\providecommand{\varitem}{} 
\providecommand{\varitem}{} 
\providecommand{\varitem}{} 
 \definecolor{Kblue}{rgb}{0,0.651,0.667}
 \definecolor{Korange}{rgb}{0.945,0.561,0}
 \definecolor{Kgreen}{rgb}{0.804,0.808,0}
 \definecolor{Kyellow}{rgb}{0.941,0.71,0}
 \definecolor{Mgray}{rgb}{0,0,0}
\newcommand{\ce}{{\mathcal{C}W}}
\newcommand{\qe}{{\mathcal{Q}W}}
\newcommand{\qce}{{\mathcal{QC}W}}
\newcommand{\tr}{\operatorname{tr}}
\renewcommand{\div}{\operatorname{div}}
\newcommand{\Nn}{{\mathbb{N}}}
\newcommand{\calG}{{\mathcal{G}}}
\newcommand{\calB}{{\mathcal{B}}}
\newcommand{\calX}{{\mathcal{X}}}
\def\dist{{\rm dist}}
\newcommand{\epsi}{\varepsilon}
\def\d{{\rm d}}
\def\dx{{\rm d}x}
\def\dt{{\rm d}t}
\def\leq{\leqslant}
\def\geq{\geqslant}
\newcommand{\grad}{\nabla}
\newcommand{\ffi}{\varphi} 
\newcommand{\aev}{{a.e.}}
\newcommand{\ellp}{{\ell_+}}
\let\weakly\rightharpoonup
\def\weaklystar{\buildrel{\hskip-.6mm\star}\over\weakly}
\newcommand{\II}{\mathbb{I}}
\newcommand{\RR}{\mathbb{R}}
\newcommand{\NN}{\mathbb{N}}
\newcommand{\Nb}{{\mathbb{N}}}
\newcommand{\R}{{\mathbb{R}}}
\newcommand{\A}{{\mathcal{A}}}
\newcommand{\HH}{{\mathcal{H}}}
\newcommand{\M}{{\mathcal{M}}}
\newcommand{\beq}{\begin{equation}}
\newcommand{\eeq}{\end{equation}}
\numberwithin{equation}{section}
\newtheoremstyle{thmlemcorr}{10pt}{10pt}{\itshape}{}{\bfseries}{.}{10pt}{{\thmname{#1}\thmnumber{
#2}\thmnote{ (#3)}}}
\newtheoremstyle{thmlemcorr*}{10pt}{10pt}{\itshape}{}{\bfseries}{.}\newline{{\thmname{#1}\thmnumber{
\newtheoremstyle{defi}{10pt}{10pt}{\itshape}{}{\bfseries}{.}{10pt}{{\thmname{#1}\thmnumber{
#2}\thmnote{ (#3)}}}
\newtheoremstyle{remexample}{10pt}{10pt}{}{}{\bfseries}{.}{10pt}{{\thmname{#1}\thmnumber{
#2}\thmnote{ (#3)}}}
\newtheoremstyle{ass}{10pt}{10pt}{}{}{\bfseries}{.}{10pt}{{\thmname{#1}\thmnumber{
A#2}\thmnote{ (#3)}}}
\theoremstyle{thmlemcorr}
\newtheorem{theorem}{Theorem}
\numberwithin{theorem}{section}
\newtheorem{lemma}[theorem]{Lemma}
\newtheorem{proposition}[theorem]{Proposition}
\theoremstyle{thmlemcorr*}
\newtheorem{theorem*}{Theorem}
\newtheorem{lemma*}[theorem]{Lemma}
\newtheorem{corollary*}[theorem]{Corollary}
\newtheorem{proposition*}[theorem]{Proposition}
\newtheorem{problem*}[theorem]{Problem}
\newtheorem{conjecture*}[theorem]{Conjecture}
\theoremstyle{defi}
\theoremstyle{remexample}
\newtheorem{remark}[theorem]{Remark}
\theoremstyle{ass}
   \newcommand{\bcal}{\text{\cursive{\begin{scriptsize}\,b\,\end{scriptsize}}}}
  \newcommand{\Bcal}{\text{\cursive{\begin{scriptsize}\,G\,\end{scriptsize}}}}
\def\weaklystar{\buildrel{\hskip-.6mm\star}\over\weakly}
\address[R.~Ferreira]{King Abdullah University
of Science
and Technology (KAUST), CEMSE Division,  Thuwal 23955-6900,
Saudi Arabia. \bf{E-mail:} rita.ferreira@kaust.edu.sa}
\address[E.~Zappale]{Dipartimento di Ingegneria Industriale, Universit\'a degli Studi di Salerno, via Giovanni Paolo II, 132 Fisciano (SA), Italy. \bf{E-mail:} ezappale@unisa.it}
\begin{document}

\title[Bending moments in thin multi-structures in
 nonlinear elasticity]{Bending-torsion moments in thin multi-structures\\ in the context of nonlinear elasticity}
\author[\hfill Rita Ferreira and Elvira Zappale 
\hfill]{Rita Ferreira and Elvira Zappale}

\begin{abstract}
Here,  we  address a dimension-reduction problem
in the context of nonlinear elasticity where the applied
external
surface
forces induce  bending-torsion  moments. The underlying body
is a multi-structure in \(\mathbb{R}^3\) consisting of a
thin tube-shaped domain placed upon a thin  plate-shaped
 domain. The problem involves two small parameters, the
 radius of the cross-section of the tube-shaped domain
 and the thickness of the plate-shaped
 domain. We characterize the different limit models, including the limit
junction condition, in the membrane-string regime according to the ratio between  these two parameters
 as they converge to zero. 
 
\textit{Keywords:} multi-structures, dimension reduction, nonlinear elasticity, bending-torsion moments, \(\Gamma\)-convergence, relaxation

\textit{MSC (2010):}
49J45, 74B20, 74K30

\textit{Date:}\ \today
%

\end{abstract}

\maketitle
%

\section{Introduction}

Thin structures are three-dimensional
structures having one or two of its
dimensions  much smaller than
the others. Because of this geometric
feature, thin structures are often seen
as  two- or one-dimensional objects.
Common examples  are the board of a
bridge,
the sail of a boat, the wing of an airplane, shelves,
 domes, antennae, pillars,
bars, cables, to mention  but a few.

In the context of the Theory of Elasticity
(see, e.g., \cite{Cia88}), a key question
 is the prediction of
the 
behavior of a thin elastic  structure
when subjected
to a given system of applied forces.
Although valid, three-dimensional models
are discarded  in favor of lower-dimensional
ones because lower-dimensional
models have a simpler 
structure. This simpler structure  allows
for  richer theoretical results and
easier numerical  treatments. On the
other hand, it only makes sense to use
a lower-dimensional model if it is a
\textit{good} model; that is, a  model
whose response is sufficiently close
to the response of the three-dimensional
model. In other words, a central question is how to rigorously
justify a lower-dimensional model starting
from the three-dimensional one.
This question is at the core of dimension-reduction
problems.
  
The rigorous justification of lower-dimensional
models was first obtained through the
 method of asymptotic expansions.  This
method was highly successful within
linear elasticity by enabling numerous
convergence results. However, in nonlinear
elasticity, the
 method of asymptotic expansions provided
few convergence results. We refer to
the books \cite{Cia97, TrVi96} for a historical
overview and a thorough description
of the use of  asymptotic expansions
to  derive one- and two-dimensional models
for thin elastic  structures.

The seminal work \cite{ABP91} gave rise
to a new   approach  to study dimension-reduction
problems
 based  on
\(\Gamma\)-convergence technics. The
notion of \(\Gamma\)-convergence was 
  introduced by De Giorgi in the 70's,
and we refer to the book \cite{DM93} for a comprehensive introduction to
this notion.
The use of \(\Gamma\)-convergence has
proved successful both for linear and
nonlinear elasticity dimension-reduction
problems. In particular, it provided
the unique known results of convergence
for the nonlinear case. Among a vast list, we refer, for instance, to
\cite{ABP91,BFM03,FMP12,FMP13,FJM06,LDR95,LDR00,MoMu03,Sca09} and
to the references therein for the rigorous
justification of nonlinear lower-dimensional
theories (such as membranes, plates,
shells,
rods, beams, strings) through \(\Gamma\)-convergence.  

In this paper, we consider a more complex type
of thin structure, commonly called 
a thin multi-structure or multi-domain.
A thin multi-structure is a  structure 
made of two or more different thin structures.
Two simple but important examples are
bridges (where, for instance, cables
are connected
to the  board of the bridge) and airplanes
(where, for instance, the wings are
attached
to the  body of the airplane). In
such structures,
the behavior/interaction at the junction
between
their different   thin components
plays a crucial role and, from the mathematical
viewpoint,  adds  nontrivial difficulties.

There exists a somewhat extensive literature
on dimension-reduction problems involving
thin multi-structures.
A substantial part of this literature
pertains to the context of linear
elasticity; see, for instance, \cite{Cia90, GMMMS07, LeD91}
and the references therein. Concerning
the case of non-linear
elasticity and the case dealing with multi-structures
in  contexts other than elasticity, we refer  to \cite{BlGr13, GaZa07,  GGLM02, GaZa06, Gru93, TrVi96}
and  \cite{BCN17, GGLM02b, GaSi11},
respectively, and to the references
therein.

Here, using \(\Gamma\)-convergence,
we derive   lower-dimensional models
with bending-torsion moments for multi-structures.
Our starting point is
 the standard nonlinear three-dimensional
equilibrium  problem for a three-dimensional
thin multi-structure that consists of
a thin tube-shaped structure placed
upon a thin plate-shaped structure.
   One of the main features of our setting
is a non-standard scaling
of the applied forces, which induce
bending-torsion moments in the limit model.
These forces were introduced in \cite{BFM03}
(also see \cite{BoFo04}) concerning the membrane case, then 
adapted to the string case in \cite{RiMSc} and
also to the membrane case in the  \(BV\) setting in \cite{BZZ08} and in the Orlicz--Sobolev  setting in \cite{LaNg14, LaNg16}.
We also refer  the work \cite{CMMO17} for a related study in the context of structured deformations.   Interestingly,
besides similar bending effects as those
derived in \cite{BZZ08, BoFo04, BFM03, CMMO17, RiMSc, LaNg14, LaNg16}, we observe
here a fine interaction between the
non-standard forces
 and the junction of the multi-structure.
Moreover, we assume that our structure satisfies a deformation condition on a suitable part of its boundary that goes beyond the clamped case, which is the case commonly assumed in the literature. Further, we characterize the
limit problem according to the asymptotic
behavior of the ratio between the area
of the cross-section of the thin tube-shaped
part of the structure and the  thickness
of the thin plate-shaped part, providing new results in the nonlinear setting. 
To precisely state our main results,
we describe
next  the set-up of our problem. 

In what follows, we  use Greek indices
to distinguish the first two components
of a tensor; for instance, \((x_\alpha)\)
and \((x_\alpha,x_3)\) stand for \((x_1,x_2)\)
and  \((x_1,x_2,x_3)\), respectively.
We represent by \(\R^{m\times n}\) the
vector space of \(m\times n\) real-valued
matrices, endowed with the norm \(\vert
M\vert:= \sqrt{\tr(M^T M)}\) associated
with the inner product \(M: M':= \tr(M^T
M')\) for \(M,\, M'\in \R^{m\times n}\).
If \(M\in \R^{3\times3}\), then \(M_\alpha\)
represents the \(3\times 2\) matrix
obtained from \(M\) by removing its
last column, which in turn is denoted
by \(M_3\); conversely, if \(M_\alpha
\in \R^{3\times 2}\) and \(M_3 \in \R^3\),
then \(M:=(M_\alpha|M_3)\) represents
the \(3\times 3\) matrix whose first
two columns are those of \(M_\alpha\)
and the third one is \(M_3\).
Moreover, we assume
that \(\epsi\) is a parameter taking
values on a sequence of positive numbers
convergent to zero and containing the
number one; we write ``for each \(\epsi>0\)"
in place of ``for each term in the sequence
where \(\epsi\) takes values".

Let \(\omega^a \) and \(\omega^b\) be two   
bounded domains (open and connected sets with Lipschitz boundaries)
in \(\RR^2\) containing the origin and such that \(\omega^a \subset \subset\omega^b\),
  let \(L>0\), and let \((r_\epsi)_{\epsi>0}\) and 
\((h_\epsi)_{\epsi>0}\) be two sequences of positive
numbers convergent to 0. For each \(\epsi>0\),
let \(\Omega_\varepsilon:={\rm int}(\overline\Omega^a_\varepsilon
\cup \overline\Omega^b_\varepsilon)\) be the union of two vertical cylinders, where 
$\Omega^a_\varepsilon:=r_\varepsilon\omega^a  \times(0,L)$
has small cross-section and fixed height and $\Omega^b_\varepsilon:=\omega^b\times
(-h_\varepsilon,0)$ has fixed cross-section and small
height; note that \(r_\epsi \omega^a\times
\{0\}\) represents the interface between
the two cylinders. We further define  $\Gamma^a_\varepsilon:= 
r_\varepsilon\omega^a\times
\{L\}$, 
$\Gamma^b_\varepsilon:=\partial\omega^b\times (-h_\varepsilon,
0)$,
$\Gamma_\varepsilon:=\Gamma^a_\varepsilon\cup\Gamma^b_\varepsilon$,
$S^a_\varepsilon:=(r_\varepsilon\partial\omega^a)\times
(0,L)$, $S^{b,-}_\varepsilon:=\omega^b\times
\{-h_\varepsilon\}$, 
$S^{b,+}_\varepsilon:=(\omega^b\setminus (r_\varepsilon\overline\omega^a))\times
\{0\}$,
$S_\varepsilon:=S^a_\varepsilon \cup S^{b,+}_\varepsilon\cup
S^{b,-}_\varepsilon$ (see Fig.1). We observe that the superscripts ``a" and
``b"
stand
for ``above" and  ``below", respectively; moreover, we  omit the index \(\epsi\)
whenever \(\epsi=1\) and, without loss of generality,
we suppose that \(r_1 = h_1=1\).




\begin{center}
\begin{tikzpicture}[scale=.75]

%
        \draw[Mgray,
           thick] (0.28,0) -- (0.28,-5.0);
         \draw[Mgray,
           thick] (1.72,0)-- (1.72,-5.0);

       
        \draw[Mgray,
           thick]
        (1,0)  ellipse (.72 and 0.25);
        
                \draw [   thin,->]
(1.5,-1)                 arc (-110:3:10pt);
                
                \draw(2.1,-.4) node
{$\textcolor{red}{S^a_\varepsilon}$};
                
                \draw [   thin,<-]
(.3,0.4)
                arc (110:3:10pt);
                
                \draw(-.0,0.4) node
{$\textcolor{red}{\Gamma^a_\varepsilon}$};
        
        \draw[gray,
            thick,fill=Kyellow!40]
        (1,-5)  ellipse (.72 and 0.25);
        
         \draw(1.02,-5) node 
         {\fontsize{4}{4}\selectfont   $\textcolor{red}
         {r_\varepsilon\omega^a\times\{0\}}$
        };

        \draw[densely dashed, <->] (.1,0)
--
        (.1,-5);
        
        \draw(-.2,-2.5) node      
        {$\textcolor{red}{L}$};

%
        \draw[Mgray,
           thick] (-5,-6) -- (-3,-4)
         -- (0.3,-4);
         
         \draw[Mgray,
        densely dashed]  (0.3,-4)  --
 (1.7,-4);

        \draw[Mgray,
           thick] (1.7,-4) -- (5,-4)
         -- (4,-6) -- (-5,-6);

        \draw[Mgray,
        densely dashed]  (-3,-4)
         -- (-3,-4.4) -- (5,-4.4);
         
         \draw[Mgray,
            thick]  (5,-4)
         -- (5,-4.4)  -- (4,-6.4);
         
         \draw[Mgray,
            thick]  (4,-6)
         -- (4,-6.4) -- (-5,-6.4);
         
         \draw[Mgray,
            thick]  (-5,-6)
         -- (-5,-6.4);
         
        \draw[Mgray,
        densely dashed]  (-3,-4.4) 
--  (-5,-6.4);

                \draw [   thin,<-]
(-3.7,-4.2)
                arc (110:3:10pt);
                
                \draw(-4.2,-4.25) node
                              {
$\textcolor{blue}{S^{b,+}_\varepsilon}$};

                \draw [   thin,->]
(4.8,-4.55)
                arc (-110:3:10pt);
                
                \draw(5.4,-3.95) node
                              {$\textcolor{blue}{\Gamma^b_\varepsilon}$};
                
                \draw [thin,->]
(3.6,-6.3)
                arc (-140:4:17pt);
                
                \draw(5,-5.6) node
                {$\textcolor{blue}{S^{b,-}_\varepsilon}$};
                
        \draw [densely dashed,<->] (-5.2,-6.42)
                --  (-5.2,-6.01);
        \draw(-5.57,-6.2) node
                {${\textcolor{blue}{h_\varepsilon}}$};
       
        \draw[   thick, ->] (1,0)
--         (1,1);
        
        \draw[densely dashed, -] (1,0)
--
        (1,-5);
 
        \draw(1.45,.8) node { $x_3$};
        
\draw(0,-7) node {Figure 1. $\overline\Omega^\varepsilon$
- reference configuration};        
              \end{tikzpicture}

\end{center}

%
%
%
We assume that \(\overline \Omega_\epsi\) is the reference
configuration of a three-dimensional body made of a hyperelastic
and homogeneous material, whose stored energy is  a Borel
function \(W:\RR^{3\times 3} \to \RR\) satisfying the
following \(p\)-growth conditions for some  \(p\in(1,\infty)\):
there exists a positive constant, \(C\), such that
for all \(\xi\in \R^{3\times 3}\), we have \vspace{-1.mm}
\begin{equation*}
\frac{1}{C}|\xi|^p  - C \leq W(\xi) \leq C(1+ |\xi|^p).
\label{pgrowth}\tag{p-growth}
\end{equation*}
We assume that the body is subjected to applied body 
forces acting in its interior, \(\Omega_\epsi\), and
to applied surface  forces acting on  the portion
\(S_\epsi\) of  its  boundary,
both of the type dead loads  and of densities \(\tilde f_\epsi\in L^q(\Omega_\varepsilon;{\mathbb
R}^3)\) and \(\tilde g_\epsi\in L^q(S_\varepsilon;{\mathbb
R}^3)\), respectively, where \(q\) satisfies \(\frac{1}{p} + \frac{1}{q}=1\). We assume further that the body satisfies
a deformation condition \(\tilde \ffi_{0,\epsi}\in W^{1,p}(\Omega_\epsi;\RR^3)\)
imposed on \(\Gamma_\epsi\). In the literature,   \(\tilde
\ffi_{0,\epsi} \)  commonly coincides
with the identity function on \(\overline\Omega^\epsi\), which corresponds to the
clamped setting. Here, we   address a more general case
that we  detail later on.

In this setting, the equilibrium problem can be formulated
as the  minimization problem \vspace{-.5mm}
\begin{equation}\label{Ptepsi} 
\inf
\big\{\widetilde E_\varepsilon(\tilde\psi)\!:\, \tilde\psi \in
\tilde \Phi_\epsi \big\},
\tag{$\widetilde
{\mathcal{P}}_\epsi$}
\end{equation}
where, denoting by \(\HH^2\) the two-dimensional
Hausdorff  measure, 
\begin{equation}\label{energystandardforces}
\begin{aligned}
\widetilde E_\varepsilon(\tilde\psi):=\int_{\Omega_\varepsilon}
W(\nabla\tilde\psi)\,\d\tilde x - \int_{\Omega_\varepsilon}
\tilde
f_\epsi\cdot\tilde\psi\,\d\tilde x - \int_{S_\varepsilon}
\tilde g_\epsi\cdot\tilde\psi\,\d{\HH}^2(\tilde x)
\end{aligned}
\end{equation}
and \vspace{-1mm}
\begin{equation}\label{deftildePhiepsi}
\begin{aligned}
\tilde \Phi_\epsi:= \big\{\tilde\psi \in 
W^{1,p}(\Omega_\varepsilon;{\mathbb
R}^3)\!:\, \tilde \psi = \tilde
\ffi_{0,\epsi} \hbox{ on } \Gamma_\epsi\big\}.
\end{aligned}
\end{equation}
Note that we can write  $\widetilde E_\varepsilon (\tilde \psi)=
\widetilde E^a_\varepsilon
(\tilde \psi) + \widetilde E^b_\varepsilon (\tilde \psi)$,
where
\begin{equation*}
\begin{aligned}
&\widetilde E^a_\varepsilon(\tilde \psi):=\int_{\Omega^a_\varepsilon}
W(\nabla\tilde\psi)\,\d\tilde x - \int_{\Omega^a_\varepsilon}
\tilde
f_\epsi\cdot\tilde\psi\,\d\tilde x - \int_{S^a_\varepsilon}
\tilde g_\epsi\cdot\tilde\psi\,\d{\HH}^2(\tilde x),\\
& \widetilde E^b_\varepsilon(\tilde \psi):=\int_{\Omega^b_\varepsilon}
W(\nabla\tilde\psi)\,\d\tilde x - \int_{\Omega^b_\varepsilon}
\tilde
f_\epsi\cdot\tilde\psi\,\d\tilde x - \int_{S^{b,-}_\varepsilon\cup
S^{b,+}_\varepsilon}
\tilde g_\epsi\cdot\tilde\psi\,\d{\HH}^2({\tilde x}).
\end{aligned}
\end{equation*}

As it is usual in the framework of dimension-reduction
problems, the first step to study the asymptotic behavior
of a diagonal infimizing sequence of the sequence of problems \eqref{Ptepsi} is to transform these problems into equivalent ones defined on a fixed domain. To this end, we consider the change of variables that to each point \(\tilde x =(\tilde x_\alpha,\tilde x_3)\in \overline \Omega^a_\varepsilon
\) associates the point \(x=(x_\alpha,x_3):=
({r_\varepsilon}^{-1}{\tilde x_\alpha},\tilde x_3)\in\overline
\Omega^a\)
and  that to each point  \(\tilde x
=(\tilde x_\alpha,\tilde x_3)\in \overline\Omega^b_\varepsilon
\)  associates the point \(x=(x_\alpha,x_3):=
(\tilde x_\alpha, {h_\varepsilon}^{-1}{\tilde x_3})\in\overline\Omega^b\),
and we define
\begin{equation*}
\begin{aligned}
&\psi^a(x):=\tilde \psi (r_\varepsilon x_\alpha,
x_3) \text{ and } \ffi^a_{0,\varepsilon}(x):=\tilde \ffi_{0,\epsi}(r_\varepsilon x_\alpha,
x_3) \text{ for $x=(x_\alpha,x_3)\in\Omega^a$},\\
& \psi^b(x):=\tilde \psi (x_\alpha, h_\varepsilon
x_3) \text{ and } \ffi^b_{0,\varepsilon}(x):=\tilde \ffi_{0,\epsi}
(x_\alpha, h_\varepsilon
x_3) \text{ for $x=(x_\alpha,x_3)\in\Omega^b$}, \\
& \psi^{b,+}(x_\alpha):=\psi^b(x_\alpha,0)  \text{ and
} \psi^{b,-}(x_\alpha):=\psi^b(x_\alpha, -1) \text{ for \(x_\alpha\in \omega^b.\)}
\end{aligned}
\end{equation*}
Observe that \(\tilde\psi \in
\tilde \Phi_\epsi\) if and only if \((\psi^a - \ffi^a_{0,\varepsilon}, \psi^b - \ffi^b_{0,\varepsilon})\in
W^{1,p}_{\Gamma^a}(\Omega^a;{\mathbb R}^3)\times
W^{1,p}_{\Gamma^b}(\Omega^b;{\mathbb R}^3)\), where \(W^{1,p}_{\Gamma}(\Omega;{\mathbb R}^3) \allowbreak := \{\psi \in W^{1,p}(\Omega;{\mathbb R}^3)\!:\,
\psi = 0 \text{ on } \Gamma\}\), and the  junction
condition \vspace{-.5mm}
\begin{equation}\label{junction}
\psi^a(x_\alpha,0_3) =
\psi^b(r_\varepsilon x_\alpha,0_3)=\psi^{b,+}(r_\varepsilon x_\alpha) \enspace \hbox{for \aev\
}
x_\alpha\in\omega^a
\end{equation}
holds. Note  that \(\ffi^a_{0,\varepsilon}\) and 
\(\ffi^b_{0,\varepsilon}\) satisfy \eqref{junction};
i.e., \(\ffi^a_{0,\varepsilon}(x_\alpha,0) =
\ffi^b_{0,\varepsilon}(r_\varepsilon x_\alpha,0)\) for \aev\
\(x_\alpha\in\omega^a\).

Regarding the densities of the applied
forces, we similarly define
\begin{equation}\label{forcesepsi}
\begin{aligned}
&f^a_\epsi(x):=\tilde f_\epsi (r_\varepsilon
x_\alpha,
x_3)  \text{ for $x=(x_\alpha,x_3)\in\Omega^a$},\enspace g^a_\epsi(x):=\tilde g_\epsi (r_\varepsilon
x_\alpha,
x_3)  \text{ for $x=(x_\alpha,x_3)\in S^a$},\\
& f^b_\epsi(x):=\tilde f_\epsi (
x_\alpha, h_\varepsilon
x_3) \text{ for $x=(x_\alpha,x_3)\in\Omega^b$}, \enspace g^{b,+}_\epsi(x_\alpha):=\tilde
g_\epsi (x_\alpha ,0) \text{ for \(x_\alpha\in \omega^b\backslash r_\epsi\overline
\omega^a\)}, \\
&  g^{b,-}_\epsi(x_\alpha):=\tilde
g_\epsi (x_\alpha,
-h_\epsi) \text{ for \(x_\alpha\in \omega^b.\)}
\end{aligned}
\end{equation}
In what follows, we assume that
the limit \vspace{-1mm}
\begin{equation}
\label{ell}
\begin{aligned}
\ell:=\lim_{\varepsilon \to 0}\frac{h_\varepsilon}{r_\varepsilon^2}
\end{aligned}
\end{equation}
 exists. We note that \({h_\varepsilon}/{r_\varepsilon^2}\)  represents the ratio between the  thickness of the plate-shaped
 domain and the area of the cross-section of the
  tube-shaped domain and that three cases, \(\ell=0\),
\(\ell\in \mathbb R^+\), and \(\ell=\infty\), must be distinguished.
We will often use the index  \(\ell_0\) if \(\ell=0\),  \(\ell_+\) if \(\ell\in\RR^+\),
and  \(\ell_\infty \) if \(\ell=\infty\)
to highlight the dependence on the value of the limit in \eqref{ell}.

As it is well-known (see, for instance, \cite{FJM06}), different limit regimes appear according to a balance between the scaling of the applied forces and the energy functional. Here,  we aim at the derivation of membrane-string models incorporating bending-torsion moments understanding, simultaneously,  the impact of the ratio 
\({h_\varepsilon}/{r_\varepsilon^2}\).  Accordingly, we 
 further specify the asymptotic
behavior of the functions in \eqref{forcesepsi} as follows.
We  assume that  there exist functions
$f^a\in L^q(\Omega^a;{\mathbb R}^3)$,
$f^b\in L^q(\Omega^b;{\mathbb
R}^3)$, $g^a,G^a\in
L^q(S^a;{\mathbb R}^3)$,
$g^{b,\pm},G^b\in L^q(\omega^b;{\mathbb
R}^3)$, \(\hat g^{b,-}\in L^q(\omega^a;\RR^3)\), and
\(\hat
G^b\in L^q(\mathfrak{C};{\mathbb
R}^3)\) with \(\mathfrak{C}\)
a convex subset of \(\RR^2\)
containing
\(\omega^a
\), all independent
of  $\varepsilon$, such
that 
\begin{description}
\item[Case \(l\in\RR^+\)] 
\vspace{-.5mm}
\begin{equation}\label{forcesl+}
\begin{aligned}
&f_\epsi^a =
f^a, \enspace g_\epsi^a = r_\varepsilon
g^a +G^a,\\
& f_\epsi^b =
f^b, \enspace g^{b,+}_\epsi = h_\varepsilon
g^{b,+} + G^b, \enspace g^{b,-}_\epsi =
-\big(h_\varepsilon
g^{b,-}
+ G^b\big)\chi_{\omega^b\backslash
r_\varepsilon\overline\omega^a} - \big(h_\varepsilon
\hat g^{b,-}
+ \hat G^b\big)\chi_{
r_\varepsilon\overline\omega^a}.
\end{aligned}
\end{equation}

\item[Case \(l=\infty\)] 
\vspace{-.5mm}
\begin{equation}\label{forcesli}
\begin{aligned}
&f_\epsi^a =
f^a, \enspace g_\epsi^a = r_\varepsilon
g^a +G^a,\\
& f_\epsi^b =
\tfrac{r_\epsi^2}{h_\epsi}f^b, \enspace g^{b,+}_\epsi = r_\varepsilon^2
g^{b,+} + \tfrac{r_\epsi^2}{h_\epsi}G^b, \enspace g^{b,-}_\epsi =
-\big(r_\varepsilon^2
g^{b,-}
+ \tfrac{r_\epsi^2}{h_\epsi}G^b\big)\chi_{\omega^b\backslash
r_\varepsilon\overline\omega^a} - \big(r_\varepsilon^2
\hat g^{b,-}
+\tfrac{r_\epsi^2}{h_\epsi} \hat G^b\big)\chi_{
r_\varepsilon\overline\omega^a}.
\end{aligned}
\end{equation}

\item[Case \(l=0\)] 
\vspace{-.5mm}
\begin{equation}\label{forcesl0}
\begin{aligned}
&f_\epsi^a =
\tfrac{h_\varepsilon}{r_\varepsilon^2}f^a, \enspace g_\epsi^a = \tfrac{h_\varepsilon}{r_\varepsilon}g^a
+\tfrac{h_\varepsilon}{r_\varepsilon^2}G^a,\\
& f_\epsi^b =
f^b, \enspace g^{b,+}_\epsi = h_\varepsilon
g^{b,+} + G^b, \enspace g^{b,-}_\epsi =
-\big(h_\varepsilon
g^{b,-}
+ G^b\big)\chi_{\omega^b\backslash
r_\varepsilon\overline\omega^a} - \big(h_\varepsilon
\hat g^{b,-}
+ \hat G^b\big)\chi_{
r_\varepsilon\overline\omega^a}.
\end{aligned}
\end{equation}

\end{description}
Here, the symbol $\chi_A$
stands for the characteristic function
of the set $A$. We assume further that $G^a(x_\alpha,x_3)\allowbreak=\Bcal^a(x_3)\nu(x_\alpha,x_3)$,
where $\Bcal^a$  is a
matrix, only depending on $x_3$, associated
with a linear
application
from ${\mathbb R}^3$ into ${\mathbb
R}^3$ and $\nu$ is
the unit outer normal to $S^a$.

Finally, we re-scale the total energy  \(\widetilde
E_\epsi\) by setting \(E_\varepsilon(\psi^a,\psi^b):=\tfrac1{
r_\varepsilon^2}
\widetilde E_\varepsilon(\tilde\psi)
= \tfrac1{
r_\varepsilon^2}
\widetilde E^a_\varepsilon(\tilde\psi)
+\tfrac1{
r_\varepsilon^2}
\widetilde E^b_\varepsilon(\tilde\psi) =:
E^a_\varepsilon(\psi^a) + E^b_\varepsilon(\psi^b) \). We have that \vspace{-2mm}
\begin{equation}\label{Eabepsi}
\begin{aligned}
E^a_\varepsilon(\psi^a) = F^a_\varepsilon(\psi^a)
- L^a_\varepsilon(\psi^a) \enspace \text{
and }\enspace E^b_\varepsilon(\psi^b)=\frac{h_\varepsilon}{r_\varepsilon^2}
F^b_\varepsilon(\psi^b) - \frac{h_\varepsilon}{r_\varepsilon^2}
L^b_\varepsilon(\psi^b), 
\end{aligned}
\end{equation}%
where \vspace{-1.5mm}
\begin{align}
&F^a_\varepsilon(\psi^a):=\int_{\Omega^a}
W({r_\varepsilon^{-1}}\nabla_\alpha
\psi^a|\nabla_3\psi^a)\,\dx , \quad
F^b_\varepsilon(\psi^b):=\int_{\Omega^b}
W(
\nabla_\alpha \psi^b|{h_\varepsilon^{-1}}\nabla_3\psi^b)\,\dx,
\label{Fabepsi}\\
& L^a_\varepsilon(\psi^a):=\int_{\Omega^a}
f^a_\epsi\cdot\psi^a\, \dx + \frac{1}{r_\epsi}
\int_{S^a} g^a_\epsi\cdot\psi^a\,\d{\HH}^2(x)
, \label{Laepsi}\\
& L^b_\varepsilon(\psi^b):=\int_{\Omega^b}
f^b_\epsi\cdot\psi^b\,\dx +\frac{1}{h_\epsi}
\int_{\omega^b\backslash r_\varepsilon
\overline\omega^a}
(g^{b,+}_\epsi\cdot \psi^{b,+} +g^{b,-}_\epsi\cdot\psi^{b,-})\,\dx_\alpha
+\frac{1}{h_\epsi}
\int_{r_\varepsilon \omega^a}
g^{b,-}_\epsi\cdot\psi^{b,-} \dx_\alpha
.\label{Lbepsi}
\end{align}
%

As justified in \cite{FRS93} ({also see
\cite{TrVi96}}), to obtain
a nonlinear membrane (string) behavior
in the limit as the thickness (cross-section) parameter of the thin plate-shaped
(tube-shaped)
domain goes to zero, the  scaling magnitude of  the applied body  forces
should be of order one, while the  scaling
magnitude of
 the applied  surface forces should
be of the same order of the thickness (cross-section)
parameter. The assumptions on the asymptotic behavior of the forces in  
\eqref{forcesl+}--\eqref{forcesl0} regarding
the terms \(f^a\), \(g^a\), \(f^b\),
\(g^{b,+}\), and \(g^{b,-}\)
are the simplest compatible with these
order of scaling magnitudes having in mind the
scaling of the total energy functional
\(E_\epsi\) and  the value of \(\ell\) in \eqref{ell}.

As
it will become clear later on, the presence
of the terms \(G^a\) and  \(G^b\), of the same order
of \(f^a\) and \(f^b\), respectively,
will induce the appearance of \textit{bending-torsion
moments}
terms
in the limit model.
As we mentioned before, this approach was considered before in \cite{BZZ08,
BoFo04, BFM03, CMMO17, RiMSc, LaNg14, LaNg16} for thin structures but not
multi-structures.

We mention further that in \cite[Sect.~3.3]{FRS93}
the authors
assert that a thin plate-shaped domain
cannot support a non-vanishing resultant
surface load as the thickness parameter
goes to zero. Due to the multi-domain
feature of the body considered here,
where there are no applied surface forces
on \(r_\epsi\omega^a \times \{0\}\)
(which, we recall, represents the interface
between  \(\Omega^a_\epsi\)
and \(\Omega^b_\epsi\)), that principle
is not satisfied if \(\hat g^{b,-}\)
or \(\hat G^b\) are different from zero.
It turns out that the term \(\hat g^{b,-}\),
of the same order of \( g^{b,-}\),  plays
no role in the limit model because it has the
standard order of scaling  magnitude and is acting
on a set of vanishing area. In contrast,    the term \(\hat G^b\), 
of the
same order of \(f^b\), will contribute
to a junction-type
term in the limit model (for \(\ell\in\RR^+)\) that is independent
of \(p\). 
   This represents a novelty
compared to \cite{GGLM02}, where the limit
model has junction-type terms only if \(p>2\).

Finally, we observe that the above change of variables and re-scaling allow us
to re-write \eqref{Ptepsi} as \vspace{-.5mm}
\begin{equation}\label{Pepsi} 
\inf
 \big\{ E^a_\varepsilon(\psi^a) + E^b_\varepsilon(\psi^b)\!: \, (\psi^a,\psi^b) \in
 \Phi_\epsi \big\},
\tag{$
{\mathcal{P}}_\epsi$}
\end{equation}
where \vspace{-2.5mm}
\begin{equation}\label{Phiepsi}
\begin{aligned}
\Phi_\epsi := \big\{ (\psi^a,\psi^b)\in W^{1,p}(\Omega^a;{\mathbb
R}^3) \times W^{1,p}(\Omega^b;{\mathbb
R}^3)\!:& \,\, \psi^a = 
\ffi^a_{0,\epsi} \hbox{ on } \Gamma^a,\,
 \psi^b =   \ffi^b_{0,\epsi}
\hbox{ on } \Gamma^b,\\
& \hbox{ and } (\psi^a,\psi^b)
\hbox{ 
satisfies } \eqref{junction}  \big\}.
\end{aligned}
\end{equation}

To describe the asymptotic behavior of \eqref{Pepsi},
we are left to detail the assumptions on 
\((\ffi^a_{0,\epsi})_{\epsi>0}\) and \((\ffi^b_{0,\epsi}
)_{\epsi>0}\).  We assume that there exist  \(\ffi^a_0 \in W^{1,p}(\Omega^a;\R^3)\) and  \(\ffi^b_0 \in W^{1,p}(\Omega^b;\R^3)\)
such that \vspace{-.mm}
\begin{align}
&\ffi^a_{0,\epsi} \weakly \ffi^a_0 \hbox{ weakly in } 
W^{1,p}(\Omega^a;\R^3), \enspace \big(|r_{\epsi}^{-1}\grad_\alpha \ffi^a_{0,\epsi}|^p
+ |\grad_3\ffi^a_{0,\epsi}|^p
\big)_{\epsi>0}\subset L^1(\Omega^a)
\hbox{ is equi-integrable},\label{bca}\tag{$b.c.^a$}
\\
&\ffi^b_{0,\epsi} \weakly \ffi^b_0 \hbox{ weakly in } 
W^{1,p}(\Omega^b;\R^3),\enspace\big(|\grad_\alpha \ffi^b_{0,\epsi}|^p + |h_{\epsi}^{-1}\grad_3\ffi^b_{0,\epsi}|^p
\big)_{\epsi>0}\subset L^1(\Omega^b) \hbox{ is equi-integrable}.
\label{bcb}\tag{$b.c.^b$}
\end{align}
Note that the functions \(\ffi^a_{0,\epsi}(x) =
(r_\varepsilon x_\alpha, x_3) \) and \(\ffi^b_{0,\epsi}
= (x_\alpha, h_\varepsilon x_3) \) corresponding to the
clamped case, which is commonly considered in the literature,
satisfy \eqref{bca}--\eqref{bcb}.

Next, we state our main results concerning the three cases \(\ell\in\RR^+\),
\(\ell=\infty\), and \(\ell=0\), where \(\ell\) is given by \eqref{ell}.
We start by introducing the spaces \vspace{-.5mm}
\begin{equation}\label{Phipl+}
\begin{aligned}
\Phi_{\ell_+}^p := \big\{ &(\psi^a,\psi^b)\in W^{1,p}(\Omega^a;{\mathbb
R}^3) \times W^{1,p}(\Omega^b;{\mathbb
R}^3)\!: \, \psi^a = 
\ffi^a_{0} \hbox{ on } \Gamma^a,\,
 \psi^b =   \ffi^b_{0}
\hbox{ on } \Gamma^b,\\
&\quad \psi^a \hbox{ is  independent of } x_\alpha, 
 \, \psi^b \hbox{ is  independent of } x_3,
 \, \hbox{and for } p>2,\, \psi^a(0_3)=\psi^b(0_\alpha) \big\},
\end{aligned}
\end{equation}
\begin{align}\label{Phipli}
\Phi_{\ell_\infty }^p := \big\{ \psi^a\in
W^{1,p}(\Omega^a;{\mathbb
R}^3) \!: \, \psi^a = 
\ffi^a_{0} \hbox{ on } \Gamma^a,\,
  \psi^a \hbox{ is  independent of }
x_\alpha,  \, 
\hbox{and for } p>2,\, \psi^a(0_3)=0
\big\},
\end{align}
and \vspace{-1mm}
\begin{align}\label{Phiplz}
\Phi_{\ell_0}^p := \big\{ \psi^b\in
 W^{1,p}(\Omega^b;{\mathbb
R}^3)\!: \, \psi^b =   \ffi^b_{0}
\hbox{ on } \Gamma^b, \psi^b \hbox{ is  independent of
} x_3, \,
  \hbox{and for } p>2,\, \psi^b(0_\alpha)=0
\big\}.
\end{align}

We refer the reader to Section~\ref{Sect:prelim} for a brief overview regarding the convex, quasiconvex, and cross-quasiconvex-convex envelopes of a function, which appear in our main theorems below.

\begin{theorem}[\(\ell\in\RR^+\)]\label{thm:ellr+}
Let \(W:\R^{3\times 3} \to \R\) be a Borel function
satisfying
\eqref{pgrowth} and let \((\psi^a_\epsi,\psi^b_\epsi)_{\epsi>0}\) be a diagonal
infimizing sequence of the sequence of problems \eqref{Pepsi},
where  \(\ell\)  given by \eqref{ell} 
is such that  \(\ell\in\RR^+\),     \((\ffi^a_{0,\epsi},\ffi^b_{0,\epsi})_{\epsi>0}\)
satisfies
\eqref{bca}--\eqref{bcb} and \eqref{junction},
and \eqref{forcesl+} holds.
Assume that \(0_\alpha\) is a Lebesgue point of  \( |
\hat G^b|^q\).  Then, the sequences 
\((\bar \bcal^a_\epsi,\psi^a_\epsi)_{\epsi>0}\) and \((\psi^b_\epsi,\bar \bcal^b_\epsi)_{\epsi>0}\), where \(\bar\bcal^a_\epsi:={r_{\varepsilon}^{-1}} \int_{\omega^a}\nabla_\alpha
\psi^a_\epsi\, \dx_\alpha\) and \(\bar\bcal^b_\epsi:= {h_\epsi^{-1}}\int_{-1}^{0}\nabla_3 \psi^b\,\dx_3 \), are sequentially, weakly compact in
\(  L^p((0,L);\R^{3 \times 2}) \times W^{1,p}(\Omega^a;\R^3) \) and \(W^{1,p}(\Omega^b;\R^3)\times L^p(\omega^b;\R^{3})\),
respectively. If \((\bar\bcal^a, \psi^a)\) and \(( \psi^b,\bar \bcal^b)\) are corresponding accumulation points, then \((\psi^a,\psi^b)\in
\Phi_{\ell_+}^p\) and they  solve the
minimization problem \vspace{-1.7mm}
\begin{equation}\label{Pell+} 
\min
\big\{ E_{\ell_+}((\bar\bcal^a,\psi^a), (\psi^b,\bar \bcal^b))\!:\, (\psi^a,\psi^b) \in\Phi_{\ell_+}^p,\, \, (\bar\bcal^a,\bar
\bcal^b) \in L^p((0,L);\R^{3 \times
2}) \times L^p(\omega^b;\R^{3})\big\} ,
\tag{$
{\mathcal{P}}_{\ell_+}$}
\end{equation}
where, for \(\bar a:=|\omega^a|\),
\(\ce\) the convex envelope of \(W\), and \(\qce\) the cross-quasiconvex-convex envelope of \(W\), \vspace{-3.9mm} %
\begin{equation}
\label{Eell+}
\begin{aligned}
E_{\ell_+}((\bar\bcal^a,\psi^a), (\psi^b,\bar \bcal^b)):= &\, \bar
a\int_0^L \ce({\bar a^{-1}}\bar\bcal^a|\nabla_3\psi^a)\,\dx_3+ \ell\int_{\omega^b}
 \qce(\nabla_\alpha \psi^b|\bar\bcal^b)\,\dx_\alpha\\
 &\quad - \int_0^L
\big(
\bar f^a\cdot\psi^a+ \bar g^a\cdot\psi^a + 
\Bcal^a:(\bar\bcal^a|0)\big)\,\dx_3 \\
&\quad -\ell \int_{\omega^b} \big(\bar f^b\cdot\psi^b +
(g^{b,+}-g^{b,-})\cdot\psi^{b} + G^b\cdot\bar\bcal^b\big)
\,\dx_\alpha +\bar a\, \hat G^b(0_\alpha)\cdot
\psi^a(0_3)
\end{aligned}
\end{equation}
with \vspace{-3.2mm}
\begin{equation}\label{barforces}
\begin{aligned}
\bar f^a(x_3):= \int_{\omega^a} f^a(x)\,\dx_\alpha, \quad
\bar g^a(x_3):= \int_{\partial\omega^a} g^a(x)\,\d\HH^1(x_\alpha), \quad \bar f^b(x_\alpha):= \int_{-1}^0 f^b(x)\,\dx_3.
\end{aligned}
\end{equation}
\end{theorem}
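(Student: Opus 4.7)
The plan is to follow the classical $\Gamma$-convergence strategy combined with the diagonal infimizing property: compactness, liminf inequality, recovery sequence, and then identification of $(\psi^a,\psi^b)$ as a solution of the limit problem.

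\textbf{Step 1 (Compactness and admissibility of the limit).} Testing $E_\epsi$ against $(\ffi^a_{0,\epsi},\ffi^b_{0,\epsi})$ and using the upper bound in \eqref{pgrowth} together with the equi-integrability in \eqref{bca}--\eqref{bcb} bounds $\sup_\epsi E_\epsi(\psi^a_\epsi,\psi^b_\epsi)$ from above (loading terms are controlled by H\"older and Poincar\'e thanks to the boundary data on $\Gamma^a,\Gamma^b$). The lower $p$-growth then yields uniform $L^p$ bounds on $r_\epsi^{-1}\nabla_\alpha\psi^a_\epsi,\,\nabla_3\psi^a_\epsi,\,\nabla_\alpha\psi^b_\epsi,\,h_\epsi^{-1}\nabla_3\psi^b_\epsi$. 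Up to subsequences, $\psi^a_\epsi\weakly\psi^a$ in $W^{1,p}(\Omega^a;\R^3)$ with $\psi^a$ independent of $x_\alpha$, $\psi^b_\epsi\weakly\psi^b$ in $W^{1,p}(\Omega^b;\R^3)$ with $\psi^b$ independent of $x_3$, and $\bar\bcal^a_\epsi,\bar\bcal^b_\epsi$ converge weakly in the stated $L^p$ spaces. The boundary conditions on $\Gamma^a,\Gamma^b$ pass to the limit by weak convergence, and for $p>2$ the embedding $W^{1,p}(\omega^b)\hookrightarrow C^0$ applied to $\psi^{b,+}$ together with \eqref{junction} forces $\psi^a(0_3)=\psi^b(0_\alpha)$; hence $(\psi^a,\psi^b)\in\Phi^p_{\ell_+}$.

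\textbf{Step 2 (Liminf inequality and convergence of loads).} For the rod, since $\ce\leq W$ is convex, Jensen's inequality in $x_\alpha$ gives
\[
F^a_\epsi(\psi^a_\epsi)\geq\bar a\int_0^L\ce\bigl(\bar a^{-1}\bar\bcal^a_\epsi\,\big|\,\bar a^{-1}\textstyle\int_{\omega^a}\nabla_3\psi^a_\epsi\,\dx_\alpha\bigr)\dx_3,
\]
whose liminf is at least $\bar a\int_0^L\ce(\bar a^{-1}\bar\bcal^a|\nabla_3\psi^a)\,\dx_3$ by weak lower semicontinuity of convex functionals. For the plate, the standard cross-quasiconvex-convex dimension-reduction lower bound, in the spirit of \cite{BFM03, LDR95, BoFo04}, yields $\liminf\tfrac{h_\epsi}{r_\epsi^2}F^b_\epsi(\psi^b_\epsi)\geq\ell\int_{\omega^b}\qce(\nabla_\alpha\psi^b|\bar\bcal^b)\,\dx_\alpha$. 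Most load terms pass by Fubini and weak convergence; the delicate ones are: (i) the bending-torsion contribution, recast via $G^a=\Bcal^a\nu$ and fiberwise divergence theorem in $\omega^a$ as $\tfrac{1}{r_\epsi}\int_{S^a}G^a\cdot\psi^a_\epsi\,\d\HH^2=\int_0^L\Bcal^a:(\bar\bcal^a_\epsi|0)\,\dx_3\to \int_0^L\Bcal^a:(\bar\bcal^a|0)\,\dx_3$; (ii) the $G^b$ term, turned via $\psi^{b,+}-\psi^{b,-}=\int_{-1}^0\nabla_3\psi^b\,\dx_3$ into $\ell\int_{\omega^b}G^b\cdot\bar\bcal^b\,\dx_\alpha$; and (iii) the junction term, where rescaling $y=x_\alpha/r_\epsi$ yields
\[
\tfrac{1}{r_\epsi^2}\int_{r_\epsi\omega^a}\hat G^b\cdot\psi^{b,-}\,\dx_\alpha=\int_{\omega^a}\hat G^b(r_\epsi y)\cdot\psi^{b,-}(r_\epsi y)\,\dy,
\]
and the Lebesgue-point hypothesis on $|\hat G^b|^q$ at $0_\alpha$, combined with $\psi^{b,-}(r_\epsi\cdot)\to\psi^b(0_\alpha)=\psi^a(0_3)$ (pointwise via $C^0$ for $p>2$; for $p\leq 2$ no junction value is imposed in $\Phi^p_{\ell_+}$, and the term is handled by density), delivers the limit $\bar a\,\hat G^b(0_\alpha)\cdot\psi^a(0_3)$. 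The parallel $h_\epsi\hat g^{b,-}$ piece vanishes because it is supported on a set of vanishing area while carrying the extra factor $h_\epsi$.

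\textbf{Step 3 (Recovery, minimality, and anticipated obstacle).} For any competitor $(\tilde\psi^a,\tilde\psi^b)\in\Phi^p_{\ell_+}$ with prescribed averages $(\tilde\bcal^a,\tilde\bcal^b)$, a recovery sequence $(\tilde\psi^a_\epsi,\tilde\psi^b_\epsi)\in\Phi_\epsi$ is built by superimposing on $\tilde\psi^a$ affine-in-$x_\alpha$ perturbations of slope $r_\epsi\tilde\bcal^a$ together with the minimizing fields appearing in the convex-envelope formula for $\ce$, and on $\tilde\psi^b$ a Vitali-covering quasiconvex construction plus affine-in-$x_3$ perturbations of slope $h_\epsi^{-1}\tilde\bcal^b$ realizing $\qce$. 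Boundary layers near $\Gamma^a,\Gamma^b$ and near the interface $r_\epsi\omega^a\times\{0\}$ then restore the data $\ffi^a_{0,\epsi},\ffi^b_{0,\epsi}$ and \eqref{junction}, with negligible cost thanks to \eqref{bca}--\eqref{bcb}. Comparing $E_\epsi(\psi^a_\epsi,\psi^b_\epsi)\leq E_\epsi(\tilde\psi^a_\epsi,\tilde\psi^b_\epsi)+o(1)$ with the bound of Step 2 forces $(\psi^a,\psi^b)$ to solve \eqref{Pell+}. The hardest part will be the recovery construction near the junction: the boundary layer in the plate around the shrinking disk $r_\epsi\omega^a$ must simultaneously satisfy \eqref{junction}, preserve the prescribed average $\bar\bcal^b$, realize the $\qce$-recovery in the bulk of $\omega^b$, and produce no parasitic contribution from $G^b$ or $\hat G^b$. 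Because the novel junction term $\bar a\,\hat G^b(0_\alpha)\cdot\psi^a(0_3)$ is $p$-independent (in contrast with the $p>2$ threshold of \cite{GGLM02}), the Lebesgue-point argument must exploit only $L^q$-regularity on $\hat G^b$ and the trace of $\psi^b$, which demands a particularly fine choice of this boundary layer.
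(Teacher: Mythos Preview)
Your overall strategy---compactness, liminf, recovery, comparison---matches the paper, and most of Steps~1--2 are correct and indeed coincide with the paper's arguments (Jensen for the rod, the Bouchitt\'e--Fonseca--Mascarenhas lower bound for the plate, the divergence-theorem rewriting of the $G^a$ term). There is, however, a concrete gap in your treatment of the junction term, and your Step~3 is organized differently from the paper in a way that hides the actual difficulty.

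\textbf{The junction term.} Your argument for item~(iii) relies on the convergence $\psi^{b,-}_\epsi(r_\epsi\cdot)\to\psi^b(0_\alpha)$ in $L^p(\omega^a)$. This is a blow-up statement about the trace of a \emph{moving} $W^{1,p}(\Omega^b)$ function at a shrinking point, and it is not justified by the available weak compactness---even for $p>2$ you only have $\psi^b_\epsi\to\psi^b$ in $L^p$, which says nothing about pointwise values of the trace near $0_\alpha$; for $p\le 2$ the value $\psi^b(0_\alpha)$ is not even defined, yet the term $\bar a\,\hat G^b(0_\alpha)\cdot\psi^a(0_3)$ must still appear in $E_{\ell_+}$ (it is $p$-independent, and ``handled by density'' does not produce it in a liminf argument). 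The paper avoids this entirely by the identity
\[
\psi^{b,-}_\epsi(r_\epsi x_\alpha)=\psi^{b,+}_\epsi(r_\epsi x_\alpha)-h_\epsi\bar\bcal^b_\epsi(r_\epsi x_\alpha)=\psi^a_\epsi(x_\alpha,0)-h_\epsi\bar\bcal^b_\epsi(r_\epsi x_\alpha),
\]
which uses \eqref{junction} to replace the ill-behaved plate trace by the rod trace $\psi^a_\epsi(\cdot,0)$ on the \emph{fixed} domain $\omega^a$. Then $\hat G^b(r_\epsi\cdot)\to\hat G^b(0_\alpha)$ in $L^q(\omega^a)$ (Vitali--Lebesgue from the Lebesgue-point hypothesis) pairs with the $L^p(\omega^a)$-trace convergence $\psi^a_\epsi(\cdot,0)\to\psi^a(0_3)$, and the residual $\bar\bcal^b_\epsi$-piece vanishes because $\hat G^b\chi_{r_\epsi\omega^a}\to 0$ in $L^q$. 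This decomposition is the key trick you are missing; without it the $p\le 2$ case does not go through.

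\textbf{The recovery step.} The paper does not build a single recovery sequence directly in $\Phi_\epsi$ as you sketch. Instead it separates the argument into three pieces: (a) a $\Gamma$-convergence result for the elastic part $F_\epsi$ alone on the larger space $\mathcal{A}_\epsi$ (no boundary data on $\Gamma^a\cup\Gamma^b$), where the upper bound is first obtained with density $W$ for smooth targets via an explicit linear-in-$x_\alpha$/linear-in-$x_3$ ansatz plus a short interpolation layer in $x_3\in(0,r_{\epsi_n})$ enforcing \eqref{junction}; (b) a separate relaxation lemma replacing $W$ by $\ce$ and $\qce$; and (c) a slicing lemma (De~Giorgi type) that restores the boundary data $\ffi^a_{0,\epsi_n},\ffi^b_{0,\epsi_n}$ a posteriori, exploiting the equi-integrability in \eqref{bca}--\eqref{bcb}. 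Your ``Vitali-covering quasiconvex construction plus boundary layers near $\Gamma^a,\Gamma^b$ and the interface'' tries to do all of this simultaneously; it may be workable, but the interface layer you flag as ``the hardest part'' is in fact rendered routine in the paper by building it \emph{before} relaxation (so the target is smooth and the junction condition $\psi^a(0_3)=\psi^b(0_\alpha)$ is available), rather than after.
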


\begin{remark}[on Theorem~\ref{thm:ellr+}]
\label{rmk:commentl+}
The problem treated in Theorem~\ref{thm:ellr+} is in the spirit of that in \cite{GGLM02}. Precisely, in  \cite[Theorem~1.1 with \(N=3\)]{GGLM02}, the authors study the asymptotic behavior as \(\epsi\to0^+\) of \vspace{-.7mm}
\begin{equation*}
\begin{aligned}
\min\bigg\{&\int_{\Omega^a}
\Big(A(x,\psi^a,{r_\varepsilon^{-1}}\nabla_\alpha
\psi^a,\nabla_3\psi^a) + f_\epsi\psi^a\Big)\,\dx + \frac{h_\varepsilon}{r_\varepsilon^2}\int_{\Omega^b}
\Big(A(x,\psi^b,
\nabla_\alpha \psi^b,{h_\varepsilon^{-1}}\nabla_3\psi^b) + f_\epsi\psi^b\Big)\, \dx\!:\\
&\qquad (\psi^a,\psi^b)\in W^{1,p}(\Omega^a) \times W^{1,p}(\Omega^b), \, \psi^a(x_\alpha,0) =
\psi^b(r_\varepsilon x_\alpha,0) \enspace
\hbox{for \aev\
}
x_\alpha\in\omega^a\bigg\},
\end{aligned}
\end{equation*}
where \(A:\Omega\times\RR\times \RR^2 \times\RR \to \RR\) is a Caratheodory function satisfying the usual \(p\)-growth conditions and is such that 
\(A(x,\cdot,\cdot,\cdot)\) is convex for \aev\ \(x\in\Omega\); in 
\cite{GGLM02}, \(\omega^a \equiv \omega^b=:\omega\), \(L=1\), and 
\(\Omega= \omega\times (-1,1)\).
Moreover, \(f^\epsi \weakly f\) in \(L^q(\Omega)\), for some \(f\in L^q(\Omega)\), and 
\(\lim_{\epsi\to0^+} \frac{h_\varepsilon}{r_\varepsilon^2}= \ell \in \RR^+ \).

Here, we do  not assume any   convexity or continuity hypotheses; our stored energy function, \(W\), is only assumed to be a Borel function. Thus, we cannot avoid the relaxation step in our analysis. We also observe that the study in \cite{GGLM02} takes into account the behavior of \((\psi^a_\epsi)_\epsi\) and \((\psi^b_\epsi)_\epsi\) in \(W^{1,p}\) and of \((\bcal^a_\epsi)_\epsi \equiv
({r_\varepsilon^{-1}}\nabla_\alpha
\psi^a)_\epsi\) and \((\bcal^b_\epsi)_\epsi \equiv({h_\varepsilon^{-1}}\nabla_3\psi^b)_\epsi\) in \(L^p\). Here, given the type of forces that we consider, besides the behavior of \((\psi^a_\epsi)_\epsi\) and \((\psi^b_\epsi)_\epsi\) in
\(W^{1,p}\),  the relevant behavior
is that of  the averages \((\bar\bcal^a_\epsi)_\epsi \equiv({r_\varepsilon^{-1}} \int_{\omega^a} \nabla_\alpha
\psi^a\,\dx_\alpha)_\epsi\) and 
\((\bar\bcal^b_\epsi)_\epsi \equiv({h_\varepsilon^{-1}}\int_{-1}^0\nabla_3\psi^b\,\dx_3)_\epsi\) in \(L^p\). It was conjectured in \cite{BFM09}, for the membrane case, that  if one considers the behavior of \((\bcal^b_\epsi)_\epsi\) (in place of \((\bar\bcal^b_\epsi)_\epsi\)) without some kind of convexity hypothesis on the stored energy function, one is led to a nonlocal limit problem. We mention further that in \cite{GaZa07}, the authors characterize the asymptotic behavior of the functional considered in  \cite{GGLM02} assuming continuity but no convexity hypotheses on the stored energy function; however, in  \cite{GaZa07},  the behavior of  \((\bcal^a_\epsi)_\epsi\),
  \((\bcal^b_\epsi)_\epsi\), \((\bar\bcal^a_\epsi)_\epsi\),
and  \((\bar\bcal^b_\epsi)_\epsi\)
  is neglected (as in \cite{LDR95} for the membrane case). 

Similarly to \cite{GGLM02}, the limit model \eqref{Pell+} is coupled only if \(p>2\). However, given the non-standard scaling of the surface forces, which are absent in \cite{GGLM02}, our model includes a pseudo-coupling term, \(\bar a\, \hat G^b(0_\alpha)\cdot
\psi^a(0_3)\), which is independent of \(p\). This novel term represents an asymptotic balance between the applied surface forces on the bottom part of the multi-structure and the interaction at its junction by means of the trace
of the deformation on the top  part.
\end{remark}

In contrast with previous works in the nonlinear setting for multi-structures, in particular \cite{GaZa07,GGLM02}, we also characterize the limit problem for different asymptotic behaviors of the ratio \({h_\varepsilon}/{r_\varepsilon^2}\); precisely, for \(\ell=0\) and \(\ell=\infty\), where \(\ell \) is given by \eqref{ell} under additional hypotheses that we detail next. We obtain results that resemble those derived in \cite{GMMMS07} for the linear case.

In order to treat the \(\ell=\infty\) and \(\ell=0\) cases, we need to impose
a stronger coercivity hypothesis on \(W\) than that in \eqref{pgrowth}; precisely,
we assume that there is a positive constant, \(C\),  such that
for all \(\xi\in \R^{3\times 3}\), we have \vspace{-2mm}
\begin{equation}\label{coercrigid2}
\begin{aligned}
W(\xi) \geq \frac1C \dist^p(\xi,SO(3)\cup SO(3)A),
\end{aligned}
\end{equation}
where \(\II\)  is the identity matrix in \(\RR^{3\times 3}\),  $A$ is any other matrix in $ \RR^{3\times 3}$ such that \(A\) and \(\II\) are {\it strongly incompatible} (see \cite{Mat92, CM}), and   \(SO(3)=\{M\in \RR^{3\times 3}\!: MM^T=\II, \,  \det M=1\}\)
is the space of proper rotations in \(\RR^3\).
Note that  \eqref{coercrigid2} implies
that \(W(\xi) \geq \frac{1}{C'}|\xi|^p
- C'\) for some \(C'>0\) independent of \(\xi\); thus, if \eqref{coercrigid2}
holds,
then the lower bound in \eqref{pgrowth}
also holds. 
We observe further that  \eqref{coercrigid2} is a natural assumption for two-phase materials (see, for instance, \cite{MoMu07, Sve93}). For such materials, the set \(SO(3)\cup SO(3)A\) corresponds to the set of deformations that carry zero elastic energy; moreover,  each copy of \(SO(3)\), \(SO(3)\) and \(SO(3) A\), is called an energy well.
Note, however, that we are not imposing here that \(W\) vanishes on \(SO(3)\cup SO(3)A\). To treat  the \(\ell=\infty\) and \(\ell=0\) cases, we    assume only
that \vspace{-1mm}
\begin{align}
&W(\II)=0,\label{WI=0}
\end{align}
which means that the
reference configuration is a natural state.

 \begin{theorem}[\(\ell=\infty\)]\label{thm:elli}
 Let \(W:\R^{3\times 3} \to \R\) be a Borel function
satisfying
\eqref{pgrowth}, \eqref{coercrigid2}, and \eqref{WI=0}.
  Let \((\psi^a_\epsi,\psi^b_\epsi)_{\epsi>0}\) be a diagonal
infimizing sequence of the sequence of problems \eqref{Pepsi},
where   \((\ffi^a_{0,\epsi},\ffi^b_{0,\epsi})_{\epsi>0}\)
with \(\ffi^b_{0,\epsi}
\equiv (x_\alpha, h_\varepsilon x_3)\) satisfies
\eqref{bca}--\eqref{bcb} and \eqref{junction}
and where \eqref{forcesli} holds. Assume that \(p>2\), 
 \(\lim_{\varepsilon \to 0}{h_\varepsilon}^{p+1}/{r_\varepsilon^2}
= \infty\), and  \((G^b(r_\epsi \cdot))_{\epsi>0}\) is bounded in \(L^q(\omega^a;\RR^3)\).
 Let \(\psi^b\equiv (x_\alpha,0)\)
and \(\bcal^b \equiv (0_\alpha,1)\).
Then, \((\psi^b_\epsi, h_\epsi^{-1}\nabla_3
\psi^b_\epsi) \to (\psi^b, \bcal^b)\) in \(W^{1,p}(\Omega^b;\R^3)\times
L^p(\Omega^b;\R^{3})\). Moreover,   the sequence 
\((\bar \bcal^a_\epsi,\psi^a_\epsi)_{\epsi>0}\), where \(\bar\bcal^a_\epsi:={r_{\varepsilon}^{-1}}
\int_{\omega^a}\nabla_\alpha
\psi^a_\epsi\, \dx_\alpha\), is sequentially, weakly compact in
\(  L^p((0,L);\R^{3 \times 2}) \times W^{1,p}(\Omega^a;\R^3) \). If \((\bar\bcal^a, \psi^a)\) is a corresponding accumulation point, then \(\psi^a\in
\Phi_{\ell_\infty }^p\) and \((\bar\bcal^a,
\psi^a)\)  solves the 
minimization problem \vspace{-.5mm}
\begin{equation}\label{Pelli} 
\min
\big\{ E_{\ell_\infty }(\bar\bcal^a,\psi^a)\!: \, (
\bar\bcal^a, \psi^a  )\in
L^p((0,L);\R^{3 \times
2})\times \Phi_{\ell_\infty }^p \big\},
\tag{$
{\mathcal{P}}_{\ell_\infty}$}
\end{equation}
where, for \(\bar a:=|\omega^a|\), 
\(\ce\) the convex envelope of \(W\), and \(\bar f^a\), \(\bar g^a\), and \(\bar f^b\)  given by \eqref{barforces},
\vspace{-1mm}%
\begin{equation}
\label{Eelli}
\begin{aligned}
E_{\ell_\infty }(\bar\bcal^a,\psi^a):= &\, \bar
a\int_0^L \ce({\bar a^{-1}}\bar\bcal^a|\nabla_3\psi^a)\,\dx_3 - \int_0^L
\big(
\bar f^a\cdot\psi^a+ \bar g^a\cdot\psi^a + 
\Bcal^a:(\bar\bcal^a|0)\big)\,\dx_3 \\
&\quad - \int_{\omega^b} \big((\bar f^b_\alpha + 
g^{b,+}_\alpha -g^{b,-}_\alpha)\cdot x_\alpha  + G^b_3\big)
\,\dx_\alpha.
\end{aligned}
\end{equation}

\end{theorem}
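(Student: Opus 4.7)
I would test \eqref{Pepsi} with the admissible pair $(\ffi^a_{0,\epsi},\ffi^b_{0,\epsi})$: since $\ffi^b_{0,\epsi}(x)=(x_\alpha,h_\epsi x_3)$ has rescaled gradient exactly $\II$, hypothesis \eqref{WI=0} gives $F^b_\epsi(\ffi^b_{0,\epsi})=0$, while \eqref{pgrowth} and \eqref{bca}--\eqref{bcb} control the remaining terms; hence $E_\epsi(\psi^a_\epsi,\psi^b_\epsi)\le C$. Together with \eqref{coercrigid2} and $h_\epsi/r_\epsi^2\to\infty$, this yields
\[
\int_{\Omega^b}\dist^p\bigl((\nabla_\alpha\psi^b_\epsi\,|\,h_\epsi^{-1}\nabla_3\psi^b_\epsi),\,SO(3)\cup SO(3)A\bigr)\,\dx\le C\tfrac{r_\epsi^2}{h_\epsi}\longrightarrow 0.
\]
Since $SO(3)$ and $SO(3)A$ are strongly incompatible and the clamped condition $\psi^b_\epsi=(x_\alpha,h_\epsi x_3)$ on $\Gamma^b$ pins the rescaled gradient near $\II\in SO(3)$ on the lateral boundary, the quantitative Friesecke--James--Müller rigidity adapted to strongly incompatible two-well problems provides $R_\epsi\in SO(3)$ such that $\|(\nabla_\alpha\psi^b_\epsi\,|\,h_\epsi^{-1}\nabla_3\psi^b_\epsi)-R_\epsi\|_{L^p}^p\le C r_\epsi^2/h_\epsi$, and the boundary trace identifies $R_\epsi\to\II$. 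Poincaré's inequality then promotes this to the strong convergences $\psi^b_\epsi\to(x_\alpha,0)$ in $W^{1,p}(\Omega^b;\RR^3)$ and $h_\epsi^{-1}\nabla_3\psi^b_\epsi\to(0_\alpha,1)$ in $L^p(\Omega^b;\RR^3)$, the scaling $h_\epsi^{p+1}/r_\epsi^2\to\infty$ being used precisely to ensure that the boundary-layer correction between $\psi^b_\epsi$ and its affine approximation $R_\epsi\cdot(x_\alpha,h_\epsi x_3)$ is of lower order than the bulk error.

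\textbf{Step 2 (tube compactness and junction).} From $F^a_\epsi(\psi^a_\epsi)\le C$ and \eqref{pgrowth}, the scaled gradient $(r_\epsi^{-1}\nabla_\alpha\psi^a_\epsi\,|\,\nabla_3\psi^a_\epsi)$ is bounded in $L^p$, so $\psi^a_\epsi$ is bounded in $W^{1,p}(\Omega^a;\RR^3)$, $\bar\bcal^a_\epsi$ in $L^p((0,L);\RR^{3\times2})$, and $\nabla_\alpha\psi^a_\epsi=r_\epsi(r_\epsi^{-1}\nabla_\alpha\psi^a_\epsi)\to 0$ in $L^p$. Extracting subsequences I get $\psi^a_\epsi\weakly\psi^a$ in $W^{1,p}$, with $\psi^a$ independent of $x_\alpha$, and $\bar\bcal^a_\epsi\weakly\bar\bcal^a$ in $L^p$. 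Since $p>2$, the Sobolev embedding $W^{1,p}(\omega^b)\hookrightarrow C(\overline{\omega^b})$ applied to the trace $\psi^b_\epsi(\cdot,0)$ together with Step 1 gives $\psi^b_\epsi(r_\epsi x_\alpha,0)\to\psi^b(0_\alpha,0)=0$; passing to the limit in \eqref{junction} yields $\psi^a(0_3)=0$, so $\psi^a\in\Phi_{\ell_\infty}^p$.

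\textbf{Step 3 (liminf inequality).} For the tube stored energy, the classical averaging-and-relaxation argument for strings (in the spirit of Acerbi--Buttazzo--Percivale and Le Dret--Raoult) applied to $F^a_\epsi$ and the weak limit $\bar\bcal^a$ of the cross-sectional averages gives
\[
\liminf_{\epsi\to 0}F^a_\epsi(\psi^a_\epsi)\ge \bar a\int_0^L\ce\bigl(\bar a^{-1}\bar\bcal^a\,|\,\nabla_3\psi^a\bigr)\,\dx_3.
\]
For the tube loads, Rellich provides $\psi^a_\epsi\to\psi^a$ strongly in $L^p$; combined with $\bar\bcal^a_\epsi\weakly\bar\bcal^a$, the ansatz $G^a=\Bcal^a(x_3)\nu$, and an integration by parts on $S^a$, one extracts the bending--torsion term $\int_0^L\Bcal^a\!:\!(\bar\bcal^a|0)\,\dx_3$. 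For the plate loads, substituting \eqref{forcesli}, exploiting $\psi^{b,+}_\epsi-\psi^{b,-}_\epsi=h_\epsi\bar\bcal^b_\epsi$ and the strong limits from Step 1, using the assumption that $(G^b(r_\epsi\cdot))_\epsi$ is bounded in $L^q(\omega^a;\RR^3)$ to dispose of the integral over the vanishing set $r_\epsi\omega^a$, and noting that the $\hat g^{b,-}$ and $\hat G^b$ contributions carry extra factors of $r_\epsi^2$ or $r_\epsi^2/h_\epsi$, one obtains
\[
\tfrac{h_\epsi}{r_\epsi^2}L^b_\epsi(\psi^b_\epsi)\longrightarrow\int_{\omega^b}\bigl((\bar f^b_\alpha+g^{b,+}_\alpha-g^{b,-}_\alpha)\cdot x_\alpha+G^b_3\bigr)\,\dx_\alpha.
\]
Since $\tfrac{h_\epsi}{r_\epsi^2}F^b_\epsi(\psi^b_\epsi)\ge 0$, these combine to $\liminf E_\epsi(\psi^a_\epsi,\psi^b_\epsi)\ge E_{\ell_\infty}(\bar\bcal^a,\psi^a)$.

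\textbf{Step 4 (recovery and main obstacle).} For the converse, given any target $(\tilde\bcal,\tilde\psi)\in L^p((0,L);\RR^{3\times2})\times\Phi_{\ell_\infty}^p$, I would take $\psi^b_\epsi:=(x_\alpha,h_\epsi x_3)$ (so $F^b_\epsi=0$ and the plate-load limit matches the target by the same computation as in Step~3) and construct $\psi^a_\epsi$ by the standard dimension-reduction recipe for strings: a convex-combination approximation realizing $\ce$, a cut-off near $\Gamma^a$ matching $\ffi^a_{0,\epsi}$ via \eqref{bca}, and a thin boundary layer at $x_3=0$ interpolating between the bulk construction and the junction value $(r_\epsi x_\alpha,0)$, which is admissible because $\tilde\psi(0_3)=0$. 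The equality of the upper and lower bounds then identifies $(\bar\bcal^a,\psi^a)$ as a solution of \eqref{Pelli}. The main obstacle is Step~1, where one must marry the quantitative nonlinear geometric rigidity with the strongly-incompatible two-well selection in the anisotropically rescaled thin-plate geometry and propagate the single-well/unique-rotation identification from the lateral trace on $\Gamma^b$ to all of $\Omega^b$ with quantitatively sharp error; the strengthened scaling $h_\epsi^{p+1}/r_\epsi^2\to\infty$ is precisely what is required to control this propagation and to upgrade the gradient closeness to strong $W^{1,p}$ convergence of the deformation.
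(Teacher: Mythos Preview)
Your outline follows the paper's strategy closely---test with the clamped boundary data, use two-well rigidity on the plate to force strong convergence to the identity, derive the string liminf by Jensen/convexity while discarding the nonnegative plate energy, and recover with $\psi^b_n=(x_\alpha,h_{\epsi_n}x_3)$ together with a string recovery for $\psi^a_n$ that is pinned to $(r_{\epsi_n}x_\alpha,0)$ at $x_3=0$. Two quantitative points in your write-up, however, are not right and obscure exactly where the hypotheses are used.

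First, the rigidity estimate. The Friesecke--James--M\"uller bound for \emph{thin} domains (Proposition~\ref{prop:thm6FJM} in the paper) is not uniform in the thickness: for the plate scaling one has
\[
\bigl\Vert (\nabla_\alpha\psi^b_\epsi\,|\,h_\epsi^{-1}\nabla_3\psi^b_\epsi)-M^b_\epsi\bigr\Vert_{L^p}^p
\;\le\;\frac{C}{h_\epsi^{\,p}}\,\bigl\Vert\dist\bigl((\nabla_\alpha\psi^b_\epsi\,|\,h_\epsi^{-1}\nabla_3\psi^b_\epsi),\mathcal K\bigr)\bigr\Vert_{L^p}^p
\;\le\;C\,\frac{r_\epsi^{2}}{h_\epsi^{\,p+1}},
\]
with $M^b_\epsi\in\mathcal K=SO(3)\cup SO(3)A$ (not yet in $SO(3)$). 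This---and nothing about boundary layers---is the sole place where the strengthened scaling $h_\epsi^{\,p+1}/r_\epsi^{2}\to\infty$ enters: it is precisely what makes the right-hand side tend to zero. The identification $M^b=\II$ then follows \emph{after} passing to the limit, from $\psi^b=(x_\alpha,0)$ on $\Gamma^b$ (which forces $M^b_\alpha=\II_\alpha$) together with the strong incompatibility of $\II$ and $A$. Your bound $\Vert\cdots\Vert^p\le C r_\epsi^2/h_\epsi$ would already vanish under $\ell=\infty$ alone and would render the extra hypothesis superfluous.

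Second, the junction. Your appeal to $W^{1,p}(\omega^b)\hookrightarrow C(\overline{\omega^b})$ for the trace $\psi^b_\epsi(\cdot,0)$ is not justified: the trace of a $W^{1,p}(\Omega^b)$ function on $\omega^b\times\{0\}$ lies only in $W^{1-1/p,p}(\omega^b)$, which embeds into $C$ only when $p>3$. The paper instead invokes Lemma~\ref{lem:l21GGLM}, case~ii), which for $\ell=\infty$ uses an auxiliary bounded $L^\infty$ field $d_\epsi$ (the third column of the nearest-point projection onto $\mathcal K$) satisfying $\Vert h_\epsi^{-1}\nabla_3\psi^b_\epsi-d_\epsi\Vert_{L^p}^p\le C r_\epsi^2/h_\epsi$; this controls $\int_{\omega^a}\bigl(\psi^b_\epsi(r_\epsi x_\alpha,0)-\psi^b_\epsi(r_\epsi x_\alpha,\bar x_3)\bigr)\dx_\alpha$ and yields $\psi^a(0_3)=\psi^b(0_\alpha)=0$ for all $p>2$. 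Your recovery description is otherwise on target: the paper makes it precise by replacing $\ffi^a_{0,\epsi}$ with $\tilde\ffi^a_{0,\epsi}$ that equals $(r_\epsi x_\alpha,x_3)$ near $x_3=0$, then invoking the string $\Gamma$-limit (Theorem~\ref{thm:rod1}) with Dirichlet data at both ends to produce $\psi^a_n$ satisfying $\psi^a_n(x_\alpha,0)=(r_{\epsi_n}x_\alpha,0)=\psi^b_n(r_{\epsi_n}x_\alpha,0)$.
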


\begin{remark}[on Theorem~\ref{thm:elli}]
\label{rmk:commentli}
(i) The restriction \(p>2\) in Theorem~\ref{thm:elli} is of a technical nature due to the
fact that  the limit condition \(\psi^a(0_3) = \psi^b(0_\alpha)
=0\)  may fail if \(p\leq2\). In this case, it seems
a very hard task to construct a recovery
sequence,
within our \(\Gamma\)-convergence analysis,
 that simultaneously satisfies
\eqref{junction} and cancels the
exploding coefficient in front of the
elastic energy in \(\Omega^b\). 
(ii) A key ingredient in our analysis of the  \(\ell=\infty\) and \(\ell=0\) cases is {(a \(p\)-version of) }the quantitative rigidity  estimate for scaled gradients
proved in \cite[Theorem~6]{FJM06} (also see Proposition~\ref{prop:thm6FJM}); this estimate together with the condition 
 \(\lim_{\varepsilon \to 0}{h_\varepsilon}^{p+1}/{r_\varepsilon^2} 
= \infty\) allows
us to \textit{properly} characterize the accumulation points in Theorem~\ref{thm:elli}.
(iii) In the same spirit as in  \cite{GMMMS07},  Theorem~\ref{thm:elli} shows that if \(r_\epsi^2 \ll h_\epsi^{p+1}\) with \(p>2\),  the limit behavior of the thin multi-structure is that of a rigid {plate} and a bent elastic string that is clamped at its lower extremity and satisfies a deformation condition at its upper extremity.
\end{remark}

Finally, we state the main theorem for the \(\ell=0\) case. The asymptotic behavior
of \eqref{Pepsi} when \(\ell=0\) is
encoded  in the  scaled problem
\(\frac{r_\epsi^2}{h_\epsi}\eqref{Pepsi}\);
this means that we will  consider an infimizing sequence
in \(\Phi_\epsi\) to the scaled energy
\(\frac{r_\epsi^2}{h_\epsi} \big(E^a_\varepsilon(\psi^a) + E^b_\varepsilon(\psi^b)\big)\). In
the linear case, this corresponds to
looking at the asymptotic behavior of
a scaled minimizing sequence (see, for
instance, \cite[Theorem~1-(iii)
and Corollary~1-(iii)]{GMMMS07}).

\begin{theorem}[\(\ell=0\)]\label{thm:ellz}
Let \(W:\R^{3\times 3} \to \R\) be
a Borel function
satisfying
\eqref{pgrowth}, \eqref{coercrigid2}, and \eqref{WI=0}. 
  Let \((\psi^a_\epsi,\psi^b_\epsi)_{\epsi>0}\)
be a diagonal
infimizing sequence of the sequence
of problems \(\frac{r_\epsi^2}{h_\epsi}\eqref{Pepsi}\),
where   \((\ffi^a_{0,\epsi},\ffi^b_{0,\epsi})_{\epsi>0}\)
with \(\ffi^a_{0,\epsi}
\equiv (r_\varepsilon x_\alpha, x_3)\)
satisfies
\eqref{bca}--\eqref{bcb} and \eqref{junction}
and where \eqref{forcesl0} holds. Assume
that \(p\leq2\), 
 \(\lim_{\varepsilon \to 0}{h_\varepsilon}/{r_\varepsilon^{p+2}}
= 0\), and  \((\frac{r_\epsi^2}{h_\epsi}G^b(r_\epsi \cdot))_{\epsi>0}\)
is bounded in \(L^q(\omega^a;\RR^3)\).
 Let \(\psi^a\equiv (0_\alpha,x_3)\)
and \(\bcal^a \equiv \II_\alpha\).
Then, \((r_\epsi^{-1}\grad_\alpha \psi^a_\epsi, 
\psi^a_\epsi) \to ( \bcal^a,\psi^a)\)
in \(
L^p(\Omega^a;\R^{3\times 2})\times W^{1,p}(\Omega^a;\R^3)\). Moreover,  
the sequence 
\((\psi^b_\epsi, \bar \bcal^b_\epsi)_{\epsi>0}\),
where \(\bar\bcal^b_\epsi:={h_{\varepsilon}^{-1}}
\int_{-1}^0\nabla_3
\psi^b_\epsi\, \dx_3\), is sequentially,
weakly compact in
\(   
W^{1,p}(\Omega^b;\R^3)\times 
L^p(\omega^b;\R^{3 }) \). If \((
\psi^b, \bar\bcal^b)\) is a corresponding accumulation
point, then \(\psi^b\in
\Phi_{\ell_0}^p\) and \((
\psi^b, \bar\bcal^b)\)  solves the 
minimization problem \vspace{-.5mm}
\begin{equation}\label{Pellz} 
\min
\big\{ E_{\ell_0 }(\psi^b,\bar \bcal^b)\!: \, (
\psi^b, \bar\bcal^b)\in  \Phi_{\ell_0}^p \times L^p(\omega^b;\R^{3 }) \big\},
\tag{$
{\mathcal{P}}_{\ell_0}$}
\end{equation}
where, being
 \(\qce\) the cross-quasiconvex-convex
envelope of \(W\) and  \(\bar f^a\), \(\bar g^a\), and
\(\bar f^b\)  given by \eqref{barforces},
\vspace{-1.mm}%
\begin{equation}
\label{Eell0torefer}
\begin{aligned}
E_{\ell_0}(
\psi^b, \bar\bcal^b):= &\int_{\omega^b}
 \qce(\nabla_\alpha \psi^b|\bar\bcal^b)\,\dx_\alpha
- \int_0^L
\big((
\bar f^a_3 + \bar g^a_3) x_3
+ 
\bar a(\Bcal^a_{11} + \Bcal^a_{22})\big)\,\dx_3 \\
&\quad - \int_{\omega^b} \big(\bar f^b\cdot\psi^b +
(g^{b,+}-g^{b,-})\cdot\psi^{b}+ G^b\cdot\bar\bcal^b\big)
\,\dx_\alpha.
\end{aligned}
\end{equation}
\end{theorem}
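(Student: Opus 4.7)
The plan is to follow the strategy of Theorem~\ref{thm:elli} with the roles of the top tube and the bottom plate interchanged. Scaling the problem by $r_\epsi^2/h_\epsi$ makes the elastic-energy coefficient on $\Omega^a$ diverge, forcing $\psi^a_\epsi$ towards a rigid limit, while the coefficient on $\Omega^b$ remains of order one and produces a membrane-type limit through the cross-quasiconvex-convex envelope $\qce$, exactly as in the case $\ell\in\R^+$.

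For the top bar, the infimizing property together with the $p$-growth bounds on the loads give $F^a_\epsi(\psi^a_\epsi)\leq C\,h_\epsi/r_\epsi^2 \to 0$. By \eqref{coercrigid2} the scaled gradient $\xi^a_\epsi:=(r_\epsi^{-1}\nabla_\alpha\psi^a_\epsi\,|\,\nabla_3\psi^a_\epsi)$ satisfies $\int_{\Omega^a}\dist^p(\xi^a_\epsi,SO(3)\cup SO(3)A)\,\dx \to 0$. Applying the $p$-version of the quantitative rigidity estimate (Proposition~\ref{prop:thm6FJM}) on the rod $\Omega^a$ at scale $r_\epsi$, together with the strong incompatibility of $\II$ and $A$ (which prevents mixing of wells) and the clamping $\psi^a_\epsi=(r_\epsi x_\alpha,x_3)$ on $\Gamma^a$ (which selects the $SO(3)$ well over $SO(3)A$), we obtain constant rotations $R^a_\epsi\in SO(3)$ with $R^a_\epsi\to\II$; the hypothesis $h_\epsi/r_\epsi^{p+2}\to 0$ is precisely what absorbs the error terms of the rigidity bound and yields $\xi^a_\epsi\to\II$ in $L^p(\Omega^a;\R^{3\times 3})$, after which a Poincar\'e inequality combined with the clamping gives the strong convergence $\psi^a_\epsi\to(0_\alpha,x_3)$ in $W^{1,p}(\Omega^a;\R^3)$. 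For the bottom plate, the uniform bound on $F^b_\epsi(\psi^b_\epsi)$ and $p$-coercivity yield weak compactness of $(\psi^b_\epsi)_\epsi$ in $W^{1,p}(\Omega^b;\R^3)$ and of $(\bar\bcal^b_\epsi)_\epsi$ in $L^p(\omega^b;\R^3)$; since $\|\nabla_3\psi^b_\epsi\|_{L^p}\leq C\,h_\epsi\to 0$, any accumulation point $\psi^b$ is independent of $x_3$, and \eqref{bcb} transfers the boundary condition $\psi^b=\ffi^b_0$ on $\Gamma^b$. Because $p\leq 2$, \eqref{junction} imposes no pointwise trace condition on $\psi^b$ at $0_\alpha$, which is consistent with the definition of $\Phi^p_{\ell_0}$.

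The lower bound is then obtained by treating each sub-domain separately. On $\Omega^a$, writing $\tfrac{r_\epsi^2}{h_\epsi}L^a_\epsi(\psi^a_\epsi) = \int_{\Omega^a}f^a\cdot\psi^a_\epsi\,\dx + \int_{S^a}g^a\cdot\psi^a_\epsi\,\d\HH^2 + r_\epsi^{-1}\int_{S^a}G^a\cdot\psi^a_\epsi\,\d\HH^2$, the cancellation $\int_{\partial\omega^a}\nu\,\d\HH^1=0$ combined with the divergence theorem applied cross-sectionally converts the $r_\epsi^{-1}$-term into a limit expressed through $\xi^a_\epsi\to\II$, producing precisely the constant $-\int_0^L\big[(\bar f^a_3+\bar g^a_3)x_3+\bar a(\Bcal^a_{11}+\Bcal^a_{22})\big]\dx_3$. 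On $\Omega^b$, the elastic inequality $\liminf F^b_\epsi(\psi^b_\epsi)\geq \int_{\omega^b}\qce(\nabla_\alpha\psi^b\,|\,\bar\bcal^b)\,\dx_\alpha$ follows exactly as in Theorem~\ref{thm:ellr+}, and the load terms pass to the limit by weak convergence, with the boundedness of $(\tfrac{r_\epsi^2}{h_\epsi}G^b(r_\epsi\,\cdot))_\epsi$ controlling the $G^b$-integral over the shrinking annulus near $r_\epsi\omega^a$; the contributions from $\hat g^{b,-}$ and $\hat G^b$ vanish in the limit since they are supported on the set $r_\epsi\omega^a$ of vanishing area and $p\leq 2$ prevents pointwise concentration of $\psi^{b,-}_\epsi$ at $0_\alpha$.

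For the matching upper bound I would take $\psi^a_\epsi(x):=(r_\epsi x_\alpha,x_3)$, whose elastic energy vanishes by \eqref{WI=0}, and let $\psi^b_\epsi$ be a standard relaxation recovery sequence realizing $\qce$, modified in an annular neighborhood of $r_\epsi\omega^a$ so as to enforce \eqref{junction}. Because $p\leq 2$ equals or lies below the plate dimension, any point of $\omega^b$ has vanishing $p$-capacity; a logarithmic (capacitary) cutoff of $\psi^b_\epsi$ near $0_\alpha$ therefore produces the required junction match at vanishing elastic and load cost, which is also the underlying reason that no pointwise constraint at $0_\alpha$ appears in $\Phi^p_{\ell_0}$. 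Combining the two inequalities identifies the accumulation points $(\psi^b,\bar\bcal^b)$ as minimizers of \eqref{Pellz}. The main technical obstacle is the rigidification step: extracting strong $L^p$-convergence of $\xi^a_\epsi$ to a single constant rotation on a slender rod from the smallness of $F^a_\epsi(\psi^a_\epsi)$ requires both a careful application of the $p$-rigidity estimate and a well-selection argument excluding $SO(3)A$, and the quantitative rate encoded in $h_\epsi/r_\epsi^{p+2}\to 0$ is precisely the sharp hypothesis needed to close the bound.
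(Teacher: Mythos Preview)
Your strategy matches the paper's: rigidity plus $h_\epsi/r_\epsi^{p+2}\to 0$ forces $\xi^a_\epsi\to\II$, the $\qce$ lower bound handles $\Omega^b$, and the $p$-capacitary cutoff (available because $p\leq 2$) gives the junction-compatible recovery sequence at no asymptotic cost. The one place where the paper is more explicit than your sketch is the well-selection step: rather than invoking the trace on $\Gamma^a$ directly, the paper extends $\psi^a_\epsi$ by $\ffi^a_{0,\epsi}=(r_\epsi x_\alpha,x_3)$ onto $\omega^a\times[L,2L)$, so that the scaled gradient equals $\II$ on a set of positive measure and Proposition~\ref{prop:thm6FJM} immediately pins the limiting matrix to $\II$.
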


\begin{remark}[on Theorem~\ref{thm:ellz}]
\label{rmk:commentl0}
(i) The restriction \(p\leq2\) in Theorem~\ref{thm:ellz}
originates from a similar
technical difficulty mentioned in Remark~\ref{rmk:commentli}-(i).
However, the construction of the recovery
sequence in the \(\ell=0\) case is different
from the previous one and, in particular,
does not depend on the limit junction
condition. (ii) Also as in the previous case, the condition
 \(\lim_{\varepsilon \to 0}{h_\varepsilon}^{p+1}/{r_\varepsilon^2}
= \infty\) allows us to benefit from (a \(p\)-version of) the rigidity  estimate for scaled gradients
proved in \cite[Theorem~6]{FJM06} (also see Proposition~\ref{prop:thm6FJM}).
(iii)\ Finally, we observe that   Theorem~\ref{thm:ellz} shows that
if \( h_\epsi\ll r_\epsi^{p+2}\) with \(p\leq2\),  the limit behavior of
the thin multi-structure
is that of a rigid {beam} and a bent elastic membrane that  satisfies a deformation
condition on its boundary.

\end{remark}

\begin{remark}[on bending-torsion moments
in the limit models \eqref{Pell+}, \eqref{Pellz},
and \eqref{Pelli}]\label{bendingtorsion}
We observe that, in general,
the term
\(\bar\bcal^a\)
is not related to the one-dimensional
strain tensor of \(\psi^a\). Thus, \(\psi^a\)
and \(\bar\bcal^a\) must be regarded
as distinct macroscopic entities. Similarly,
\(\psi^b\)
and \(\bar\bcal^b\)  must be regarded
as distinct macroscopic entities.
We further observe that given the nature
of \(G^a\) and \(G^b\), \(\bar\bcal^a\)
accounts for bending and torsion moments
in the string, while \(\bar\bcal^b\)
accounts for bending  moments in the
membrane.
\end{remark}

This paper is organized as follows. In Section~\ref{Sect:prelim}, we recall the notions of convex, quasiconvex, and cross-quasiconvex-convex envelopes of a function and associated lower semicontinuity results that will be used throughout the paper. We also establish some preliminary results that are common to the three cases,  \(\ell\in\RR^+\),
\(\ell=\infty\), and \(\ell=0\). Next, in Section~\ref{Sect:l+}, we prove Theorem~\ref{thm:ellr+}. We also recover as a particular case the 3D-1D counterpart of the study in \cite{BFM03}, which was addressed in \cite{RiMSc} (see Section~\ref{Subs:rod}). Then, in Sections~\ref{Sect:li} and \ref{Sect:lz}, we prove Theorems~\ref{thm:elli} and \ref{thm:ellz}, respectively. Finally, in Section~\ref{Sect:forces}, we elaborate on variants of the models in Theorems~\ref{thm:ellr+}, \ref{thm:elli},
and \ref{thm:ellz} corresponding to instances where \(G^a\) or \(G^b\),
 inducing bending moments in the limit, is not present. In particular, we establish relationships with the models in \cite{ABP91,LDR95} (see Section~\ref{Subs:nobending}). We also discuss, in Section~\ref{Subs:divform}, the case where the system of applied forces is in divergence form as in \cite{GMMMS02, GMMMS07, MuSi99,
Musi00}. This divergence form allows for less regular body and surface density terms.

\section{Preliminary results
}\label{Sect:prelim}

In what follows, given a measurable set  \(A\subset\RR^n\),
we define  \(L^p_0(A;\R^l):= \{ u\in L^p(A;\R^l)\!:
\, \int_A u\,\dx = 0\}\) and we denote
by \(|A|\) its Lebesgue measure. 

An important argument within our analysis
relates to weakly lower semicontinuity properties
of integral functionals of the form
\vspace{-1.5mm}
\begin{equation*}
\begin{aligned}
(\psi,\bcal) \in W^{1,p}(\Omega;\RR^m)
\times L^p(\Omega;\RR^l) \mapsto I(\psi,\bcal):= \int_\Omega
\overline W(\grad \psi(x), \bcal(x))\,
\dx,
\end{aligned}
\end{equation*}
where \(\Omega\subset\RR^n\) is an open
and bounded set with Lipschitz boundary
and  \(\overline W: \RR^{m\times n} \times
\RR^l \to \RR\) is a Borel function  for which
there exists a positive constant, \(C \), such
that,
for all \((M,b) \in \RR^{m\times n} \times
\RR^l\), \vspace{-1mm}
\begin{equation}\label{growthcqc}
\begin{aligned}
- \frac{1}{C} \leq \overline W(M,b) \leq C( 1 + |M|^p + |b|^p). 
\end{aligned}
\end{equation}

It turns out (see \cite{LDR00, FKP94})
that the integral \(I\) above is sequentially
weakly lower semicontinuous in \(W^{1,p}(\Omega;\RR^m)
\times L^p(\Omega;\RR^l)\) if and only
if \(\overline W\) is cross-quasiconvex-convex;
that is,  setting \(Q:=(0,1)^n\), if
and only if for all  \((M,b) \in \RR^{m\times n}
\times
\RR^l\) and for all \(\theta \in W^{1,\infty}_0
(Q;\RR^m)\) and 
\(\eta \in L^\infty_0(Q;\RR^l)\),
we have \vspace{-1.mm}
\begin{equation}\label{defcqc}
\begin{aligned}
\overline W(M,b) \leq \int_{Q}
\overline W( M+ \grad \theta(x), b
+ \eta(x))\,\dx.
\end{aligned}
\end{equation}

It can be proved (see \cite[Proposition~4.4
and Corollary~4.6]{LDR00})  that if \(\overline
W\) is a cross-quasiconvex-convex function
satisfying \eqref{growthcqc}, then 
\(M\mapsto\overline W(M,b)\) is quasiconvex
for all \(b\in\RR^l\) fixed,  \(b
\mapsto \overline W(M,b)\) is convex
for all \(M\in\RR^{m\times n}\) fixed,
and there exists a positive constant,
\(C\), such that for all \((M_1,b_1), \,
(M_2,b_2) \in \RR^{m\times
n}
\times
\RR^l\),
we have \vspace{-.5mm}
\begin{equation*}
\begin{aligned}
&|\overline W(M_1,b_1) - \overline W(M_2,b_2)|\leq C(1 + |M_1|^{p-1} + |M_2|^{p-1}
+ |b_1|^{p-1} + |b_2|^{p-1})(|M_1- M_2|
+ |b_1 - b_2|).
\end{aligned}
\end{equation*}
Moreover, if the integral \(I\) above is not
  sequentially
weakly lower semicontinuous in \(W^{1,p}(\Omega;\RR^m)
\times L^p(\Omega;\RR^l)\), then its weak lower semicontinuous envelope in \(W^{1,p}(\Omega;\RR^m)
\times L^p(\Omega;\RR^l)\) has the following integral representation  (see \cite[Theorem~5.4]{FKP94}, \cite[Theorem~4.17]{LDR00}): \vspace{-1mm}
\begin{equation*}
\begin{aligned}
\int_\Omega \mathcal{QC}\overline W(\grad \psi(x), \bcal(x))\,
\dx 
\end{aligned}
\end{equation*}
for all \((\psi,\bcal) \in W^{1,p}(\Omega;\RR^m)
\times L^p(\Omega;\RR^l)\), where \(\mathcal{QC}\overline W\) is the cross-quasiconvex-convex
envelope of \(\overline W \); precisely,
 for all  \((M,b) \in \RR^{m\times
n}
\times
\RR^l\),
\vspace{-1mm}%
\begin{equation*}
\begin{aligned}
\mathcal{QC}\overline W(M,b) = \inf\bigg\{\int_{Q}
\overline W( M+ \grad \theta(x), b
+ \eta(x))\,\dx\!: \,  \theta \in W^{1,\infty}_0
(Q;\RR^m),\,\eta \in L^\infty_0(Q;\RR^l)\bigg\}.
\end{aligned}
\end{equation*}

In the \(l=m\) case, we can
associate to \(\overline W\) the function
\(\widetilde W:\RR^{m\times(n+1)}\to \RR\)
defined, for all   \((M,b) \in \RR^{m\times
n}
\times
\RR^m\),  by \(\widetilde W(M|b):= \overline
W(M,b)\). With this association in mind,
we have (see 
\cite[Corollary~4.21]{LDR00})
\begin{equation}
\label{eq:cxtyineq}
\begin{aligned}
\mathcal{C}\widetilde W(M|b) \leq
\mathcal{QC}\overline W(M,b) \leq \mathcal{Q}\widetilde W (M|b)\leq \widetilde
W(M|b) 
\end{aligned}
\end{equation}
for all \((M,b) \in \RR^{m\times
n}
\times
\RR^m\), where \(\mathcal{C}\widetilde W\) and \(\mathcal{Q}\widetilde
W\) are the convex and  quasiconvex
envelopes, respectively, of \(\widetilde
W\). 
In this paper, we do not
distinguish  \(\overline W\)
and \(\widetilde W\); in particular,
we  write \(\mathcal{QC}\widetilde W\) in place of \(\mathcal{QC}\overline W\).

We further observe that if \(n=1\),
then any cross-quasiconvex-convex is
convex; to see this, it suffices to
use \eqref{defcqc} with
\(M=\lambda M_1 + (1-\lambda) M_2\), \(b=\lambda b_1 + (1-\lambda) b_2\),
\begin{equation*}
\begin{aligned}
\theta(x):= \begin{cases}
(-\lambda M_1 + \lambda M_2)(x-1) &
\text{if } \lambda \leq x \leq 1\\
((1-\lambda) M_1 - (1-\lambda) M_2)x &
\text{if } 0 \leq x \leq \lambda,
\end{cases}  
\end{aligned}  
\end{equation*}
and \vspace{-1mm}
\begin{equation*}
\begin{aligned}
  \eta(x):= \begin{cases}
-\lambda b_1 + \lambda b_2 &
\text{if } \lambda \leq x \leq 1\\
(1-\lambda) b_1 - (1-\lambda) b_2
&
\text{if } 0 \leq x \leq \lambda,
\end{cases}
\end{aligned}  
\end{equation*}
for \((M_1,b_1), \, (M_2,b_2) \in \RR^{m\times 1}
\times
\RR^l\) and \(\lambda \in (0,1)\). The
converse implication follows  by Jensen's
inequality. Thus, for all \((M,b) \in \RR^{m\times
1}
\times
\RR^l\), we have \vspace{-.5mm}
\begin{equation}
\label{eq:cxty1d}
\begin{aligned}
\mathcal{C}\overline W(M,b) =
\mathcal{QC}\overline W(M,b).
\end{aligned}
\end{equation}

\begin{remark}\label{ccxqcx}
Given a Borel function   \(\widehat W: \RR^{d\times l} \times
\RR^m \to \RR\), we may associate  
 the function \(\overline W: \RR^{m\times
1}
\times
\RR^{dl}\) defined for \((M,b)\in \RR^{m\times
1}
\times
\RR^{dl} \), with \(b=(b_{11},...,b_{1l},...,
b_{d1},...,b_{dl})\), by \(\overline W(M,b):= \widehat W(\hat b, M)\), where \(\hat
b:= (b_{ij})_{1\leq i \leq d \atop 1\leq j \leq l} \in \RR^{d\times
l}\). Then, in view of \eqref{eq:cxty1d},
%
\(\mathcal{C}\widehat W(\hat b,M) = \mathcal{C}\overline W(M,b) =
\mathcal{QC}\overline W(M,b).\)
 
 In this paper, we will use  this remark
with \(d=m=3\), \(l=2\), and \(\widehat
W(M_\alpha,M_3):= W(M_\alpha|M_3)\)
for \(M=(M_\alpha|M_3)\in\RR^{3\times3}\).
Note that \(\mathcal{C} \widehat W(M_\alpha,M_3)= \mathcal{C} W(M_\alpha|M_3)\).\end{remark}

Finally, we recall the definition of
the function \(Q^\ast W\) introduced in \cite{BFM03}  to describe bending phenomena in thin plates: \vspace{-1.5mm} 
\begin{equation*}
\begin{aligned}
\mathcal{Q}^\ast W(M_\alpha| M_3) := \inf \bigg\{&
 \int_{(0,1)^3}W(M_\alpha + \nabla_\alpha \varphi(x)|\lambda \nabla_3 \varphi(x))\,\dx\!:\, \,\lambda\in \mathbb
R, \,\varphi \in W^{1,p}((0,1)^3;\mathbb R^3),
\\ 
 &\, \ffi(\cdot,x_3) \text{ is } (0,1)^2
 \text{-periodic for \aev\ \(x_3\) in
 (0,1)},\,    \lambda \int_{-{1}/{2}}^{{1}/{2}}\nabla_3
\varphi(x)\,\dx_3= M_3 \bigg\}
\end{aligned}
\end{equation*}
for \(M=(M_\alpha|M_3)\in
\mathbb R^{3\times 3}\). As proved in
 \cite[Proposition~A]{BFM09}, for all \(M=(M_\alpha|M_3)\in
\mathbb R^{3\times 3}\), we have 
\begin{equation}
\label{eq:Qast=Qhat}
\begin{aligned}
\mathcal{Q}^\ast W (M_\alpha|M_3)= \qce (M_\alpha|M_3).
\end{aligned}
\end{equation}

The following lemma allows us to characterize the accumulation
points of a diagonal infimizing sequence for the sequence
of problems \eqref{Pepsi}. Its proof is very similar to that of \cite[Proposition~2.1]{GGLM02}, for which
reason we will only highlight the necessary
modifications. We first introduce some
notation. 

Let \vspace{-.5mm}
\begin{equation}
\label{Aell+}
\begin{aligned}
\mathcal{A}_{l^+}^p:= \big\{ (\psi^a,
\psi^b) &\in W^{1,p}(\Omega^a;\R^3)
\times
 W^{1,p}(\Omega^b;\R^3)\!:\,
 \psi^a \hbox{ is  independent of }
x_\alpha, 
 \\
 &\quad \psi^b \hbox{ is  independent
of } x_3,
 \, \hbox{and for } p>2,\, \psi^a(0_3)=\psi^b(0_\alpha)
 \big\}
\end{aligned}
\end{equation}
and, for \(0<\epsi\leq 1\), let \vspace{-1mm}
\begin{equation}
\label{Aepsi}
\begin{aligned}
\mathcal{A}_\epsi:= \Big\{ ((\bar\bcal^a,\psi^a),
 ( \psi^b,\bar
\bcal^b)) &\in \big(L^p((0,L);\R^{3 \times
2}) \times W^{1,p}(\Omega^a;\R^3)\big)
\times
 \big(W^{1,p}(\Omega^b;\R^3)\times L^p(\omega^b;\R^{3})\big)\!:\,
\\
&\quad\frac{1}{r_{\varepsilon}} \int_{\omega^a}\nabla_\alpha
\psi^a(x_\alpha,\cdot)\, \dx_\alpha
=\bar \bcal^a(\cdot),\enspace
\frac{1}{h_\epsi}\int_{-1}^{0}\nabla_3
\psi^b(\cdot,x_3)\,\dx_3
=\bar \bcal^b(\cdot), \\
&\quad \psi^a(x_\alpha,0_3) = \psi^b(r_\epsi
x_\alpha,0_3) \hbox{ for \aev\ } x_\alpha\in
\omega^a
 \Big\}.
\end{aligned}
\end{equation}

\begin{lemma}\label{lem:l21GGLM}
Let $(\psi^a_\epsi)_{\epsi>0}\subset W^{1,p}(\Omega^a;\RR^3)$
and $(\psi^b_\epsi)_{\epsi>0}\subset W^{1,p}(\Omega^b;\RR^3)$  be such that
\vspace{-.5mm} %
\begin{equation*}
\begin{aligned}
& \sup_{\epsi>0} \|\psi^a_\epsi\|_{W^{1,p}(\Omega^a;\RR^3)}<\infty,
\quad \sup_{\epsi>0} \|r_{\epsi}^{-1}\grad_\alpha \psi^a_\epsi
\|_{L^p(\Omega^a;\RR^{3\times 2})}<\infty,\\
& \sup_{\epsi>0} \|\psi^b_\epsi\|_{W^{1,p}(\Omega^b;\RR^3)}<\infty,
\quad \sup_{\epsi>0} \|h_{\epsi}^{-1}\grad_3 \psi^b_\epsi
\|_{L^p(\Omega^b;\RR^3)}<\infty.
\end{aligned}
\end{equation*}
Then, the sequences  $(r_{\epsi}^{-1}\grad_\alpha
\psi^a_\epsi, \psi^a_\epsi)_{\epsi>0} $
and $(\psi^b_\epsi, h_{\epsi}^{-1}\grad_3 \psi^b_\epsi)_{\epsi>0}$ are
sequentially, weakly compact in \( L^p(\Omega^a;\allowbreak
\RR^{3\times
2})\times W^{1,p}(\Omega^a;\RR^3)\) and \(W^{1,p}(\Omega^b;\RR^3)\times L^p(\Omega^b;\RR^3)\),
respectively. Moreover, let \((\bcal^a,\psi^a)\) and    \((\psi^b, \bcal^b)\) be corresponding accumulation points; that is,  let \(\epsi_j\preccurlyeq
\epsi\) be such that \(((r_{\epsi_j}^{-1}\grad_\alpha
\psi^a_{\epsi_j}, \psi^a_{\epsi_j}),(\psi^b_{\epsi_j}, h_{{\epsi_j}}^{-1}\grad_3
\psi^b_{\epsi_j}))_{j\in\Nn} \)  converges
 to \(((\bcal^a,\psi^a),(\psi^b, \bcal^b))\) weakly in \(\big(L^p(\Omega^a;\RR^{3\times 2}) \times
W^{1,p}(\Omega^a;\RR^3)\big) \times
\big(W^{1,p}(\Omega^b;\RR^3)\times
L^p(\Omega^b;\RR^3) \big)\). Then,  \(\psi^a\)  is  independent of \( x_\alpha\),
\(\psi^b\)  is  independent of \( x_3\),  there exist  \(v^a\in L^p((0,L);W^{1,p}(\omega^a;\R^3)\cap
L^p_0(\omega^a;\R^3))\) and \(v^b\in L^p(\omega^b;W^{1,p}((-1,0);\R^3)\cap
L^p_0((-1,0);\R^3))\) such that \(\bcal^a = \grad_\alpha
v^a\) and \(\bcal^b = \grad_3
v^b\), and  \vspace{-1.2mm}
\begin{equation}
\label{eq:as24rod}
\lim_{j \to \infty}\int_{\omega^a}\psi^a_{\epsi_j}(x_\alpha,
0_3)\,\dx_\alpha=|\omega^a|\psi^a(0_3).
\end{equation}
Furthermore,
recalling \eqref{ell},
\begin{itemize}
\leftskip=-1em
\item [i)]
 if \(\ell\in\RR^+\)
or \(\ell=0\)  and if
\(p>2\), then we may extract
a
further  subsequence, \((\psi^a_{\epsi_{j_k}},
\psi^b_{\epsi_{j_k}} )_{k\in\NN}\),  for
which \vspace{-2.6mm}
\begin{equation}
\label{eq:as24GGLM}
\lim_{k\to \infty}\int_{\omega^a}\psi^b_{\epsi_{j_k}}
(r_{\varepsilon_{j_k}}
x_\alpha, 0_3)\,\dx_\alpha=|\omega^a|\psi^b(0_\alpha).
\end{equation}

\item [ii)] if \(\ell=\infty\),   
\(p>2\), and there exists a bounded sequence \((d_\epsi)_{\epsi>0}\) in
\(L^\infty(\Omega^b;\RR^3)\) such that
\( \Vert {h_\epsi}^{-1} \grad_3 \psi^b_\epsi
- d_\epsi\Vert ^p_{L^p(\Omega^b;\RR^3)}
\leq C \tfrac{r_\epsi^2}{h_\epsi}\), then we may extract
a
further  subsequence, \((\psi^a_{\epsi_{j_k}},
\psi^b_{\epsi_{j_k}} )_{k\in\NN}\),
 satisfying \eqref{eq:as24GGLM}.
\end{itemize}
In particular, if in addition to i)
or
ii),  we have \(((\bar\bcal^a_\epsi,\psi^a_\epsi),
 (\psi^b_\epsi,\bar
\bcal^b_\epsi))\in \A_\epsi \) for all
\(\epsi>0\), then
\((\psi^a, \psi^b) \in\A^p_{\ell_+}\),
 \( \bar\bcal^a_{\epsi_j}
\weakly \bar\bcal^a\) weakly in  \(L^p((0,L);\R^{3
\times 2})\), and \( \bar\bcal^b_{\epsi_j}
\weakly
\bar\bcal^b\) weakly in  \(L^p(\omega^b;\R^{3})\),
where \(\bar\bcal^a:=\int_{\omega^a}\bcal^a\,
\dx_\alpha\) and \(\bar\bcal^b:=\int_{-1}^{0}\bcal^b\,\dx_3
\).
\end{lemma}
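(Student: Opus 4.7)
The proof follows the scheme of \cite[Proposition~2.1]{GGLM02}, with adjustments to handle the averaged quantities $\bar\bcal^a_\epsi,\,\bar\bcal^b_\epsi$ and the three regimes of $\ell$. Weak compactness is immediate from the stated $L^p$/$W^{1,p}$-bounds and reflexivity. The separation-of-variables conclusions follow because $\grad_\alpha\psi^a_\epsi = r_\epsi\bigl(r_\epsi^{-1}\grad_\alpha\psi^a_\epsi\bigr)$ and $\grad_3\psi^b_\epsi = h_\epsi\bigl(h_\epsi^{-1}\grad_3\psi^b_\epsi\bigr)$ converge strongly to zero in $L^p$, forcing $\grad_\alpha\psi^a=0$ and $\grad_3\psi^b=0$ in the weak limit. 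For the gradient structure of $\bcal^a$, I would introduce the slice-by-slice centered corrector
\[
v^a_\epsi(x_\alpha,x_3):=r_\epsi^{-1}\Bigl(\psi^a_\epsi(x_\alpha,x_3)-\tfrac{1}{|\omega^a|}\int_{\omega^a}\psi^a_\epsi(y_\alpha,x_3)\,\dy_\alpha\Bigr),
\]
which is automatically in $L^p_0(\omega^a;\R^3)$ for a.e.\ $x_3$ and satisfies $\grad_\alpha v^a_\epsi = r_\epsi^{-1}\grad_\alpha\psi^a_\epsi$; the Poincar\'e--Wirtinger inequality on $\omega^a$ gives a uniform $L^p((0,L);W^{1,p}(\omega^a;\R^3))$-bound, any weak limit $v^a$ satisfies $\grad_\alpha v^a=\bcal^a$, and a symmetric construction in $x_3$ on $(-1,0)$ yields $v^b$ with $\grad_3 v^b=\bcal^b$.

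The convergence \eqref{eq:as24rod} is immediate from the compactness of the trace map $W^{1,p}(\Omega^a;\R^3)\to L^p(\omega^a\times\{0\};\R^3)$ combined with the independence of $\psi^a$ from $x_\alpha$. The main technical point is \eqref{eq:as24GGLM}, which requires $p>2$. My plan is to compare $\psi^b_\epsi(\cdot,0)$ with its vertical mean $\tilde\psi^b_\epsi(x_\alpha):=\int_{-1}^0\psi^b_\epsi(x_\alpha,s)\,\d s$. A Fubini argument shows that $(\tilde\psi^b_\epsi)$ is bounded in $W^{1,p}(\omega^b;\R^3)$ with weak limit $\psi^b$, and the compact Morrey embedding $W^{1,p}(\omega^b)\hookrightarrow C^{0,1-2/p}(\overline{\omega^b})$, available precisely when $p>2$, upgrades this to uniform convergence; hence, by the change of variables $y_\alpha=r_\epsi z_\alpha$ and continuity of $\psi^b$ at the origin, $r_\epsi^{-2}\int_{r_\epsi\omega^a}\tilde\psi^b_\epsi(y_\alpha)\,\dy_\alpha\to|\omega^a|\psi^b(0_\alpha)$. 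The residual $\psi^b_\epsi(\cdot,0)-\tilde\psi^b_\epsi$ is controlled pointwise by $\bigl(\int_{-1}^0|\grad_3\psi^b_\epsi|^p\,\d s\bigr)^{1/p}$ via H\"older in $x_3$; integrating over $r_\epsi\omega^a$, applying H\"older in $x_\alpha$, and dividing by $r_\epsi^2$ produces in case i) the bound $C\,h_\epsi\,r_\epsi^{-2/p}$, which vanishes because $h_\epsi/r_\epsi^2$ is bounded by $\ell$ and $r_\epsi^{2-2/p}\to 0$ for $p>1$. In case ii) the naive bound diverges, and the corrector $h_\epsi\int_0^{x_3}d_\epsi\,\d s$ must be subtracted first: it vanishes at $x_3=0$ (so the trace is unchanged) and is $O(h_\epsi)$ in $L^\infty$ (so the weak limit of $\psi^b_\epsi$ is preserved). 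The additional hypothesis then yields $\|\grad_3(\psi^b_\epsi-h_\epsi\int_0^{x_3}d_\epsi\,\d s)\|_{L^p(\Omega^b)}^p\leq C\,h_\epsi^{p-1}r_\epsi^2$, and the same H\"older chain now produces $C\,h_\epsi^{(p-1)/p}\to 0$. This step is the main obstacle: it is precisely where Morrey's embedding, the H\"older comparison with the vertical mean, and the rate $r_\epsi^2/h_\epsi$ built into the $d_\epsi$-hypothesis must be balanced.

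For the \emph{in particular} clause, integrating the junction constraint $\psi^a_\epsi(x_\alpha,0)=\psi^b_\epsi(r_\epsi x_\alpha,0)$ over $\omega^a$ and passing to the limit via \eqref{eq:as24rod}--\eqref{eq:as24GGLM} forces $|\omega^a|\psi^a(0_3)=|\omega^a|\psi^b(0_\alpha)$, which together with the already-established separation-of-variables properties and the limit boundary conditions places $(\psi^a,\psi^b)$ in $\A^p_{\ell_+}$. To identify $\bar\bcal^a=\int_{\omega^a}\bcal^a\,\dx_\alpha$, I would test $\bar\bcal^a_\epsi(x_3)=r_\epsi^{-1}\int_{\omega^a}\grad_\alpha\psi^a_\epsi(y_\alpha,x_3)\,\dy_\alpha$ against an arbitrary $\phi\in L^q((0,L);\R^{3\times 2})$: lifting $\phi$ to $\Omega^a$ as a function independent of $x_\alpha$, the pairing reduces to $\int_{\Omega^a}\phi:(r_\epsi^{-1}\grad_\alpha\psi^a_\epsi)\,\dx$, and passing to the limit via the weak $L^p$-convergence of $r_\epsi^{-1}\grad_\alpha\psi^a_\epsi$ gives the claim; the identification of $\bar\bcal^b$ follows by the analogous argument with $(-1,0)$ playing the role of $\omega^a$ and the $x_3$-direction playing the role of $x_\alpha$.
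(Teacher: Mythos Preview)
Your proposal is correct and follows essentially the same approach as the paper, which defers to \cite[Proposition~2.1]{GGLM02} for everything except case~ii). There, the paper compares with a fixed level $\bar x_3\in(-1,0)$ (taken from \cite{GGLM02}) instead of your vertical mean and splits $h_\epsi^{-1}\grad_3\psi^b_\epsi = (h_\epsi^{-1}\grad_3\psi^b_\epsi - d_\epsi) + d_\epsi$ directly inside the trace-difference integral rather than subtracting the antiderivative corrector $h_\epsi\int_0^{x_3}d_\epsi\,\d s$ first; both organizations produce the same decisive bound $Ch_\epsi^{(p-1)/p} + Ch_\epsi\|d_\epsi\|_\infty\to 0$.
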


\begin{proof} The proof regarding the
\(\ell\in\RR^+\) case  can be found in
 \cite[Proposition~2.1]{GGLM02}. Note that, independently of the value of \(\ell\), \eqref{eq:as24rod} follows
from the continuity of the trace with
respect to the weak convergence in \(W^{1,p}\).
We observe further that the arguments
in \cite[Proposition~2.1]{GGLM02} remain
valid for \(\ell=0\) (see \cite[(2.9) with \(N=3\)]{GGLM02}).
The \(\ell=\infty\) case also can  be treated 
as in \cite[Proposition~2.1]{GGLM02}  with the exception of the proof of  \cite[(2.9)]{GGLM02}. Precisely,
 we are left to prove that \vspace{-.5mm}
\begin{equation}
\label{eq:as2.10}
\begin{aligned}
\lim_{k\to\infty} \bigg| 
\int_{\omega^a}\big(\psi^b_{\epsi_{j_k}}
(r_{\varepsilon_{j_k}}
x_\alpha, 0)- \psi^b_{\epsi_{j_k}}
(r_{\varepsilon_{j_k}}
x_\alpha, \bar x_3)\big) \,\dx_\alpha \bigg| =0, 
\end{aligned}
\end{equation}
where \(\bar x_3 \) is a certain fixed
point in \((-1,0)\) (see  \cite[(2.5)]{GGLM02}).
To show  \eqref{eq:as2.10}, let \((d_\epsi)_{\epsi>0}\) be as in
\textit{ii)} and recall that \(\bar a =|\omega^a|\). Using H\"older's inequality and a change of variables,
we obtain \vspace{-.mm}
\begin{equation*}
\begin{aligned}
&\bigg| 
\int_{\omega^a}\big(\psi^b_{\epsi_{j_k}}
(r_{\varepsilon_{j_k}}
x_\alpha, 0_3)- \psi^b_{\epsi_{j_k}}
(r_{\varepsilon_{j_k}}
x_\alpha, \bar x_3)\big) \,\dx_\alpha
\bigg| \\
&\quad
= h_{\epsi_{j_k}}\bigg| 
\int_{\omega^a} \int_{\bar x_3}^0\Big[\big
(\tfrac1{h_{\epsi_k}}
\grad_3 \psi^b_{\epsi_{j_k}}
(r_{\varepsilon_{j_k}}
x_\alpha, x_3) - d_{\epsi_k}(r_{\varepsilon_{j_k}}
x_\alpha, x_3)\big )+d_{\epsi_k}(r_{\varepsilon_{j_k}}
x_\alpha, x_3)\Big] \,\dx_3\dx_\alpha
\bigg| \\
&\quad \leq\bar a^{\tfrac{p-1}{p}}  h_{\epsi_{j_k}}  \bigg( \frac{1}{r_{\epsi_{j_k}}^2}
\int_{r_{\varepsilon_{j_k}}\omega^a} \int_{-1}^0\big
|\tfrac1{h_{\epsi_{j_k}}}
\grad_3 \psi^b_{\epsi_{j_k}}
(
x_\alpha, x_3) - d_{\epsi_k}(
x_\alpha, x_3) \big|^p\,\dx_\alpha
\dx_3\bigg)^{\tfrac1p}+\bar ah_{\epsi_{j_k}} \Vert
d_{\epsi_k}\Vert_{\infty}\\
&\quad \leq C^{\tfrac1p}\bar a^{\tfrac{p-1}{p}}
h_{\epsi_{j_k}}^{\tfrac{p-1}{p}} +\bar ah_{\epsi_{j_k}}
\Vert
d_{\epsi_k}\Vert_{\infty},
\end{aligned}
\end{equation*}
from which \eqref{eq:as2.10} follows.
\end{proof}

\begin{remark}\label{onb's}
In view of Lemma~\ref{lem:l21GGLM},
we are led to investigate
whether the functions \(\bar\bcal^a\)
or \(
\bar\bcal^b\) in the limit problems, \eqref{Pell+}, \eqref{Pelli}, or \eqref{Pellz},
 belong
to a strict subspace of \(L^p((0,L);\R^{3
\times 2} )\) or \( L^p(\omega^b;\R^{3})
\), respectively. However,
it can be easily checked that \(\{\bar
\bcal^a\in L^p((0,L);\R^{3 \times 2})\!:
\,
\bar \bcal^a
= \int_{\omega^a} \grad_\alpha v^a\,\dx_\alpha
\hbox{ for some } v^a\in L^p((0,L);W^{1,p}(\omega^a;\R^3)\cap
L^p_0(\omega^a;\R^3) ) \}= L^p((0,L);\R^{3
\times 2}) \) and \(\{\bar \bcal^b\in
L^p(\omega^b;\R^{3})\!:
\, \bar \bcal^b
= \int_{-1}^0 \grad_3 v^b\,\dx_3 \allowbreak
\hbox{ for some } v^b\in L^p(\omega^b;W^{1,p}((-1,0);\R^3)\cap
L^p_0((-1,0);\R^3))
\}=L^p(\omega^b;\R^{3})\). Thus, the functions \(\bar\bcal^a\)
or \(
\bar\bcal^b\) in the limit problems
 are
indeed defined in the whole space \(L^p((0,L);\R^{3
\times 2} ) \) or \( L^p(\omega^b;\R^{3})
\), respectively. 
\end{remark}

In some proofs, to gain  regularity
regarding the integrand function, it will be convenient
to replace \(W\) by its quasiconvex
envelope, \(\mathcal{Q} W\). The next
lemma will enable us to do so without
loss of generality.

\begin{lemma}\label{lem:WbyQW}
Let  \(W:\RR^{3\times 3} \to \RR\)
be a Borel function satisfying \eqref{pgrowth}. Let \((\epsi_n)_{n\in\Nb}\) be a sequence
of positive
numbers convergent to zero, and let
\((\ell^a_n)_{n\in\Nn}\) and \((\ell^b_n)_{n\in\Nn}\)
be two sequences of positive numbers for which the corresponding limits
exist in \((0,\infty]\). Recall  \eqref{Fabepsi} and
let \vspace{-.5mm}
\begin{equation*}
\begin{aligned}
\calG_{\epsi_n}^a(\psi^a):= \int_{\Omega^a}
\qe({r_{\epsi_n}^{-1}}\nabla_\alpha
\psi^a|\nabla_3\psi^a)\,\dx \quad \hbox{and}
\quad %
\calG_{\epsi_n}^b(\psi^b):= \int_{\Omega^b}
\qe(\nabla_\alpha \psi^b|{h_{\epsi_n}^{-1}}
\nabla_3\psi^b)\,\dx.
\end{aligned}
\end{equation*}
Then, for
 all \(\big((\bar\bcal^a,\psi^a),
(\psi^b, \bar\bcal^b)\big) \in
\big(L^p((0,L);\R^{3 \times 2}) \times
W^{1,p}(\Omega^a;\R^3)\big)\times
\big( W^{1,p}(\Omega^b;\R^3)
\times L^p(\omega^b;\R^{3}) \big)  \),
we have
 \vspace{-1mm}
\begin{equation*}
\begin{aligned}
F^-((\bar\bcal^a,\psi^a), (\psi^b,
\bar\bcal^b)) = \calG((\bar\bcal^a,\psi^a), (\psi^b,
\bar\bcal^b)),
\end{aligned}
\end{equation*}
where
\vspace{-1mm} 
\begin{equation}
\label{F-}
\begin{aligned}
&F^-((\bar\bcal^a,\psi^a), (\psi^b,
\bar\bcal^b))\\
&\qquad:=\inf\Big\{\liminf_{n\to\infty}
\big(\ell^a_nF_{\epsi_n}^a (\psi^a_n) + \ell^b_nF_{\epsi_n}^b
(\psi^b_n)
\big)\!:\, ((\bar\bcal^a_n,\psi^a_n),(
\psi^b_n, \bar\bcal^b_n)
)\in \mathcal{A}_{\epsi_n} \hbox{ for
all } n\in\Nb,\\
& \hskip30mm \psi^a_n \weakly \psi^a
\hbox{ weakly in
} W^{1,p}(\Omega^a;\R^3),
\, 
 \bar\bcal^a_n \weakly \bar\bcal^a \hbox{
weakly in }
L^{p}((0,L);\R^{3\times
 2}),\\
& \hskip30mm \psi^b_n \weakly \psi^b
\hbox{ weakly in
} W^{1,p}(\Omega^b;\R^3),
\, 
 \bar\bcal^b_n \weakly \bar\bcal^b \hbox{
weakly in }
L^{p}(\omega^b;\R^3)\Big\}
\end{aligned}
\end{equation}
and \vspace{-1mm}
\begin{equation}
\label{GQW}
\begin{aligned}
&\calG((\bar\bcal^a,\psi^a), (\psi^b,
\bar\bcal^b))\\
&\qquad:=\inf\Big\{\liminf_{n\to\infty}
\big(\ell^a_n\calG_{\epsi_n}^a (\psi^a_n) +
\ell^b_n
\calG_{\epsi_n}^b (\psi^b_n)
\big)\!:\, ((\bar\bcal^a_n,\psi^a_n),(
\psi^b_n, \bar\bcal^b_n)
)\in \mathcal{A}_{\epsi_n} \hbox{ for
all } n\in\Nb,\\
& \hskip30mm \psi^a_n \weakly \psi^a
\hbox{ weakly in
} W^{1,p}(\Omega^a;\R^3),
\, 
 \bar\bcal^a_n \weakly \bar\bcal^a \hbox{
weakly in } 
L^{p}((0,L);\R^{3\times
 2}),\\
& \hskip30mm \psi^b_n \weakly \psi^b
\hbox{ weakly in
} W^{1,p}(\Omega^b;\R^3),
\, 
 \bar\bcal^b_n \weakly \bar\bcal^b \hbox{
weakly in }
L^{p}(\omega^b;\R^3)\Big\}.
\end{aligned}
\end{equation}
\end{lemma}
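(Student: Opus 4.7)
The inequality $F^{-} \geq \calG$ is immediate from the pointwise bound $\mathcal{Q}W \leq W$: for every admissible sequence for $F^{-}$, the energies $\calG^a_{\epsi_n}(\psi^a_n)$ and $\calG^b_{\epsi_n}(\psi^b_n)$ are bounded above by $F^a_{\epsi_n}(\psi^a_n)$ and $F^b_{\epsi_n}(\psi^b_n)$, so the same sequence is admissible for $\calG$ with equal or smaller energy.

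For the converse inequality $F^{-} \leq \calG$, my plan is to use a diagonalization argument built on the classical quasiconvex relaxation theorem. Fix $\eta>0$ and pick, for each $n$, an admissible $((\bar\bcal^a_n,\psi^a_n),(\psi^b_n,\bar\bcal^b_n))\in\A_{\epsi_n}$ realizing the $\liminf$ in \eqref{GQW} up to $\eta$. On the fixed domain $\Omega^a$, consider the Borel integrand $\widetilde W^a_{\epsi_n}(M):=W(r_{\epsi_n}^{-1}M_\alpha \mid M_3)$; because quasiconvexification commutes with invertible linear changes of the gradient variable, its quasiconvex envelope is $M \mapsto \mathcal{Q}W(r_{\epsi_n}^{-1}M_\alpha \mid M_3)$, and analogously on $\Omega^b$. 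The Acerbi--Fusco relaxation theorem then provides, for each $n$ and $k\in\Nn$, perturbations $\theta^a_{n,k}\in W^{1,p}_0(\Omega^a;\R^3)$ and $\theta^b_{n,k}\in W^{1,p}_0(\Omega^b;\R^3)$ such that $\theta^a_{n,k},\theta^b_{n,k} \weakly 0$ weakly in $W^{1,p}$ as $k\to\infty$ and
\begin{equation*}
\lim_{k\to\infty}F^a_{\epsi_n}(\psi^a_n + \theta^a_{n,k}) = \calG^a_{\epsi_n}(\psi^a_n),\qquad \lim_{k\to\infty}F^b_{\epsi_n}(\psi^b_n + \theta^b_{n,k}) = \calG^b_{\epsi_n}(\psi^b_n).
\end{equation*}
A standard diagonal extraction then yields $k_n\to\infty$ with $\tilde\psi^a_n:=\psi^a_n+\theta^a_{n,k_n}$ and $\tilde\psi^b_n:=\psi^b_n+\theta^b_{n,k_n}$ converging weakly to $\psi^a$ and $\psi^b$ in the respective spaces and satisfying $\liminf_n(\ell^a_n F^a_{\epsi_n}(\tilde\psi^a_n)+\ell^b_n F^b_{\epsi_n}(\tilde\psi^b_n)) \leq \calG((\bar\bcal^a,\psi^a),(\psi^b,\bar\bcal^b))+\eta$.

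The decisive point is that the corrections $\theta^a_{n,k_n}$ and $\theta^b_{n,k_n}$ belong to $W^{1,p}_0$ of their respective domains, hence vanish on the entire boundaries $\partial\Omega^a$ and $\partial\Omega^b$. By a slicewise application of the divergence theorem this preserves the averages
\begin{equation*}
\frac{1}{r_{\epsi_n}}\int_{\omega^a}\nabla_\alpha \tilde\psi^a_n\,\dx_\alpha=\bar\bcal^a_n,\qquad \frac{1}{h_{\epsi_n}}\int_{-1}^{0}\nabla_3 \tilde\psi^b_n\,\dx_3=\bar\bcal^b_n,
\end{equation*}
leaves the boundary data on $\Gamma^a$ and $\Gamma^b$ unchanged, and preserves the junction condition \eqref{junction} (since both interface traces are unchanged). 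Hence $((\bar\bcal^a_n,\tilde\psi^a_n),(\tilde\psi^b_n,\bar\bcal^b_n))\in\A_{\epsi_n}$, witnessing $F^{-}\leq \calG+\eta$; letting $\eta\to 0^+$ concludes the proof.

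The main technical obstacle is the compatibility between the outer limit $n\to\infty$ and the inner relaxation limit $k\to\infty$: one must diagonalize so that the resulting single sequence simultaneously has the prescribed weak limits in $W^{1,p}$, satisfies the constraints defining $\A_{\epsi_n}$ exactly for every $n$, and matches the $\calG$-energy asymptotically. The zero-trace property of the Acerbi--Fusco perturbations is what makes this possible without any further correction, and is the crucial feature that ties the relaxation mechanism to the non-standard average and junction constraints of the multi-structure problem.
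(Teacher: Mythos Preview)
Your approach is correct and takes a somewhat different route from the paper's. The key observation---that the Acerbi--Fusco recovery perturbations may be chosen in $W^{1,p}_0$ of the full cylinders, and that such zero-trace perturbations automatically preserve the junction condition \eqref{junction} and (via the slicewise divergence-theorem argument you give) leave the averaged quantities $\bar\bcal^a_n,\bar\bcal^b_n$ unchanged---bypasses the machinery of the paper's Step~2. There the authors take a general recovery sequence with no boundary control, invoke the Fonseca--M\"uller--Pedregal decomposition lemma to make the scaled gradients $p$-equi-integrable, and then perform a one-dimensional cut-off in $x_3$ near the interface to restore the junction, redefining the averages along the modified sequence and checking that they still converge weakly. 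Your route avoids both the decomposition lemma and the cut-off construction, at the mild cost of invoking the boundary-matching form of the relaxation theorem. Both arguments rest on the commutativity of quasiconvexification with the anisotropic scaling (the paper's Step~1) and on a final diagonalization (the paper's Step~3); for the latter you should make explicit, as the paper does around \eqref{lboundsWbyQW}, that the uniform $W^{1,p}$ bound needed to run the diagonal argument comes from the coercivity in \eqref{pgrowth} together with $\lim_n\ell^a_n,\lim_n\ell^b_n\in(0,\infty]$, and that one first passes to a subsequence realizing the $\liminf$ before filling in the remaining indices with the original near-optimal sequence.
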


\begin{remark}\label{coefWbyQW}
In the \(\ell\in\RR^+\) and \(\ell=\infty\)
cases, we will use Lemma~\ref{lem:WbyQW}
with \(\ell_n^a=1\) and \(\ell_n^b = {h_{\epsi_n}}/
{r_{\epsi_n}^2}\) for all \(n\in\Nn\);
in the \(\ell=0\)
case, we will take \(\ell_n^a={r_{\epsi_n}^2}/
{h_{\epsi_n}}\) and \(\ell_n^b =
1\) for all \(n\in\Nn\).
\end{remark}

 \begin{proof}[Proof of Lemma~\ref{lem:WbyQW}]
We start by observing that if \(W\) satisfies \eqref{pgrowth}
or \eqref{coercrigid2}, then so does
\(\mathcal{Q}W\).

Because \(\qe \leq W\), the inequality
\(\calG \leq F^-\)
holds. To prove the converse inequality,
we will proceed
in several steps.

\textit{Step 1.} In this step, we prove
that for all
\(M=(M_\alpha|M_3) \in \R^{3\times 3}\)
and \(r,\,h>0\),
we have \vspace{-1mm}
\begin{equation}
\label{eq:QWepsi}
\begin{aligned}
\qe_r^a(M) = (\qe)_{r}^a(M) \quad \text{and}
\quad 
\qe_{h}^b(M) = (\qe)_{h}^b(M), 
\end{aligned}
\end{equation}
where
%
\(\tilde W_{r}^a(M) := \tilde W({r^{-1}}
M_\alpha|M_3)\) and
\(\tilde W_{h}^b(M)
:= \tilde W(M_\alpha|{h^{-1}}
M_3).\)
%

The proof of the second identity in
\eqref{eq:QWepsi}
can be found in \cite[Proposition~1.1]{BFM03}.
The first
identity in \eqref{eq:QWepsi} can be
proved similarly.


\textit{Step~2.} In this step, we show
that for fixed
\(n\in\Nb\) and for every
\(((\bar\bcal^a,\psi^a),(
\psi^b, \bar\bcal^b)
)\in \mathcal{A}_{\epsi_n}\), we can
find a sequence
\(((\bar\bcal^a_j,\psi^a_j),(
\psi^b_j, \bar\bcal^b_j)
)_{j\in\Nb}\subset \mathcal{A}_{\epsi_n}\)
such that
\(\psi^a_j \weakly \psi^a\) weakly in
 \(W^{1,p}(\Omega^a;\R^3)\), \( 
 \bar\bcal^a_j \weakly \bar\bcal^a\)
weakly in \(L^{p}((0,L);\R^{3\times
 2})\), \(\psi^b_j \weakly \psi^b\)
weakly in
 \(W^{1,p}(\Omega^b;\R^3)\),  \( 
 \bar\bcal^b_j \weakly \bar\bcal^b\)
weakly in \(L^{p}(\omega^b;\R^3)\),
 and \vspace{-1.5mm}
\begin{equation*}
\begin{aligned}
&\lim_{j\to\infty}  \int_{\Omega^a}
W({r_{\epsi_n}^{-1}}\nabla_\alpha
\psi^a_j|\nabla_3\psi^a_j)\,\dx =  \int_{\Omega^a}
\qe({r_{\epsi_n}^{-1}}\nabla_\alpha
\psi^a|\nabla_3\psi^a)\,\dx,\\
& \lim_{j\to\infty} \int_{\Omega^b}
W(\nabla_\alpha \psi^b_j|{h_{\epsi_n}^{-1}}\nabla_3\psi^b_j)\,\dx
= \int_{\Omega^b}
\qe(\nabla_\alpha \psi^b|{h_{\epsi_n}^{-1}}\nabla_3\psi^b)\,\dx.
\end{aligned}
\end{equation*}

We first observe  that
the
lower bound in \eqref{pgrowth} allows
us to  assume that \(W\geq0\) without loss
of generality. 
Invoking \eqref{pgrowth} once more,
\eqref{eq:QWepsi},  the relaxation
result
in \cite{AcFu84}, and  the
decomposition lemma \cite[Lemma~1.2]{FMP98},
we can find 
sequences \((\psi^a_k)_{k\in\Nb}\) and
\((\psi^b_k)_{k\in\Nb}\)
such that 
\(\psi^a_k \weakly \psi^a\) weakly in
 \(W^{1,p}(\Omega^a;\R^3)\),  \(\psi^b_k
\weakly \psi^b\)
weakly in
 \(W^{1,p}(\Omega^b;\R^3)\), \((|\grad
\psi^a_k|^p)_{k\in\Nb}
 \subset L^1(\Omega^a)\) and \((|\grad\psi^b_k|^p)_{k\in\Nb}
 \subset L^1(\Omega^b)\) are equi-integrable,
and \vspace{-1.5mm}
\begin{equation}\label{eq:relaxanddecomp}
\begin{aligned}
&\lim_{k\to\infty}  \int_{\Omega^a}
W({r_{\epsi_n}^{-1}}\nabla_\alpha
\psi^a_k|\nabla_3\psi^a_k)\,\dx =  \int_{\Omega^a}
\qe({r_{\epsi_n}^{-1}}\nabla_\alpha
\psi^a|\nabla_3\psi^a)\,\dx,\\
& \lim_{k\to\infty} \int_{\Omega^b}
W(\nabla_\alpha \psi^b_k|{h_{\epsi_n}^{-1}}\nabla_3\psi^b_k)\,\dx
= \int_{\Omega^b}
\qe(\nabla_\alpha \psi^b|{h_{\epsi_n}^{-1}}\nabla_3\psi^b)\,\dx.
\end{aligned}
\end{equation}
In particular, because \(((\bar\bcal^a,\psi^a),(
\psi^b, \bar\bcal^b)
)\in \mathcal{A}_{\epsi_n}\), we have \vspace{-.5mm}
\begin{equation*}
\begin{aligned}
&\bar\bcal^a_k:= \frac{1}{r_{\varepsilon_n}}
\int_{\omega^a}\nabla_\alpha
\psi^a_k(x_\alpha,\cdot)\, \dx_\alpha
\weakly_k \bar\bcal^a
\hbox{ weakly in } L^{p}((0,L);\R^{3\times
 2}),\\
& 
 \bar\bcal^b_k := \frac{1}{h_{\epsi_n}}\int_{-1}^{0}\nabla_3
\psi^b_k(\cdot,x_3)\,\dx_3
\weakly_k \bar\bcal^b \hbox{ weakly
in } L^{p}(\omega^b;\R^3).
\end{aligned}
\end{equation*}
To construct sequences that also
 satisfy 
the condition \eqref{junction}, we 
use the slicing method.
 Fix \(\tau>0\); because \((1+{r_{\epsi_n}^{-p}}|\nabla_\alpha
\psi^a|^p + |\nabla_3\psi^a|+{r_{\epsi_n}^{-p}}|\nabla_\alpha
\psi^a_k|^p + |\nabla_3\psi^a_k|^p)_{k\in\Nb}\)
and \((1+
|\nabla_\alpha \psi^b|^p + {h_{\epsi_n}^{-p}}|\nabla_3\psi^b|^p+
|\nabla_\alpha \psi^b_k|^p + {h_{\epsi_n}^{-p}}|\nabla_3\psi^b_k|^p)_{k\in\Nb}\)
are equi-integrable, there exists  \(\epsilon\in
(0,\tau)\)
 such that, for measurable   \(E\subset\R^3\) with \(|E|<\epsilon\), \vspace{-1.5mm} 
\begin{equation}
\label{eq:byequi}
\begin{aligned}
&\sup_{k\in\Nb}\bigg\{\int_{\Omega^a\cap
E} (1+{r_{\epsi_n}^{-p}}|\nabla_\alpha
\psi^a|^p + |\nabla_3\psi^a|+{r_{\epsi_n}^{-p}}|\nabla_\alpha
\psi^a_k|^p + |\nabla_3\psi^a_k|^p)\,\dx
\\ &\qquad+
\int_{\Omega^b\cap E} (1+
|\nabla_\alpha \psi^b|^p + {h_{\epsi_n}^{-p}}|\nabla_3\psi^b|^p+|\nabla_\alpha
\psi^b_k|^p + {h_{\epsi_n}^{-p}}|\nabla_3\psi^b_k|^p)\,\dx\bigg\}
< \tau. \end{aligned}
\end{equation}
For \(j\in \Nb\), fix \(\delta_j \in
\big(0, \tfrac{\epsilon}{|\omega^a|
+ |\omega^b|}\big)\) such that  \(\delta_j\to0^+\)
as
\(j\to\infty\), and let \(\phi_j \in
C^\infty(\R;[0,1])\)
be a smooth cut-off function such that
\(\phi_j(t)
= 0\)
for \(t\in (-\delta_j,\delta_j)\), 
\(\phi_j(t) =
1\) for \(|t|\geq 2\delta_j\), and
 \(\Vert\phi_j'\Vert_\infty \leq
2/{\delta_j}\).
Define, for \(k,\,j\in\Nb\), 
\begin{equation*}
\begin{aligned}
&\psi_{k,j}^a(x):= \phi_j(x_3)\psi^a_k(x)
+
(1-\phi_j(x_3))\psi^a(x), \quad x\in
\Omega^a,\\
&\bar\bcal^a_{k,j}(x_3):= \frac{1}{r_{\varepsilon_n}}
\int_{\omega^a}\nabla_\alpha
\psi^a_{k,j}(x_\alpha,x_3)\, \dx_\alpha,
\quad
x_3\in (0,L),\\
&\psi_{k,j}^b(x):= \phi_j(x_3)\psi^b_k(x)
+
(1-\phi_j(x_3))\psi^b(x), \quad x\in
\Omega^b,\\
&  \bar\bcal^b_{k,j} (x_\alpha):= \frac{1}{h_{\epsi_n}}\int_{-1}^{0}\nabla_3
\psi^b_{k,j}(x_\alpha,x_3)\,\dx_3, \quad
x_\alpha
\in \omega^b. \end{aligned}
\end{equation*}
Fix \(j\in\Nn\). It can be checked that
\begin{equation}
\label{eq:doublelimseq}
\begin{aligned}
&\psi^a_{k,j} \weakly_k \psi^a \hbox{
weakly in } W^{1,p}(\Omega^a;\R^3),\enspace
\bar\bcal^a_{k,j} \weakly_k \bar\bcal^a
\hbox{ weakly
in } L^{p}((0,L);\R^{3\times
 2}),\\
& \psi^b_{k,j} \weakly_k \psi^b \hbox{
weakly in } W^{1,p}(\Omega^b;\R^3),\enspace
 \bar\bcal^b_{k,j} \weakly_k \bar\bcal^b\hbox{
weakly
in } L^{p}(\omega^b;\R^3).
\end{aligned}
\end{equation}
Also, because \(\psi^a(x_\alpha,0_3) =
\psi^b(r_{\epsi_n}
x_\alpha, 0_3)\) for \aev\ \(x_\alpha
\in \omega^a\) and
\(\phi_j(0)=0\), we have \(\psi^a_{k,j}(x_\alpha,0_3)
=
\psi^b_{k,j}(r_{\epsi_n}
x_\alpha, 0_3)\) for \aev\ \(x_\alpha
\in \omega^a\); thus,
for all \(k\in\Nb\), \vspace{-1mm}
\begin{equation}
\label{eq:doubleseqAepsi}
\begin{aligned}
((\bar\bcal^a_{k,j},\psi^a_{k,j}),(
\psi^b_{k,j}, \bar\bcal^b_{k,j}))
\in \mathcal{A}_{\epsi_n}.
\end{aligned}
\end{equation}
 Moreover, in view of \eqref{pgrowth},
\eqref{eq:byequi},
and \(W\geq0\), \vspace{-.5mm}
\begin{equation*}
\begin{aligned}
&\int_{\Omega^a} W({r_{\epsi_n}^{-1}}\nabla_\alpha
\psi^a_{k,j}|\nabla_3\psi^a_{k,j})\,\dx
\\
&\quad =\int_0^{\delta_j}\int_{\omega^a}
W({r_{\epsi_n}^{-1}}\nabla_\alpha
\psi^a|\nabla_3\psi^a)\,\dx_\alpha\dx_3
+ \int_{\delta_j}^{2\delta_j}\int_{\omega^a}
W({r_{\epsi_n}^{-1}}\nabla_\alpha
\psi^a_{k,j}|\nabla_3\psi^a_{k,j})\,\dx_\alpha\dx_3\\
&\quad\quad + \int_{2\delta_j}^L\int_{\omega^a}
W({r_{\epsi_n}^{-1}}\nabla_\alpha
\psi^a_k|\nabla_3\psi^a_k)\,\dx_\alpha\dx_3\\
&\quad \leq C\tau +\ \frac{C}{\delta_j^p}
\int_{\Omega^a}
|\psi^a_k -\psi^a|^p\,\dx +\int_{\Omega^a}
W({r_{\epsi_n}^{-1}}\nabla_\alpha
\psi^a_k|\nabla_3\psi^a_k)\,\dx 
\end{aligned}
\end{equation*}
for some constant \(C\)  only depending 
on the constant
in \eqref{pgrowth} and on \(p\). Similarly,
\vspace{-.5mm}
\begin{equation*}
\begin{aligned}
&\int_{\Omega^b} W(\nabla_\alpha
\psi^b_{k,j}|{h_{\epsi_n}^{-1}}\nabla_3\psi^b_{k,j})\,\dx\leq
C\tau +\ \frac{C}{\delta^p_j} \int_{\Omega^b}
|\psi^b_k -\psi^b|^p\,\dx +\int_{\Omega^b}
W(\nabla_\alpha
\psi^b_k|{h_{\epsi_n}^{-1}}\nabla_3\psi^b_k)\,\dx.
\end{aligned}
\end{equation*}
Letting \(k\to\infty\) first, then \(j\to\infty\),
and
finally \(\tau\to0^+\) in the two last
estimates and using \eqref{eq:relaxanddecomp}, we conclude
that \vspace{-1mm}
\begin{equation}
\label{eq:almostQW}
\begin{aligned}
&\limsup_{j\to\infty} \limsup_{k\to\infty}
\int_{\Omega^a}
W({r_{\epsi_n}^{-1}}\nabla_\alpha
\psi^a_{k,j}|\nabla_3\psi^a_{k,j})\,\dx
\leq\int_{\Omega^a}
\qe({r_{\epsi_n}^{-1}}\nabla_\alpha
\psi^a|\nabla_3\psi^a)\,\dx,\\
&\limsup_{j\to\infty} \limsup_{k\to\infty}
\int_{\Omega^b}
W(\nabla_\alpha
\psi^b_{k,j}|{h_{\epsi_n}^{-1}}\nabla_3\psi^b_{k,j})\,\dx
\leq \int_{\Omega^b}
\qe(\nabla_\alpha \psi^b|{h_{\epsi_n}^{-1}}\nabla_3\psi^b)\,\dx.
\end{aligned}
\end{equation}

In view of  \eqref{eq:doublelimseq},
\eqref{eq:doubleseqAepsi},
\eqref{eq:almostQW},   the
metrizability of the weak convergence
on bounded sets
together with \eqref{pgrowth},
and invoking once more the relaxation
result 
in \cite{AcFu84} together with \eqref{eq:QWepsi},
 we can
find a subsequence \(k_j\prec k\) such
that \(((\tilde{\bar\bcal}^a_j,
\tilde\psi^a_j),(\tilde
\psi^b_j,\tilde {\bar\bcal}^b_j)):=
((\bar\bcal^a_{k_j,j},\psi^a_{k_j,j}),(
\psi^b_{k_j,j}, \bar\bcal^b_{k_j,j}))\),
\(j\in\Nb\),
satisfies
the requirements stated in Step~2.


\textit{Step~3.} In this step, we prove
that  \(\calG \geq
F^-\).

Let   \(((\bar\bcal^a,\psi^a),
(\psi^b, \bar\bcal^b)) \in
\big(L^p((0,L);\R^{3 \times 2}) \times
W^{1,p}(\Omega^a;\R^3)\big)\times
\big( W^{1,p}(\Omega^b;\R^3)
\times L^p(\omega^b;\R^{3}) \big) \)
be such that \(\calG((\bar\bcal^a,\psi^a),
(\psi^b, \bar\bcal^b)) <\infty\). Fix
\(\delta>0\), and
for
each \(n\in\Nb\), let \(((\bar\bcal^a_n,\psi^a_n),(
\psi^b_n, \bar\bcal^b_n)
)\in \mathcal{A}_{\epsi_n}\) be such
that  \( \psi^a_n
\weakly \psi^a\)  weakly in
\(W^{1,p}(\Omega^a;\R^3)\), \( \bar\bcal^a_n
\weakly
\bar\bcal^a\)
weakly in \(L^{p}((0,L);\R^{3\times
 2})\), \( \psi^b_n \weakly \psi^b\)
weakly in \(W^{1,p}(\Omega^b;\R^3)\),
 \(  \bar\bcal^b_n \weakly \bar\bcal^b\)
weakly in \(L^{p}(\omega^b;\R^3)\),
 and \vspace{-1mm}
\begin{equation*}
\begin{aligned}
\calG((\bar\bcal^a,\psi^a),
(\psi^b, \bar\bcal^b)) + \delta \geq
\liminf _{n\to\infty}
\big(\ell^a_n\calG_{\epsi_n}^a (\psi^a_n) +
\ell^b_n\calG_{\epsi_n}^b (\psi^b_n)
\big).
 \end{aligned}
\end{equation*}
Let 
\(n_k \prec n\) be such that \vspace{-1mm}
\begin{equation*}
\begin{aligned}
\liminf _{n\to\infty}
\big(\ell^a_n\calG_{\epsi_n}^a (\psi^a_n)
+
\ell^b_n\calG_{\epsi_n}^b (\psi^b_n)
\big) =
\lim _{k\to\infty}
\Big(\ell^a_{n_k}\calG_{\epsi_{n_k}}^a (\psi^a_{n_k})
+\ell^b_{n_k}
\calG_{\epsi_{n_k}}^b (\psi^b_{n_k})\Big)
.
 \end{aligned}
\end{equation*}
Fix  \(k\in\Nb\). By Step~2, 
there exists
a sequence  \(((\bar\bcal^a_{n_k,j},\psi^a_{n_k,j}),(
\psi^b_{n_k,j}, \bar\bcal^b_{n_k,j})
)_{j\in\Nb}\subset \mathcal{A}_{\epsi_{n_k}}\)
 such
that  \( \psi^a_{n_k,j}
\weakly_j \psi^a_{n_k}\)  weakly in
\(W^{1,p}(\Omega^a;\R^3)\), \( \bar\bcal^a_{n_k,j}
\weakly_j
\bar\bcal^a_{n_k}\)
weakly in \(L^{p}((0,L);\R^{3\times
 2})\), \( \psi^b_{n_k,j} \weakly_j
\psi^b_{n_k}\) weakly
in \(W^{1,p}(\Omega^b;\R^3)\),
 \(  \bar\bcal^b_{n_k,j} \weakly_j \bar\bcal^b_{n_k}\)
weakly in \(L^{p}(\omega^b;\R^3)\),
 and \vspace{-.5mm}
\begin{equation*}
\begin{aligned}
&\lim_{j\to\infty} \int_{\Omega^a} W({r_{\epsi_{n_k}}^{-1}}\nabla_\alpha
\psi^a_{n_k,j}|\nabla_3\psi^a_{n_k,j})\,\dx
=\calG_{\epsi_{n_k}}^a
(\psi^a_{n_k}),\\
& \lim_{j\to\infty} \int_{\Omega^b}
W(\nabla_\alpha \psi^b_{n_k,j}|{h_{\epsi_{n_k}}^{-1}}\nabla_3\psi^b_{n_k,j})\,\dx
=\calG_{\epsi_{n_k}}^b (\psi^b_{n_k}).
\end{aligned}
\end{equation*}
Hence, \vspace{-1mm}
\begin{equation}
\label{eq:S3energies}
\begin{aligned}
\calG((\bar\bcal^a,\psi^a),
(\psi^b, \bar\bcal^b)) + \delta \geq
 \lim _{k\to\infty}
\lim_{j\to\infty} \big(\ell^a_{n_k}F_{\epsi_{n_k}}^a
(\psi^a_{n_k,j})
+ \ell^b_{n_k}F_{\epsi_{n_k}}^b (\psi^b_{n_k,j})
\big),
\end{aligned}
\end{equation}
\begin{equation}
\label{eq:S3convlp}
\begin{aligned}
\lim _{k\to\infty}
\lim_{j\to\infty} \Vert \psi^a_{n_k,j}
- \psi^a\Vert_{L^p(\Omega^a;\R^3)}
= 0, \quad \lim _{k\to\infty}
\lim_{j\to\infty} \Vert \psi^b_{n_k,j}
- \psi^b\Vert_{L^p(\Omega^b;\R^3)}
= 0,
\end{aligned}
\end{equation}
and
\begin{equation}
\label{eq:S3weakvonvlp}
\begin{aligned}
&  \lim _{k\to\infty}
\lim_{j\to\infty}  \bar\bcal^a_{n_k,j}
= \bar\bcal^a
\hbox{ with
respect to the weak convergence in }
L^{p}((0,L);\R^{3\times
 2}),\\
 & \lim _{k\to\infty}
\lim_{j\to\infty}  \bar\bcal^b_{n_k,j}
= \bar\bcal^b
\hbox{ with
respect to the weak convergence in }
L^{p}(\omega^b;\R^3).
\end{aligned}
\end{equation}

Next, we observe that without
loss of generality, we may assume  that
\(\inf_{k\in\Nn} \ell^a_{n_k}, \, \inf_{k\in\Nn} \ell^b_{n_k}\geq
c>0\). 
Hence, by \eqref{pgrowth}, \vspace{-.5mm}
\begin{equation}\label{lboundsWbyQW}
\begin{aligned}
&\frac1C\Vert ({r_{\epsi_{n_k}}^{-1}}\nabla_\alpha
\psi^a_{n_k,j}|\nabla_3\psi^a_{n_k,j})
\Vert^p_{L^p(\Omega^a;\R^{3\times3})}
+ \frac1C\Vert (\nabla_\alpha \psi^b_{n_k,j}
|{h_{\epsi_{n_k}}^{-1}}\nabla_3\psi^b_{n_k,j})
\Vert^p_{L^p(\Omega^b;\R^{3\times3})}\\
&\quad \leq  \frac1{c} \Big({\ell^a_{n_k}}F_{\epsi_{n_k}}^a
(\psi^a_{n_k,j})
+ \ell^b_{n_k}F_{\epsi_{n_k}}^b (\psi^b_{n_k,j})\Big) + C(|\Omega^a| + |\Omega^b|).
\end{aligned}
\end{equation}

Because the weak topology is metrizable
on bounded sets,
 \eqref{eq:S3energies}--\eqref{lboundsWbyQW}
yield the existence of a diagonal sequence
  $((\bar\bcal^a_{n_k,j_k},
\psi^a_{n_k,j_k}), \allowbreak (
\psi^b_{n_k,j_k}, \bar\bcal^b_{n_k,j_k})
)_{k\in\Nb}$  satisfying
\(((\bar\bcal^a_{n_k,j_k},\psi^a_{n_k,j_k}),
\allowbreak(
\psi^b_{n_k,j_k}, \bar\bcal^b_{n_k,j_k})
)\in \mathcal{A}_{\epsi_{n_k}}\) for
all \(k\in\Nb\),
  \( \psi^a_{n_k,j_k}
\weakly_k \psi^a\)  weakly in
\(W^{1,p}(\Omega^a;\R^3)\), \( \bar\bcal^a_{n_k,j_k}
\weakly_k
\bar\bcal^a\)
weakly in \(L^{p}((0,L);\R^{3\times
 2})\), \( \psi^b_{n_k,j_k} \weakly_k
\psi^b\) weakly
in \(W^{1,p}(\Omega^b;\R^3)\),
 \(  \bar\bcal^b_{n_k,j_k} \weakly_k
\bar\bcal^b\) weakly
in \(L^{p}(\omega^b;\R^3)\),
 and realizing the double limit on the
right-hand side
 of \eqref{eq:S3energies}. Thus,
\vspace{-.5mm}%
\begin{equation}
\label{eq:almostQW-W}
\begin{aligned}
\calG((\bar\bcal^a,\psi^a),
(\psi^b, \bar\bcal^b)) + \delta &\geq
\lim _{k\to\infty}
 \big(\ell^a_{n_k}F_{\epsi_{n_k}}^a (\psi^a_{n_k,j_k})
+ \ell^b_{n_k}F_{\epsi_{n_k}}^b (\psi^b_{n_k,j_k})
\big) \\
&\geq \liminf _{n\to\infty}
 \big(\ell^a_{n}F_{\epsi_{n}}^a (\tilde\psi^a_{n})
+ \ell^b_{n}F_{\epsi_{n}}^b (\tilde\psi^b_{n})\big)
\geq F^-((\bar\bcal^a,\psi^a),
(\psi^b, \bar\bcal^b)),
\end{aligned}
\end{equation}
where \(((\tilde{\bar\bcal}^a_n,\tilde\psi^a_n),(
\tilde\psi^b_n, \tilde{\bar\bcal}^b_n)
):=((\bar\bcal^a_{n_k,j_k},
\psi^a_{n_k,j_k}),  (
\psi^b_{n_k,j_k}, \bar\bcal^b_{n_k,j_k})
)\) if \(n=n_k\), and \(((\tilde{\bar\bcal}^a_n,\tilde\psi^a_n),(
\tilde\psi^b_n, \tilde{\bar\bcal}^b_n)
):=((\bar\bcal^a_n,\allowbreak\psi^a_n),  (
\psi^b_n, \bar\bcal^b_n)
)\) if \(n\not=n_k\). Letting \(\delta\to0^+\)
in \eqref{eq:almostQW-W},
we obtain \(\calG\geq F^-\). This concludes
Step~3, as well
as the proof of Lemma~\ref{lem:WbyQW}.
\end{proof}

We conclude this section with a quantitative
result regarding approximations of
the scaled gradients in this paper,
\((r_\epsi^{-1}\grad_\alpha|\grad_3)\)
and \((\grad_\alpha|h_\epsi^{-1}\grad_3)\),
by appropriate matrices as in  \cite{FJM06}. In
what follows,  \(A_1\) and \(A_2\)
are two \textit{strongly incompatible} matrices in
the sense of \cite{Mat92, CM}. 

We first observe that the quantitative
geometric rigidity theorems \cite[Theorem~3.1]{FJM02}
for the single-well case, \(\mathcal{K}=SO(n)\),
and \cite[Theorem~1.2]{CM} for double-well case, \(\mathcal{K}=SO(n)A_1
\cup SO(n)A_2\), both proved for \(p=2\), hold for any \(p\in(1,\infty)\).  The  \(\mathcal{K}=SO(n)\) case was proved in \cite[Section~2.4]{CS06},
while the \(\mathcal{K}=SO(n)A_1
\cup SO(n)A_2\) case in \cite{DLS}.
Precisely, the following result holds.

\begin{theorem}\label{thm:GRbyFJM}
Let \(n\geq2\) and \(U\subset\RR^n\)
be a bounded Lipschitz domain. Assume
that   \(p\in(1,\infty)\) and that either  
 \(\mathcal{K}=SO(n)\) or  \(\mathcal{K}=SO(n)A_1
\cup SO(n)A_2\), where  \(A_1, \, A_2 \in \RR^{n\times n}\)
are   \textit{strongly incompatible}. Then,
there exists a positive constant, \(C_{U,p,\mathcal{K}}\),
such that for all \(v\in W^{1,p}(U;\RR^n)\),
we can find  \(M\in
\mathcal{K}\) satisfying
\begin{equation*}
\begin{aligned}
\Vert \grad v - M\Vert_{L^p(U;\RR^{n\times
n})} \leq C_{U,p,\mathcal{K}} \Vert \dist (\grad
v, \mathcal{K})\Vert_{L^p(U)}.
\end{aligned}
\end{equation*}
Moreover, the constant \(C_{U,p,\mathcal{K}}\) is
invariant under dilatations and can
be chosen uniformly for a family of
domains that are Bilipschitz equivalent
with  controlled Lipschitz constants.
\end{theorem}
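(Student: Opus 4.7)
The plan is to combine the known $L^2$-rigidity estimates with a Calder\'on--Zygmund type upgrade of the integrability exponent. For the single-well case $\mathcal{K}=SO(n)$, the $p=2$ estimate is the Friesecke--James--M\"uller rigidity result \cite[Theorem~3.1]{FJM02}; for the two-well case $\mathcal{K}=SO(n)A_1\cup SO(n)A_2$ with $A_1,A_2$ strongly incompatible, the $p=2$ estimate is \cite[Theorem~1.2]{CM}. The dilation invariance of the constant $C_{U,p,\mathcal{K}}$ follows from a direct change of variables: if $v_\lambda(x):=\lambda^{-1}v(\lambda x)$, then $\grad v_\lambda(x)=\grad v(\lambda x)$, so both $\|\grad v_\lambda-M\|_{L^p(\lambda^{-1}U)}$ and $\|\dist(\grad v_\lambda,\mathcal{K})\|_{L^p(\lambda^{-1}U)}$ scale by the same power of $\lambda$; similarly, bilipschitz invariance (with controlled constants) is obtained by flattening via the given charts and tracking the Jacobians.

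To pass from $p=2$ to arbitrary $p\in(1,\infty)$, I would follow the strategy used in \cite[Section~2.4]{CS06} for the single-well case and in \cite{DLS} for the two-well case. First, cover $U$ with a Whitney-type family of cubes $\{Q_i\}$, apply the $p=2$ theorem on each cube to obtain a local matrix $M_i\in\mathcal{K}$, and then derive a good-$\lambda$ inequality of the form
\begin{equation*}
\bigl|\{x\in U:\,\mathcal{M}(|\grad v-M|)(x)>A\lambda,\ \mathcal{M}(\dist(\grad v,\mathcal{K}))(x)\le\delta\lambda\}\bigr|\le\theta\,\bigl|\{\mathcal{M}(|\grad v-M|)(x)>\lambda\}\bigr|,
\end{equation*}
where $\mathcal{M}$ is a suitable (localised) maximal operator and $A,\theta,\delta$ depend only on $n$, $p$, and $\mathcal{K}$. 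The standard distribution-function argument then yields the $L^p$-bound for every $p\in(1,\infty)$, with a global matrix $M\in\mathcal{K}$ obtained by averaging the $M_i$ and the error controlled by the Poincar\'e inequality on $U$.

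The main technical obstacle appears in the two-well case: since the nearest well may differ on distinct cubes, the local matrices $M_i$ are not canonical, and one must certify that they sit in the same well at the scales where $\dist(\grad v,\mathcal{K})$ is small. This is precisely the role of strong incompatibility of $A_1$ and $A_2$, which rules out nontrivial rank-one connections between $SO(n)A_1$ and $SO(n)A_2$ and thereby forbids oscillating microstructures jumping between wells; this rigidity of the well-selection is the content of \cite{CM} for $p=2$ and of \cite{DLS} in its $L^p$-form. Once well-selection is stabilised, the Calder\'on--Zygmund step proceeds in parallel with the single-well case, and tracking the constants through the Whitney decomposition and the distribution-function argument yields the uniform dependence of $C_{U,p,\mathcal{K}}$ within bilipschitz classes of domains with controlled Lipschitz constants.
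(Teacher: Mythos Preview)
The paper does not give its own proof of this theorem: it simply records it as a known result, attributing the $p=2$ estimates to \cite[Theorem~3.1]{FJM02} and \cite[Theorem~1.2]{CM}, and the extension to general $p\in(1,\infty)$ to \cite[Section~2.4]{CS06} (single well) and \cite{DLS} (two wells). Your proposal therefore goes further than the paper does, by sketching the actual mechanism behind those references; the overall strategy you describe (local $L^2$ rigidity on Whitney cubes, a good-$\lambda$/maximal-function argument to upgrade integrability, strong incompatibility to stabilise well-selection) is indeed the route taken in \cite{CS06} and \cite{DLS}.

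One imprecision worth flagging: you write that the global matrix $M\in\mathcal{K}$ is ``obtained by averaging the $M_i$''. Since $\mathcal{K}$ is not convex, an average of elements of $\mathcal{K}$ need not lie in $\mathcal{K}$, so this step as phrased does not produce an admissible $M$. In the actual arguments one instead selects a single $M\in\mathcal{K}$ (e.g.\ by applying the $p=2$ rigidity once more on the whole of $U$, or by choosing one of the local $M_i$ and controlling the discrepancy with the others via the overlap estimates), rather than averaging. Apart from this, your sketch is consistent with the cited literature and hence with what the paper relies on.
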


Using Theorem~\ref{thm:GRbyFJM} and
arguing as in  \cite[Theorem~6]{FJM06},
the following result follows.

\begin{proposition}\label{prop:thm6FJM}
Let \(\omega\subset\RR^2\) be a bounded
Lipschitz domain,  \(I\subset \RR\)
an interval, and \(p\in(1,\infty)\).
Assume that either  
 \(\mathcal{K}=SO(3)\) or  \(\mathcal{K}=SO(3)A_1
\cup SO(3)A_2\), where  \(A_1, \, A_2 \in \RR^{3\times 3}\)
are   \textit{strongly incompatible}.
 Then, 
for all \(\psi \in W^{1,p}(\omega\times I;\RR^3)\), there exist constant matrices,
\(M^a\in \mathcal{K}\) and \(M^b\in \mathcal{K}\),
such that \vspace{-.5mm}
\begin{align*}
& \Vert (r_\epsi^{-1}\grad_\alpha \psi
|\grad_3 \psi) - M^a\Vert^p_{L^p(\omega\times
I;\RR^{3\times 3})} \leq \frac{C}{r_\epsi^p}
\Vert \dist ( (r_\epsi^{-1}\grad_\alpha \psi
|\grad_3 \psi) , \mathcal{K})\Vert^p_{L^p(\omega\times
I)},
\\
& \Vert (\grad_\alpha \psi
|h_\epsi^{-1}\grad_3 \psi) - M^b\Vert^p_{L^p(\omega\times
I;\RR^{3\times 3})} \leq \frac{C}{h_\epsi^p}
\Vert \dist ( (\grad_\alpha \psi
|h_\epsi^{-1}\grad_3 \psi) , \mathcal{K})\Vert^p_{L^p(\omega\times
I)},
\end{align*}
where \(C=C(\omega\times I,p, \mathcal{K})\) is a
 positive constant only  depending  on
 \(\omega\times I\),  \(p\), and \(\mathcal{K}\).

\end{proposition}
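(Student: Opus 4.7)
The plan is to derive both estimates from Theorem~\ref{thm:GRbyFJM} by an argument modeled on \cite[Theorem~6]{FJM06}---proved there for $p=2$, $\mathcal{K}=SO(3)$, and the plate geometry---adapted to general $p\in(1,\infty)$, to each of the two scaled-gradient geometries, and to the two-well set $\mathcal{K}=SO(3)A_1\cup SO(3)A_2$. The two estimates being structurally analogous, I would focus on the first one; the plate case is handled by an identical but two-dimensional variant.

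The first step is to remove the scaling by a change of variables: setting $y=(r_\epsi x_\alpha,x_3)$ and $\tilde\psi(y):=\psi(y_\alpha/r_\epsi,y_3)$ on the long, thin cylinder $r_\epsi\omega\times I$, one has $\nabla\tilde\psi(y)=(r_\epsi^{-1}\nabla_\alpha\psi|\nabla_3\psi)(y_\alpha/r_\epsi,y_3)$, and a Jacobian computation shows that the first inequality of the proposition is equivalent to the unscaled bound
\[
\|\nabla\tilde\psi-M^a\|^p_{L^p(r_\epsi\omega\times I;\RR^{3\times3})}\leq C\,r_\epsi^{-p}\,\|\dist(\nabla\tilde\psi,\mathcal{K})\|^p_{L^p(r_\epsi\omega\times I)}
\]
for some $M^a\in\mathcal{K}$. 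Next, I would cover $r_\epsi\omega\times I$ by a family $\{Q_j\}_{j=1}^N$ of overlapping boxes of side $\sim r_\epsi$ with $N\sim r_\epsi^{-1}$ (since $\omega$ is covered by $O(1)$ cross-sectional slices while the length is fixed), chosen so that consecutive boxes $Q_j,\,Q_{j+1}$ share an overlap of volume $\sim r_\epsi^3$ and each $Q_j$ is bilipschitz equivalent to the unit cube with constants independent of $j$ and $\epsi$. Applying Theorem~\ref{thm:GRbyFJM} to each $Q_j$, and exploiting its dilation invariance, yields matrices $M_j\in\mathcal{K}$ satisfying $\|\nabla\tilde\psi-M_j\|^p_{L^p(Q_j)}\leq C\|\dist(\nabla\tilde\psi,\mathcal{K})\|^p_{L^p(Q_j)}$ with a uniform constant, and a triangle inequality on each overlap gives $|M_j-M_{j+1}|^p\leq Cr_\epsi^{-3}\|\dist(\nabla\tilde\psi,\mathcal{K})\|^p_{L^p(Q_j\cup Q_{j+1})}$.

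Setting $M^a:=M_1$, a discrete $\ell^p$ telescoping along the one-dimensional chain produces
\[
|M_j-M^a|^p\leq N^{p-1}\sum_{k=1}^{N-1}|M_{k+1}-M_k|^p\leq Cr_\epsi^{-(p-1)-3}\|\dist(\nabla\tilde\psi,\mathcal{K})\|^p_{L^p(r_\epsi\omega\times I)},
\]
where the second inequality uses the finite overlap of the sets $Q_j\cup Q_{j+1}$. Summing $|M_j-M^a|^p|Q_j|\sim r_\epsi^3|M_j-M^a|^p$ over the $N\sim r_\epsi^{-1}$ boxes yields the contribution $r_\epsi^{-p}\|\dist(\nabla\tilde\psi,\mathcal{K})\|^p$, which, combined with the per-box rigidity bounds, gives the claim after undoing the change of variables. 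For the second estimate, the analogous change of variables $y=(x_\alpha,h_\epsi x_3)$ sends $\omega\times I$ to the thin plate $\omega\times h_\epsi I$, and the cover now requires $N\sim h_\epsi^{-2}$ cubes of side $h_\epsi$ arranged in a two-dimensional index grid. The main obstacle is that a naive one-dimensional chaining in this two-dimensional index set produces a suboptimal factor $h_\epsi^{-p-1}$ rather than the sharp $h_\epsi^{-p}$; recovering the correct exponent requires the refined construction of a piecewise-constant rotation field $R:\omega\to\mathcal{K}$, together with a Poincar\'e/BV-type estimate on the aggregated grid, which is precisely the content of the proof of \cite[Theorem~6]{FJM06}. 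That argument transfers to general $p\in(1,\infty)$ and to the double-well set $\mathcal{K}$ without substantial modification, thanks to the $L^p$ rigidity provided by Theorem~\ref{thm:GRbyFJM}.
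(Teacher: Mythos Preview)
Your proposal is correct and matches the paper's approach: the paper gives no detailed proof, simply stating that the result follows by ``arguing as in \cite[Theorem~6]{FJM06}'' combined with Theorem~\ref{thm:GRbyFJM}, and your sketch is precisely an outline of that argument adapted to general $p$ and to the two-well set. Your observation that the rod case admits the simpler one-dimensional chaining while the plate case requires the full refined construction of \cite[Theorem~6]{FJM06} is accurate and, in fact, more explicit than what the paper provides.
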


\section{Case $\ell\in\RR^+$}\label{Sect:l+}

In this section, we treat the $\ell\in\RR^+$ case. We start by establishing
 some
auxiliary results concerning this case in 
Section~\ref{Subs:l+ar}.
Then, in Section~\ref{Subs:l+proof},
we prove Theorem~\ref{thm:ellr+}. Finally,
in Section~\ref{Subs:rod}, we recover
the nonlinear string model with bending
moments and generalized boundary conditions
 in \cite{RiMSc}.

\subsection{Auxiliary results}\label{Subs:l+ar}

As in \cite{BFM03}, to individualize
the new variables
\(\bar\bcal^a\) and \(\bar\bcal^b\)
in the elastic
part of the total energy,  we introduce,
for \(0<\epsi\leq 1\), the functional
\(F_\epsi:
\big(
 L^p((0,L);\R^{3 \times 2}) \times W^{1,p}(\Omega^a;\R^3)\big)\times
\big( W^{1,p}(\Omega^b;\R^3)
\times L^p(\omega^b;\R^{3}) \big) \to
(-\infty,\infty]
\) defined by \vspace{-.5mm}
\begin{equation}
\label{Fepsi}
\begin{aligned}
F_\epsi((\bar\bcal^a,\psi^a), (\psi^b,
\bar\bcal^b)):=
\begin{cases}
\displaystyle
F_\epsi^a(\psi^a) +
\tfrac{h_\varepsilon}{r_\varepsilon^2}F_\epsi^b(\psi^b) & \hbox{if }  ((\bar\bcal^a,\psi^a),(
\psi^b, \bar\bcal^b)
)\in \mathcal{A}_\epsi\\
\infty & \hbox{otherwise,}
\end{cases}
\end{aligned}
\end{equation}
where \(F^a_\epsi\) and \(F^b_\epsi\)
are given by \eqref{Fabepsi}
and \(\A_\epsi\)  by \eqref{Aepsi}.

The next proposition is proved  in \cite[Proposition 3.1]{GGLM02} and provides a dense subspace of the space
\(\A^p_{\ell_+}\)
 introduced in \eqref{Aell+}. This density
result  will be  useful in
Theorem~\ref{Thm:FepsitoF} below, where
we prove an auxiliary
\(\Gamma\)-convergence result concerning
the sequence of functionals \((F_\epsi)_{\epsi>0}\). 
\begin{proposition}\label{prop:density}
Let  ${\mathcal A}_\ellp^p$ be the space
 defined in \eqref{Aell+}
and let \(V\) be the space defined by
\vspace{-.5mm}
\begin{equation}
\label{V}
\begin{aligned}
V = \big\{(\psi^a, \psi^b) \in W^{1,\infty}(\Omega^a;\mathbb
R^3)\times W^{1,\infty}&(\Omega^b;\mathbb
R^3)\!:\, \psi^a \hbox{ is independent
of } x_\alpha,
\\
  &\psi^b
\hbox{ is independent of }x_3,\, \psi^a(0_3)
= \psi^b(0_\alpha)\big\}.
\end{aligned}
\end{equation}
Then, $V$ is dense in ${\mathcal A}^p_{\ell_+}$
with
respect to the $W^{1,p}\times W^{1,p}$-norm.
\end{proposition}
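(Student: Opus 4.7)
The plan is to first replace \(\psi^a\) and \(\psi^b\) by smooth (hence \(W^{1,\infty}\)) approximants via mollification, and then perform a local correction near the junction point to enforce the pointwise matching \(\psi^a(0_3) = \psi^b(0_\alpha)\) required of elements of \(V\). I would begin by exploiting the independence of \(x_\alpha\) and \(x_3\) to identify \(\psi^a\) and \(\psi^b\) with elements of \(W^{1,p}((0,L);\R^3)\) and \(W^{1,p}(\omega^b;\R^3)\), respectively, and produce smooth approximants \(\varphi^a_\delta\) and \(\varphi^b_\delta\) (after suitable extensions across the respective boundaries) converging to them in \(W^{1,p}\). These are \(W^{1,\infty}\) on the bounded underlying sets, but generically \(\varphi^a_\delta(0_3) \neq \varphi^b_\delta(0_\alpha)\).

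Setting \(K_\delta := \varphi^a_\delta(0_3)\), which is well-defined since \(W^{1,p}((0,L);\R^3) \hookrightarrow C([0,L];\R^3)\), I would correct only the \(\psi^b\)-side by defining
\begin{equation*}
\tilde\varphi^b_{\delta,j}(x_\alpha) := \varphi^b_\delta(x_\alpha) + \eta_j(x_\alpha)\bigl(K_\delta - \varphi^b_\delta(x_\alpha)\bigr),
\end{equation*}
where \(\eta_j \in C^\infty_c(\omega^b)\) satisfies \(\eta_j(0_\alpha)=1\) and has support contained in a ball around \(0_\alpha\) whose radius tends to zero. Then \(\tilde\varphi^b_{\delta,j}(0_\alpha) = K_\delta = \varphi^a_\delta(0_3)\), so the pair \((\varphi^a_\delta,\tilde\varphi^b_{\delta,j})\) belongs to \(V\) for every admissible \(\delta,j\).

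The heart of the argument is to show that, along a suitable diagonal \(j=j(\delta)\), the error \(\eta_j(K_\delta - \varphi^b_\delta)\) tends to zero in \(W^{1,p}(\omega^b;\R^3)\). For \(p>2\), Morrey's embedding yields uniform convergence \(\varphi^b_\delta \to \psi^b\), hence \(K_\delta - \varphi^b_\delta(0_\alpha) \to \psi^a(0_3) - \psi^b(0_\alpha) = 0\); combined with the Lipschitz control of \(\varphi^b_\delta\) on shrinking balls, a standard cutoff with bounded \(W^{1,p}\)-norm suffices. For \(p \leq 2\) the junction condition is \emph{not} imposed on \(\mathcal{A}^p_{\ell_+}\) and the discrepancy \(K_\delta - \varphi^b_\delta(0_\alpha)\) need not be small; instead I would exploit the fact that singletons have zero \(W^{1,p}\)-capacity in \(\R^2\) for \(p\leq 2\), which supplies \(\eta_j\) with \(\eta_j(0_\alpha)=1\) and \(\|\eta_j\|_{W^{1,p}(\omega^b)} \to 0\) (a scaled truncation for \(p<2\), the classical logarithmic cutoff for \(p=2\)). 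In either case the residual term \(\int_{\mathrm{supp}\,\eta_j}|\nabla \varphi^b_\delta|^p\,\dx_\alpha\) vanishes as \(j\to\infty\) by absolute continuity of the integral, and a standard diagonal extraction yields a sequence in \(V\) converging to \((\psi^a,\psi^b)\) in the \(W^{1,p}\times W^{1,p}\)-norm.

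The main technical obstacle is the \(p\leq 2\) case: pointwise matching at the junction has to be forced even though \(K_\delta - \varphi^b_\delta(0_\alpha)\) need not be small, which compels one to use cutoffs of vanishing \(W^{1,p}\)-norm whose existence is precisely the zero-capacity-of-points phenomenon in \(\R^2\) in this regime. The \(p>2\) case is, by contrast, essentially a quantitative version of a continuous-trace argument, and modifying only \(\varphi^b_\delta\) rather than both components avoids any bookkeeping between two simultaneous corrections.
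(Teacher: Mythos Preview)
The paper does not supply its own proof; it cites \cite[Proposition~3.1]{GGLM02}. Your sketch captures the standard argument, and the dichotomy you identify---Morrey's embedding when \(p>2\), vanishing \(p\)-capacity of points in \(\R^2\) when \(p\le 2\)---is exactly the right mechanism.

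One technical simplification for the \(p>2\) case: with your correction \(\eta_j\bigl(K_\delta - \varphi^b_\delta\bigr)\), the gradient contains the term \((K_\delta - \varphi^b_\delta)\nabla\eta_j\), and since for \(p>2\) every cutoff supported in \(B(0_\alpha,r_j)\) with \(\eta_j(0_\alpha)=1\) satisfies \(\|\nabla\eta_j\|_{L^p}\sim r_j^{(2-p)/p}\to\infty\) as \(r_j\to 0\), one must balance the shrinking support against both \(c_\delta:=K_\delta-\varphi^b_\delta(0_\alpha)\to 0\) and the (possibly unbounded in \(\delta\)) Lipschitz constant of \(\varphi^b_\delta\). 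It is cleaner to use the \emph{constant} correction \(c_\delta\,\eta\) with a \emph{fixed} \(\eta\in C^\infty_c(\omega^b)\), \(\eta(0_\alpha)=1\): then the error in \(W^{1,p}\) is exactly \(|c_\delta|\,\|\eta\|_{W^{1,p}}\to 0\), with no diagonal balancing required. Your \(p\le 2\) argument via capacity is correct as stated.
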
 

Next, we prove a relaxation result that
will be useful in  Theorem~\ref{Thm:FepsitoF} to establish an integral representation of the \(\Gamma\)-limit
of
the sequence  \((F_\epsi)_{\epsi>0}\).

\begin{lemma}\label{lem:relaxationresult}
 Let \(W:\R^{3\times 3} \to \R\) be
a Borel function
satisfying
\eqref{pgrowth},  let \(\A^p_\ellp\)
be given by \eqref{Aell+}, and let \(\bar a,\,
\ell\in\RR^+\). Then, for all $(\psi^a,
\psi^b)\in
{\A}^p_{\ell+}$ and $(\bar\bcal^a, \bar\bcal^b)\in
L^p((0,L);\mathbb
R^{3\times 2})\times L^p(\omega^b;\mathbb
R^3)$, the
functional \vspace{-1.5mm}
\begin{equation*}
\begin{aligned}
 J^p_{\ell_+}((\bar\bcal^a, \psi^a),(\psi^b,\bar\bcal
^b)):=\inf\bigg\{&\liminf_{j\to \infty}
\bigg( \bar{a}\int_0^L
W({\bar a}^{-1}\bar\bcal^a_j| \nabla_3
\psi_j^a)\,\dx_3
+\ell\int_{\omega^b}W(\nabla_\alpha
\psi_j^b| \bar\bcal^b_j)\,\dx_\alpha\bigg)\!:\\
&\enspace(\psi^a_j,
\psi^b_j)\in {\A}^p_{\ell_+},\, \psi_j^a \rightharpoonup
\psi^a \hbox{ in } W^{1,p}((0,L);\mathbb
R^3),\, \bar\bcal^a_j \rightharpoonup
\bar\bcal^a \hbox{
in }L^p((0,L);\mathbb R^{3\times 2}),\\
&\enspace  \psi_j^b \rightharpoonup
\psi^b \hbox{ in }
W^{1,p}(\omega^b;\mathbb R^3),\, \bar\bcal^b_j
\rightharpoonup
\bar\bcal^b \hbox{ in }L^p(\omega^b;\mathbb
R^{3})
 \bigg\}
\end{aligned}
\end{equation*}
coincides with \vspace{-1mm}
\begin{equation}\label{eq:relaxedee}
\begin{aligned}
\bar{a}\int_0^L\ce(\bar{a}^{-1}\bar\bcal^a|\nabla_3
\psi^a)\,\dx_3+\ell\int_{\omega^b}\qce(\nabla_\alpha
\psi^b|\bar\bcal^b)\,\dx_\alpha.
\end{aligned}
\end{equation}
\end{lemma}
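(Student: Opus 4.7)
The plan is to prove the two inequalities \(J^p_{\ell_+}\ge \eqref{eq:relaxedee}\) and \(J^p_{\ell_+}\le \eqref{eq:relaxedee}\) separately. The two summands in \eqref{eq:relaxedee} decouple naturally, since \(\bar\bcal^a\) lives over \((0,L)\) and \(\bar\bcal^b\) over \(\omega^b\) and, by hypothesis, they are unconstrained \(L^p\) functions; the only coupling between the \(a\)- and the \(b\)-pieces comes from the junction condition built into \(\A^p_{\ell_+}\) when \(p>2\).

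\textbf{Lower bound.} Given any admissible sequence \((\psi^a_j,\psi^b_j,\bar\bcal^a_j,\bar\bcal^b_j)\), the first term of \eqref{eq:relaxedee} is controlled by weak-\(L^p\) lower semicontinuity applied to the joint integrand \((N,M_3)\mapsto\ce(\bar a^{-1}N|M_3)\), which is convex on \(\RR^{3\times 2}\times\RR^3\) by \eqref{eq:cxty1d} and Remark~\ref{ccxqcx} (in one spatial dimension the cross-quasiconvex-convex envelope coincides with the joint convex envelope of \(W\)), and to the pair \((\bar\bcal^a_j,\nabla_3\psi^a_j)\weakly(\bar\bcal^a,\nabla_3\psi^a)\) in \(L^p\). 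For the second term, \(\qce\) is by construction cross-quasiconvex-convex and inherits \eqref{growthcqc}, so the weak lower semicontinuity theorem in \(W^{1,p}\times L^p\) from \cite{FKP94,LDR00} recalled in Section~\ref{Sect:prelim} applies directly to \((\nabla_\alpha\psi^b_j,\bar\bcal^b_j)\weakly(\nabla_\alpha\psi^b,\bar\bcal^b)\).

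\textbf{Upper bound.} By Proposition~\ref{prop:density} we may assume \((\psi^a,\psi^b)\in V\) (hence Lipschitz), and by a standard density reduction in \(L^p\) the targets \((\bar\bcal^a,\bar\bcal^b)\) may be taken bounded. Recovery sequences are built independently on each piece from the cell-problem formulas recalled in Section~\ref{Sect:prelim}: for the \(a\)-piece, the convex envelope characterization of \(\ce\) (equivalently, the 1D cross-quasiconvex-convex formula, by \eqref{eq:cxty1d}) combined with a fine-scale tiling in \(x_3\) produces \(\psi^a_j\to\psi^a\) in \(L^\infty((0,L);\RR^3)\), \(\nabla_3\psi^a_j\weakly\nabla_3\psi^a\) and \(\bar\bcal^a_j\weakly\bar\bcal^a\) in \(L^p\), with \(\int_0^L W(\bar a^{-1}\bar\bcal^a_j|\nabla_3\psi^a_j)\,\dx_3\to\int_0^L\ce(\bar a^{-1}\bar\bcal^a|\nabla_3\psi^a)\,\dx_3\); for the \(b\)-piece, the analogous tiling applied to the \(\qce\) cell formula yields \(\psi^b_j\to\psi^b\) in \(L^\infty\), \((\nabla_\alpha\psi^b_j,\bar\bcal^b_j)\weakly(\nabla_\alpha\psi^b,\bar\bcal^b)\) in \(L^p\), and convergence of the \(W\)-integral to \(\int_{\omega^b}\qce(\nabla_\alpha\psi^b|\bar\bcal^b)\,\dx_\alpha\). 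Because the inner perturbations live in \(W^{1,\infty}_0\) and \(L^\infty_0\), the boundary conditions \(\psi^a_j=\ffi^a_0\) on \(\Gamma^a\) and \(\psi^b_j=\ffi^b_0\) on \(\Gamma^b\) hold automatically.

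\textbf{Main obstacle: enforcing the junction.} The delicate point is that when \(p>2\) the above recovery sequences need not satisfy \(\psi^a_j(0_3)=\psi^b_j(0_\alpha)\), even though the equality holds in the limit. The fix is to run the slicing argument of Step~2 in the proof of Lemma~\ref{lem:WbyQW} near the junction: choose a smooth cut-off \(\phi_j\) with \(\phi_j=0\) on \([-\delta_j,\delta_j]\) and \(\phi_j=1\) outside \([-2\delta_j,2\delta_j]\), and replace the recovery sequences by
\[
\tilde\psi^a_j:=\phi_j(x_3)\psi^a_j+(1-\phi_j(x_3))\psi^a,\qquad
\tilde\psi^b_j:=\phi_j(|x_\alpha|)\psi^b_j+(1-\phi_j(|x_\alpha|))\psi^b.
\]
Since \(\phi_j(0)=0\), we obtain \(\tilde\psi^a_j(0_3)=\psi^a(0_3)=\psi^b(0_\alpha)=\tilde\psi^b_j(0_\alpha)\), so \((\tilde\psi^a_j,\tilde\psi^b_j)\in\A^p_{\ell_+}\). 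The excess energy produced in the transition layers is controlled by \eqref{pgrowth}, the Lipschitz bound on \((\psi^a,\psi^b)\in V\), and the equi-integrability of the recovery energy densities (arranged via the Acerbi--Fusco decomposition \cite{AcFu84,FMP98}), exactly as in the estimate preceding \eqref{eq:almostQW}. A diagonalization in the pair \((j,\delta_j)\), in the same spirit as Step~3 of the proof of Lemma~\ref{lem:WbyQW}, produces a single recovery sequence realizing \eqref{eq:relaxedee} and matches the prescribed weak limits of all four variables, thereby closing the upper bound.
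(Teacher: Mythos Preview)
Your proof is essentially correct and parallels the paper's argument closely for the lower bound and for the junction repair, but the upper-bound route differs in one structural respect. The paper does \emph{not} reduce to regular targets via Proposition~\ref{prop:density}; instead it directly invokes the abstract relaxation theorems \cite[Theorem~4.17]{LDR00} and \cite[Theorem~5.4]{FKP94} (combined with Remark~\ref{ccxqcx}) to obtain, for \emph{arbitrary} \((\psi^a,\psi^b,\bar\bcal^a,\bar\bcal^b)\), recovery sequences realizing \eqref{eq:one} and \eqref{eq:two} separately. This already finishes the case \(p\le 2\). For \(p>2\) the paper then applies the decomposition lemmas \cite[Lemma~2.1]{FMP98} and \cite[Lemma~8.13]{FL07} to upgrade those recovery sequences to ones with \(p\)-equi-integrable gradients/fields, and only \emph{then} runs the two-cut-off slicing (one radial in \(x_\alpha\), one in \(x_3\)) to force \(\psi^a_j(0_3)=\psi^b_j(0_\alpha)\). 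Your route---density to \(V\) and bounded \(\bar\bcal\)'s first, then explicit cell-formula tilings---works too, but requires an additional lsc/continuity argument to pass from \(V\) back to \(\A^p_{\ell_+}\), which the paper's direct approach avoids.

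One small slip: you mention that the recovery sequences automatically satisfy \(\psi^a_j=\ffi^a_0\) on \(\Gamma^a\) and \(\psi^b_j=\ffi^b_0\) on \(\Gamma^b\). Those boundary conditions are \emph{not} part of the space \(\A^p_{\ell_+}\) (see \eqref{Aell+}); they belong to \(\Phi^p_{\ell_+}\) and are handled separately, later, via Lemma~\ref{lem:ontrace}. For the present lemma only the junction condition (when \(p>2\)) matters.
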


\begin{proof}[Proof]
Fix \((\psi^a, \psi^b)\in
{\A}^p_{\ell+}$ and $(\bar\bcal^a, \bar\bcal^b)\in
L^p((0,L);\mathbb
R^{3\times 2})\times L^p(\omega^b;\mathbb
R^3)\). Because \(\ce\leq W\),  \(\qce\leq
W\), and the functional in \eqref{eq:relaxedee}
is sequentially weakly lower semincontinuous
in \(\big( L^p((0,L);\mathbb
R^{3\times 2})
\times W^{1,p}((0,L);\allowbreak\mathbb
R^3) \big) \times \big(W^{1,p}(\omega^b;\mathbb R^3) \times
L^p(\omega^b;\mathbb
R^{3}) \big)\) (see Section~\ref{Sect:prelim}),  we have \vspace{-1mm}
\begin{equation*}
\begin{aligned}
J^p_{\ell_+}((\bar\bcal^a, \psi^a),(\psi^b,\bar\bcal
^b)) \geq \bar{a}\int_0^L\ce(\bar{a}^{-1}\bar\bcal^a|\nabla_3
\psi^a)\,\dx_3+\ell\int_{\omega^b}\qce(\nabla_\alpha
\psi^b|\bar\bcal^b)\,\dx_\alpha.
\end{aligned}
\end{equation*}
To prove the converse inequality, we
start by observing
that in view of \cite[Theorem~4.17]{LDR00}
(see also 
\cite[Theorem~5.4]{FKP94}) and Remark~\ref{ccxqcx}, we
have \vspace{-.5mm}
\begin{align}
&\inf\bigg\{\liminf_{k\to \infty}\bar{a}\int_0^L
W({\bar
a}^{-1}\bar\bcal^a_k| \nabla_3 \psi_k^a)\,\dx_3\!:\,
\psi_k^a \rightharpoonup \psi^a \hbox{
in } W^{1,p}((0,L);\mathbb
R^3), \,
\bar\bcal^a_k \rightharpoonup
\bar\bcal^a \hbox{
in }L^p((0,L);\mathbb R^{3\times 2})\bigg\}
\nonumber
\\ &\quad ={\bar a}\int_0^L \ce({\bar
a}^{-1}\bar\bcal^a|
\nabla_3 \psi^a)\,\dx_3, \label{eq:one}\\
&\inf\bigg\{\liminf_{k\to \infty}\int_{\omega^b}W(\nabla_\alpha
\psi_k^b| \bar\bcal^b_k)\,\dx_\alpha\!:
\psi_k^b \rightharpoonup
\psi^b \hbox{ in } W^{1,p}(\omega^b;\mathbb
R^3),\, \bar\bcal^b_k
\rightharpoonup \bar\bcal^b \hbox{ in
}L^p(\omega^b;\mathbb
R^{3})\bigg\} \nonumber
 \\
 &\quad=\int_{\omega^b}\qce(\nabla_\alpha
\psi^b| \bar\bcal^b)\,\dx_\alpha. \label{eq:two}
\end{align} 
This completes the proof
in the case  in which $p\in(1, 2]$.

Assume now that $p>2$. By the  relaxation
results in
\eqref{eq:one} and \eqref{eq:two},
by  the  decomposition lemmas \cite[Lemma~2.1]{FMP98}
and \cite[Lemma~8.13]{FL07}, and by
\eqref{pgrowth},
we can find sequences 
\((\bar\bcal^a_k,\psi^a_k
)_{k\in\Nb}\subset  L^p((0,L);\mathbb
R^{3\times 2})
\times W^{1,p}((0,L);\mathbb
R^3)   \) and  
\((
\psi^b_k, \bar\bcal^b_k)_{k\in\Nb}\subset
W^{1,p}(\omega^b;\mathbb R^3) \times
L^p(\omega^b;\mathbb
R^{3})   \) such that
\(\psi^a_k \weakly \psi^a\) weakly in
 \(W^{1,p}((0,L);\R^3)\), \( 
 \bar\bcal^a_k \weakly \bar\bcal^a\)
weakly in \(L^{p}((0,L);\R^{3\times
 2})\), \(\psi^b_k \weakly \psi^b\)
weakly in
 \(W^{1,p}(\omega^b;\R^3)\),  \( 
 \bar\bcal^b_k \weakly \bar\bcal^b\)
weakly in \(L^{p}(\omega^b;\R^3)\),
  \((|\grad_3 \psi^a_k|^p+|\bar\bcal^a_k|^p)_{k\in\Nb}
 \subset L^1((0,L))\) and \((|\grad_\alpha\psi^b_k|^p+|\bar\bcal^b_k|^p)_{k\in\Nb}
 \subset L^1(\omega^b)\) are equi-integrable,
and \vspace{-1.5mm}
\begin{equation*}
\begin{aligned}
& \lim_{k\to \infty} \bar{a}\int_0^L
W({\bar
a}^{-1}\bar\bcal^a_k| \nabla_3 \psi_k^a)\,\dx_3
=\bar{a}\int_0^L\ce(\bar{a}^{-1}\bar\bcal^a|\nabla_3
\psi^a)\,\dx_3,\\
&\lim_{k\to \infty}\int_{\omega^b}W(\nabla_\alpha
\psi_k^b| \bar\bcal^b_k)\,\dx_\alpha
= \int_{\omega^b}\qce(\nabla_\alpha
\psi^b| \bar\bcal^b)\,\dx_\alpha.
\end{aligned}
\end{equation*}
To conclude, we are left to find sequences
that do not
increase the above integral limits and,
simultaneously,
satisfy the junction condition  $\psi^a_k(0_3)=\psi^b_k(0_\alpha)$.
This can be achieved by a similar slicing
argument to that used in Step~2 of the
proof of Lemma~\ref{lem:WbyQW}.
The main difference here is that instead
of just considering
one sequence of smooth cut-off function,
we consider
two; precisely, for a well-chosen sequence
\((\delta_j)_{j\in\NN}\)
of positive numbers convergent to zero,
we take  \((\phi_j^a)_{j\in\NN} \subset
 C^\infty(\R;[0,1])\) and \((\phi_j^b)_{j\in\NN}
\subset
 C^\infty(\R^2;[0,1])\)
 such that \(\phi_j^a(x_3)
= 0\)
for \(x_3\in (-\delta_j,\delta_j)\),
 \(\phi_j^a(x_3) =
1\) for \(|x_3|\geq 2\delta_j\),  \(\phi_j^b(x_\alpha)
= 0\)
for \(|x_\alpha|<\delta_j\),  \(\phi_j^b(x_\alpha)
=
1\) for \(|x_\alpha|\geq 2\delta_j\),
and
 \((\Vert\grad_3\phi_j^a\Vert_\infty
 + \Vert\grad_\alpha\phi_j^b\Vert_\infty)
\leq \tfrac{c}{\delta_j}\), where \(c>0\)
is independent
of \(j\in\NN\).
Then, defining \vspace{-1.mm}
\begin{equation*}
\begin{aligned}
&\psi_{k,j}^a(x_3):= \phi_j^a(x_3)\psi^a_k(x_3)
+
(1-\phi^a_j(x_3))\psi^a(x_3), \quad x_3\in
(0,L),\\
& \psi_{k,j}^b(x_\alpha):= \phi_j^b(x_\alpha)\psi^b_k(x)
+
(1-\phi_j^b(x_\alpha))\psi^b(x_\alpha),
\quad x_\alpha\in \omega^b,
\end{aligned}
\end{equation*}
and arguing as in Step~2 of the proof
of Lemma~\ref{lem:WbyQW},
we can find a subsequence \(k_j \prec k\) and \((\tilde \psi^a_j, \tilde
\psi^b_j) \in
\A^p_\ellp\), \(j\in\NN\), such that
\((\tilde \psi^a_j
)_{j\in\Nb} \), \((\bar\bcal^a_{k_j})_{j\in\NN}\),
    \((\tilde \psi^b_j)_{j\in\NN}\),
and \( (\bar\bcal^b_{k_j})_{j\in\Nb}\) 
are admissible sequences for \(J^p_{\ell_+}((\bar\bcal^a,
\psi^a),(\psi^b,\bar\bcal
^b))\) and \vspace{-1.5mm}
\begin{equation*}
\begin{aligned}
&\liminf_{j\to\infty} \bigg( \bar{a}\int_0^L
W({\bar a}^{-1}\bar\bcal^a_{k_j}| \nabla_3
\tilde \psi_j^a)\,\dx_3
+\ell\int_{\omega^b}W(\nabla_\alpha
\tilde \psi_j^b| \bar\bcal^b_{k_j})\,\dx_\alpha\bigg)\\
&\quad \leq\bar{a}\int_0^L\ce(\bar{a}^{-1}\bar\bcal^a|\nabla_3
\psi^a)\,\dx_3+\ell\int_{\omega^b}\qce(\nabla_\alpha
\psi^b|\bar\bcal^b)\,\dx_\alpha.
\end{aligned}
\end{equation*}
This completes the proof of Lemma~\ref{lem:relaxationresult}.
\end{proof}

Finally, we prove  a \(\Gamma\)-convergence
result
for the
sequence \((F_\epsi)_{\epsi>0}\) of
functionals defined by \eqref{Fepsi}.

\begin{theorem}\label{Thm:FepsitoF}
Let \(W:\R^{3\times 3} \to \R\) be a
Borel function satisfying
\eqref{pgrowth},  assume that \(\ell\in\RR^+\),
and let \(F_\epsi\) be given by \eqref{Fepsi}.
Then, \((F_\epsi)_{\epsi>0}\)
\(\Gamma\)-converges,
with respect to the weak topology in
\(
\big(L^p((0,L);\R^{3 \times 2}) \times
W^{1,p}(\Omega^a;\R^3)\big)\times
\big( W^{1,p}(\Omega^b;\R^3)
\times L^p(\omega^b;\R^{3}) \big) \),
to the functional
\(F:
\big(L^p((0,L);\R^{3 \times 2}) \times
W^{1,p}(\Omega^a;\R^3)\big)\times
\big( W^{1,p}(\Omega^b;\R^3)
\times L^p(\omega^b;\R^{3}) \big) \to
(-\infty,\infty]
\) defined by
\begin{equation*}
\begin{aligned}
F((\bar\bcal^a,\psi^a), (\psi^b, \bar\bcal^b)):=
\begin{cases}
\displaystyle
F^a(\bar\bcal^a,\psi^a) +\ell 
F^b(\psi^b,\bar\bcal^b) & \hbox{if }
 (\psi^a,
\psi^b)
\in \mathcal{A}^p_{l^+}\\
\infty & \hbox{otherwise,}
\end{cases}
\end{aligned}
\end{equation*}
where \(\A^p_{l^+}\) is given by \eqref{Aell+}
and, for \(\bar a=|\omega^a|\), \vspace{-1mm}
\begin{equation}
\label{Fab}
\begin{aligned}
F^a(\bar\bcal^a,\psi^a):= \bar a\int_0^L
\ce({\bar a^{-1}}\bar\bcal^a|\nabla_3\psi^a)\,\dx_3,
\quad %
F^b(\psi^b,\bar\bcal^b):= \int_{\omega^b}
\qce(\nabla_\alpha \psi^b|\bar\bcal^b)\,\dx_\alpha.
\end{aligned}
\end{equation}
\end{theorem}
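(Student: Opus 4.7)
The plan is to establish both the $\Gamma$-liminf and $\Gamma$-limsup inequalities. First, I would apply Lemma~\ref{lem:WbyQW} (with $\ell_n^a=1$ and $\ell_n^b=h_{\epsi_n}/r_{\epsi_n}^2$, as indicated in Remark~\ref{coefWbyQW}) to reduce matters to the case where $W$ is replaced by its quasiconvex envelope $\qe$; since $\qe$ inherits the $p$-growth in \eqref{pgrowth} and is continuous, I may assume henceforth that $W$ itself enjoys these properties. Lemma~\ref{lem:l21GGLM} then guarantees that any weak accumulation point $(\psi^a,\psi^b)$ of a sequence in $\A_\epsi$ with bounded energy lies in $\A^p_{\ell_+}$, so the bound $F=\infty$ off this set is automatic and attention focuses on sequences with finite limit.

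For the $\Gamma$-liminf inequality, I would split the total energy into its $a$-part and $b$-part. Since $\ce$ is convex, Jensen's inequality applied over $\omega^a$ for each fixed $x_3$ yields
\[
\int_{\omega^a} W\bigl(r_\epsi^{-1}\nabla_\alpha\psi^a_\epsi|\nabla_3\psi^a_\epsi\bigr)\,\dx_\alpha \geq \bar a\,\ce\Bigl(\bar a^{-1}\bar\bcal^a_\epsi(x_3)\,\Big|\,\bar a^{-1}\int_{\omega^a}\nabla_3\psi^a_\epsi(\cdot,x_3)\,\dx_\alpha\Bigr);
\]
integrating over $x_3$, using the weak convergence $\bar a^{-1}\int_{\omega^a}\nabla_3\psi^a_\epsi\,\dx_\alpha\weakly\nabla_3\psi^a$ in $L^p((0,L);\R^3)$ (because $\psi^a$ is independent of $x_\alpha$) together with the weak lower semicontinuity of convex integrals on $L^p$, I obtain the bound $\bar a\int_0^L\ce(\bar a^{-1}\bar\bcal^a|\nabla_3\psi^a)\,\dx_3$. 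For the $b$-part the argument is more delicate because $\qe$ need not be convex in its third column; here I would exploit that the weak limit $\psi^b$ is independent of $x_3$ to ``freeze'' $\nabla_\alpha\psi^b_\epsi$ in $x_3$ via a Lipschitz truncation (or a decomposition lemma of \cite{FMP98} type), then apply Jensen in $x_3$ against the convex variable and invoke the integral representation for cross-quasiconvex-convex envelopes from \cite[Theorem~4.17]{LDR00} (equivalently \cite[Theorem~5.4]{FKP94} or \cite[Proposition~A]{BFM09}, cf.~\eqref{eq:Qast=Qhat}) to recover $\qce$ in the limit, yielding $\ell\int_{\omega^b}\qce(\nabla_\alpha\psi^b|\bar\bcal^b)\,\dx_\alpha$.

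For the $\Gamma$-limsup, by a standard diagonalization combined with Lemma~\ref{lem:relaxationresult} and Proposition~\ref{prop:density} (plus mollification of $\bar\bcal^a,\bar\bcal^b$), it suffices to produce, for smooth targets with $(\psi^a,\psi^b)\in V$ and $\bar\bcal^a,\bar\bcal^b$ Lipschitz, a recovery sequence whose energy approximates the unrelaxed quantity $\bar a\int_0^L W(\bar a^{-1}\bar\bcal^a|\nabla_3\psi^a)\,\dx_3+\ell\int_{\omega^b} W(\nabla_\alpha\psi^b|\bar\bcal^b)\,\dx_\alpha$. The natural candidate is
\[
\psi^a_\epsi(x_\alpha,x_3):=\psi^a(x_3)+r_\epsi\bar a^{-1}\bar\bcal^a(x_3)\,x_\alpha,\qquad \psi^b_\epsi(x_\alpha,x_3):=\psi^b(x_\alpha)+h_\epsi\,\bar\bcal^b(x_\alpha)\,x_3,
\]
which satisfies the averaging constraints in $\A_\epsi$ \emph{exactly} and whose scaled gradients converge uniformly to $(\bar a^{-1}\bar\bcal^a|\nabla_3\psi^a)$ and $(\nabla_\alpha\psi^b|\bar\bcal^b)$, respectively, so continuity of $W$ produces the desired energy limit. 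The junction condition \eqref{junction} generally fails and I would repair it by interpolating $\psi^a_\epsi$ on a thin layer $\omega^a\times(0,\delta_\epsi)$ between the straight candidate above and the trace $\psi^b_\epsi(r_\epsi\,\cdot\,,0)$. When $p>2$, the compatibility $\psi^a(0_3)=\psi^b(0_\alpha)$ built into $\A^p_{\ell_+}$ together with Lipschitz regularity reduce the pointwise mismatch to $O(r_\epsi)$, which can be absorbed by a suitably chosen $\delta_\epsi$ into a negligible energy contribution, as in \cite[Section~3]{GGLM02}; when $p\leq 2$ no limit junction is enforced, and the correction costs only a vanishing contribution to the rescaled energy thanks to the vanishing area of the interface $r_\epsi\omega^a\times\{0\}$.

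The main obstacle is the $b$-part of the $\Gamma$-liminf: because $\qe$ fails to be convex jointly in all its arguments, Jensen cannot be applied directly in $x_3$, and the ``averaging-out'' of the $\nabla_3$ direction must be orchestrated carefully via the cross-quasiconvex-convex envelope theory combined with the vanishing oscillation of $\psi^b_\epsi$ in $x_3$. A secondary difficulty is the recovery sequence step, which requires reconciling three constraints simultaneously---the averaging identities defining $\A_\epsi$, the junction condition \eqref{junction}, and the prescribed weak limits---with the correction at the junction requiring genuinely different arguments according to whether $p>2$ or $p\leq 2$.
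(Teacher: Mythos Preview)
Your overall strategy matches the paper's closely: reduce to quasiconvex $W$ via Lemma~\ref{lem:WbyQW}, use Lemma~\ref{lem:l21GGLM} for the domain of finiteness, treat the $a$- and $b$-parts of the liminf separately, and build the recovery sequence first for regular targets in $V$ before extending by Proposition~\ref{prop:density} and Lemma~\ref{lem:relaxationresult}. Two points deserve comment.

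For the $b$-liminf, the paper does not carry out the ``freeze $\nabla_\alpha\psi^b_\epsi$ in $x_3$, then Jensen, then cross-quasiconvex-convex representation'' program you sketch; it simply invokes \cite[Theorem~1.2(i)]{BFM03} together with \eqref{eq:Qast=Qhat}, which already delivers $\liminf_n \ell_n F^b_{\epsi_n}(\psi^b_n)\geq \ell\int_{\omega^b}\qce(\nabla_\alpha\psi^b|\bar\bcal^b)\,\dx_\alpha$ in one stroke. Your more hands-on outline is not wrong in spirit, but it is vaguer and duplicates what that reference provides.

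Your discussion of the junction correction contains a genuine confusion. You correctly reduce, via Proposition~\ref{prop:density}, to $(\psi^a,\psi^b)\in V$, and by the very definition \eqref{V} the condition $\psi^a(0_3)=\psi^b(0_\alpha)$ holds in $V$ \emph{for every} $p$, not only for $p>2$. The paper's interpolation on the layer $\omega^a\times(0,r_{\epsi_n})$ then controls $|\nabla_3\psi^a_n|$ uniformly precisely because of this identity (writing $r_{\epsi_n}^{-1}\psi^a(r_{\epsi_n})-r_{\epsi_n}^{-1}\psi^b(r_{\epsi_n}x_\alpha)$ and inserting $-r_{\epsi_n}^{-1}\psi^a(0_3)+r_{\epsi_n}^{-1}\psi^b(0_\alpha)=0$). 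Your separate $p\leq 2$ argument based on the ``vanishing area of the interface $r_\epsi\omega^a\times\{0\}$'' is therefore unnecessary, and as stated it is not clearly correct either: a mismatch of order one across a layer of thickness $\delta_\epsi$ produces an energy contribution of order $\delta_\epsi^{1-p}$ in $\Omega^a$, which diverges for $p>1$. Drop that branch and use the $V$-identity uniformly in $p$, exactly as the paper does.
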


\begin{proof}
To prove the claim,
it suffices to show
that  for any subsequence \(\epsi_n\prec\epsi\), the \(\Gamma\)-limit inferior,
\(F^-\), of \((F_{\epsi_n})_{n\in\Nb}\),
given by
\eqref{F-}
with \(\ell^a_n=1\) and \(\ell^b_n=
{h_{\epsi_n}}/{r_{\epsi_n}^2}\), coincides with \(F\)  for all \(\big((\bar\bcal^a,\psi^a),
(\psi^b, \bar\bcal^b)\big) \in
\big(L^p((0,L);\R^{3 \times 2}) \times
W^{1,p}(\Omega^a;\R^3)\big)\times
\big( W^{1,p}(\Omega^b;\R^3)
\times L^p(\omega^b;\R^{3}) \big)  \).
To show this identity, we will proceed
in several steps.

To simplify the notation, we
set 
\(\mu_n:={r_{\varepsilon_n}^{-1}}\),
\(\lambda_n:={h_{\varepsilon_n}^{-1}}\),
and 
\(\ell_n:={h_{\varepsilon_n}}/{r_{\varepsilon_n}^2}\).

\textit{Step 1.} In this step, we prove
that we may assume
without loss of generality that \(W
\) is  quasiconvex.

In fact, in view of \eqref{eq:cxtyineq},
we have  \(\mathcal{C}(\qe) = \ce\)
and \(\mathcal{QC}(\qe) = \qce\).
On the other hand, by Lemma~\ref{lem:WbyQW},
the \(\Gamma\)-limit inferior in
\eqref{F-} with \(\ell^a_n=1\) and \(\ell^b_n=
{h_{\epsi_n}}/{r_{\epsi_n}^2}\) remains
unchanged if we replace \(W\) by its
quasiconvex envelope,
\(\qe\). Thus, we may assume
 that \(W
\) is quasiconvex
function. In this case, by \eqref{pgrowth},
 we also have that \(W\) is \(p\)-Lipschitz continuous;
i.e., there
exists a positive constant, \(C\), such
that for all
\(\xi,\,\xi' \in \R^{3\times 3}\), we
have \vspace{-.5mm}
\begin{equation}
\label{eq:WpLip}
\begin{aligned}
|W(\xi) - W(\xi')| \leq C(1+ |\xi|^{p-1}
+\ |\xi'|^{p-1})|\xi
- \xi'|.
\end{aligned}
\end{equation}
%


\textit{Step 2.} In this step, we prove
that if \(F^-((\bar\bcal^a,\psi^a),
(\psi^b, \bar\bcal^b)) < \infty\), then
\((\psi^a,\psi^b ) \in \mathcal{A}^p_{l^+}\).

By definition of the \(\Gamma\)-limit
inferior, for all
\(n\in\Nb\), there exists  \(((\bar\bcal^a_n,\psi^a_n),(
\psi^b_n, \bar\bcal^b_n)
)\in \mathcal{A}_{\epsi_n} \) such that
 \(((\bar\bcal^a_n,\psi^a_n),(
\psi^b_n, \bar\bcal^b_n)
)_{n\in\Nb}\) weakly converges to \(((\bar\bcal^a,\psi^a),
(\psi^b, \bar\bcal^b) )\) in  \(
\big(L^p((0,L);\R^{3 \times 2}) \times
W^{1,p}(\Omega^a;\R^3)\big)\times
\big( W^{1,p}(\Omega^b;\R^3)
\times L^p(\omega^b;\R^{3}) \big) \)
and \(F^-((\bar\bcal^a,\psi^a),
(\psi^b, \bar\bcal^b)) = \liminf_{n\to\infty}\big(F_{\epsi_n}^a
(\psi^a_n) + \ell_nF_{\epsi_n}^b
(\psi^b_n) \big) \).

Because \(F^-((\bar\bcal^a,\psi^a),
(\psi^b, \bar\bcal^b)) < \infty\), extracting
a subsequence
if needed, we may assume that there
exists a positive
constant, \(\bar C\), such that for
all \(n\in\Nb\),
we have \(F_{\epsi_n}^a
(\psi^a_n) + \ell_nF_{\epsi_n}^b
(\psi^b_n) \leq \bar C\). Then, \eqref{pgrowth}
yields \vspace{-1.5mm}
\begin{equation*}
\begin{aligned}
\bar C \geq &\frac{1}{2C}\Big( \Vert
\mu_n \grad_\alpha\psi^a_n
\Vert^p_{L^p(\Omega^a;\R^{3\times 2})}
+ \Vert \grad_3\psi^a_n
\Vert^p_{L^p(\Omega^a;\R^{3})} +
\ell_n \Vert \grad_\alpha\psi^b_n
\Vert^p_{L^p(\Omega^a;\R^{3\times 2})}
+ \ell_n\Vert\lambda_n
\grad_3\psi^b_n
\Vert^p_{L^p(\Omega^a;\R^{3})} \Big)\\
&\quad- C(|\Omega^a| +\ell_n|\Omega^b|)
\end{aligned}
\end{equation*}
for all \(n\in\Nb\); this estimate, \eqref{ell},
and Lemma~\ref{lem:l21GGLM}
allow us to conclude that \((\psi^a,\psi^b
) \in \mathcal{A}^p_{l^+}\).

\textit{Step 3 (lower bound).} In this
step, we prove
that for all  \((\psi^a,\psi^b ) \in
\mathcal{A}^p_{l^+}\)
and $(\bar\bcal^a, \bar\bcal^b)\in L^p((0,L);\mathbb
R^{3\times 2})\times L^p(\omega^b;\mathbb
R^3)$, we have 
%
\(F^-((\bar\bcal^a,\psi^a),
(\psi^b, \bar\bcal^b)) \geq F^a(\bar\bcal^a,\psi^a)
+
\ell F^b(\psi^b,\bar\bcal^b) .\)
%
%

For each \(n\in\NN\), let \(((\bar\bcal^a_n,\psi^a_n),
\allowbreak(
\psi^b_n, \bar\bcal^b_n)
) \in \mathcal{A}_{\epsi_n}\)
 be such that  \vspace{-1.5mm}
\begin{equation*}
\begin{aligned}
&\psi^a_n \weakly \psi^a \hbox{ weakly
in
} W^{1,p}(\Omega^a;\R^3),
\, 
 \bar\bcal^a_n=\mu_n \int_{\omega^a}\nabla_\alpha
\psi^a_n\,\dx_\alpha \weakly \bar\bcal^a
\hbox{ weakly
in } L^{p}((0,L);\R^{3\times
 2}),\\
&  \psi^b_n \weakly \psi^b \hbox{ weakly
in
} W^{1,p}(\Omega^b;\R^3),
\, 
 \bar\bcal^b_n=\lambda_n \int_{-1}^0\nabla_3\psi^b_n\,
\dx_\alpha \weakly \bar\bcal^b \hbox{
weakly in } L^{p}(\omega^b;\R^3).
\end{aligned}
\end{equation*}

Using the inequality \(W\geq \ce\),
Fubini's lemma, and
Jensen's inequality, we obtain
\vspace{-1mm}%
\begin{equation}
\label{eq:lba1}
\begin{aligned}
  \int_{\Omega^a} W(\mu_n\nabla_\alpha
\psi^a_n|\nabla_3\psi^a_n)\,\dx &\geq
\int_0^L\int_{\omega^a}
\ce(\mu_n\nabla_\alpha
\psi^a_n|\nabla_3\psi^a_n)\,\dx_\alpha
\dx_3 \\
&\geq \bar a \int_0^L \ce\Big(\frac{\mu_n}{\bar
a}\int_{\omega^a}
\nabla_\alpha
\psi^a_n\,\dx_\alpha\Big|\frac{1}{\bar
a}\int_{\omega^a}\nabla_3\psi^a_n\,
\dx_\alpha \Big )\,\dx_3.
\end{aligned}
\end{equation}
 Next, we observe that the functional
\(\calG:L^p((0,L);\R^{3
\times 3}) \to \R\) defined by
%
\(\calG(v):=\int_0^L \ce(v(t))\,\dt\)
%
is sequentially lower semicontinuous
with respect to
the weak topology in \(L^p((0,L);\R^{3
\times 3})\) because \(\ce\) is a convex
function satisfying
the bounds in \eqref{pgrowth}. Consequently,
since
\begin{equation*}
\begin{aligned} 
\Big(\frac{\mu_n}{\bar a}\int_{\omega^a}
\nabla_\alpha
\psi^a_n\,\dx_\alpha\Big|\frac{1}{\bar
a}\int_{\omega^a}\nabla_3\psi^a_n\,
\dx_\alpha \Big ) \weakly \Big(\frac{1}{\bar
a}\bar\bcal^a
\Big| \grad_3 \psi^a\Big)
\end{aligned}
\end{equation*}
weakly in \(L^p((0,L);\R^{3
\times 3})\), we have
\begin{equation*}
\begin{aligned}
\liminf_{n\to\infty} \calG \Big(\frac{\mu_n}{\bar
a}\int_{\omega^a}
\nabla_\alpha
\psi^a_n\,\dx_\alpha\Big|\frac{1}{\bar
a}\int_{\omega^a}\nabla_3\psi^a_n\,
\dx_\alpha \Big ) \geq \calG \Big(\frac{1}{\bar
a}\bar\bcal^a
\Big| \grad_3 \psi^a\Big).
\end{aligned}
\end{equation*}
This estimate and \eqref{eq:lba1} entail
\vspace{-2mm}%
\begin{equation}
\label{eq:lba}
\begin{aligned}
\liminf_{n\to\infty} F^a_{\epsi_n} (\psi^a_n)\geq
\bar
a\int_0^L \ce({\bar a^{-1}}\bar\bcal^a|\nabla_3\psi^a)\,\dx_3
=  F^a(\bar\bcal^a,\psi^a). 
\end{aligned}
\end{equation}On the other hand, by \eqref{ell} and by   \cite[Theorem~1.2~(i)]{BFM03}
together with \eqref{eq:Qast=Qhat},
we have \vspace{-.5mm} 
\begin{equation}
\label{eq:lbb}
\begin{aligned}
\liminf_{n\to\infty} \ell_nF^b_{\epsi_n}
(\psi^b_n)&=\liminf_{n\to\infty}
\ell_n\int_{\Omega^b} W(\nabla_\alpha
\psi^b_n|\lambda_n\nabla_3\psi^b_n)\,\dx
\geq  \ell\int_{\omega^b}
\qce(\nabla_\alpha \psi^b|\bar\bcal^b)\,\dx_\alpha
=\ell F^b(\psi^b,\bar\bcal^b).
\end{aligned}
\end{equation}
From \eqref{eq:lba} and \eqref{eq:lbb},
we obtain \vspace{-1mm}
\begin{equation*}
\begin{aligned}
\liminf_{n\to\infty}  \big(F_{\epsi_n}^a
(\psi^a_n) +
\ell_nF_{\epsi_n}^b (\psi^b_n)
\big) &\geq \liminf_{n\to\infty} F^a_{\epsi_n}
(\psi^a_n)
+\liminf_{n\to\infty} \ell_nF^b_{\epsi_n}
(\psi^b_n) \geq
F^a(\bar\bcal^a,\psi^a)
+\ell F^b(\psi^b,\bar\bcal^b),
\end{aligned}
\end{equation*}
from which the conclusion follows by
taking the infimum
over all admissible sequences \(((\bar\bcal^a_n,\psi^a_n),
\allowbreak(
\psi^b_n, \bar\bcal^b_n)
)_{n\in\Nb}\) in the definition of \(F^-((\bar\bcal^a,\psi^a),
(\psi^b, \bar\bcal^b))\).
 
\smallskip
\textit{Step 4 (upper bound in terms
of the original
density \(W\) and for regular target
functions).} In this step,
we prove
that for all  $(\psi^a, \psi^b) \in
V$ (see \eqref{V})
and $(\bar\bcal^a,
\bar\bcal^b) \in C^1([0,L];\mathbb R^{3\times
2}) \times C^1(\overline\omega^b;\mathbb
R^3)$, we have \vspace{-1mm}
\begin{equation}
\label{eq:ubwrtW}
\begin{aligned}
F^-((\bar\bcal^a,\psi^a),
(\psi^b, \bar\bcal^b)) \leq \bar{a}\int_0^L
W({\bar
a}^{-1}\bar\bcal^a|\nabla_3\psi^a)\,\dx_3
+ \ell\int_{\omega^b}W(\nabla_\alpha
\psi^b| \bar\bcal^b)\,\dx_\alpha .
\end{aligned}
\end{equation}

The proof of \eqref{eq:ubwrtW}
follows closely
that of \cite[Proposition~4.1]{GGLM02}.
For \(n\in\NN\), define 
\begin{equation*}
\psi^a_n(x) :=
\begin{cases}
\big(\frac{r_{\varepsilon_n}}{\bar
a} \bar\bcal^a(r_{\varepsilon_n})x_\alpha^T
+ \psi^a(r_{\varepsilon_n})\big)\frac{x_3}{r_{\varepsilon_n}}
+ \psi^b(r_{\varepsilon_n} x_\alpha)\frac{r_{\varepsilon_n}-x_3}{r_{\varepsilon_n}}
 &\hbox{if } x = (x_\alpha, x_3) \in \omega^a\times
(0, r_{\varepsilon_n})\\
\frac{r_{\varepsilon_n}}{\bar a}\bar\bcal^a(x_3)x_\alpha^T
+ \psi^a(x_3)
&\hbox{if } x= (x_\alpha, x_3) \in \omega^a \times
(r_{\varepsilon_n},
L)
\end{cases}
\end{equation*}
and \vspace{-1.5mm}
\begin{equation*}
\psi^b_n(x):=  h_{\varepsilon_n} x_3\bar\bcal^b(x_\alpha)
+ \psi^b(x_\alpha)  \hbox{ if } x \in
\Omega^b.
\end{equation*}
Note that \({h_{\varepsilon_n}^{-1}}\int_{-1}^{0}\nabla_3
\psi^b_n\,\dx_3=
\bar\bcal^b\) 
and, defining \(\bar\bcal^a_n:= {r_{\epsi_n}^{-1}}
\int_{\omega^a}\nabla_\alpha
\psi^a_n\,\dx_\alpha\), it can be easily
checked that
\(((\bar\bcal^a_n,\psi^a_n),(\psi_n^b,\bar\bcal^b))\in
\A_{\epsi_n}\) for all \(n\in\NN\);
moreover,
%
\(\psi^b_n \to \psi^b \hbox{ in } W^{1,p}(\Omega^b;\mathbb
R^3) \)
%
and \(\grad_\alpha\psi^b_n \to \grad_\alpha\psi^b\)
pointwise in \(\Omega^b\).
Thus, the continuity of \(W\) (see \eqref{eq:WpLip}
in Step~1), Lebesgue's
dominated convergence
theorem, and \eqref{pgrowth} yield \vspace{-.5mm}
\begin{equation}
\label{eq:oneb}
\lim_{n\to\infty}\int_{\Omega^b}W\left(\nabla_\alpha
\psi^{b}_n| h_{\varepsilon_n}^{-1}\nabla_3
\psi^b_n\right)\dx = \lim_{n\to\infty}\int_{\Omega^b}W(\nabla_\alpha
\psi^{b}_n| \bar\bcal^b)\dx= \int_{\omega^b}W(\nabla_\alpha
\psi^b| \bar\bcal^b)\,\dx_\alpha.
\end{equation}

On the other hand, for all \(x = (x_\alpha,
x_3) \in \omega^a\times (0, r_{\varepsilon_n})\),
we have the
following pointwise estimates:
\begin{equation*}
\begin{aligned}
&|\psi^a_n(x)| \leq r_{\varepsilon_n}
{\bar
a}^{-1}|x_\alpha| 
|\bar\bcal^a(r_{\varepsilon_n})| + |
\psi^a(r_{\varepsilon_n})|
+2|\psi^b(r_{\varepsilon_n} x_\alpha)|
\leq C(\Vert \bar\bcal^a\Vert_\infty
+ \Vert \psi^a\Vert_\infty +  \Vert
\psi^b\Vert_\infty),\\
& |\grad_\alpha \psi^a_n(x)| \leq  r_{\varepsilon_n}
{\bar
a}^{-1}|\bar\bcal^a(r_{\varepsilon_n})|+2
r_{\epsi_n}|\grad_\alpha
\psi^b(r_{\varepsilon_n} x_\alpha)|\leq
r_{\varepsilon_n}C(\Vert \bar\bcal^a\Vert_\infty
 +  \Vert\grad_\alpha  \psi^b\Vert_\infty),
\\
& |\grad_3 \psi^a_n(x)| = \big|{\bar
a}^{-1} \bar\bcal^a(r_{\varepsilon_n})x_\alpha^T
+ {r_{\epsi_n}^{-1}}  \psi^a(r_{\varepsilon_n})
-
{r_{\epsi_n}^{-1}}  \psi^b(r_{\varepsilon_n}
x_\alpha) \big| \leq C(\Vert \bar\bcal^a\Vert_\infty
 + {\rm Lip}(\psi^a) + {\rm Lip}(\psi^b)),
\end{aligned}
\end{equation*}
where in the last estimate we used the
identity \(\, - {r_{\epsi_n}^{-1}}\psi^a(0_3)
+  {r_{\epsi_n}^{-1}} \psi^b(0_\alpha)
=0\) (see \eqref{V}), and where \(C\)
is a
positive constant independent of \(n\).
These estimates, the definition
of \(\psi^a_n\), and \eqref{pgrowth}
entail \vspace{-1.5mm}%
\begin{equation*}
\begin{aligned}
&\psi^a_n \to \psi^a \hbox{ in } W^{1,p}(\Omega^a;\mathbb
R^3), \quad \bar\bcal^a_n= \frac{1}{r_{\epsi_n}}
\int_{\omega^a}\nabla_\alpha
\psi^a_n\,\dx_\alpha \to \bar\bcal^a
\hbox{ in } L^p((0,L);\mathbb R^3),\\
& \lim_{n\to\infty}\int_0^{r_{\epsi_n}}\!\!\!
\int_{\omega^a} W \left(r_{\varepsilon_n}^{-1}\nabla_\alpha
\psi^a_n| \nabla_3 \psi^a_n\right)\dx_\alpha\dx_3
=0.
\end{aligned}
\end{equation*}
From this last limit and arguing as in \eqref{eq:oneb}, we obtain \vspace{-1mm}
\begin{equation}
\begin{aligned}
\label{eq:onea}
&\lim_{n\to\infty}\int_{\Omega^a}W \left(r_{\varepsilon_n}^{-1}\nabla_\alpha
\psi^a_n| \nabla_3 \psi^a_n\right)\dx
\\
&\quad = \lim_{n\to\infty}\int_{\Omega^a}
W \left(\bar a^{-1}\bar\bcal^a(x_3)|r_{\varepsilon_n}
\bar a^{-1}\grad_3
\bar \bcal^a(x_3)x_\alpha^T
+\nabla_3 \psi^a(x_3)\right)\chi_{(r_{\varepsilon_n},L)}(x_3)\,\dx
\\
 &\quad= \bar{a}\int_0^L W({\bar
a}^{-1}\bar\bcal^a|\nabla_3\psi^a)\,\dx_3.
\end{aligned}
\end{equation}

Using the definition of \(F^-((\bar\bcal^a,\psi^a),
(\psi^b, \bar\bcal^b))\), \eqref{eq:onea},
\eqref{ell},
and \eqref{eq:oneb}, we conclude Step~4.

%

%
\textit{Step 5 (Upper bound in terms
of the original
density \(W\)).} In this step, we prove
that \eqref{eq:ubwrtW}
holds for all   $(\psi^a, \psi^b) \in
\A_\ellp$
and $(\bar\bcal^a,
\bar\bcal^b) \in L^p((0,L);\mathbb R^{3\times
2}) \times
L^p(\omega^b;\mathbb
R^3)$.

The claim in this step follows from Steps~1 and 4, Proposition~\ref{prop:density},
the density  of $C^1([0,L];\mathbb
R^3)$ and $C^1(\overline\omega^b,\mathbb
R^3)$ in $L^p((0,L);\mathbb
R^3)$ and $L^p(\omega^b;\mathbb R^3),$
 respectively,
with
respect to the $L^p$-strong convergence,
the sequential lower semicontinuity
of \(F^-\) with respect to the weak
convergence in  \(
\big(L^p((0,L);\R^{3 \times 2}) \times
W^{1,p}(\Omega^a;\R^3)\big)\times
\big( W^{1,p}(\Omega^b;\R^3)
\times L^p(\omega^b;\R^{3}) \big) \),
 \eqref{pgrowth}, and Lebesgue's
dominated convergence theorem.

\textit{Step 6 (Upper bound).} In this
step, we prove
that for all  \((\psi^a,\psi^b ) \in
\mathcal{A}^p_{l^+}\)
and $(\bar\bcal^a, \bar\bcal^b)\in L^p((0,L);\mathbb
R^{3\times 2})\times L^p(\omega^b;\mathbb
R^3)$, we have
\(F^-((\bar\bcal^a,\psi^a),
(\psi^b, \bar\bcal^b)) \leq F^a(\bar\bcal^a,\psi^a)
+\ell 
F^b(\psi^b,\bar\bcal^b)\).

The claim in this step is an immediate
consequence of Step~5, the sequential
lower semicontinuity of \(F^-\) with
respect to the weak
convergence in  \(
\big(L^p((0,L);\R^{3 \times 2}) \times
W^{1,p}(\Omega^a;\R^3)\big)\times
\big( W^{1,p}(\Omega^b;\R^3)
\times L^p(\omega^b;\R^{3}) \big) \),
and  Lemma~\ref{lem:relaxationresult}.
\end{proof}

We conclude this section by proving
a lemma that allows
us to address the boundary conditions
in the minimization
problem \eqref{Pepsi}. The proof uses
some of the ideas
in \cite[Lemma~2.2]{BFM03} and is based
on a slicing
argument.

\begin{lemma}\label{lem:ontrace}
Let \(W:\R^{3\times 3} \to \R\) be a
Borel function satisfying
\eqref{pgrowth},  let \(\kappa\in\R\),
and assume that \(\ell\in\RR^+\).
Fix \((\psi^a,\psi^b ) \in \mathcal{A}^p_{l^+}\)
and $(\bar\bcal^a, \bar\bcal^b)\in L^p((0,L);\mathbb
R^{3\times 2})\times L^p(\omega^b;\mathbb
R^3)$.  For each
\(n\in\Nb\), let \(((\bar\bcal^a_n,\psi^a_n),(
\psi^b_n, \bar\bcal^b_n)
)\in \mathcal{A}_{\epsi_n}\) be such
that  \( \psi^a_n
\to \psi^a\)   in
\(L^{p}(\Omega^a;\R^3)\), \( \bar\bcal^a_n
\weakly \bar\bcal^a\)
weakly in \(L^{p}((0,L);\R^{3\times
 2})\), \( \psi^b_n \to \psi^b\)  in
\(L^{p}(\Omega^b;\R^3)\),
 \(  \bar\bcal^b_n \weakly \bar\bcal^b\)
weakly in \(L^{p}(\omega^b;\R^3)\),
 and \vspace{-.5mm}
\begin{equation}\label{eq:limL}
\begin{aligned}
\lim_{n\to\infty}
  \Big(F_{\epsi_{n}}^a (\psi^a_{n})
+ \tfrac{h_{\varepsilon_n}}{r_{\varepsilon_n}^2}F_{\epsi_{n}}^b (\psi^b_{n})\Big)
= \kappa.
\end{aligned}
\end{equation}
For each
\(n\in\Nb\), let  \(\ffi^a_n,\, \ffi^a \in W^{1,p}(\Omega^a;\R^3)\) and 
 \(\ffi^b_n, \, \ffi^b \in W^{1,p}(\Omega^b;\R^3)\)
be such that \vspace{-.5mm}
\begin{equation}\label{eq:assonbc}
\begin{aligned}
&\ffi^a_n \weakly \ffi^a \hbox{ weakly
in } W^{1,p}(\Omega^a;\R^3),
\enspace \big(|r_{\epsi_n}^{-1}\grad_\alpha
\ffi^a_n|^p
+ |\grad_3\ffi^a_n|^p
\big)_{n\in\Nb}\subset L^1(\Omega^a)
\hbox{ is equi-integrable},
\\  
&\ffi^b_n \weakly \ffi^b \hbox{ weakly
in } W^{1,p}(\Omega^b;\R^3),\enspace
\big(|\grad_\alpha \ffi^b_n|^p + |h_{\epsi_n}^{-1}\grad_3\ffi^b_n|^p
\big)_{n\in\Nb}\subset L^1(\Omega^b)
\hbox{ is equi-integrable}.
\end{aligned}
\end{equation}
Assume further that \(\psi^a = \ffi^a\)
on \(\Gamma^a\) and \(\psi^b = \ffi^b\)
on \(\Gamma^b
\). Then, there exist subsequences \(r_{\epsi_{n_k}}
\prec r_{\epsi_n}\), \(h_{\epsi_{n_k}}
\prec h_{\epsi_n}\), \(\ffi^a_{{n_k}}
\prec \ffi^a_n\), and \(\ffi^b_{{n_k}}
\prec \ffi^b_n\), and a sequence 
\(((\tilde{\bar\bcal}^a_k,\tilde\psi^a_k),(\tilde
\psi^b_k, \tilde{\bar\bcal}^b_k)
)_{k\in\Nb}\) satisfying
\(((\tilde{\bar\bcal}^a_k,\tilde\psi^a_k),(\tilde
\psi^b_k, \tilde{\bar\bcal}^b_k)
) \in \mathcal{A}_{\epsi_{n_k}}\), \(\tilde\psi^a_k
= \ffi^a_{n_k}\)
on \(\Gamma^a\), and \(\tilde\psi^b_k
= \ffi^b_{n_k}\)
on \(\Gamma^b\)  for all \(k\in\Nb\);
moreover,
\(\tilde\psi^a_k \weakly \psi^a\) weakly
in
 \(W^{1,p}(\Omega^a;\R^3)\), \( 
\tilde{\bar\bcal}^a_k \weakly \bar\bcal^a\)
weakly in \(L^{p}((0,L);\R^{3\times
 2})\), \(\tilde \psi^b_k \weakly \psi^b\)
weakly in
 \(W^{1,p}(\Omega^b;\R^3)\),  \( \tilde{\bar\bcal}^b_k
\weakly \bar\bcal^b\) weakly in \(L^{p}(\omega^b;\R^3)\),
 and \vspace{-.5mm}
 \begin{equation*}
\begin{aligned}
\limsup_{k\to\infty}
 \Big(F_{\epsi_{n_k}}^a (\tilde\psi^a_{k})
+ \tfrac{h_{\varepsilon_{n_k}}}{r_{\varepsilon_{n_k}}^2}F_{\epsi_{n_k}}^b (\tilde\psi^b_{k})\Big)
\leq \kappa.
\end{aligned}
\end{equation*}
\end{lemma}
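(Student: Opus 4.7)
The plan is a De Giorgi--Hestenes slicing argument carried out independently in a thin neighborhood of $\Gamma^a$ (for the tube) and in a thin annular neighborhood of $\partial\omega^b$ (for the plate). Since $\overline{\omega^a}\subset\subset\omega^b$ and $r_{\epsi_n}\to 0$, for $n$ large these cut-off regions are disjoint from the junction $r_{\epsi_n}\omega^a\times\{0\}$, so the junction condition \eqref{junction} is preserved automatically. Moreover, because the modifications occur on sets of vanishing Lebesgue measure while the scaled gradients remain bounded in $L^p$, the averages $\tilde{\bar\bcal}^a_k$ and $\tilde{\bar\bcal}^b_k$ will converge weakly in $L^p$ to $\bar\bcal^a$ and $\bar\bcal^b$, respectively.

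From \eqref{eq:limL} and \eqref{pgrowth}, the scaled gradients $A^{a,\psi}_n:=(r_{\epsi_n}^{-1}\nabla_\alpha\psi^a_n|\nabla_3\psi^a_n)$ and $A^{b,\psi}_n:=(\nabla_\alpha\psi^b_n|h_{\epsi_n}^{-1}\nabla_3\psi^b_n)$ are bounded in $L^p$; combined with the strong $L^p$ convergence of $\psi^a_n,\psi^b_n$, this gives $\psi^a_n\weakly\psi^a$ in $W^{1,p}(\Omega^a;\R^3)$ and $\psi^b_n\weakly\psi^b$ in $W^{1,p}(\Omega^b;\R^3)$. Setting $u^a_n:=\psi^a_n-\varphi^a_n$ and $u^b_n:=\psi^b_n-\varphi^b_n$, the weak limits $u^a:=\psi^a-\varphi^a$ and $u^b:=\psi^b-\varphi^b$ have vanishing trace on $\Gamma^a$ and $\Gamma^b$, respectively.

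For the tube, fix small $\delta>0$ and an integer $K\ge 1$, partition $[L-\delta,L-\delta/2]$ into $K$ equal slabs $J_{\delta,K,k}$ of width $\delta/(2K)$, and pick $\rho_{\delta,K,k}\in C^\infty([0,L];[0,1])$ depending only on $x_3$, with $\rho=1$ below $J_{\delta,K,k}$, $\rho=0$ above, and $|\rho'|\le CK/\delta$. Setting $\tilde\psi^a_{n,\delta,K,k}:=\rho\,\psi^a_n+(1-\rho)\varphi^a_n$, we have $\tilde\psi^a_{n,\delta,K,k}=\varphi^a_n$ on $\Gamma^a$ and $\tilde\psi^a_{n,\delta,K,k}=\psi^a_n$ on $\omega^a\times(0,L-\delta)$. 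The upper bound in \eqref{pgrowth} yields
\begin{equation*}
F^a_{\epsi_n}(\tilde\psi^a_{n,\delta,K,k})\le F^a_{\epsi_n}(\psi^a_n) + C\!\!\!\int_{\omega^a\times J_{\delta,K,k}}\!\!\!\!\!\bigl(1+|A^{a,\psi}_n|^p+|A^{a,\varphi}_n|^p+(K/\delta)^p|u^a_n|^p\bigr)\,\dx + C\!\!\!\int_{\omega^a\times(L-\delta/2,L)}\!\!\!\!\!(1+|A^{a,\varphi}_n|^p)\,\dx,
\end{equation*}
with $A^{a,\varphi}_n:=(r_{\epsi_n}^{-1}\nabla_\alpha\varphi^a_n|\nabla_3\varphi^a_n)$. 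The last integral is $o(1)$ as $\delta\to 0$ uniformly in $n$ by the equi-integrability \eqref{eq:assonbc}. For the slab contributions, averaging in $k\in\{1,\ldots,K\}$ and the pigeonhole principle produce some $k^\ast=k^\ast(n,\delta,K)$ for which the three integrals on $J_{\delta,K,k^\ast}$ are bounded by $K^{-1}$ times the corresponding $L^p$-norms over $\omega^a\times(L-\delta,L-\delta/2)$.

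The delicate term is $(K/\delta)^p K^{-1}\|u^a_n\|^p_{L^p(\omega^a\times(L-\delta,L-\delta/2))}$, which a naive global $L^p$-bound on $u^a_n$ would render of order $K^{p-1}$ and hence useless. To control it, note that by Rellich $u^a_n\to u^a$ strongly in $L^p(\Omega^a;\R^3)$ and, since $u^a$ has zero trace on $\omega^a\times\{L\}$, slicing in $x_3$ together with H\"older's inequality yields the Hardy--Poincar\'e bound $\|u^a\|^p_{L^p(\omega^a\times(L-\delta,L))}\le \delta^p\|\partial_3 u^a\|^p_{L^p(\omega^a\times(L-\delta,L))}$, whose right-hand side is $o(1)$ as $\delta\to 0$ by absolute continuity of the integral. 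Hence for each fixed $K$ the delicate term is $\le CK^{p-1}\|\partial_3 u^a\|^p_{L^p(\omega^a\times(L-\delta,L))}+o(1)$ as $n\to\infty$, which tends to $0$ upon letting $n\to\infty$ first and then $\delta\to 0$. An entirely analogous construction in a $\delta$-neighborhood of $\partial\omega^b$, using cut-offs in $x_\alpha$ and the Hardy--Poincar\'e inequality in the $x_\alpha$-direction together with the zero trace of $u^b$ on $\Gamma^b$, produces $\tilde\psi^b_{n,\delta,K,k}$ with the matching estimate for $(h_{\epsi_n}/r_{\epsi_n}^2)F^b_{\epsi_n}$ (the factor $h_{\epsi_n}/r_{\epsi_n}^2\to\ell\in\RR^+$ is harmless). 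A standard diagonal extraction over the parameters $\delta$, $K$, and $n$, exploiting metrizability of weak convergence on bounded sets, then yields the required sequence. The main obstacle is precisely the delicate term: its treatment requires combining the De Giorgi averaging in $k$ (which absorbs the non-equi-integrable $|A^{a,\psi}_n|^p$) with the Hardy--Poincar\'e bound exploiting the zero trace of the \emph{limit} difference $u^a$, and analogously for $u^b$.
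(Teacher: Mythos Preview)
Your argument is correct, but it differs from the paper's proof in two places worth noting.

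First, the paper does \emph{not} interpolate between $\psi^a_n$ and $\varphi^a_n$. Instead it interpolates between $\psi^a_n$ and $\varphi^a_n-\varphi^a+\psi^a$ (and analogously on the plate). On $\Gamma^a$ this still equals $\varphi^a_n$ because $\psi^a=\varphi^a$ there, but now the difference appearing in the cut-off derivative term is $(\psi^a_n-\psi^a)-(\varphi^a_n-\varphi^a)$, which tends to $0$ \emph{strongly} in $L^p$. Consequently the ``delicate term'' $\delta^{-p}\|\cdot\|_{L^p}^p$ vanishes as $n\to\infty$ for each fixed $\delta$, and no Hardy--Poincar\'e inequality is needed. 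Your choice forces you through Hardy--Poincar\'e for the limit $u^a$ (and its 2D analogue near $\partial\omega^b$, which does hold for Lipschitz domains but needs a local boundary-flattening argument you only sketch).

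Second, the paper handles the possible concentration of $|A^{a,\psi}_n|^p$ not by pigeonhole over $K$ slabs but by passing to a subsequence along which the associated Radon measures $\nu^a_n,\nu^b_n$ converge weakly-$\star$, and then choosing the cut-off level $\eta$ so that $\nu^a(\{L-\eta\})=\nu^b(\partial A^b_\eta)=0$. This replaces your three-parameter diagonalization $(n,\delta,K)$ by a two-parameter one $(j,\delta)$ followed by a good choice of $\eta$.

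Both routes are standard slicing devices and both deliver the lemma. The paper's interpolant is the cleaner trick: it kills the transition error without any trace inequality, at the price of the measure-compactness step; your averaging over $K$ layers is the classical De Giorgi device and trades measure theory for an extra limit in $K$ plus the Hardy bound.
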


\begin{proof}
Without
loss of generality,  we may assume 
that \(W\geq 0\) by
the lower bound
in \eqref{pgrowth} and that  \(r_{\epsi_n} \leq 1\) and \(h_{\epsi_n} \leq 1\) for all \(n\in\NN\).
By  \eqref{eq:assonbc}, there
exist \(\vartheta^a
\in L^p((0,L);\R^{3\times  2}) \) and
\(\vartheta^b
\in L^{p}(\omega^b;\R^3) \) such that,
up to a not relabeled
subsequence, we have \vspace{-.5mm}
\begin{equation*}
\begin{aligned}
&{r_{\epsi_n}^{-1}}\int_{\omega^a}\grad_\alpha
\ffi^a_n
\,\dx_\alpha \weakly \vartheta^a \hbox{
weakly in } L^p((0,L);\R^{3\times
 2}), \quad \grad_\alpha \ffi^a_n
\to 0 \hbox{  in } L^{p}(\Omega^a;\R^{3\times
2}), \\
& {h_{\epsi_n}^{-1}}\int_{-1}^0\grad_3
\ffi^b_n
\,\dx_3 \weakly \vartheta^b \hbox{ weakly
in } L^p(\omega^b;\R^3),
\quad \grad_3 \ffi^b_n
\to 0 \hbox{  in } L^{p}(\Omega^b;\R^3).
\end{aligned} 
\end{equation*}
Consequently, \(\grad_\alpha \ffi^a
= 0\) and \(\grad_3
\ffi^b = 0\); thus, \(\ffi^a\) is independent
of \(x_\alpha\)
and \(\ffi^b\) is independent of \(x_3\).

Next, we define two sequences of positive
Radon measures,
\((\nu^a_n)_{n\in\Nb} \subset \M(0,L)\)
and \((\nu^b_n)_{n\in\Nb} \subset \M(\omega^b)\),
by setting, for  \(n\in\Nb\),  \(B^a\in
\calB(0,L)\),
and
 \(B^b\in \calB(\omega^b)\),
\vspace{-.5mm}%
\begin{equation*}
\begin{aligned}
 \nu^a_n(B^a):= &\int_{\omega^a \times
B^a} \big(1+ |r_{\epsi_n}^{-1}\grad_\alpha
\psi^a_n|^p+ |r_{\epsi_n}^{-1}\grad_\alpha
\ffi^a_n|^p
+ |\grad_3\psi^a_n|^p+ |\grad_3\ffi^a_n|^p+
 |\grad_3\psi^a|^p+
 |\grad_3\ffi^a|^p\big) \,\dx\\
\end{aligned}
\end{equation*}
 and
\vspace{-.5mm}%
\begin{equation*}
\begin{aligned}
 \nu^b_n(B^b):= &\int_{ B^b\times (-1,0)}
\big(1+ |\grad_\alpha
\psi^b_n|^p+ |\grad_\alpha
\ffi^b_n|^p
+ |h_{\epsi_n}^{-1}\grad_3\psi^b_n|^p+
|h_{\epsi_n}^{-1}\grad_3\ffi^b_n|^p+  |\grad_\alpha\psi^b|^p+
 |\grad_\alpha\ffi^b|^p\big)
\,\dx.
\end{aligned}
\end{equation*}

Using \eqref{pgrowth}, \eqref{eq:limL},
and \eqref{eq:assonbc},
we may extract subsequences \((\nu^a_{n_j})_{j\in\Nb}
\)
and \((\nu^b_{n_j})_{j\in\Nb} \)
of \((\nu^a_n)_{n\in\Nb} \)
and \((\nu^b_n)_{n\in\Nb} \),
respectively, such that \(\nu^a_{n_j}
\weaklystar \nu^a\)
weakly-\(\star\) in \(\M(0,L)\) and
\(\nu^b_{n_j}
\weaklystar \nu^b\)
weakly-\(\star\) in \(\M(\omega^b)\)
for some \(\nu^a\in
\M(0,L)\) and \(\nu^b\in
\M(\omega^b)\).

Fix \(\tau>0\). Because \((1+|{r_{\epsi_{n_j}}^{-1}}\nabla_\alpha
\ffi^a_{n_j}|^p + |\nabla_3\ffi^a_{n_j}|^p+|\nabla_3\ffi^a|^p
+ |\nabla_3\psi^a|^p)_{j\in\Nb}\) and
\((1+
|\grad_\alpha \ffi^b_{n_j}|^p + |h_{\epsi_{n_j}}^{-1}\grad_3\ffi^b_{n_j}|^p+|\nabla_\alpha\ffi^b|^p
+ |\nabla_\alpha\psi^b|^p)_{j\in\Nb}\)
are equi-integrable, there exists  \(\epsilon\in
(0,\tau)\)
 such that for every measurable set
 \(B\subset\R^3\) with \(|B|
< \epsilon\), we have  \vspace{-.5mm}
\begin{equation}
\label{eq:byequibc}
\begin{aligned}
&\sup_{j\in\Nb}\bigg\{\int_{\Omega^a\cap
B} (1+|{r_{\epsi_{n_j}}^{-1}}\nabla_\alpha
\ffi^a_{n_j}|^p + |\nabla_3\ffi^a_{n_j}|^p+|\nabla_3\ffi^a|^p
+ |\nabla_3\psi^a|^p)\,\dx \\ &\qquad+
\int_{\Omega^b\cap
B} (1+
|\grad_\alpha \ffi^b_{n_j}|^p + |h_{\epsi_{n_j}}^{-1}\grad_3\ffi^b_{n_j}|^p+|\nabla_\alpha\ffi^b|^p
+ |\nabla_\alpha\psi^b|^p)\,\dx\bigg\}< \tau. \end{aligned}
\end{equation}

For \(t>0\), let \(A^b_t:= \{x\in\omega^b\!:\,
\dist(x,\partial\omega^b)>t\}\). Fix
\(\eta=\eta(\tau)>0\)
such that  \(r_{\epsi_{n_j}}\omega^a
\subset A^b_\eta\)
for all \(j\in\Nb\) and \(\big(|\omega^a\times
[L-\eta,L)|
+ |(\omega^b\backslash\overline
A^b_\eta)\times(-1,0)|\big)<\epsilon\);
for \(\delta\in
(0,\tfrac\eta2)\),
define the subsets \(I^a_\delta:= (L-\eta-\delta,
L-\eta+2\delta)\)
and \(I^b_\delta:= A^b_{\eta-2\delta}
\backslash \overline
 A^b_{\eta+\delta}\), and consider smooth
cut-off functions
 \(\phi^a_{\eta,\delta} \in C^\infty_c(\R;[0,1])\)
and
 \(\phi^b_{\eta,\delta} \in C^\infty_c(\R^2;[0,1])\)
such that
 \(\phi^a_{\eta,\delta}(x_3) =0\) if
\(|x_3|>L-\eta+\delta\),
 \(\phi^a_{\eta,\delta}(x_3) =1\) if
\(|x_3|<L-\eta\),
 \(\phi^b_{\eta,\delta}(x_\alpha) =0\)
if \(x_\alpha
\not\in
 A^b_{\eta-\delta}\), and  \(\phi^b_{\eta,\delta}(x_\alpha)=
1\) if \(x_\alpha \in
 A^b_{\eta}\). Because both the length
of the interval
\(I^a_\delta\) and the thickness of
the strip \(I^b_\delta\)
 are of the order \(\delta\), there
exists a constant
 \(C\), independent of \(\delta\), such
that \(\Vert\grad_3
 \phi^a_{\eta,\delta}\Vert_{\infty}
+\Vert\grad_\alpha
\phi^b_{\eta,\delta}\Vert_{\infty} \leq
 C/\delta\).
Finally,
define \vspace{-.0mm}
\begin{equation*}
\begin{aligned}
& \vartheta^a_{j,\delta,\eta}(x):= \psi^a_{n_j}(x)
\phi^a_{\eta,\delta}(x_3)
+ (\ffi^a_{n_j}(x) - \ffi^a(x_3) + \psi^a(x_3))(1-
\phi^a_{\eta,\delta}(x_3)),
\\
& \vartheta^b_{j,\delta,\eta}(x):= \psi^b_{n_j}(x)
\phi^b_{\eta,\delta}(x_\alpha)
+ (\ffi^b_{n_j}(x) - \ffi^b(x_\alpha)
+ \psi^b(x_\alpha))(1-
\phi^b_{\eta,\delta}(x_\alpha)).
\end{aligned}
\end{equation*}
Because \(\phi^a_{\eta,\delta}(L)=0\),
\(\ffi^a(L) =
\psi^a(L)\), and \( \phi^b_{\eta,\delta}
= 0\) and \(\ffi^b
=
\psi^b\) on \(\partial \omega^b\), we
have \vspace{-.5mm}
\begin{equation}
\label{eq:barpsijbc}
\begin{aligned}
\vartheta^a_{j,\delta,\eta} = \ffi^a_{n_j}
\hbox{ on
} \omega^b\times \{L\} \,\hbox{ and
}\, \vartheta^b_{j,\delta,\eta}
= \ffi^b_{n_j} \hbox{ on
} \partial\omega^b\times (-1,0).
\end{aligned}
\end{equation}Also, for \aev\ \(x_\alpha\in \omega^a\),
 \(\phi^a_{\eta,\delta}(0)=\phi^b_{\eta,\delta}(r_{\epsi_n}
x_\alpha)=1\) and \(\psi^a_{n_j}(x_\alpha,0)
= \psi^b_{n_j}
(r_{\epsi_n} x_\alpha,0)\) by the definition
of \(\mathcal{A}_{\epsi_n}\); hence,
 for \aev\ \(x_\alpha\in \omega^a\),
\vspace{-.5mm}%
\begin{equation}
\label{eq:barpsijjunct}
\begin{aligned}
\vartheta^a_{j,\delta,\eta}(x_\alpha,0)
= \vartheta^b_{j,\delta,\eta}
(r_{\epsi_n} x_\alpha,0).
\end{aligned}
\end{equation}
Moreover, \vspace{-.5mm}
\begin{equation*}
\begin{aligned}
& \grad_\alpha \vartheta^a_{j,\delta,\eta}
= \phi^a_{\eta,\delta}
\grad_\alpha  \psi^a_{n_j} +(1- \phi^a_{\eta,\delta})\grad_\alpha
 \ffi^a_{n_j},\\
& \grad_3 \vartheta^a_{j,\delta,\eta}
= \phi^a_{\eta,\delta}
\grad_3  \psi^a_{n_j} +(1- \phi^a_{\eta,\delta})(\grad_3\ffi^a_{n_j}
-\grad_3\ffi^a + \grad_3\psi^a)  + \grad_3
 \phi^a_{\eta,\delta}(\psi^a_{n_j} 
-\psi^a+ \ffi^a-
\ffi^a_{n_j}  ),\\
& \grad_\alpha \vartheta^b_{j,\delta,\eta}
= \phi^b_{\eta,\delta}
\grad_\alpha  \psi^b_{n_j} +(1- \phi^b_{\eta,\delta})(\grad_\alpha\ffi^b_{n_j}
-\grad_\alpha\ffi^b + \grad_\alpha\psi^b)  
+(\psi^b_{n_j}  -\psi^b+ \ffi^b-
\ffi^b_{n_j}  )\otimes 
\grad_\alpha\phi^b_{\eta,\delta} ,\\
&  \grad_3 \vartheta^b_{j,\delta,\eta}
= \phi^b_{\eta,\delta}
\grad_3\psi^b_{n_j} +(1- \phi^b_{\eta,\delta})\grad_3\ffi^b_{n_j},
\end{aligned}
\end{equation*}
and, passing to the limit as \(j\to\infty\),
\begin{equation}
\label{eq:limitsbarpsij}
\begin{aligned}
& \vartheta^a_{j,\delta,\eta} \to_j
\psi^a \hbox{ in
} L^p(\Omega^a;\R^3),\enspace {r_{\epsi_{n_j}}^{-1}}\int_{\omega^a}\grad_\alpha
\vartheta^a_{j,\delta,\eta}
\,\dx_\alpha \weakly_j \phi^a_{\eta,\delta}
\bar\bcal^a +  (1- \phi^a_{\eta,\delta})\vartheta^a
\hbox{ weakly in } L^p((0,L);\R^{3\times
 2}),   \\
& \vartheta^b_{j,\delta,\eta} \to_j
\psi^b \hbox{ in
} L^p(\Omega^b;\R^3), \enspace {h_{\epsi_{n_j}}^{-1}}\int_{-1}^0\grad_3
\vartheta^b_{j,\delta,\eta}\dx_3
\weakly_j \phi^b_{\eta,\delta}
\bar\bcal^b +  (1- \phi^b_{\eta,\delta})\vartheta^b
\hbox{ weakly in } L^p(\omega^b;\R^3).
\end{aligned}
\end{equation}

Next, we estimate the energies \(F^a_{\epsi_{n_j}}\)
and \(F^b_{\epsi_{n_j}}\) at \(\vartheta^a_{j,\delta,\eta}\)
and \(\vartheta^b_{j,\delta,\eta}\),
respectively. Because
\(W\geq0\), using \eqref{pgrowth}, \eqref{eq:byequibc},
and the definitions of \(\vartheta^a_{j,\delta,\eta}\)
 and \(\nu^a_{n_j}\), we have
\vspace{-1mm}%
\begin{equation*}
\begin{aligned}
F^a_{\epsi_{n_j}}(\vartheta^a_{j,\delta,\eta})
&\leq
F^a_{\epsi_{n_j}}(
\psi^a_{n_j} ) + \int_{\omega^a \times
(L-\eta+\delta,
L)} W( r_{\epsi_{n_j}}^{-1} \grad_\alpha
 \ffi^a_{n_j}|\grad_3\ffi^a_{n_j}
-\grad_3\ffi^a + \grad_3\psi^a)\,\dx\\
&\qquad +\int_{\omega^a \times (L-\eta,
L-\eta + \delta)}
W( r_{\epsi_{n_j}}^{-1} \grad_\alpha
 \vartheta^a_{j,\delta,\eta}
\,|\grad_3\vartheta^a_{j,\delta,\eta}
\,)\,\dx \\
&\leq F^a_{\epsi_{n_j}}(
\psi^a_{n_j} ) + C\tau+ C\bigg( \nu^a_{n_j}(I^a_\delta
) + \frac{1}{\delta^p}
\int_{\omega^a\times I^a_\delta} |\psi^a_{n_j}
 -\psi^a+
\ffi^a- \ffi^a_{n_j} |^p\,\dx\bigg).
\end{aligned}
\end{equation*}
A similar estimate holds for \(F^b_{\epsi_{n_j}}(\vartheta^b_{j,\delta,\eta})
\); thus, fixing \(\eta\) and \(\delta\)
and letting
\(j\to\infty\), from the convergences
in the assumption
of Lemma~\ref{lem:ontrace} and \eqref{ell},
we obtain
\vspace{-1mm}%
\begin{equation*}
\begin{aligned}
&\limsup_{j\to\infty} \Big( F^a_{\epsi_{n_j}}
(\vartheta^a_{j,\delta,\eta})
+ \tfrac{h_{\varepsilon_{n_j}}} {r_{\varepsilon_{n_j}}^2}F^b_{\epsi_{n_j}}(\vartheta^b_{j,\delta,\eta})
\Big)
\\
&\quad\leq \limsup_{j\to\infty} \Big(
F^a_{\epsi_{n_j}}(
\psi^a_{n_j} ) + \tfrac{h_{\varepsilon_{n_j}}}
{r_{\varepsilon_{n_j}}^2}F^b_{\epsi_{n_j}}(
\psi^b_{n_j} ) \Big) + C\tau + C\limsup_{j\to\infty}
\big( \nu^a_{n_j}(I^a_\delta) + \nu^b_{n_j}(I^b_\delta)\big)
\\
&\qquad + \frac{C}{\delta^p}\limsup_{j\to\infty}
\bigg(
\int_{\omega^a\times I^a_\delta} |\psi^a_{n_j}
 -\psi^a+
\ffi^a- \ffi^a_{n_j} |^p\,\dx + \int_{
I^b_\delta \times
(-1,0)} |\psi^b_{n_j}  -\psi^b+
\ffi^b- \ffi^b_{n_j} |^p\,\dx  \bigg)
\\
&\quad \leq \kappa + C\tau + C\big(\nu^a(\overline
I^a_\delta)
+ \nu^b(\overline I^b_\delta) \big).
\end{aligned}
\end{equation*}
Letting \(\delta\to0^+\), it follows
that
\vspace{-1mm}%
\begin{equation*}
\begin{aligned}
\limsup_{\delta\to0^+}\limsup_{j\to\infty}
\Big( F^a_{\epsi_{n_j}}(\vartheta^a_{j,\delta,\eta})
+ \tfrac{h_{\varepsilon_{n_j}}}{r_{\varepsilon_{n_j}}^2} F^b_{\epsi_{n_j}}(\vartheta^b_{j,\delta,\eta})
\Big)
\leq \kappa + C\tau + C\big(\nu^a(\{L-\eta\})
+ \nu^b(\partial A^b_\eta) \big).
\end{aligned}
\end{equation*}
Hence, choosing sequences \((\eta_k)_{k\in\Nb}\)
and
 \((\delta_k)_{k\in\Nb}\)  such that
\(\eta_k\to 0^+\) as \(k\to\infty\)
and, for all \(k\in\Nb\), \(\nu^a(\{L-\eta_k\})
= \nu^b(\partial A^b_{\eta_k}) = 0\)
and
 \(0<2\delta_k
< \eta_k\),  we obtain
\vspace{-1mm}%
\begin{equation*}
\begin{aligned}
\limsup_{k\to\infty}\limsup_{j\to\infty}
\Big( F^a_{\epsi_{n_j}}(\vartheta^a_{j,\delta_k,\eta_k})
+ \tfrac{h_{\varepsilon_{n_j}}}{r_{\varepsilon_{n_j}}^2}F^b_{\epsi_{n_j}}(\vartheta^b_{j,\delta_k,\eta_k})
\Big)
\leq \kappa + C\tau.
\end{aligned}
\end{equation*}
Thus, letting \(\tau\to0^+\), we
have
\vspace{-1mm}%
\begin{equation}\label{eq:limsupL}
\begin{aligned}
\limsup_{k\to\infty}\limsup_{j\to\infty}
\Big( F^a_{\epsi_{n_j}}(\vartheta^a_{j,\delta_k,\eta_k})
+ \tfrac{h_{\varepsilon_{n_j}}}{r_{\varepsilon_{n_j}}^2} F^b_{\epsi_{n_j}}(\vartheta^b_{j,\delta_k,\eta_k})
\Big)
\leq \kappa.
\end{aligned}
\end{equation}
Finally,  \eqref{pgrowth}
and \eqref{eq:limsupL}
imply that   \(r_{\epsi_{n_j}}^{-1}\int_{\omega^a}\grad_\alpha
\vartheta^a_{j,\delta_k,\eta_k}
\,\dx_\alpha\) and  \( h_{\epsi_{n_j}}^{-1}\int_{-1}^0\grad_3
\vartheta^b_{j,\delta_k,\eta_k}\dx_3\)
admit bounds in
\(L^p((0,L);\R^{3\times
 2})\) and \(L^p(\omega^b;\R^3)\), respectively,
that
 are independent of \(k\) and \(j\).
Because \(\phi^a_{\eta_k,\delta_k}
 \to1\) in \(L^p(0,L)\) and \(\phi^b_{\eta_k,\delta_k}
 \to1\) in \(L^p(\omega^b)\) and because
the weak topology
 is metrizable on bounded sets, \eqref{eq:barpsijbc},
 \eqref{eq:barpsijjunct}, \eqref{eq:limitsbarpsij},
 \eqref{eq:limsupL}, and \eqref{pgrowth}
yield the existence
 of a sequence \((j_k)_{k\in\Nb}\) 
such that \vspace{-1mm}
\begin{equation*}
\begin{aligned}
\tilde\psi^a_k:= \vartheta^a_{j_k,\delta_k,\eta_k},
\enspace
\tilde{\bar\bcal}^a_k :={r_{\epsi_{n_{j_k}}}^{-1}}\int_{\omega^a}\grad_\alpha
\vartheta^a_{j_k,\delta_k,\eta_k}
\,\dx_\alpha, \enspace \tilde\psi^b_k:= \vartheta^b_{j_k,\delta_k,\eta_k},
\enspace \hbox{and }\,
\tilde{\bar\bcal}^b_k :={h_{\epsi_{n_{j_k}}}^{-1}}\int_{-1}^0\grad_3
\vartheta^a_{j_k,\delta_k,\eta_k}
\,\dx_3
\end{aligned}
\end{equation*}
satisfy the requirements.\end{proof}

\subsection{Proof of Theorem~\ref{thm:ellr+}}\label{Subs:l+proof}

\begin{proof}[Proof of Theorem~\ref{thm:ellr+}]
Let \((\psi^a_\epsi,\psi^b_\epsi)_{\epsi>0}\)
be as in the statement of Theorem~\ref{thm:ellr+}; that is, a sequence
in \(W^{1,p}(\Omega^a;{\mathbb
        R}^3) \times W^{1,p}(\Omega^b;{\mathbb
        R}^3)\) satisfying \( \psi^a_\epsi =
\ffi^a_{0,\epsi}\) on \(\Gamma^a\),
\(
\psi^b_\epsi =   \ffi^b_{0,\epsi}\)
on  \(\Gamma^b\), \(\psi^a_\epsi(x_\alpha,0)
=
\psi^b_\epsi(r_\varepsilon x_\alpha,0)\)
for \aev\
\(x_\alpha\in\omega^a\), and \vspace{-1mm}
\begin{equation}
        \label{eq:diaginf}
        \begin{aligned}
                E^a_\epsi(\psi^a_\epsi) + E^b_\epsi(\psi^b_\epsi)
                < \inf_{(\psi^a,\psi^b) \in
                        \Phi_\epsi}
                \big(E^a_\varepsilon(\psi^a) + E^b_\varepsilon(\psi^b)\big)
                + \rho(\epsi),
        \end{aligned}
\end{equation}
where \(\rho\) is a non-negative
function satisfying \(\rho(\epsi) \to
0\) as \(\epsi\to0^+\) and  \(E^a_\epsi\), \(E^b_\epsi\), and \(\Phi_\epsi\) are given by \eqref{Eabepsi} and \eqref{Phiepsi}. Note that by \eqref{Laepsi},
\eqref{Lbepsi}, and 
\eqref{forcesl+}, we have
\vspace{-1mm}%
\begin{equation*}
        \begin{aligned}
                L^a_\varepsilon(\psi^a_\epsi)=\int_{\Omega^a}
                f^a\cdot\psi^a_\epsi\,\dx +
                \int_{S^a} g^a\cdot\psi^a_\epsi\,\d{\HH}^2(x)
                +
                \int_{\Omega^a}
                \Bcal^a:\Big(\frac{1}{r_\varepsilon}\nabla_\alpha\psi^a_\epsi
                \Big|0
                \Big)\,\dx
        \end{aligned}
\end{equation*}
and
\vspace{-2mm}%
\begin{equation*}
        \begin{aligned}
                L^b_\varepsilon(\psi^b_\epsi)=&\int_{\Omega^b}
                f^b\cdot\psi^b_\epsi\,\dx + 
                \int_{\omega^b\backslash r_\varepsilon
                        \overline\omega^a}
                (g^{b,+}\cdot \psi^{b,+}_\epsi -g^{b,-}
                \cdot\psi^{b,-}_\epsi)\,\dx_\alpha
                \\&+
                \int_{\omega^b\backslash r_\varepsilon\overline\omega^a}
                G^b\cdot\Big(\frac{\psi^{b,+}_\epsi-\psi^{b,-}_\epsi}
                {h_\varepsilon}\Big)
                \,\dx_\alpha
                -
                \int_{r_\varepsilon \omega^a}\hat
                g^{b,-}\cdot\psi^{b,-} _\epsi\,\dx_\alpha
                -
                \frac{1}{h_\varepsilon}\int_{r_\varepsilon
                        \omega^a}\hat
                G^b\cdot\psi^{b,-}_\epsi\,\dx_\alpha.
        \end{aligned}
\end{equation*}
Also, recalling that  \(\bar\bcal^a_\epsi={r_{\varepsilon}^{-1}}
\int_{\omega^a}\nabla_\alpha
\psi^a_\epsi\, \dx_\alpha\) and \(\bar\bcal^b_\epsi=
{h_\epsi^{-1}}\int_{-1}^{0}\nabla_3
\psi^b_\epsi\,\dx_3
\), \vspace{-.0mm}
%
\begin{align}
        \int_{\Omega^a}
        \Bcal^a:\Big(\frac{1}{r_\varepsilon}\nabla_\alpha\psi^a_\epsi
        \Big|0
        \Big)\,\dx = 
        \int_{0}^L
        \Bcal^a:(\bar\bcal^a_\epsi|0)\,\dx_3, \,\,
        \int_{\omega^b\backslash r_\varepsilon\overline\omega^a}
        G^b\cdot\Big(\frac{\psi^{b,+}_\epsi-\psi^{b,-}_\epsi}
        {h_\varepsilon}\Big)
        \,\dx_\alpha = \int_{\omega^b\backslash r_\varepsilon\overline\omega^a}
        G^b\cdot
        \bar\bcal^b_\epsi\,
        \dx_\alpha, \label{Lbcala}
\end{align}
%
and, using \eqref{junction} and a change
of variables, %
\begin{equation}\label{Ljunction}
        \begin{aligned}
                \displaystyle{\frac{1}{h_\epsi} \int_{r_\epsi
                                \omega^a}
                        \hat G^b(x_\alpha)\cdot \psi^{b,-}_\epsi(x_\alpha)\,\dx_\alpha}
              & = 
                \displaystyle{\frac{1}{h_\epsi} \int_{r_\epsi
                                \omega^a}
                        \hat G^b(x_\alpha)\cdot (\psi^{b,+}_\epsi(x_\alpha)
                        + \psi^{b,-}_\epsi(x_\alpha)- \psi^{b,+}_\epsi(x_\alpha))\,\dx_\alpha
                } 
                \\
                &=\displaystyle{\frac{1}{h_\epsi} \int_{r_\epsi
                                \omega^a}
                        \hat G^b(x_\alpha)\cdot \psi^{b,+}_\epsi(x_\alpha)\,
                        \dx_\alpha - \int_{r_\epsi \omega^a}\hat
                        G^b(x_\alpha)\cdot \bar\bcal^b_\epsi(x_\alpha)\,
                        \dx_\alpha
                }\\
                &=\displaystyle{\frac{r_\epsi^2}{h_\epsi}
                        \int_{ \omega^a}
                        \hat G^b(r_\epsi x_\alpha)\cdot \psi^a_\epsi(x_\alpha,0)\,\dx_\alpha
                        -\int_{r_\epsi \omega^a}\hat G^b(x_\alpha)\cdot
                        \bar\bcal^b_\epsi(x_\alpha)\,
                        \dx_\alpha .}
        \end{aligned}
\end{equation}

Because \(0_\alpha\) is a Lebesgue point
of \(|\hat G^b|^q\),
 the Vitali--Lebesgue  theorem yields \(\hat
G(r_\epsi\cdot) \to \hat G(0_\alpha)\)
in \(L^q(\mathfrak{C};\RR^3)\);
in particular, in \(L^q(\omega^a;\RR^3) \) because \(\omega^a \subset \mathfrak{C}\) by hypothesis.
Then, taking \((\ffi^a_{\epsi,0},\ffi^b_{\epsi,0})\)
as a test
function on the right-hand side of \eqref{eq:diaginf},
from \eqref{pgrowth},  \eqref{bca}--\eqref{bcb},
Holder's inequality, the continuity
of the trace (from \(W^{1,p}\) into
\(L^p\)), and the fact that \(\ell\in\RR^+\),   we
conclude that \vspace{-.5mm}
\begin{equation*}
        \begin{aligned}
                \sup_{\epsi>0}\big(E^a_\epsi(\psi^a_\epsi)
                + E^b_\epsi(\psi^b_\epsi) \big)<\infty.
        \end{aligned}
\end{equation*}
This estimate, \eqref{pgrowth},
Young's  inequality,
Poincar\'e's inequality together with
\eqref{bca}--\eqref{bcb},  the continuity
of the trace (from \(W^{1,p}\) into
\(L^p\)), and the fact that \(\ell\in\RR^+\) yield
\vspace{-1mm}%
\begin{equation*}
        \begin{aligned}
                \sup_{\epsi>0}\Big(\|\psi^a_\epsi\|_{W^{1,p}(\Omega^a;\RR^3)}
                + \|r_{\epsi}^{-1}\grad_\alpha \psi^a_\epsi
                \|_{L^p(\Omega^a;\RR^{3\times 2})} +
                \|\psi^b_\epsi\|_{W^{1,p}(\Omega^b;\RR^3)}
                + \|h_{\epsi}^{-1}\grad_3 \psi^b_\epsi
                \|_{L^p(\Omega^b;\RR^3)}\Big)<\infty.
        \end{aligned}
\end{equation*}
Thus, the sequences 
\((\bar \bcal^a_\epsi,\psi^a_\epsi)_{\epsi>0}\)
and \((\psi^b_\epsi,\bar
\bcal^b_\epsi)_{\epsi>0}\) are sequentially, weakly compact
in
\(L^p((0,L);\R^{3
        \times
        2}) \times W^{1,p}(\Omega^a;\R^3)  \)
and \(W^{1,p}(\Omega^b;\R^3)\times L^p(\omega^b;\R^{3})\),
respectively. Let \((\bar\bcal^a, \psi^a)\)
and \(( \psi^b,\bar
\bcal^b)\) be corresponding accumulation
points. By Lemma~\ref{lem:l21GGLM},
\((\psi^a,\psi^b)\in
\A_{\ell_+}^p\) (see \eqref{Aell+}). Moreover,
 \( \psi^a = \ffi^a_{0}\) on
\(\Gamma^a\) and \(  \psi^b =   \ffi^b_{0}\)
on  \(\Gamma^b\) by the continuity
of the trace.  Hence,
\((\psi^a,\psi^b)\in \Phi_{\ell_+}^p\)(see \eqref{Phipl+}). 

Next, we show that
\vspace{-2mm}%
\begin{equation}
        \label{eq:limtotalforce}
        \begin{aligned}
                \lim_{\epsi\to0^+} \Big(L^a_\epsi(\psi^a_\epsi)
                + \frac{h_\epsi}{r_\epsi^2}
                L^b_\epsi(\psi^b_\epsi)
                \Big) = L^a(\bar\bcal^a,\psi^a) + \ell
                L^b(\psi^a,\psi^b, \bar\bcal^b),
        \end{aligned}
\end{equation}
where, for  \(\bar f^a\), \(\bar g^a\), and
\(\bar f^b\) given
by \eqref{barforces}, \vspace{-1mm} 
\begin{equation*}
        \begin{aligned}
                L^a(\bar\bcal^a,\psi^a):= \int_0^L
                \big(
                \bar f^a\cdot\psi^a+ \bar g^a\cdot\psi^a
                + 
                \Bcal^a:(\bar\bcal^a|0)\big)\,\dx_3
        \end{aligned}
\end{equation*}
and \vspace{-1mm}
\begin{equation*}
        \begin{aligned}
                L^b(\psi^a,\psi^b, \bar\bcal^b):=\int_{\omega^b}
                \big(\bar f^b\cdot\psi^b
                +
                (g^{b,+}-g^{b,-})\cdot\psi^{b}
                + G^b\cdot\bar\bcal^b\big)
                \,\dx_\alpha - \frac{\bar a}{\ell}\,
                \hat G^b(0_\alpha)\cdot \psi^a(0_3).
        \end{aligned}
\end{equation*}

By \eqref{Lbcala},
the equality \vspace{-1mm}
\begin{equation*}
        \begin{aligned}
                \lim_{\epsi\to0^+} L^a_\epsi(\psi^a_\epsi)
                = L^a(\bar\bcal^a,\psi^a)
        \end{aligned}
\end{equation*}
is an immediate consequence of the convergence
\(\psi^a_\epsi\weakly
\psi^a\) weakly in \(W^{1,p}(\Omega^a;\RR^3)\)
together with the continuity of the
trace and of the convergence  \(\bar\bcal^a_\epsi\weakly
\bar\bcal^a\) weakly in \(L^p((0,L);\RR^{3\times2}
)\).

Similarly, in view of \eqref{Lbcala},
\eqref{Ljunction}, \eqref{ell}, and because   \(\psi^b_\epsi\weakly
\psi^b\) weakly in \(W^{1,p}(\Omega^b;\RR^3)\),
\(g^{b,\pm}\chi_{\omega^b
        \backslash r_\epsi\overline \omega^a}
\to g^{b,\pm}\)
in \(L^q(\omega^b;\RR^3)\), \(G^b\chi_{\omega^b
        \backslash r_\epsi\overline \omega^a}
\to G^b\)
in \(L^q(\omega^b;\RR^3)\), \(\bcal^b_\epsi\weakly \bar\bcal^b\)
weakly  in \(L^p(\omega^b;\RR^3)\),
\(\hat g^{b,-}\chi_{ r_\epsi \omega^a}
\to 0\)
in \(L^q(\omega^a;\RR^3)\), 
\(\hat G^b \chi_{r_\epsi \omega^a} \to
0\) in  \(L^q(\omega^b;\RR^3)\), and
\(\hat
G(r_\epsi\cdot) \to \hat G(0_\alpha)\)
in \(L^q({\omega^a};\RR^3)\), it follows
that \vspace{-1mm}
\begin{equation*}
        \begin{aligned}
                \lim_{\epsi\to0^+} L^b_\epsi(\psi^b_\epsi)
                = L^b(\psi^a,\psi^b,
                \bar\bcal^b).
        \end{aligned}
\end{equation*}  

Consequently, using \eqref{ell} once
more, we conclude that \eqref{eq:limtotalforce}
holds.

To simplify the notation, in the remaining
part of the
proof, we set \vspace{-.5mm}
\begin{equation*}
        \begin{aligned}
                \mathcal{X}:=\big(
                L^p((0,L);\R^{3 \times
2}) \times W^{1,p}(\Omega^a;\R^3)\big)\times
                \big( W^{1,p}(\Omega^b;\R^3)
\times
                L^p(\omega^b;\R^{3})
                \big) .
        \end{aligned}
\end{equation*}

Let
us now introduce, for \(0<\epsi \leq 1\),
the functionals
\(\mathcal{E}_\epsi:
\mathcal{X} \to
(-\infty,\infty]
\) and 
\(\mathcal{E}_{\ell_+}:
\mathcal{X} \to
(-\infty,\infty]
\) defined by \vspace{-1.5mm}
\begin{equation*}
        \begin{aligned}
                \mathcal{E}_\epsi((\bar\bcal^a,\psi^a), (\psi^b,
                \bar\bcal^b)):=
                \begin{cases}
                        \displaystyle
                        E_\epsi^a(\psi^a) +
                        E_\epsi^b(\psi^b) & \hbox{if }  ((\bar\bcal^a,\psi^a),(
                        \psi^b, \bar\bcal^b)
                        )\in \mathcal{A}_\epsi \hbox{ and }
                        (\psi^a,\psi^b) \in
                        \Phi_\epsi\\
                        \infty & \hbox{otherwise}
                \end{cases}
        \end{aligned}
\end{equation*}
and
\vspace{-1mm}%
\begin{equation*}
        \begin{aligned}
                \mathcal{E}_{\ell_+}((\bar\bcal^a,\psi^a),
                (\psi^b, \bar\bcal^b)):=
                \begin{cases}
                        \displaystyle
                        E_{\ell_+}((\bar\bcal^a,\psi^a), (\psi^b,\bar
                        \bcal^b)) & \hbox{if }   (\psi^a,\psi^b)
                        \in
                        \Phi_{\ell_+}^p\\
                        \infty & \hbox{otherwise,}
                \end{cases}
        \end{aligned}
\end{equation*}
respectively, where, we recall, \(\A_\epsi\),
\(\Phi_\epsi\),  \(E_{\ell_+}\), and
\(\Phi_{\ell_+}^p\)
are given by \eqref{Aepsi}, \eqref{Phiepsi},
\eqref{Eell+}, and   \eqref{Phipl+}.

Note that if \( ((\bar\bcal^a,\psi^a),(
\psi^b, \bar\bcal^b)
)\in \mathcal{A}_\epsi\) and  \((\psi^a,\psi^b)
\in
\Phi_\epsi\), then \(\mathcal{E}_\epsi((\bar\bcal^a,\psi^a),
(\psi^b, \bar\bcal^b)) = F_\epsi^a(\psi^a)
+
\frac{h_\epsi}{r_\epsi^2}F_\epsi^b(\psi^b) -L^a_\epsi(\psi^a_\epsi)
- \frac{h_\epsi}{r_\epsi^2}
L^b_\epsi(\psi^b_\epsi) \) (see \eqref{Eabepsi});
also,
if  \((\psi^a,\psi^b) \in \Phi_{\ell_+}^p\),
then \(\mathcal{E}_{\ell_+}((\bar\bcal^a,\psi^a),
(\psi^b,\bar \bcal^b)) = F^a(\psi^a)
+
\ell F^b(\psi^b) -L^a(\bar\bcal^a,\psi^a)
- \ell L^b(\psi^a,\psi^b, \bar\bcal^b) \) (see
\eqref{Eell+} and \eqref{Fab}).

We claim that \((\mathcal{E}_\epsi)_{\epsi>0}\)
\(\Gamma\)-converges
to \(\mathcal{E}_{\ell_+}\) with respect to the weak
topology in \(\mathcal{X} \).
As we showed at
the beginning of this proof, \((\mathcal{E}_\epsi)_{\epsi>0}\)
is equi-coercive with respect to the
weak topology in \(\mathcal{X} \).
Thus,  if the claim
holds, then  Theorem~\ref{thm:ellr+}
immediately follows
(see  \cite[Proposition~8.16, Theorem~7.8, and
Corollary~7.20]{DM93}).
To prove the claim, it suffices to show
that given any
subsequence \(\epsi_n\prec\epsi\), the
\(\Gamma\)-lower
limit  of \((\mathcal{E}_{\epsi_n})_{n\in\NN}\)
coincides with \(\mathcal{E}_{\ell_+}\)
(see \cite[Chapter~8]{DM93}).

We first  show that given   \( ((\bar\bcal^a_n,\psi^a_n),(\psi^b_n,
\bar\bcal^b_n))_{n\in\NN} \subset \calX\) and \( ((\bar\bcal^a,\psi^a),(\psi^b,
\bar\bcal^b)) \in \calX\)
such that \( ((\bar\bcal^a_n,\psi^a_n),\allowbreak(\psi^b_n,
\bar\bcal^b_n))\weakly ((\bar\bcal^a,\psi^a),\allowbreak
(\psi^b, \bar\bcal^b)) \) weakly in
\( \calX\), we have \vspace{-1mm}%
\begin{equation}
        \label{eq:liminfcalF}
        \begin{aligned}
                \mathcal{E}_{\ell_+}((\bar\bcal^a,\psi^a),
                (\psi^b, \bar\bcal^b))
                \leq \liminf_{n\to\infty} \mathcal{E}_{\epsi_n}
                ((\bar\bcal^a_n,\psi^a_n),(\psi^b_n,
                \bar\bcal^b_n)).
        \end{aligned}
\end{equation}
To prove \eqref{eq:liminfcalF}, we may
assume that  the  lower
limit on the right-hand
side of \eqref{eq:liminfcalF} is actually
a limit and is finite, extracting a
subsequence if necessary.  Then,  \(((\bar\bcal^a_n,\psi^a_n),(\psi^b_n,
\bar\bcal^b_n)) \in \A_{\epsi_n}\) and
\((\psi^a_n,\psi^b_n)
\in  \Phi_{\epsi_n} \) for all \(n\in\NN\).
In particular,
\(\psi^a_n = \ffi^a_{0,\epsi_n}\) on
\(\Gamma^a\) and \( \psi^b_n =   \ffi^b_{0,\epsi_n}\)
on \( \Gamma^b\). Thus, \(\psi^a = \ffi^a_{0}\)
on \(\Gamma^a\) and \(\psi^b =   \ffi^b_{0}\)
on \( \Gamma^b\). Invoking \eqref{eq:limtotalforce}
and Theorem~\ref{Thm:FepsitoF}, we deduce
\eqref{eq:liminfcalF}.

To conclude, we prove that given   \( ((\bar\bcal^a,\psi^a),(\psi^b,
\bar\bcal^b)) \in \calX\), there exists
a sequence \((({\bar\bcal}^a_n,
\psi^a_n),\allowbreak
(\psi^b_n,
{\bar\bcal}^b_n))_{n\in\NN} \allowbreak\subset
\calX\)   such that \( (({\bar\bcal}^a_n,\psi^a_n),(\psi^b_n,
{\bar\bcal}^b_n))\weakly ((\bar\bcal^a,\psi^a),(\psi^b,
\bar\bcal^b)) \) weakly in \( \calX\)
and
\vspace{-1mm}%
\begin{equation}
        \label{eq:limsupcalF}
        \begin{aligned}
                \mathcal{E}_{\ell_+}((\bar\bcal^a,\psi^a),
                (\psi^b, \bar\bcal^b))
                =\liminf_{n\to\infty} \mathcal{E}_{\epsi_n}
                ((\bar\bcal^a_n,\psi^a_n),(\psi^b_n,
                \bar\bcal^b_n)).
        \end{aligned}
\end{equation}
To establish \eqref{eq:limsupcalF},
the only non-trivial
case is the case in which \((\psi^a,\psi^b)
\in
\Phi_{\ell_+}^p\). Then, by Theorem~\ref{Thm:FepsitoF},
we can find a sequence   \(((\hat{\bar\bcal}^a_n,
\hat\psi^a_n),\allowbreak
(\hat\psi^b_n,
\hat{\bar\bcal}^b_n))_{n\in\NN} \subset
\A_{\epsi_n}\) such
that 
\( ((\hat{\bar\bcal}^a_n,
\hat\psi^a_n),\allowbreak
(\hat\psi^b_n,
\hat{\bar\bcal}^b_n))\weakly ((\bar\bcal^a,\psi^a),(\psi^b,
\bar\bcal^b)) \) weakly in \( \calX\)
and
\vspace{-1.5mm}%
\begin{equation*}
        \begin{aligned}
                F^a(\psi^a) +
                \ell F^b(\psi^b)
                =\lim_{n\to\infty} \Big(F_{\epsi_n}
                (\hat\psi^a_n)+\frac{h_{\epsi_n}}{r_{\epsi_n}^2}
                F_{\epsi_n} (\hat\psi^b_n)\Big).
        \end{aligned}
\end{equation*}
By Lemma~\ref{lem:ontrace}, we can find
a subsequence
\(\epsi_{n_k} \prec \epsi_n\) and a
sequence  
\(((\tilde{\bar\bcal}^a_k,\tilde\psi^a_k),(\tilde
\psi^b_k, \tilde{\bar\bcal}^b_k)
)_{j\in\Nb} \subset \mathcal{A}_{\epsi_{n_k}}\)
satisfying
\((\tilde\psi^a_k,\tilde\psi^b_k) \in
\Phi_{\epsi_{n_k}}\)
for all \(k\in\Nb\), 
\(((\tilde{\bar\bcal}^a_k,\tilde\psi^a_k),(
\tilde \psi^b_k,
\tilde{\bar\bcal}^b_k))  \weakly ((\bar\bcal^a,
\psi^a),(
\psi^b, \bar\bcal^b))\) weakly in
\(\calX\), 
and \vspace{-1mm}
\begin{equation}\label{eq:limsupcalF2}
        \begin{aligned}
                \limsup_{k\to\infty}
                \Big(F_{\epsi_{n_k}}^a (\tilde\psi^a_{k})
                + \frac{h_{\epsi_{n_k}}}{r_{\epsi_{n_k}}^2} F_{\epsi_{n_k}}^b (\tilde\psi^b_{k})\Big)
                \leq F^a(\psi^a) +
                \ell F^b(\psi^b).
        \end{aligned}
\end{equation}
Then, defining \((({\bar\bcal}^a_n,
\psi^a_n),\allowbreak
(\psi^b_n,
{\bar\bcal}^b_n)) :=((\tilde{\bar\bcal}^a_k,\tilde\psi^a_k),(\tilde
\psi^b_k, \tilde{\bar\bcal}^b_k)
) \)   if \(n=n_k\) and \((({\bar\bcal}^a_n,
\psi^a_n),\allowbreak
(\psi^b_n,
{\bar\bcal}^b_n)):= ((\bar\bcal^a,\psi^a),\allowbreak(\psi^b,
\bar\bcal^b))\) if \(n\not= n_k\), from
\eqref{eq:liminfcalF}, \eqref{eq:limtotalforce},
and \eqref{eq:limsupcalF2}, in this
order, we obtain
\begin{equation*}
        \begin{aligned}
                \mathcal{E}_{\ell_+}((\bar\bcal^a,\psi^a),
                (\psi^b,\bar \bcal^b)) &= F^a(\psi^a)
                +
                \ell F^b(\psi^b) -L^a(\bar\bcal^a,\psi^a)
                - \ell L^b(\psi^b,
                \bar\bcal^b)\\
                &\leq \liminf_{n\to\infty} \mathcal{E}_{\epsi_n}
                ((\bar\bcal^a_n,\psi^a_n),(\psi^b_n,
                \bar\bcal^b_n))
                \leq \limsup_{k\to\infty} \mathcal{E}_{\epsi_{n_k}}
                ((\tilde{\bar\bcal}^a_k,\tilde\psi^a_k),(\tilde
                \psi^b_k, \tilde{\bar\bcal}^b_k)
                )\\
                & = \limsup_{k\to\infty} \Big(F_{\epsi_{n_k}}^a(\tilde\psi^a_k)
                +
                \frac{h_{\epsi_{n_k}}}{r_{\epsi_{n_k}}^2}F_{\epsi_{n_k}}^b(\tilde
                \psi^b_k) -L^a_{\epsi_{n_k}}(\tilde
                \psi^a_k) - \frac{h_{\epsi_{n_k}}}{r_{\epsi_{n_k}}^2}
                L^b_{\epsi_{n_k}}(\tilde
                \psi^b_k) \Big)
                \\
                &\leq \limsup_{k\to\infty} \Big(F_{\epsi_{n_k}}^a
                (\tilde\psi^a_{k})
                + \frac{h_{\epsi_{n_k}}}{r_{\epsi_{n_k}}^2}F_{\epsi_{n_k}}^b (\tilde\psi^b_{k})\Big)
                -L^a(\bar\bcal^a,\psi^a) - \ell L^b(\psi^a,\psi^b,
                \bar\bcal^b)
                \\
                &\leq \mathcal{E}_{\ell_+}((\bar\bcal^a,\psi^a),
                (\psi^b,\bar \bcal^b)), 
        \end{aligned}
\end{equation*}
which proves \eqref{eq:limsupcalF}.
\end{proof}
\subsection{The string case}\label{Subs:rod}
 
Here, we recover the analysis of a 
 nonlinear string model with bending-torsion
moments and generalized boundary conditions
 that was carried out in \cite{RiMSc},
which provides the 3D-1D counterpart
of the study in \cite{BFM03} under more
general boundary conditions.
Roughly speaking, it corresponds to
consider the problem  \eqref{Ptepsi}
disregarding the terms in \(\Omega^b_\epsi\)
and setting a deformation condition
on \(r_\epsi \omega^a\times \{0,L\}\);
that is, on both of the extremities of
the thin tube-shaped domain \(\Omega^a_\epsi\).  
 After a similar change of variables
 and re-scaling described in the Introduction, we are then led
 to the study of the re-scaled problem
\begin{equation}\label{Pepsirod} 
\inf
\big\{ E^a_\varepsilon(\psi^a)\!:\,
\psi^a  \in
 \Phi^a_\epsi\big\} ,
\tag{$
{\mathcal{P}}^a_\epsi$}
\end{equation}
where, for \(\Gamma^a_0:=\omega^a\times\{0\}\)
and \(\Gamma^a_L:=\omega^a\times\{L\}\),
%
\(\Phi^a_\epsi := \big\{ \psi^a\in
W^{1,p}(\Omega^a;{\mathbb
R}^3) \!: \,\, \psi^a = 
\ffi^a_{0,\epsi} \hbox{ on } \Gamma^a_0
\cup \Gamma^a_L  \big\}\)
%
and, as above, \vspace{-1.5mm}
\begin{equation*}
\begin{aligned}
E^a_\varepsilon(\psi^a) =\int_{\Omega^a}
W({r_\varepsilon^{-1}}\nabla_\alpha
\psi^a|\nabla_3\psi^a)\,\dx - \int_{\Omega^a}
f^a\cdot\psi^a\,\dx -
\int_{S^a} g^a\cdot\psi^a\,\d{\HH}^2(x)
-
\int_{\Omega^a}
\Bcal^a:\Big(\frac{1}{r_\varepsilon}\nabla_\alpha\psi^a
\Big|0
\Big)\,\dx
\end{aligned}
\end{equation*}
with  \(W:\R^{3\times 3} \to \R\)  a
Borel function
satisfying
\eqref{pgrowth},   \(f^a\in L^q(\Omega^a;\RR^3)\),
\(g^a\in L^q(S^a;\RR^3)\), and \(\Bcal^a\in L^q((0,L);\RR^{3\times 3})\).

We further assume that there exists  \(\ffi^a_0 \in W^{1,p}(\Omega^a;\R^3)\)
 satisfying
\eqref{bca}.
As observed before,  the function \(\ffi^a_{0,\epsi}(x)
=
(r_\varepsilon x_\alpha, x_3) \)  corresponding
to the
clamped case, which is commonly considered
in the literature,
satisfies \eqref{bca}.

Addressing the extremity \(\Gamma^a_0\)
in an analogous way as we treated the extremity \(\Gamma^a_L\) in the 
previous two subsections, we find implicit
in the arguments in those two subsections
the proof of the following result.

\begin{theorem}\label{thm:rod1} For
\(0<\epsi\leq 1\), let  \(\mathcal{E}^a_\epsi,
\, \mathcal{E}^a:
L^p((0,L);\R^{3 \times 2}) \times W^{1,p}(\Omega^a;\R^3)
 \to (-\infty,\infty]\) be the functionals defined, for
\((\bar\bcal^a,\psi^a) \in L^p((0,L);\R^{3
\times 2}) \times W^{1,p}(\Omega^a;\R^3)
\),   by \vspace{-.5mm}
\begin{equation*}
\begin{aligned}
\mathcal{E}^a_\epsi(\bar\bcal^a,\psi^a):=
\begin{cases}
\displaystyle
E_\epsi^a(\psi^a)  & \hbox{if }  \psi^a
 \in
 \Phi^a_\epsi \hbox{ and } \frac{1}{r_{\varepsilon}}
\int_{\omega^a}\nabla_\alpha
\psi^a\, \dx_\alpha
=\bar \bcal^a\\
\infty & \hbox{otherwise}
\end{cases} 
\end{aligned}
\end{equation*}
and  \vspace{-1.mm}
\begin{equation*}
\begin{aligned}
\mathcal{E}^a(\bar\bcal^a,\psi^a):=
\begin{cases}
\displaystyle
E^a(\bar\bcal^a,\psi^a) & \hbox{if }
  \psi^a
\in
\Phi^a \\
\infty & \hbox{otherwise,}
\end{cases}
\end{aligned}
\end{equation*}
respectively, where \vspace{-.mm}
\begin{equation}\label{Phirod}
\begin{aligned}
\Phi^a := \big\{ \psi^a\in
W^{1,p}(\Omega^a;{\mathbb
R}^3) \!:& \,\, \psi^a = 
\ffi^a_{0} \hbox{ on } \Gamma^a_0
\cup \Gamma^a_L \hbox{ and } \,
  \psi^a \hbox{ is  independent of }
x_\alpha \big\}
\end{aligned}
\end{equation}
and
\vspace{-1mm}\begin{equation*}
\begin{aligned}
E^a(\bar\bcal^a,\psi^a):= &\, \bar
a\int_0^L \ce({\bar a^{-1}}\bar\bcal^a|\nabla_3\psi^a)\,\dx_3
  - \int_0^L
\big(
\bar f^a\cdot\psi^a+ \bar g^a\cdot\psi^a
+ 
\Bcal^a:(\bar\bcal^a|0)\big)\,\dx_3
\end{aligned}
\end{equation*}
with
\(\bar a = |\omega^a|\), \(\bar f^a(x_3)=
\int_{\omega^a} f^a(x)\,\dx_\alpha\),
\(
\bar g^a(x_3)= \int_{\partial\omega^a}
g^a(x)\,\d\HH^1(x_\alpha)\).
Then,  \((\mathcal{E}^a_\epsi)_{\epsi>0}\)
\(\Gamma\)-converges to \(\mathcal{E}^a\)
 with respect to the
weak
topology in \(L^p((0,L);\R^{3 \times
2}) \times W^{1,p}(\Omega^a;\R^3) \).
\end{theorem}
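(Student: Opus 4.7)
The plan is to parallel the proof of Theorem~\ref{thm:ellr+} line by line, but in the simpler framework where the $b$-component and the junction condition are absent, and where the single boundary condition on $\Gamma^a$ is replaced by the symmetric pair of boundary conditions on $\Gamma^a_0$ and $\Gamma^a_L$. Equi-coercivity of $(\mathcal{E}^a_\epsi)_{\epsi>0}$ with respect to the weak topology in $L^p((0,L);\R^{3\times 2})\times W^{1,p}(\Omega^a;\R^3)$ follows exactly as at the beginning of the proof of Theorem~\ref{thm:ellr+}, using \eqref{pgrowth}, \eqref{bca}, and Young's inequality to absorb the linear forcing terms and the bending-torsion contribution $\int_0^L \Bcal^a:(\bar\bcal^a_\epsi|0)\,\dx_3$.

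First, I would establish the elastic $\Gamma$-convergence: for the pure-elastic counterpart $G_\epsi(\bar\bcal^a,\psi^a):=F^a_\epsi(\psi^a)$ if $r_\epsi^{-1}\int_{\omega^a}\nabla_\alpha\psi^a\,\dx_\alpha=\bar\bcal^a$ and $+\infty$ otherwise, the $\Gamma$-limit equals $\bar a\int_0^L\ce(\bar a^{-1}\bar\bcal^a|\nabla_3\psi^a)\,\dx_3$ on pairs $(\bar\bcal^a,\psi^a)$ with $\psi^a$ independent of $x_\alpha$, and $+\infty$ otherwise. The lower bound is Step~3 of Theorem~\ref{Thm:FepsitoF} restricted to the $a$-part: Fubini plus Jensen's inequality with respect to $\ce$, combined with $L^p$-weak lower semicontinuity of $v\mapsto\int_0^L\ce(v)\,\dt$. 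The upper bound is obtained by replacing $W$ with $\qe$ through Lemma~\ref{lem:WbyQW} (taking $\ell_n^a=1$, $\ell_n^b=0$ formally, i.e., ignoring the $b$-component), using the relaxation formula \eqref{eq:one}, a decomposition lemma, and finally the explicit construction from Step~4 of Theorem~\ref{Thm:FepsitoF} applied only on $\Omega^a$: for $\psi^a\in W^{1,\infty}((0,L);\R^3)$ and $\bar\bcal^a\in C^1([0,L];\R^{3\times 2})$, set $\psi^a_\epsi(x):= \frac{r_\epsi}{\bar a}\bar\bcal^a(x_3)x_\alpha^T+\psi^a(x_3)$, for which $F^a_\epsi(\psi^a_\epsi)\to\bar a\int_0^L W(\bar a^{-1}\bar\bcal^a|\nabla_3\psi^a)\,\dx_3$ by Lebesgue's dominated convergence; density in the natural target spaces, the lower-semicontinuity of the $\Gamma$-lower limit, and \eqref{pgrowth} close the argument.

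Second, I would adapt Lemma~\ref{lem:ontrace} to match the prescribed traces on \emph{both} extremities: given a recovery sequence $(\bar\bcal^a_\epsi,\psi^a_\epsi)$ from the previous step, one modifies $\psi^a_\epsi$ through two independent boundary-layer cut-offs near $x_3=0$ and $x_3=L$. Concretely, pick smooth $\phi^{0}_{\eta,\delta},\phi^{L}_{\eta,\delta}\in C^\infty_c(\R;[0,1])$ localized in shrinking neighborhoods of $\{0\}$ and $\{L\}$, and define $\tilde\psi^a_\epsi:=\phi\,\psi^a_\epsi+(1-\phi)(\ffi^a_{0,\epsi}-\ffi^a_0+\psi^a)$ with $\phi:=\phi^{0}_{\eta,\delta}\phi^{L}_{\eta,\delta}$. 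The equi-integrability in \eqref{bca} together with the tightness of the auxiliary measure $\nu^a_\epsi$ introduced in the proof of Lemma~\ref{lem:ontrace} controls the gradient on the transition layers; choosing $\eta$ and $\delta$ so that $\nu^a(\{0\})=\nu^a(\{L\})=0$, and finally diagonalizing in the weak topology, gives a sequence $\tilde\psi^a_\epsi\weakly\psi^a$ in $W^{1,p}(\Omega^a;\R^3)$ with $\tilde\psi^a_\epsi=\ffi^a_{0,\epsi}$ on $\Gamma^a_0\cup\Gamma^a_L$, $\limsup_\epsi F^a_\epsi(\tilde\psi^a_\epsi)\leq\bar a\int_0^L\ce(\bar a^{-1}\bar\bcal^a|\nabla_3\psi^a)\,\dx_3$, and $\tilde{\bar\bcal}^a_\epsi\weakly\bar\bcal^a$ in $L^p((0,L);\R^{3\times 2})$.

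Third, the forcing terms pass to the limit by standard weak-convergence arguments: $\int_{\Omega^a}f^a\cdot\psi^a_\epsi\,\dx\to\int_0^L\bar f^a\cdot\psi^a\,\dx_3$ and $\int_{S^a}g^a\cdot\psi^a_\epsi\,\d\HH^2\to\int_0^L\bar g^a\cdot\psi^a\,\dx_3$ by the continuity of the trace with respect to the weak convergence in $W^{1,p}$, while $\int_{\Omega^a}\Bcal^a\colon(r_\epsi^{-1}\nabla_\alpha\psi^a_\epsi|0)\,\dx=\int_0^L\Bcal^a:(\bar\bcal^a_\epsi|0)\,\dx_3\to\int_0^L\Bcal^a:(\bar\bcal^a|0)\,\dx_3$ by the weak convergence $\bar\bcal^a_\epsi\weakly\bar\bcal^a$ in $L^p((0,L);\R^{3\times 2})$. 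Combining these three ingredients as in the final diagonal argument of the proof of Theorem~\ref{thm:ellr+} yields the $\Gamma$-liminf and $\Gamma$-limsup inequalities for $(\mathcal{E}^a_\epsi)_{\epsi>0}$ and $\mathcal{E}^a$. The main technical obstacle is the two-sided boundary-layer construction of Step~2, but since no junction coupling with a plate is present, the layers near $x_3=0$ and $x_3=L$ do not interact and each is treated independently by a verbatim adaptation of Lemma~\ref{lem:ontrace}; no analogue of the delicate density result in Proposition~\ref{prop:density} is required, because the target space $\Phi^a$ in \eqref{Phirod} contains no junction condition, and the density of smooth functions in $\A^p:=\{\psi^a\in W^{1,p}((0,L);\R^3)\!:\psi^a=\ffi^a_0\text{ at }0,L\}$ is classical.
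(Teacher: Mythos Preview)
Your proposal is correct and follows essentially the same approach the paper indicates: the paper does not give an explicit proof of Theorem~\ref{thm:rod1} but says the result is ``implicit in the arguments'' of Subsections~\ref{Subs:l+ar}--\ref{Subs:l+proof}, with the extremity $\Gamma^a_0$ treated analogously to $\Gamma^a_L$, and you have spelled out precisely that adaptation. One minor presentational slip: your cut-off $\phi:=\phi^{0}_{\eta,\delta}\phi^{L}_{\eta,\delta}$ with both factors in $C^\infty_c$ and ``localized near $\{0\}$ and $\{L\}$'' would vanish identically; you want $\phi$ equal to $1$ in the interior and transitioning to $0$ in layers near each endpoint (e.g., $\phi=(1-\phi^{0}_{\eta,\delta})(1-\phi^{L}_{\eta,\delta})$, or directly two cut-offs as in Lemma~\ref{lem:ontrace}), and the null-measure condition should be imposed at the layer locations $\{\eta\}$ and $\{L-\eta\}$ rather than at the endpoints themselves.
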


As a corollary to Theorem~\ref{thm:rod1},
we derive a nonlinear string model where
the applied surface forces induce a
bending-torsion effect:

\begin{theorem}\label{thm:rod2}
Let \(W:\R^{3\times 3} \to \R\) be a
Borel function
satisfying
\eqref{pgrowth} and let \((\psi^a_\epsi)_{\epsi>0}\)
be a diagonal
infimizing sequence of the sequence
of problems \eqref{Pepsirod},
where      
\((\ffi^a_{0,\epsi})_{\epsi>0}\)
satisfies
\eqref{bca}. 
  Then, the sequence
\((\bar \bcal^a_\epsi,\psi^a_\epsi)_{\epsi>0}\),
where \(\bar\bcal^a_\epsi:={r_{\varepsilon}^{-1}}
\int_{\omega^a}\nabla_\alpha
\psi^a_\epsi\, \dx_\alpha\), is sequentially,
weakly compact in
\(  L^p((0,L);\R^{3 \times 2}) \times
W^{1,p}(\Omega^a;\R^3) \). If \((\bar\bcal^a, \psi^a)\)
is an  
accumulation point, then \((\bar\bcal^a,
\psi^a)\in
 L^p((0,L);\R^{3 \times
2}) \times \Phi^a\) and it  solves the
minimization problem
\begin{equation}\label{Prod} 
\min
\big\{ E^a(\bar\bcal^a,\psi^a)\!:\, \psi^a\in \ffi^a_0 + W^{1,p}_0((0,L);\RR^3), \, \bar\bcal^a \in L^p((0,L);\R^{3 \times
2}) \big\}  .
\tag{$
{\mathcal{P}}^a$}
\end{equation}
\end{theorem}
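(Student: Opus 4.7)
The plan is to combine equi-coercivity of the rescaled energies \((\mathcal{E}^a_\epsi)_{\epsi>0}\) in Theorem~\ref{thm:rod1} with the \(\Gamma\)-convergence statement therein, via the fundamental theorem of \(\Gamma\)-convergence (see, e.g., \cite[Corollary~7.20 and Theorem~7.8]{DM93}), to deduce that every cluster point of the diagonal infimizing sequence \((\bar\bcal^a_\epsi,\psi^a_\epsi)_{\epsi>0}\) solves \eqref{Prod}. The proof therefore reduces to two ingredients: uniform a priori bounds and the identification of the space of admissible limits.

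First, I would derive the a priori bounds. Using \(\ffi^a_{0,\epsi}\) as a competitor in \eqref{Pepsirod} together with the upper bound in \eqref{pgrowth} and the equi-integrability condition in \eqref{bca} shows that \(E^a_\epsi(\ffi^a_{0,\epsi})\) is bounded uniformly in \(\epsi\); hence the diagonal infimizing property yields \(\sup_{\epsi>0} E^a_\epsi(\psi^a_\epsi)<\infty\). To turn this into a bound on \((\psi^a_\epsi, r_\epsi^{-1}\grad_\alpha\psi^a_\epsi)\), I would write
\begin{equation*}
\int_{\Omega^a} \Bcal^a:\bigl(\tfrac{1}{r_\epsi}\grad_\alpha\psi^a_\epsi\bigm|0\bigr)\,\dx = \int_0^L \Bcal^a:(\bar\bcal^a_\epsi|0)\,\dx_3,
\end{equation*}
and then estimate the three linear terms defining \(L^a_\epsi(\psi^a_\epsi)\) via H\"older's inequality, the continuity of the trace \(W^{1,p}(\Omega^a)\hookrightarrow L^p(S^a)\), Young's inequality, and Poincar\'e's inequality applied to \(\psi^a_\epsi-\ffi^a_{0,\epsi}\), which vanishes on \(\Gamma^a_0\cup\Gamma^a_L\). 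Combined with the lower bound in \eqref{pgrowth} and Jensen's inequality, this absorbs the linear part of the energy into the elastic term and produces
\begin{equation*}
\sup_{\epsi>0}\Bigl(\|\psi^a_\epsi\|_{W^{1,p}(\Omega^a;\R^3)} + \|r_\epsi^{-1}\grad_\alpha\psi^a_\epsi\|_{L^p(\Omega^a;\R^{3\times 2})}\Bigr)<\infty,
\end{equation*}
so that \((\bar\bcal^a_\epsi,\psi^a_\epsi)_{\epsi>0}\) is sequentially, weakly compact in \(L^p((0,L);\R^{3\times 2})\times W^{1,p}(\Omega^a;\R^3)\).

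Second, let \((\bar\bcal^a,\psi^a)\) be a corresponding accumulation point. Since \(\|\grad_\alpha \psi^a_\epsi\|_{L^p}=r_\epsi\|r_\epsi^{-1}\grad_\alpha\psi^a_\epsi\|_{L^p}\to 0\), the weak limit \(\psi^a\) is independent of \(x_\alpha\); the same argument applied to \(\ffi^a_{0,\epsi}\) and \eqref{bca} shows that \(\ffi^a_0\) is independent of \(x_\alpha\). By weak continuity of the trace, the boundary identity \(\psi^a_\epsi=\ffi^a_{0,\epsi}\) on \(\Gamma^a_0\cup\Gamma^a_L\) passes to the limit, so \(\psi^a\in\Phi^a\) (see \eqref{Phirod}). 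Identifying functions independent of the cross-section with elements of \(W^{1,p}((0,L);\R^3)\), this is exactly the admissibility constraint in \eqref{Prod}.

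Finally, I would conclude by invoking Theorem~\ref{thm:rod1}: since \((\mathcal{E}^a_\epsi)_{\epsi>0}\) is equi-coercive in the weak topology of \(L^p((0,L);\R^{3\times 2})\times W^{1,p}(\Omega^a;\R^3)\) (by the bound just established) and \(\Gamma\)-converges to \(\mathcal{E}^a\), every weak cluster point of a diagonal infimizing sequence minimizes the \(\Gamma\)-limit; in particular, \((\bar\bcal^a,\psi^a)\) solves \eqref{Prod}. The main obstacle is the bookkeeping in the coercivity step, because the bending-torsion term \(\int_0^L \Bcal^a:(\bar\bcal^a_\epsi|0)\,\dx_3\) is of the same homogeneity as the elastic energy in the variable \(\bar\bcal^a_\epsi\); Young's inequality must be tuned so that the resulting \(\|\bar\bcal^a_\epsi\|_{L^p}^p\) contribution is dominated by the \(p\)-growth lower bound on the energy, and the constants controlled uniformly in \(\epsi\). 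Everything else is a direct specialization of the arguments carried out in Section~\ref{Subs:l+proof} with the bottom cylinder removed and the two extremities of \(\Omega^a\) treated symmetrically.
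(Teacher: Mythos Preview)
Your proposal is correct and follows essentially the same approach as the paper: the paper presents Theorem~\ref{thm:rod2} as a corollary of the \(\Gamma\)-convergence result in Theorem~\ref{thm:rod1}, and the implicit argument is precisely the equi-coercivity computation plus the fundamental theorem of \(\Gamma\)-convergence that you outline, mirroring the proof of Theorem~\ref{thm:ellr+} in Section~\ref{Subs:l+proof} with the \(\Omega^b\) terms removed and both extremities of \(\Omega^a\) treated symmetrically. Your remark on tuning Young's inequality so that the \(\Bcal^a\)-term is absorbed by the \(p\)-growth lower bound is exactly the relevant technical point; the mention of Jensen's inequality in the coercivity step is superfluous (it is used for the \(\Gamma\)-liminf, not for the a~priori bounds), but this does not affect the argument.
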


\begin{remark}
(i) As before, in general, the term \(\bar\bcal^a\)
is not related to the one-dimensional
strain tensor of \(\psi^a\). Thus, \(\psi^a\)
and \(\bar\bcal^a\) must be regarded
as distinct macroscopic entities. Moreover, given the nature
of \(G^a\), \(\bar\bcal^a\)
accounts for bending and torsion moments
in the string. (ii)
If \(\Bcal^a\equiv0\), which means that the
term \(G^a\) in the surface applied forces with
a non-standard  order
of scaling  magnitude is not present,
then the model \eqref{Prod} reduces
to
\vspace{-.5mm}%
\begin{equation}\label{Prodnb} 
\min
\bigg\{\bar
a\int_0^L \mathcal{C}W_0(\nabla_3\psi^a)\,\dx_3
  - \int_0^L
\big(
\bar f^a\cdot\psi^a+ \bar g^a\cdot\psi^a
\big)\,\dx_3\!:\, \psi^a\in \ffi^a_0 + W^{1,p}_0((0,L);\RR^3)\bigg\} ,
\tag{$
{\widehat{\mathcal{P}}}^a$}
\end{equation}
where \(\mathcal{C}W_0\) is the convex
envelop of the function \(W_0:\RR^3\to\RR\)
defined for \(\zeta\in\RR^3\) by
\begin{equation}\label{eq:W0}
\begin{aligned}
W_0(\zeta) := \inf_{b^a\in \RR^{3\times
2}} W(b^a|\zeta).
\end{aligned}
\end{equation}
The model \eqref{Prodnb} is the 1D counterpart
of the model derived in \cite{LDR95}
and, in essence, coincides with the
model derived in \cite{ABP91}.
We note, however, that in  \cite{ABP91}, the physical condition ``\(\det \grad\psi^a >0\)" of non-interpenetration of matter is addressed.\end{remark}

\section{Case $\ell=\infty$}\label{Sect:li}
 
 

This section is devoted to the proof
of Theorem~\ref{thm:elli}, where Proposition~\ref{prop:thm6FJM}
and the results in Subsection~\ref{Subs:rod}
play an important role.

\begin{proof}[Proof of Theorem~\ref{thm:elli}]
Let \((\psi^a_\epsi,\psi^b_\epsi)_{\epsi>0}\)
be as in the statement of Theorem~\ref{thm:elli};
that is, a sequence
in \(W^{1,p}(\Omega^a;{\mathbb
R}^3) \times W^{1,p}(\Omega^b;{\mathbb
R}^3)\) satisfying \( \psi^a_\epsi =
\ffi^a_{0,\epsi}\) on \(\Gamma^a\),
\(
 \psi^b_\epsi =   \ffi^b_{0,\epsi}\)
on  \(\Gamma^b\), \(\psi^a_\epsi(x_\alpha,0)
=
\psi^b_\epsi(r_\varepsilon x_\alpha,0)\)
for \aev\
\(x_\alpha\in\omega^a\), and
\vspace{-1mm}%
\begin{equation}
\label{eq:diaginfi}
\begin{aligned}
E^a_\epsi(\psi^a_\epsi) + E^b_\epsi(\psi^b_\epsi)
< \inf_{(\psi^a,\psi^b) \in
 \Phi_\epsi}
 \big(E^a_\varepsilon(\psi^a) + E^b_\varepsilon(\psi^b)\big)
+ \rho(\epsi),
\end{aligned}
\end{equation}
where \(\rho\) is a non-negative
function satisfying \(\rho(\epsi) \to
0\) as \(\epsi\to0^+\) and  \(E^a_\epsi\), \(E^b_\epsi\), and \(\Phi_\epsi\)
are given by \eqref{Eabepsi} and \eqref{Phiepsi}. By  \eqref{Laepsi},
 \eqref{Lbepsi},  
\eqref{forcesli}, \eqref{Lbcala}, 
and \eqref{Ljunction}, we have \vspace{-1mm}
\begin{equation*}
\begin{aligned}
E^a_\epsi(\psi^a_\epsi) = F^a_\epsi(\psi^a_\epsi)
- L^a_\epsi(\psi^a_\epsi) \enspace \text{and
} \enspace E^b_\varepsilon(\psi^b_\epsi) =
\frac{h_\epsi}{r_\epsi^2} F^b_\varepsilon(\psi_\epsi
^b) - \frac{h_\epsi}{r_\epsi^2}L^b_\varepsilon(\psi^b_\epsi), 
\end{aligned}
\end{equation*}
where \(F^a_\epsi\) and \(F^b_\epsi\)
are given by \eqref{Fabepsi} and 
\vspace{-1mm}%
\begin{align*}
L^a_\varepsilon(\psi^a_\epsi)&=\int_{\Omega^a}
f^a\cdot\psi^a_\epsi\,\dx +
\int_{S^a} g^a\cdot\psi^a_\epsi\,\d{\HH}^2(x)
+
\int_{0}^L
\Bcal^a:(\bar\bcal^a_\epsi|0)\,\dx_3,\\
\frac{h_\epsi}{r_\epsi^2}L^b_\varepsilon(\psi^b_\epsi)&=\int_{\Omega^b}
f^b\cdot\psi^b_\epsi\,\dx + 
\int_{\omega^b\backslash r_\varepsilon
\overline\omega^a}
(g^{b,+}\cdot \psi^{b,+}_\epsi -g^{b,-}
\cdot\psi^{b,-}_\epsi)\,\dx_\alpha
+
\int_{\omega^b\backslash
r_\varepsilon\overline\omega^a}
G^b\cdot
\bar\bcal^b_\epsi\,
\dx_\alpha
\\&\quad-
\int_{r_\varepsilon \omega^a}\hat
g^{b,-}\cdot\psi^{b,-} _\epsi\,\dx_\alpha
-
\frac{r_\epsi^2}{h_\epsi}
\int_{ \omega^a}
\hat G^b(r_\epsi \cdot)\cdot \psi^a_\epsi(\cdot,0)\,\dx_\alpha
+\int_{r_\epsi \omega^a}\hat G^b\cdot
\bar\bcal^b_\epsi\,
\dx_\alpha .
\end{align*}

Because 
\((G^b(r_\epsi \cdot))_{\epsi>0}\)
is bounded in \(L^q(\omega^a;\RR^3)\),
taking \((\ffi^a_{\epsi,0},\ffi^b_{\epsi,0})\)
with \(\ffi^b_{\epsi,0}\equiv (x_\alpha,
h_\epsi x_3)\) as a test
function on the right-hand side of \eqref{eq:diaginfi},
from \eqref{pgrowth}, \eqref{bca}, 
\eqref{WI=0}, Holder's inequality, the continuity
of the trace (from \(W^{1,p}\) into
\(L^p\)), and the fact that \(r_\epsi^2/h_\epsi
\to0\), we
conclude that 
\begin{equation*}
\begin{aligned}
\sup_{\epsi>0}\big(E^a_\epsi(\psi^a_\epsi)
+ E^b_\epsi(\psi^b_\epsi) \big)<\infty.
\end{aligned}
\end{equation*}
From this estimate, \eqref{coercrigid2},
Young's  inequality,
 Poincar\'e's inequality together with
\eqref{bca}--\eqref{bcb},  the continuity
of the trace (from \(W^{1,p}\) into
\(L^p\)), and the fact that \(\mathcal{K}:=SO(3)
\cup SO(3) A\)
is a compact subset of \(\RR^{3\times
3}\),
we obtain
\begin{equation}\label{coercdist}
\begin{aligned}
\sup_{\epsi>0}\Big(&\Vert \dist ( (r_\epsi^{-1}\grad_\alpha
\psi^a_\epsi
|\grad_3 \psi^a_\epsi) , \mathcal{K})\Vert^p_{L^p(\Omega^a)} +\frac{h_\epsi}{r_\epsi^2}  \Vert \dist ( (\grad_\alpha
\psi^b_\epsi
|h_\epsi^{-1}\grad_3 \psi^b_\epsi) , \mathcal{K})\Vert^p_{L^p(\Omega^b)}\Big)<\infty.
\end{aligned}
\end{equation}
In particular,
using the fact that \(\mathcal{K}\)
is a compact subset of \(\RR^{3\times
3}\) and Poincar\'e's inequality together with
\eqref{bca}--\eqref{bcb} once more, and because \(\sup_{\epsi>0}
{r_\epsi^2}/{h_\epsi}<\infty\), we 
have also
\vspace{-1mm}%
\begin{equation*}
\begin{aligned}
\sup_{\epsi>0}\Big( & \|\psi^a_\epsi\|_{W^{1,p}(\Omega^a;\RR^3)}
+ \|r_{\epsi}^{-1}\grad_\alpha \psi^a_\epsi
\|_{L^p(\Omega^a;\RR^{3\times 2})} \\
&
+
 \|\psi^b_\epsi\|_{W^{1,p}(\Omega^b;\RR^3)}
+ \|h_{\epsi}^{-1}\grad_3 \psi^b_\epsi
\|_{L^p(\Omega^b;\RR^3)} +\tfrac{h_\epsi}{r_\epsi^2}
 \|h_{\epsi}^{-1}\grad_3 \psi^b_\epsi
 - d_\epsi\|^p_{L^p(\Omega^b;\RR^3)}
\Big)<\infty,
\end{aligned}
\end{equation*}
where \(d_\epsi\) is the third column
of the map \(D_\epsi:\Omega^b \to \mathcal{K}\)
satisfying
\begin{equation*}
\begin{aligned}
 \dist
( (\grad_\alpha
\psi^b_\epsi
|h_\epsi^{-1}\grad_3 \psi^b_\epsi) ,
\mathcal{K}) = | (\grad_\alpha
\psi^b_\epsi
|h_\epsi^{-1}\grad_3 \psi^b_\epsi) -
D_\epsi|.
\end{aligned}
\end{equation*}
Thus, the sequences 
\((\bar \bcal^a_\epsi,\psi^a_\epsi)_{\epsi>0}\)
and \((\psi^b_\epsi,\bar
\bcal^b_\epsi)_{\epsi>0}\) are sequentially,
weakly compact
in
\(L^p((0,L);\R^{3
\times
2}) \times W^{1,p}(\Omega^a;\R^3)  \)
and \(W^{1,p}(\Omega^b;\R^3)\times L^p(\omega^b;\R^{3})\),
respectively. Let \((\bar\bcal^a, \psi^a)\)
and \(( \psi^b,\bar
\bcal^b)\) be corresponding accumulation
points. By Lemma~\ref{lem:l21GGLM},
\((\psi^a,\psi^b)\in
\A_{\ell_+}^p\) (see \eqref{Aell+}). Moreover,
we have \( \psi^a = \ffi^a_{0}\) on
\(\Gamma^a\) and \(  \psi^b =   \ffi^b_{0}\)
on  \(\Gamma^b\) by the continuity
of the trace.  Hence,
\((\psi^a,\psi^b)\in \Phi_{\ell_+}^p\) (see \eqref{Phipl+}).

Invoking now \eqref{coercdist} and 
Proposition~\ref{prop:thm6FJM},  there exists a sequence \((M^b_\epsi)_{\epsi>0}\subset
\mathcal{K}\) of constant matrices such that
\vspace{-1mm}%
\begin{equation*}
\begin{aligned}
\Vert (\grad_\alpha
\psi^b_\epsi
|h_\epsi^{-1}\grad_3 \psi^b_\epsi) - M^b_\epsi  \Vert^p_{L^p(\Omega^b;\RR^{3\times 3})} \leq C \frac{r_\epsi^2}{h_\epsi^{p+1}}.
\end{aligned}
\end{equation*}
Extracting a subsequence if necessary,
we have that \(M^b_\epsi \to M^b\) in
\(\RR^{3\times 3}\) for some \(M^b\in
\mathcal{K}\). Then, because \(\lim_{\varepsilon \to 0}{r_\varepsilon^2}
/{h_\varepsilon}^{p+1}= 0\) by hypothesis, we have 
\begin{equation*}
\begin{aligned}
 (\grad_\alpha
\psi^b_\epsi
|h_\epsi^{-1}\grad_3 \psi^b_\epsi) \to
M^b \text{ in } {L^p(\Omega^b;\RR^{3\times
3})}.
\end{aligned}
\end{equation*}
In particular, \((\grad_\alpha
\psi^b
|\grad_3 \psi^b) \equiv (M^b_\alpha|0)\).
Then, using the fact that  \(  \psi^b =   \ffi^b_{0} = (x_\alpha,0)\)
on  \(\Gamma^b\), it follows that \(M^b_\alpha
= \II_\alpha\) and  \(  \psi^b
\equiv  (x_\alpha,0)\);  because either
 \(M^b\in
SO(3)\) or \(M^b\in
SO(3)A\) and \(\II\) and \(A\) are strongly
incompatible, we conclude that   \(M^b = \II\).
Consequently, \(\bcal^b = \bar\bcal^b\equiv \II_3\).
Moreover, since \(\psi^b(0_\alpha) =0
\), we conclude  that \(\psi^a\in 
\Phi^p_{\ell_\infty }\) (see \eqref{Phipli}).

Next, we observe that, arguing as in the proof of
Theorem~\ref{thm:ellr+} and using the
fact that \(\lim_{\varepsilon
\to 0}{r_\epsi^2}/{h_\epsi} = 0\), we
conclude that if 
\(\psi^a_\epsi\weakly
\psi^a\) weakly in \(W^{1,p}(\Omega^a;\RR^3)\),
\(\psi^b_\epsi\weakly
\psi^b\) weakly in \(W^{1,p}(\Omega^b;\RR^3)\),
\(\bar\bcal^a_\epsi\weakly
\bar\bcal^a\) weakly in \(L^p((0,L);\RR^{3\times2}
)\),
and \(\bar\bcal^b_\epsi\weakly
\bar\bcal^b\) weakly in \(L^p(\omega^b;\RR^{3}
)\), then \vspace{-.5mm}
\begin{equation*}
\begin{aligned}
\lim_{\epsi\to0^+} \big(L^a_\epsi(\psi^a_\epsi)
+ 
\tfrac{h_\epsi}{r_\epsi^2}L^b_\epsi(\psi^b_\epsi)
\big) =&  \int_0^L
\big(
\bar f^a\cdot\psi^a+ \bar g^a\cdot\psi^a
+ 
\Bcal^a:(\bar\bcal^a|0)\big)\,\dx_3 \\&+ \int_{\omega^b}
\big(\bar f^b\cdot\psi^b
+
(g^{b,+}-g^{b,-})\cdot\psi^{b}
+ G^b\cdot\bar\bcal^b\big)
\,\dx_\alpha ;
\end{aligned}
\end{equation*}
in particular, for  \(\psi^b\equiv (x_\alpha,0)\)
and  \(\bar \bcal^b \equiv (0_\alpha,1)\),
and  \(\bar f^a\), \(\bar g^a\), and
\(\bar f^b\)  given by \eqref{barforces}, we have
\vspace{-.5mm}%
\begin{equation}\label{eq:limtotalforcei2}
\begin{aligned}
\lim_{\epsi\to0^+} \big(L^a_\epsi(\psi^a_\epsi)
+ 
\tfrac{h_\epsi}{r_\epsi^2}L^b_\epsi(\psi^b_\epsi)
\big) =&
 \int_0^L
\big(
\bar f^a\cdot\psi^a+ \bar g^a\cdot\psi^a
+ 
\Bcal^a:(\bar\bcal^a|0)\big)\,\dx_3 \\&+ \int_{\omega^b} \big((\bar f^b_\alpha
+ 
g^{b,+}_\alpha -g^{b,-}_\alpha)\cdot
x_\alpha  + G^b_3\big)
\,\dx_\alpha.
\end{aligned}
\end{equation}

As in the proof of Theorem~\ref{thm:ellr+},
we set %
\begin{equation*}
\begin{aligned}
\mathcal{X}:=\big(
 L^p((0,L);\R^{3 \times 2}) \times W^{1,p}(\Omega^a;\R^3)\big)\times
\big( W^{1,p}(\Omega^b;\R^3) \times
L^p(\omega^b;\R^{3})
\big) 
\end{aligned}
\end{equation*}
and, recalling \eqref{Aepsi}, \eqref{Phiepsi}, \eqref{Eelli}, and \eqref{Phipli},  we introduce, for \(0<\epsi \leq 1\),
the functionals
 \(\mathcal{E}_\epsi:
\mathcal{X} \to
(-\infty,\infty]
\) and 
 \(\mathcal{E}_{\ell_\infty }:
\mathcal{X} \to
(-\infty,\infty]
\) defined by
\begin{equation*}
\begin{aligned}
\mathcal{E}_\epsi((\bar\bcal^a,\psi^a),
(\psi^b,
\bar\bcal^b)):=
\begin{cases}
\displaystyle
E_\epsi^a(\psi^a) +
E_\epsi^b(\psi^b) & \hbox{if }  ((\bar\bcal^a,\psi^a),(
\psi^b, \bar\bcal^b)
)\in \mathcal{A}_\epsi \hbox{ and }
(\psi^a,\psi^b) \in
\Phi_\epsi\\
\infty & \hbox{otherwise}
\end{cases}
\end{aligned}
\end{equation*}
and
\vspace{-1mm}%
\begin{equation*}
\begin{aligned}
\mathcal{E}_{\ell_\infty }((\bar\bcal^a,\psi^a),
(\psi^b, \bar\bcal^b)):=
\begin{cases}
\displaystyle
E_{\ell_\infty }(\bar\bcal^a,\psi^a) & \hbox{if }   \psi^a
\in
\Phi_{\ell_\infty }^p, \,   \psi^b\equiv (x_\alpha,0), \hbox{ and }
\bar\bcal^b \equiv (0_\alpha,1) \\
\infty & \hbox{otherwise,}
\end{cases}
\end{aligned}
\end{equation*}
respectively.

We claim that \((\mathcal{E}_\epsi)_{\epsi>0}\)
\(\Gamma\)-converges
to \(\mathcal{E}_{\ell_\infty }\) with
respect to the
weak
topology in \(\mathcal{X} \).
As we showed at
the beginning of this proof, \((\mathcal{E}_\epsi)_{\epsi>0}\)
is equi-coercive with respect to the
weak topology in \(\mathcal{X} \).
Thus, 
if the claim
holds, then  Theorem~\ref{thm:elli}
 follows.
Moreover, 
to prove the claim, it suffices to show
that given any
subsequence \(\epsi_n\prec\epsi\), the
\(\Gamma\)-limit  of \((\mathcal{E}_{\epsi_n})_{n\in\NN}\)
coincides with \(\mathcal{E}_{\ell_\infty }\).

We first  show that given  \( ((\bar\bcal^a_n,\psi^a_n),(\psi^b_n,
\bar\bcal^b_n))_{n\in\NN}\subset \calX\) and \( ((\bar\bcal^a,\psi^a),(\psi^b,
\bar\bcal^b)) \in \calX\) such that \( ((\bar\bcal^a_n,\psi^a_n),\allowbreak(\psi^b_n,
\bar\bcal^b_n))\weakly ((\bar\bcal^a,\psi^a),\allowbreak
(\psi^b, \bar\bcal^b)) \) weakly in
\( \calX\), we have \vspace{-.5mm}
\begin{equation}
\label{eq:liminfcalFi}
\begin{aligned}
\mathcal{E}_{\ell_\infty }((\bar\bcal^a,\psi^a),
(\psi^b, \bar\bcal^b))
\leq \liminf_{n\to\infty} \mathcal{E}_{\epsi_n}
((\bar\bcal^a_n,\psi^a_n),(\psi^b_n,
\bar\bcal^b_n)).
\end{aligned}
\end{equation}

 To prove \eqref{eq:liminfcalFi}, we may
assume  that  the  lower
limit on the right-hand
side of \eqref{eq:liminfcalF} is actually
a limit and is finite, extracting a
subsequence if necessary.  Then,  \(((\bar\bcal^a_n,\psi^a_n),(\psi^b_n,
\bar\bcal^b_n)) \in \A_{\epsi_n}\),  \((\psi^a_n,\psi^b_n)
\in  \Phi_{\epsi_n} \),  and \(\mathcal{E}_{\epsi_n}
((\bar\bcal^a_n,\psi^a_n),(\psi^b_n,
\bar\bcal^b_n)) = F^a_{\epsi_n}(\psi^a_n)
+\frac{h_{\epsi_n}}{r_{\epsi_n}^2} F^b_{\epsi_n}(\psi^b_n) - L^a_{\epsi_n}(\psi^a_n) -  \frac{h_{\epsi_n}}{r_{\epsi_n}^2}L^b_{\epsi_n}(\psi^b_n) \) for all \(n\in\NN\).
Consequently, arguing as above, we conclude
that \(\psi^a \in \Phi_{\ell_\infty }^p\), \(\psi^b\equiv (x_\alpha,0)\),  and \(\bar\bcal^b \equiv (0_\alpha,1)\); thus,
\vspace{-1mm}%
\begin{equation*}
\begin{aligned}
\mathcal{E}_{\ell_\infty }((\bar\bcal^a,\psi^a),
(\psi^b, \bar\bcal^b))= 
&\, \bar
a\int_0^L \ce({\bar a^{-1}}\bar\bcal^a|\nabla_3\psi^a)\,\dx_3
- \int_0^L
\big(
\bar f^a\cdot\psi^a+ \bar g^a\cdot\psi^a
+ 
\Bcal^a:(\bar\bcal^a|0)\big)\,\dx_3 \\
&\quad - \int_{\omega^b} \big((\bar f^b_\alpha
+ 
g^{b,+}_\alpha -g^{b,-}_\alpha)\cdot
x_\alpha  + G^b_3\big)
\,\dx_\alpha. 
\end{aligned}
\end{equation*}
As proved in \eqref{eq:lba}, we have
\vspace{-1mm}%
\begin{equation*}
\begin{aligned}
  \bar
a\int_0^L \ce({\bar a^{-1}}\bar\bcal^a|
\nabla_3\psi^a)\,\dx_3 \leq \liminf_{n\to\infty} \int_{\Omega^a}W (r_{\varepsilon_n}^{-1}\nabla_\alpha
\psi^a_n| \nabla_3 \psi^a_n)\,\dx =
 \liminf_{n\to\infty}
 F^a_{\epsi_n} (\psi^a_n).
\end{aligned}
\end{equation*}
This inequality, the fact that \(W\geq0\)
by \eqref{coercrigid2}, and  \eqref{eq:limtotalforcei2} yield
  \eqref{eq:liminfcalFi}.

To conclude, we prove that given   \( ((\bar\bcal^a,\psi^a),(\psi^b,
\bar\bcal^b)) \in \calX\), there exists
a sequence \((({\bar\bcal}^a_n,
\psi^a_n),\allowbreak
(\psi^b_n,
{\bar\bcal}^b_n))_{n\in\NN}\allowbreak \subset
\calX\)   such that \( (({\bar\bcal}^a_n,\psi^a_n),(\psi^b_n,
{\bar\bcal}^b_n))\weakly ((\bar\bcal^a,\psi^a),(\psi^b,
\bar\bcal^b)) \) weakly in \( \calX\)
and
\vspace{-1mm}%
\begin{equation}
\label{eq:limsupcalFi}
\begin{aligned}
\mathcal{E}_{\ell_\infty }((\bar\bcal^a,\psi^a),
(\psi^b, \bar\bcal^b))
=\lim_{n\to\infty} \mathcal{E}_{\epsi_n}
((\bar\bcal^a_n,\psi^a_n),(\psi^b_n,
\bar\bcal^b_n)).
\end{aligned}
\end{equation}
To establish \eqref{eq:limsupcalFi},
the only non-trivial
case is the case in which  \(\psi^a \in \Phi_{\ell_\infty }^p\),
\(\psi^b\equiv (x_\alpha,0)\),  and
\(\bar\bcal^b \equiv (0_\alpha,1)\). 
Assume that these three conditions hold
and, from now on,  also assume that \(p>2\), which
implies that \(\psi^a(0_3) = \psi^b(0_\alpha)= 0\).

Let \(\phi\in C^\infty_c(\RR;[0,1])\)
be a smooth cut-off function such that
\(\phi(t)=1\) if \(|t|\leq \tfrac{L}5\),
and 
\(\phi(t)=0\) if \(|t|\geq \tfrac{L}4\).
Define, for \(0<\epsi\leq 1\) and \(x=(x_\alpha,
x_3) \in \Omega^a\),
\begin{equation*}
\begin{aligned}
&\tilde \ffi^a_{\epsi,0}(x):= (r_\epsi x_\alpha,
x_3) \phi(x_3) + \ffi^a_{\epsi,0}(x)
(1- \phi(x_3)), \enspace
\tilde \ffi^a_{0}(x_3):= (0_\alpha,
x_3) \phi(x_3) + \ffi^a_{0}(x_3)
(1- \phi(x_3)).
\end{aligned}
\end{equation*}
Because 
\(\ffi^b_{0,\epsi}(x)
= (x_\alpha,
h_{\epsi} x_3)\),   \((\ffi^a_{\epsi,0},\ffi^b_{\epsi,0})\) satisfies
\eqref{bca}--\eqref{bcb}, and  \(\phi(0_3) = 1\), we deduce
that   \((\tilde \ffi^a_{\epsi,0},\ffi^b_{\epsi,0})\) satisfies
\eqref{bca}--\eqref{bcb},  \eqref{junction},
and \(\tilde \ffi^a_{\epsi,0} \weakly
\tilde \ffi^a_{0}
\) weakly in \(W^{1,p}(\Omega^a;\RR^3)\).
Also, note  that
 \( \psi^a(0_3) = 0 =\tilde \ffi^a_{0}(0_3)
\) and,  because \(\psi^a\in \Phi^p_{\ell_\infty}\) and \(\phi(L)=0\),
\( \psi^a(L) = \ffi^a_{0}(L) =\tilde \ffi^a_{0}(L)
\); in particular, \(\psi^a \in \tilde\Phi^a\), where \(\tilde\Phi^a\) is given by \eqref{Phirod} with \(\ffi^a_0\) replaced by \(\tilde\ffi^a_0\). 

Invoking Theorem~\ref{thm:rod1} and
\eqref{pgrowth}, we can find a sequence
\(({\bar\bcal}^a_n,
\psi^a_n)_{n\in\NN} \subset
L^p((0,L);\R^{3 \times 2}) \times W^{1,p}(\Omega^a;\R^3)\)   such that \( ({\bar\bcal}^a_n,\psi^a_n)\weakly (\bar\bcal^a,\psi^a) \) weakly in
\(L^p((0,L);\R^{3 \times 2}) \times W^{1,p}(\Omega^a;\R^3)\), \(\psi^a_n
= \tilde\ffi^a_{0,\epsi_n}\) on  \(\omega^a\times
\{0,L\}\), \(r^{-1}_{\epsi_n} \int_{\omega^a}
\grad_\alpha \psi^a_n \,\dx_\alpha =
\bar \bcal^a_n\), and \vspace{-1mm}
\begin{equation}\label{ubcasei}
\begin{aligned}
\lim_{n\to\infty} E^a_{\epsi_n}(\psi^a_n) =  \bar
a\int_0^L \ce({\bar a^{-1}}\bar\bcal^a|\nabla_3\psi^a)\,\dx_3
- \int_0^L
\big(
\bar f^a\cdot\psi^a+ \bar g^a\cdot\psi^a
+ 
\Bcal^a:(\bar\bcal^a|0)\big)\,\dx_3.
\end{aligned}
\end{equation}
Note that the condition  \(\psi^a_n
= \tilde\ffi^a_{0,\epsi_n}\) on  \(\omega^a\times
\{0,L\}\) implies that \(\psi^a_n(x_\alpha
, 0_3) =(r_{\epsi_n} x_\alpha,0_3) \)
for \aev\ \(x_\alpha\in\omega^a\) and \( \psi^a_n =
\ffi^a_{\epsi_n,0}\) on \(\Gamma^a= \omega^a
\times \{L\} \).  Finally,  define \(\psi^b_n(x) = (x_\alpha,
h_{\epsi_n} x_3) \) and \(\bar \bcal^b_n
=(0_\alpha,1)\). Recalling that 
\(\ffi^b_{0,\epsi_n}(x)
= (x_\alpha,
h_{\epsi_n} x_3)\), we conclude that
\(((\bar\bcal^a_n,\psi^a_n),(
\psi^b_n, \bar\bcal^b_n)
)\in \mathcal{A}_{\epsi_n}\),   
\((\psi^a_n,\psi^b_n) \in
\Phi_{\epsi_n}\), and, by \eqref{WI=0}
and \eqref{eq:limtotalforcei2},
\vspace{-2mm}%
\begin{equation}\label{ubcasei1}
\begin{aligned}
\lim_{n\to\infty} E^b_{\epsi_n}(\psi^b_n)
=- \lim_{n\to\infty} \tfrac{h_{\epsi_n}}{r_{\epsi_n}^2}L^b_{\epsi_n}(\psi^b_n)=
- \int_{\omega^b}
\big((\bar f^b_\alpha
+ 
g^{b,+}_\alpha -g^{b,-}_\alpha)\cdot
x_\alpha  + G^b_3\big)
\,\dx_\alpha.
\end{aligned}
\end{equation}
From \eqref{ubcasei} and \eqref{ubcasei1},
we obtain
 \eqref{eq:limsupcalFi}.
\end{proof} 

\begin{remark}
        \label{doublewells}
Instead of $\II$ and $A$,  it is possible
to consider
any other two  strongly incompatible matrices,  $A_1$ and $A_2$,  in \eqref{coercrigid2}.
In this case, 
a result similar to Theorem~\ref{thm:elli}
 holds subjected to prescribing an appropriate deformation condition (related to either $A_1$ or $A_2$) on \(\Gamma^b\) and adjusting
\eqref{WI=0} accordingly.
\end{remark}

\section{{Case $\ell=0$}}\label{Sect:lz}

In this section, we prove Theorem~\ref{thm:ellz} concerning the  \(\ell=0\) case. This is done in Section~\ref{Subs:lzproof} after we have established  some preliminary results in Section~\ref{Subs:lzau}.

Note that by  \eqref{Laepsi},
 \eqref{Lbepsi},  
\eqref{forcesl0}, \eqref{Lbcala}, 
and \eqref{Ljunction}, we have
\vspace{-1mm}%
\begin{equation*}
\begin{aligned}
\frac{r_\epsi^2}{h_\epsi} E^a_\epsi(\psi^a) = \frac{r_\epsi^2}{h_\epsi}F^a_\epsi(\psi^a)
- \frac{r_\epsi^2}{h_\epsi}L^a_\epsi(\psi^a) \enspace \text{and
} \enspace \frac{r_\epsi^2}{h_\epsi}E^b_\varepsilon(\psi^b) =
 F^b_\varepsilon(\psi^b) - L^b_\varepsilon(\psi^b), 
\end{aligned}
\end{equation*}
where \(F^a_\epsi\) and \(F^b_\epsi\)
are given by \eqref{Fabepsi} and,  for \(((\bar\bcal^a,\psi^a),
(\psi^b, \bar\bcal^b)) \in \A_\epsi\) (see \eqref{Aepsi}), \vspace{-1mm}%
\begin{equation*}
\begin{aligned}
\frac{r_\epsi^2}{h_\epsi}L^a_\varepsilon(\psi^a)&=\int_{\Omega^a}
f^a\cdot\psi^a\,\dx +
\int_{S^a} g^a\cdot\psi^a\,\d{\HH}^2(x)
+
\int_{0}^L
\Bcal^a:(\bar\bcal^a|0)\,\dx_3,\\
L^b_\varepsilon(\psi^b)&=\int_{\Omega^b}
f^b\cdot\psi^b\,\dx + 
\int_{\omega^b\backslash r_\varepsilon
\overline\omega^a}
(g^{b,+}\cdot \psi^{b,+} -g^{b,-}
\cdot\psi^{b,-})\,\dx_\alpha
+
\int_{\omega^b\backslash
r_\varepsilon\overline\omega^a}
G^b\cdot
\bar\bcal^b\,
\dx_\alpha
\\&\quad-
\int_{r_\varepsilon \omega^a}\hat
g^{b,-}\cdot\psi^{b,-}\,\dx_\alpha
-
\frac{r_\epsi^2}{h_\epsi}
\int_{ \omega^a}
\hat G^b(r_\epsi \cdot)\cdot \psi^a(\cdot,0)\,\dx_\alpha
+\int_{r_\epsi \omega^a}\hat G^b\cdot
\bar\bcal^b\,
\dx_\alpha. 
\end{aligned}
\end{equation*}

\subsection{Auxiliary results}\label{Subs:lzau}

We start by proving a convenient version of Lemma~\ref{lem:WbyQW} that allows us to assume that \(W\) is quasiconvex; thus, in particular, continuous in view of \eqref{pgrowth}.

\begin{lemma}\label{lem:WbyQW2}
The statement of Lemma~\ref{lem:WbyQW}
remains valid if we replace the set \(\mathcal{A}_\epsi\)  in
\eqref{Aepsi} by the set
\vspace{-1mm}%
\begin{equation}
\label{tAepsi}
\begin{aligned}
\widetilde{\mathcal{A}}_\epsi:= \big\{ ((\bar\bcal^a,\psi^a),
 ( \psi^b,\bar
\bcal^b)) &\in \mathcal{A}_\epsi\!:\,
\psi^a= \ffi^a_{\epsi,0} \text{ on }
\Gamma^a
 \big\}.
\end{aligned}
\end{equation}
\end{lemma}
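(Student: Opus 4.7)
The plan is to follow, almost verbatim, the three-step architecture of the proof of Lemma~\ref{lem:WbyQW}: (Step~1) a quasiconvexification identity for the scaled-gradient densities, reducing to $\mathcal{Q}W$; (Step~2) for fixed $n$, construction of a recovery sequence in $\widetilde{\mathcal{A}}_{\epsi_n}$ whose $W$-energies converge to the $\qe$-energies at the limiting point; (Step~3) a diagonal argument combining the $n$- and the $k$-relaxation. The inequality $\calG \leq F^-$ is again immediate from $\qe \leq W$, since in this version $F^-$ and $\calG$ are defined by the same admissible class of sequences. All the work is in proving $F^- \leq \calG$, and within that, only Step~2 requires new input, because Step~1 is a pointwise identity on the integrands (independent of admissibility) and Step~3 only uses the weak convergences together with membership in $\widetilde{\mathcal{A}}_{\epsi_n}$, both of which transfer automatically.

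For the modified Step~2, fix $n\in\Nb$ and $((\bar\bcal^a,\psi^a),(\psi^b,\bar\bcal^b)) \in \widetilde{\mathcal{A}}_{\epsi_n}$. Applying the Acerbi--Fusco relaxation theorem together with the decomposition lemma \cite[Lemma~1.2]{FMP98} (as in \eqref{eq:relaxanddecomp}), first produce $(\psi^a_k, \psi^b_k)_{k\in\Nb}$ with equi-integrable scaled gradients, weakly convergent to $(\psi^a,\psi^b)$, and realizing the two $\qe$-integrals in the limit. These sequences satisfy neither the junction nor the boundary condition on $\Gamma^a$ \emph{a priori}. To enforce both, I would combine the one-sided slicing of Step~2 of Lemma~\ref{lem:WbyQW} (a cut-off $\phi_j$ vanishing near $x_3=0$, handling the junction) with the boundary-trace slicing of Lemma~\ref{lem:ontrace} (a second cut-off $\tilde\phi_j$ vanishing near $x_3=L$, handling the boundary). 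Concretely, set
\[
\psi^a_{k,j}(x):= \phi_j(x_3)\tilde\phi_j(x_3)\psi^a_k(x) + \big(1-\phi_j(x_3)\tilde\phi_j(x_3)\big)\psi^a(x) + (1-\tilde\phi_j(x_3))\big(\ffi^a_{\epsi_n,0}(x) - \psi^a(x)\big),
\]
where $\phi_j \equiv 0$ on $(-\delta_j,\delta_j)$ and $\tilde\phi_j \equiv 0$ on $(L-\delta_j,L+\delta_j)$, leaving $\psi^b_{k,j}$ defined exactly as in Step~2 of Lemma~\ref{lem:WbyQW}. Because $\psi^a(x_\alpha,0_3) = \psi^b(r_{\epsi_n}x_\alpha,0_3)$ and $\psi^a = \ffi^a_{\epsi_n,0}$ on $\Gamma^a$, the resulting sequence lies in $\widetilde{\mathcal{A}}_{\epsi_n}$. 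The energy on the two transition layers is controlled by equi-integrability of $|r_{\epsi_n}^{-1}\grad_\alpha \psi^a|^p + |\grad_3 \psi^a|^p + |r_{\epsi_n}^{-1}\grad_\alpha \psi^a_k|^p + |\grad_3\psi^a_k|^p$, exactly as in \eqref{eq:byequi}, plus analogous terms involving $\ffi^a_{\epsi_n,0}$; the $|\nabla|^p/\delta_j^p$ contribution from the derivatives of $\phi_j\tilde\phi_j$ is absorbed by the $L^p$-convergence $\psi^a_k\to\psi^a$ on the thin layers once $k\to\infty$ is sent first. Passing to the limit as $k\to\infty$, $j\to\infty$, $\tau\to0^+$ in the resulting estimate yields the analogue of \eqref{eq:almostQW}. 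A diagonalization $k_j \prec k$ completes Step~2.

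The main obstacle — and the only genuinely new point relative to Lemma~\ref{lem:WbyQW} — is ensuring that the two cut-offs, acting at opposite ends of the interval $(0,L)$, do not interact with each other or with the equi-integrable decomposition produced by Acerbi--Fusco/Fonseca--Müller--Pedregal. Since $\phi_j\tilde\phi_j$ is still a legitimate (compactly supported) Lipschitz cut-off with $\|\grad(\phi_j\tilde\phi_j)\|_\infty \lesssim 1/\delta_j$, and since the two transition layers $(0,\delta_j)$ and $(L-\delta_j,L)$ are disjoint for $\delta_j < L/2$, the energy estimates separate additively and the proof of Step~2 of Lemma~\ref{lem:WbyQW} applies on each layer independently. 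Step~3 then runs unchanged, with the only replacement being $\mathcal{A}_{\epsi_n} \rightsquigarrow \widetilde{\mathcal{A}}_{\epsi_n}$ in the statement that membership in this class is preserved under the diagonal extraction, concluding the proof of $F^- \leq \calG$ and hence of Lemma~\ref{lem:WbyQW2}.
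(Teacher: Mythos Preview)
Your strategy is sound and ultimately works, but it differs from the paper's and your justification of the boundary layer is incomplete.

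The paper does \emph{not} modify Step~2. It keeps Step~2 exactly as in Lemma~\ref{lem:WbyQW} (producing sequences only in $\mathcal{A}_{\epsi_{n_k}}$) and instead inserts an additional De~Giorgi slicing inside Step~3: for each fixed $k$, it slices the Step~2 output $(\psi^a_{n_k,j})_j$ against the target $\psi^a_{n_k}$ near $\Gamma^a$, while retaining the junction trace of $\psi^a_{n_k,j_i}$ at $x_3=0$. Since $\psi^a_{n_k,j}\to\psi^a_{n_k}$ strongly in $L^p$ and $\psi^a_{n_k}\in\widetilde{\mathcal A}_{\epsi_{n_k}}$ already carries the boundary datum $\ffi^a_{\epsi_{n_k},0}$, the cut-off derivative term $\delta^{-p}\int|\psi^a_{n_k,j}-\psi^a_{n_k}|^p$ vanishes as $j\to\infty$ with no further argument needed.

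Your construction instead interpolates, in the layer near $x_3=L$, between $\psi^a_k$ and $\ffi^a_{\epsi_n,0}$. The resulting cut-off term is $\frac{C}{\delta_j^{p}}\int_{\omega^a\times(L-2\delta_j,L-\delta_j)}|\psi^a_k-\ffi^a_{\epsi_n,0}|^p$, and after sending $k\to\infty$ this does \emph{not} vanish: it leaves the residual $\frac{C}{\delta_j^{p}}\int|\psi^a-\ffi^a_{\epsi_n,0}|^p$. Your claim that ``$L^p$-convergence $\psi^a_k\to\psi^a$'' absorbs this term is therefore incorrect for the $\Gamma^a$-layer. The fix is one line --- since $\psi^a=\ffi^a_{\epsi_n,0}$ on $\Gamma^a$, a one-dimensional Poincar\'e inequality in $x_3$ gives $\frac{1}{\delta_j^{p}}\int_{\omega^a\times(L-2\delta_j,L)}|\psi^a-\ffi^a_{\epsi_n,0}|^p\leq C\int_{\omega^a\times(L-2\delta_j,L)}|\nabla_3(\psi^a-\ffi^a_{\epsi_n,0})|^p\to 0$ as $j\to\infty$ --- but you must state it. With that correction your route is valid; the paper's choice simply sidesteps the issue by slicing against a target that is already the strong $L^p$ limit.
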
 

\begin{proof}
The proof of Lemma~\ref{lem:WbyQW2}
is mainly that of Lemma~\ref{lem:WbyQW}.
We only need to adapt Step~3 of the
latter to incorporate the boundary
condition \(\psi^a= \ffi^a_{\epsi,0} \) on \(\Gamma^a\) in the definition
of \(\widetilde \A_\epsi\), which we
detail next.

Let \(\mathcal{\widetilde G}\) and \(\widetilde F^-\) be
given by \eqref{GQW} and \eqref{F-},
respectively, with \(\A_\epsi\) replaced
by \(\widetilde \A_\epsi\).
As in the beginning of Step~3 of  the
proof
of Lemma~\ref{lem:WbyQW}, to prove that  \(\mathcal{\widetilde
G} \geq \widetilde F^-\),
 take   \(((\bar\bcal^a,\psi^a),
(\psi^b, \bar\bcal^b)) \in
\big(L^p((0,L);\R^{3 \times 2}) \times
W^{1,p}(\Omega^a;\R^3)\big)\times
\big( W^{1,p}(\Omega^b;\R^3)
\times L^p(\omega^b;\R^{3}) \big) \)
 satisfying \(\widetilde \calG((\bar\bcal^a,\psi^a),
(\psi^b, \bar\bcal^b)) <\infty\). Fix
\(\delta>0\), and
for
each \(n\in\Nb\), let \(((\bar\bcal^a_n,\psi^a_n),(
\psi^b_n, \bar\bcal^b_n)
)\in \widetilde {\mathcal{A}}_{\epsi_n}\) be such
that  \( \psi^a_n
\weakly \psi^a\)  weakly in
\(W^{1,p}(\Omega^a;\R^3)\), \( \bar\bcal^a_n
\weakly
\bar\bcal^a\)
weakly in \(L^{p}((0,L);\R^{3\times
 2})\), \( \psi^b_n \weakly \psi^b\)
weakly in \(W^{1,p}(\Omega^b;\R^3)\),
 \(  \bar\bcal^b_n \weakly \bar\bcal^b\)
weakly in \(L^{p}(\omega^b;\R^3)\),
 and
\vspace{-1mm} %
\begin{equation*}
\begin{aligned}
\widetilde \calG((\bar\bcal^a,\psi^a),
(\psi^b, \bar\bcal^b)) + \delta \geq
\liminf _{n\to\infty}
\big(\ell^a_n\calG_{\epsi_n}^a (\psi^a_n)
+
\ell^b_n\calG_{\epsi_n}^b (\psi^b_n)
\big).
 \end{aligned}
\end{equation*}
Let 
\(n_k \prec n\) be a subsequence for
which
\vspace{-1mm}\begin{equation*}
\begin{aligned}
\liminf _{n\to\infty}
\big(\ell^a_n\calG_{\epsi_n}^a (\psi^a_n)
+
\ell^b_n\calG_{\epsi_n}^b (\psi^b_n)
\big) =
\lim _{k\to\infty}
\Big(\ell^a_{n_k}\calG_{\epsi_{n_k}}^a
(\psi^a_{n_k})
+\ell^b_{n_k}
\calG_{\epsi_{n_k}}^b (\psi^b_{n_k})\Big)
.
 \end{aligned}
\end{equation*}
Fix  \(k\in\Nb\). By Step~2 of the proof
of Lemma~\ref{lem:WbyQW}, 
there exists
a sequence  \(((\bar\bcal^a_{n_k,j},\psi^a_{n_k,j}),(
\psi^b_{n_k,j}, \bar\bcal^b_{n_k,j})
)_{j\in\Nb}\subset \mathcal{A}_{\epsi_{n_k}}\)
 such
that  \( \psi^a_{n_k,j}
\weakly_j \psi^a_{n_k}\)  weakly in
\(W^{1,p}(\Omega^a;\R^3)\), \( \bar\bcal^a_{n_k,j}
\weakly_j
\bar\bcal^a_{n_k}\)
weakly in \(L^{p}((0,L);\R^{3\times
 2})\), \( \psi^b_{n_k,j} \weakly_j
\psi^b_{n_k}\) weakly
in \(W^{1,p}(\Omega^b;\R^3)\),
 \(  \bar\bcal^b_{n_k,j} \weakly_j \bar\bcal^b_{n_k}\)
weakly in \(L^{p}(\omega^b;\R^3)\),
 and \vspace{-1mm}
\begin{equation*}
\begin{aligned}
&\lim_{j\to\infty} \int_{\Omega^a} W({r_{\epsi_{n_k}}^{-1}}\nabla_\alpha
\psi^a_{n_k,j}|\nabla_3\psi^a_{n_k,j})\,\dx
=\calG_{\epsi_{n_k}}^a
(\psi^a_{n_k}),\\
& \lim_{j\to\infty} \int_{\Omega^b}
W(\nabla_\alpha \psi^b_{n_k,j}|{h_{\epsi_{n_k}}^{-1}}\nabla_3\psi^b_{n_k,j})\,\dx
=\calG_{\epsi_{n_k}}^b (\psi^b_{n_k}).
\end{aligned}
\end{equation*}
Using the E. De Giorgi's slicing method (for \(k\in\NN\)
fixed)  in the spirit of Lemma~\ref{lem:ontrace}
(also see , for instance, 
\cite[Lemma~2.2]{BFM03}), we can construct
a subsequence \(j_i\prec j\) and a sequence \((
\hat\psi^a_{k,i})_{i\in\Nb} \subset
W^{1,p}(\Omega^a;\R^3) \) such that
\(
\hat\psi^a_{k,i} =\psi^a_{n_k} \) on
\(\Gamma^a=\omega^a \times \{L \}\) and 
\(
\hat\psi^a_{k,i} =\psi^a_{n_k, j_i} \) on
  \(\omega^a \times \{0\}\) for all
\(i\in\NN\), 
  \( \hat\psi^a_{k,i}
\weakly_i \psi^a_{n_k}\)  weakly in
\(W^{1,p}(\Omega^a;\R^3)\), 
\( \hat{\bar\bcal}^a_{k,i} := \int_{\omega^a}
 \grad_\alpha 
\hat\psi^a_{k,i}\, \dx_\alpha
\weakly_i
\bar\bcal^a_{n_k}\)
weakly in \(L^{p}((0,L);\R^{3\times
 2})\), and \vspace{-1mm}
\begin{equation*}
\begin{aligned}
\limsup_{i\to\infty}  \int_{\Omega^a} W({r_{\epsi_{n_k}}^{-1}}\nabla_\alpha
\hat\psi^a_{k,i}|\nabla_3
\hat\psi^a_{k,i})\,\dx
\leq \lim_{j\to\infty} \int_{\Omega^a} W({r_{\epsi_{n_k}}^{-1}}\nabla_\alpha
\psi^a_{n_k,j}|\nabla_3\psi^a_{n_k,j})\,\dx.
\end{aligned}
\end{equation*}
Note that the trace equalities
\(
\hat\psi^a_{k,i} =\psi^a_{n_k} \) on
\(\Gamma^a\) and 
\(
\hat\psi^a_{k,i} =\psi^a_{n_k, j_i}
\) on
  \(\omega^a \times \{0\}\) for all
\(i\in\NN\), together with the inclusions
 \(((\bar\bcal^a_{n_k},\psi^a_{n_k}),(
\psi^b_{n_k}, \bar\bcal^b_{n_k})
)\in \widetilde {\mathcal{A}}_{\epsi_{n_k}}\)
and   \(((\bar\bcal^a_{n_k,j_i},\psi^a_{n_k,j_i}),(
\psi^b_{n_k,j_i}, \bar\bcal^b_{n_k,j_i})
)_{i\in\Nb}\subset \mathcal{A}_{\epsi_{n_k}}\),
 imply that    \(((\hat{\bar\bcal}^a_{k,i},
\hat\psi^a_{k,i}),(
\psi^b_{n_k,j_i}, \bar\bcal^b_{n_k,j_i})
)_{i\in\Nb}\subset \widetilde {\mathcal{A}}_{\epsi_{n_k}}\).
To conclude, we proceed  as in Step~3 of  the proof
of Lemma~\ref{lem:WbyQW} (from \eqref{eq:S3energies}
onwards).
\end{proof}

As in Section~\ref{Sect:l+},  to individualize
the  variables
\(\bar\bcal^a\) and \(\bar\bcal^b\)
in the elastic
part of the (scaled) total energy,  we introduce,
for \(0<\epsi\leq 1\), the functional
\(\widetilde F_\epsi:
\big(
 L^p((0,L);\R^{3 \times 2}) \times W^{1,p}(\Omega^a;\R^3)\big)\times
\big( W^{1,p}(\Omega^b;\R^3)
\times L^p(\omega^b;\R^{3}) \big) \to
(-\infty,\infty]
\) defined by
\vspace{-1mm}%
\begin{equation}
\label{tFepsi}
\begin{aligned}
\widetilde F_\epsi((\bar\bcal^a,\psi^a), (\psi^b,
\bar\bcal^b)):=
\begin{cases}
\displaystyle
\tfrac{r_\varepsilon^2}{h_\varepsilon}
F_\epsi^a(\psi^a) +
F_\epsi^b(\psi^b)
& \hbox{if }  ((\bar\bcal^a,\psi^a),(
\psi^b, \bar\bcal^b)
)\in \widetilde{\mathcal{A}}_\epsi\\
\infty & \hbox{otherwise,}
\end{cases}
\end{aligned}
\end{equation}
where \(F^a_\epsi\) and \(F^b_\epsi\)
are the functionals defined in \eqref{Fabepsi}
and \(\widetilde\A_\epsi\) is given by \eqref{tAepsi}. 
Next, we prove that a  \(\Gamma\)-convergence result similar
to Theorem~\ref{Thm:FepsitoF} holds.

\begin{theorem}\label{thm:tFepsitotF}
Let \(W:\R^{3\times 3} \to \R\) be a
Borel function satisfying
\eqref{pgrowth}, \eqref{coercrigid2},
and \eqref{WI=0}, and assume
that \(p\leq2\),  
 \(\lim_{\varepsilon \to 0}{h_\varepsilon}/{r_\varepsilon^{p+2}}
= 0\), and \(\ffi_{0,\epsi}^a \equiv
(r_\epsi x_\alpha,x_3)\).
Then, the sequence of functionals \((\widetilde
F_\epsi)_{\epsi>0}\)
defined by \eqref{tFepsi}
\(\Gamma\)-converges,
with respect to the weak topology in
\(
\big(L^p((0,L);\R^{3 \times 2}) \times
W^{1,p}(\Omega^a;\R^3)\big)\times
\big( W^{1,p}(\Omega^b;\R^3)
\times L^p(\omega^b;\R^{3}) \big) \),
to the functional
\(\widetilde F:
\big(L^p((0,L);\R^{3 \times 2}) \times
W^{1,p}(\Omega^a;\R^3)\big)\times
\big( W^{1,p}(\Omega^b;\R^3)
\times L^p(\omega^b;\R^{3}) \big) \to
(-\infty,\infty]
\) defined by
\begin{equation*}
\begin{aligned}
\widetilde F((\bar\bcal^a,\psi^a), (\psi^b, \bar\bcal^b)):=
\begin{cases}
\displaystyle
 F^b(\psi^b,\bar\bcal^b) & \hbox{if }
 \psi^a \equiv (0_\alpha,x_3),\, \bar\bcal^a
 \equiv \bar a\, \II_\alpha,\, \text{and
} \,
\psi^b \text{ is independent of } x_3
\\
\infty & \hbox{otherwise,}
\end{cases}
\end{aligned}
\end{equation*}
where \vspace{-1mm}%
\begin{equation*}
\begin{aligned}
F^b(\psi^b,\bar\bcal^b)= \int_{\omega^b}
\qce(\nabla_\alpha \psi^b|\bar\bcal^b)\,\dx_\alpha.
\end{aligned}
\end{equation*}
\end{theorem}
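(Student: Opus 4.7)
The plan is to establish the claimed $\Gamma$-convergence with respect to the weak topology in $\mathcal{X}:=\big(L^p((0,L);\RR^{3\times 2})\times W^{1,p}(\Omega^a;\RR^3)\big)\times\big(W^{1,p}(\Omega^b;\RR^3)\times L^p(\omega^b;\RR^3)\big)$ by blending the strategy already used in Theorems~\ref{Thm:FepsitoF} and \ref{thm:elli}. As a preliminary reduction, the boundary-condition-aware variant Lemma~\ref{lem:WbyQW2} of Lemma~\ref{lem:WbyQW} lets me assume without loss of generality that $W$ is quasiconvex, and hence $p$-Lipschitz continuous by \eqref{pgrowth}.

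For the $\Gamma$-liminf inequality, I would fix a weakly convergent sequence $((\bar\bcal^a_\epsi,\psi^a_\epsi),(\psi^b_\epsi,\bar\bcal^b_\epsi))\to((\bar\bcal^a,\psi^a),(\psi^b,\bar\bcal^b))$ in $\mathcal{X}$ along which $\liminf \widetilde F_\epsi$ is a finite limit. Necessarily $((\bar\bcal^a_\epsi,\psi^a_\epsi),(\psi^b_\epsi,\bar\bcal^b_\epsi))\in\widetilde{\mathcal{A}}_\epsi$ for all $\epsi>0$, so in particular $\psi^a_\epsi=(r_\epsi x_\alpha,x_3)$ on $\Gamma^a$. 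The energy bound $(r_\epsi^2/h_\epsi)F^a_\epsi(\psi^a_\epsi)\leq C$, the coercivity \eqref{coercrigid2}, the hypothesis $h_\epsi/r_\epsi^{p+2}\to 0$, and the $p$-version of the rigidity estimate in Proposition~\ref{prop:thm6FJM} combine to produce (up to a subsequence) constant matrices $M^a_\epsi\in\mathcal{K}:=SO(3)\cup SO(3)A$, converging to some $M^a\in\mathcal{K}$, for which $(r_\epsi^{-1}\grad_\alpha\psi^a_\epsi|\grad_3\psi^a_\epsi)\to M^a$ strongly in $L^p(\Omega^a;\RR^{3\times 3})$. Thus $\psi^a_\epsi\to\psi^a$ strongly in $W^{1,p}(\Omega^a;\RR^3)$ with $\grad_\alpha\psi^a\equiv 0$, $\grad_3\psi^a\equiv M^a_3$, and $r_\epsi^{-1}\grad_\alpha\psi^a_\epsi\to M^a_\alpha$ in $L^p$. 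A trace argument on the $2$-dimensional surface $\Gamma^a$---which, mirroring the identification carried out in the proof of Theorem~\ref{thm:elli} but applied now to the affine structure $\psi^a_\epsi(x)\approx M^a(r_\epsi x_\alpha,x_3)^T + c_\epsi$ inherited from the strong $L^p$-convergence of the scaled gradient, together with the clamped condition $\psi^a_\epsi(\cdot,L)=(r_\epsi\,\cdot,L)$---supplies enough constraints to match the $x_\alpha$-coefficient and force $M^a_\alpha=\II_\alpha$; combined with $M^a\in\mathcal{K}$ and the strong incompatibility of $\II$ and $A$, this yields $M^a=\II$, hence $\psi^a\equiv(0_\alpha,x_3)$ and $\bar\bcal^a\equiv\bar a\,\II_\alpha$. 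Whenever any of these identifications fails, $\widetilde F((\bar\bcal^a,\psi^a),(\psi^b,\bar\bcal^b))=\infty$ and the liminf inequality is vacuous. On the plate side, Lemma~\ref{lem:l21GGLM} ensures $\psi^b$ is independent of $x_3$, and invoking \cite[Theorem~1.2~(i)]{BFM03} together with \eqref{eq:Qast=Qhat}, exactly as in the derivation of \eqref{eq:lbb}, delivers $\liminf_{\epsi\to 0} F^b_\epsi(\psi^b_\epsi)\geq\int_{\omega^b}\qce(\grad_\alpha\psi^b|\bar\bcal^b)\,\dx_\alpha$; since $W\geq 0$ by \eqref{coercrigid2}, the liminf inequality follows.

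For the $\Gamma$-limsup inequality, only the case $\psi^a\equiv(0_\alpha,x_3)$, $\bar\bcal^a\equiv\bar a\,\II_\alpha$, and $\psi^b$ independent of $x_3$ requires construction. On the tube side, take $\psi^a_\epsi:=\ffi^a_{0,\epsi}=(r_\epsi x_\alpha,x_3)$; this satisfies the clamped condition on $\Gamma^a$ identically and yields $(r_\epsi^{-1}\grad_\alpha\psi^a_\epsi|\grad_3\psi^a_\epsi)\equiv\II$, so that by \eqref{WI=0} the rescaled tube contribution $(r_\epsi^2/h_\epsi)F^a_\epsi(\psi^a_\epsi)$ vanishes identically. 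On the plate side, I would construct $(\psi^b_\epsi,\bar\bcal^b_\epsi)\weakly(\psi^b,\bar\bcal^b)$ realizing $\lim_{\epsi\to 0}F^b_\epsi(\psi^b_\epsi)=\int_{\omega^b}\qce(\grad_\alpha\psi^b|\bar\bcal^b)\,\dx_\alpha$ via the same relaxation-plus-decomposition argument used in Step~4 of the proof of Theorem~\ref{Thm:FepsitoF}, building on \cite[Theorem~1.2~(i)]{BFM03}, \eqref{eq:Qast=Qhat}, and \cite[Lemma~1.2]{FMP98}. What remains is to modify $\psi^b_\epsi$ in a shrinking neighborhood of the junction interface $r_\epsi\overline{\omega^a}\times\{0\}$ in $\Omega^b$ so as to enforce the pointwise junction identity $\psi^b_\epsi(r_\epsi x_\alpha,0)=(r_\epsi x_\alpha,0)$ for a.e.\ $x_\alpha\in\omega^a$ built into $\widetilde{\mathcal{A}}_\epsi$.

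This final junction correction is the main obstacle, and it is precisely where the hypothesis $p\leq 2$ plays an essential role (cf.\ Remark~\ref{rmk:commentl0}-(i)): since $\{0_\alpha\}$ has zero $p$-capacity in $\RR^2$ when $p\leq 2$, a cutoff-based gluing of $\psi^b_\epsi$ to the affine map $x\mapsto(x_\alpha,0)$ on an appropriately chosen shrinking cylindrical neighborhood of the junction interface can be designed so that the resulting sequence remains $W^{1,p}\times L^p$-weakly convergent to $(\psi^b,\bar\bcal^b)$, satisfies the junction identity, and has an additional elastic cost controlled---via \eqref{pgrowth}---by $p$-capacity-type estimates that vanish as $\epsi\to 0^+$. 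Combined with the identically vanishing tube contribution and the plate-energy limit, this yields the $\Gamma$-limsup bound and closes the proof.
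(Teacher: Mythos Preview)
Your overall architecture is right, and the preliminary reduction via Lemma~\ref{lem:WbyQW2}, the plate lower bound via \cite[Theorem~1.2(i)]{BFM03}, and the tube recovery sequence $\psi^a_\epsi\equiv(r_\epsi x_\alpha,x_3)$ with vanishing energy by \eqref{WI=0} all match the paper exactly. However, there is a genuine gap in your identification of $M^a=\II$ in the liminf step.

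Your proposed ``trace argument on $\Gamma^a$ mirroring Theorem~\ref{thm:elli}'' does not carry over. In Theorem~\ref{thm:elli} the boundary condition lives on the \emph{lateral} boundary $\partial\omega^b\times(-1,0)$, and the identification $M^b_\alpha=\II_\alpha$ is read off directly from the limit function $\psi^b$ (which is affine with slope $M^b_\alpha$ and equals $(x_\alpha,0)$ on a set rich enough to determine that slope). For the tube the boundary condition is on the \emph{top} face $\omega^a\times\{L\}$, and the limit $\psi^a$ is independent of $x_\alpha$, so it carries no information about $M^a_\alpha$. A quantitative version---estimating $(\II-M^a_\epsi)_\alpha$ from the trace of $\psi^a_\epsi - M^a_\epsi(r_\epsi x_\alpha,x_3)^T$ on $\Gamma^a$---loses an extra factor $r_\epsi^{-1}$ and would require $h_\epsi/r_\epsi^{2p+2}\to 0$, which is strictly stronger than the hypothesis $h_\epsi/r_\epsi^{p+2}\to 0$. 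The paper avoids this by an extension trick: since $\psi^a_n=\ffi^a_{0,\epsi_n}$ on $\Gamma^a$, one extends $\psi^a_n$ by $\ffi^a_{0,\epsi_n}$ on $\omega^a\times[L,2L)$, notes that the scaled gradient equals $\II$ there exactly and that the extended energy is unchanged by \eqref{WI=0}, and then applies Proposition~\ref{prop:thm6FJM} on the \emph{extended} domain. The resulting rigid matrix must be $o(1)$-close to $\II$ in $L^p$ on a set of fixed positive measure, forcing $M^a_n\to\II$ under the stated hypothesis alone.

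For the limsup your capacity idea is correct, but the order of operations matters. The paper first treats \emph{regular} targets $(\psi^b,\bar\bcal^b)\in W^{1,\infty}\times C^1$, where the explicit interpolation $\psi^b_n=(x_\alpha,h_{\epsi_n}x_3)\phi_{p,n}+(h_{\epsi_n}x_3\,\bar\bcal^b+\psi^b)(1-\phi_{p,n})$ with the $p$-capacitary potential $\phi_{p,n}$ keeps the cross term $|((x_\alpha,h_{\epsi_n}x_3)-\psi^b)\otimes\nabla_\alpha\phi_{p,n}|^p$ pointwise bounded, so \eqref{bycapacity} suffices; only afterwards does it pass to general $(\psi^b,\bar\bcal^b)$ by density and the relaxation identity \eqref{eq:two}. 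Your proposal to first build a general BFM03 recovery sequence and then splice runs into the problem that such a sequence need not be $L^\infty$-bounded when $p\le 2$, so the cross term is not obviously controlled.
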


\begin{proof}
The proof of Theorem~\ref{thm:tFepsitotF}
follows along that of Theorem~\ref{Thm:FepsitoF},
but several adaptations are required.

We want to show that   given any subsequence \(\epsi_n\prec\epsi\),
the \(\Gamma\)-limit inferior,
\(\widetilde F^-\), of \((\widetilde F_{\epsi_n})_{n\in\Nb}\),
given by
\eqref{F-}
with \(\A_{\epsi_n}\) replaced
by \(\widetilde \A_{\epsi_n}\),
 coincides with \(\widetilde F\)  for all \(\big((\bar\bcal^a,\psi^a),
(\psi^b, \bar\bcal^b)\big) \in
\big(L^p((0,L);\R^{3 \times 2}) \times
W^{1,p}(\Omega^a;\R^3)\big)\times
\big( W^{1,p}(\Omega^b;\R^3)
\times L^p(\omega^b;\R^{3}) \big)  \).
For that, we will proceed
in several steps and, to simplify the notation, we
set 
\(\mu_n:={r_{\varepsilon_n}^{-1}}\),
\(\lambda_n:={h_{\varepsilon_n}^{-1}}\),
and 
\(\ell_n:={h_{\varepsilon_n}}/{r_{\varepsilon_n}^2}\).

\textit{Step 1.} In this step, we prove
that we may assume
 that \(W
\) is a continuous, quasiconvex
function.

By \eqref{eq:cxtyineq},
we have  \(\mathcal{C}(\qe) = \ce\)
and \(\mathcal{QC}(\qe) = \qce\).
On the other hand, by Lemma~\ref{lem:WbyQW2},
the \(\Gamma\)-limit inferior in
\eqref{F-} with  \(\A_{\epsi_n}\) replaced
by \(\widetilde \A_{\epsi_n}\), \(\ell^a_n\equiv
{r_{\epsi_n}^2}/{h_{\epsi_n}}\), and \(\ell^b_n\equiv
1\)
remains
unchanged if we replace \(W\) by its
quasiconvex envelope,
\(\qe\). Thus, without loss of generality, we may assume
 that \(W
\) is  quasiconvex, which together with \eqref{pgrowth} implies
that \(W\) is \(p\)-Lipschitz continuous
(see \eqref{eq:WpLip}). 

\textit{Step 2.} In this step, we prove
that if \(\widetilde F^-((\bar\bcal^a,\psi^a),
(\psi^b, \bar\bcal^b)) < \infty\), then
\(\psi^a \equiv (0_\alpha,x_3)\), \( \bar\bcal^a
 \equiv \bar a\, \II_\alpha\), and
\(\psi^b\)  is independent of  \(x_3\).

By definition of the \(\Gamma\)-limit
inferior, for all
\(n\in\Nb\), there exists  \(((\bar\bcal^a_n,\psi^a_n),(
\psi^b_n, \bar\bcal^b_n)
)\in \widetilde{\mathcal{A}}_{\epsi_n} \) such that
 \(((\bar\bcal^a_n,\psi^a_n),(
\psi^b_n, \bar\bcal^b_n)
)_{n\in\Nb}\) weakly converges to \(((\bar\bcal^a,\psi^a),
(\psi^b, \bar\bcal^b) )\) in  \(
\big(L^p((0,L);\R^{3 \times 2}) \times
W^{1,p}(\Omega^a;\R^3)\big)\times
\big( W^{1,p}(\Omega^b;\R^3)
\times L^p(\omega^b;\R^{3}) \big) \)
and \(\widetilde  F^-((\bar\bcal^a,\psi^a),
(\psi^b, \bar\bcal^b)) = \liminf_{n\to\infty}\big(\ell_n^{-1}F_{\epsi_n}^a
(\psi^a_n) + F_{\epsi_n}^b
(\psi^b_n) \big) \).
Because \(\widetilde F^-((\bar\bcal^a,\psi^a),
(\psi^b, \bar\bcal^b)) < \infty\), extracting
a subsequence
if needed, we may assume that there
exists a positive
constant, \(\bar C\), such that for
all \(n\in\Nb\),
we have \(\ell_n^{-1}F_{\epsi_n}^a
(\psi^a_n) + F_{\epsi_n}^b
(\psi^b_n) \leq \bar C\). 

Define
\vspace{-1mm}%
\begin{equation*}
\begin{aligned}
\tilde\psi^a_n := \begin{cases}
\psi^a_{\epsi_n} & \hbox{in } \Omega^a\\
\ffi^a_{0,\epsi_n} & \hbox{in } \omega^a \times [L,2L)
\end{cases}
\end{aligned}
\end{equation*}
and \(\tilde \Omega^a := \omega^a \times (0,2L)\). By definition of \(\widetilde {\mathcal{A}}_{\epsi_n}\), we have \(\tilde \psi^a_n \in W^{1,p} (\tilde\Omega^a;\RR^3)\) and, in view of  \eqref{WI=0}, we have  \(\int_{\tilde\Omega^a} W(r_{\epsi_n}^{-1} \grad_\alpha \tilde \psi^a_n| \grad_3 \tilde \psi^a_n)\, \dx \equiv F_{\epsi_n}^a (\psi^a_n) \); thus,
invoking \eqref{coercrigid2} and setting
\(\mathcal{K}:= SO(3) \cup SO(3)A\), we have
\vspace{-.5mm}%
\begin{equation*}
\begin{aligned}
\sup_{n\in\NN} \bigg(\frac{r_{\varepsilon_n}^2}{h_{\varepsilon_n}} \Vert \dist ( (r_{\epsi_n}^{-1}\grad_\alpha
\tilde \psi^a_{n}
|\grad_3 \tilde \psi^a_{n}) , \mathcal{K})\Vert^p_{L^p(\tilde \Omega^a)} +
 \Vert \dist ( (\grad_\alpha
\psi^b_n
|h_{\epsi_n}^{-1}\grad_3 \psi^b_n) , \mathcal{K})\Vert^p_{L^p(\Omega^b)}\Big) \bigg) \leq \bar C.
\end{aligned}
\end{equation*}
In particular,
using the fact that \(\mathcal{K}\)
is a compact subset of \(\RR^{3\times
3}\) and using  Proposition~\ref{prop:thm6FJM},   we 
have also
\vspace{-1mm}%
\begin{equation*}
\begin{aligned}
\sup_{n\in\NN} \|h_{\epsi_n}^{-1}\grad_3 \psi^b_n
\|_{L^p(\Omega^b;\RR^3)} 
<\infty \enspace \hbox{ and } \enspace \Vert (r_{\epsi_n}^{-1}\grad_\alpha
\tilde \psi^a_{n}
|\grad_3 \tilde \psi^a_{n}) - M^a_n  \Vert^p_{L^p(\tilde \Omega^a;\RR^{3\times
3})} \leq \tilde C \frac{h_{\epsi_n}}{r_{\epsi_n}^{p+2}},
\end{aligned}
\end{equation*}
where  \((M^a_n)_{n\in\NN}\subset
\mathcal{K}\) is a sequence of constant matrices.
Consequently, \(\psi^b\) is independent of \(x_3\). 
Moreover, extracting a subsequence if necessary,
we have  \(M^a_n \to M^a\) in
\(\RR^{3\times 3}\) for some \(M^a\in
\mathcal{K}\). Then, because \(\lim{h_{\epsi_n}}/{r_{\epsi_n}^{p+2}}= 0\) by hypothesis, it follows that \vspace{-1mm}
\begin{equation*}
\begin{aligned}
  (r_{\epsi_n}^{-1}\grad_\alpha
\tilde \psi^a_{n}
|\grad_3 \tilde \psi^a_{n}) \to
M^a \text{ in } {L^p(\tilde \Omega^a;\RR^{3\times
3})}.
\end{aligned}
\end{equation*}
Recalling that \(\ffi^a_{0,\epsi_n} \equiv (r_{\epsi_n} x_\alpha, x_3)\) and \(\tilde \psi^a_n \equiv \ffi^a_{0,\epsi_n} \) in \( \omega^a \times [L,2L)\), we obtain \(M^a\equiv \II\). Hence, 
\(\psi^a \equiv (0_\alpha,x_3)\) and \( \bar\bcal^a
 \equiv \bar a\, \II_\alpha\).

\textit{Step 3 (lower bound).} In this
step, we prove
that for all \(((\bar\bcal^a,\psi^a),
(\psi^b, \bar\bcal^b) )\) in  \(
\big(L^p((0,L);\R^{3 \times 2}) \times
W^{1,p}(\Omega^a;\R^3)\big)\times
\big( W^{1,p}(\Omega^b;\R^3)
\times L^p(\omega^b;\R^{3}) \big) \) such that 
\(\psi^a \equiv (0_\alpha,x_3)\), \( \bar\bcal^a
 \equiv \bar a\, \II_\alpha\), and
\(\psi^b\)  is independent of  \(x_3\),
  we have
\vspace{-2mm}%
\begin{equation*}
\begin{aligned}
\widetilde F^-((\bar\bcal^a,\psi^a),
(\psi^b, \bar\bcal^b)) \geq  F^b(\psi^b,\bar\bcal^b) .
\end{aligned}
\end{equation*}

To prove this estimate, it suffices to invoke the inequality \(W\geq 0\) and    \cite[Theorem~1.2~(i)]{BFM03}.

\textit{Step 4 (upper bound in terms
of the original
density \(W\) and for regular target
functions).} In this
step, we prove
that if 
\(\psi^a \equiv (0_\alpha,x_3)\), \( \bar\bcal^a
 \equiv \bar a\, \II_\alpha\), 
\(\psi^b \in W^{1,\infty} (\Omega^b;\RR^3)\)  is independent of  \(x_3\),
and \(\bcal^b\in C^1(\overline \omega^b;\RR^3)\), then \vspace{-1mm}
\begin{equation}
\label{eq:ubwrtWlz}
\begin{aligned}
\widetilde F^-((\bar\bcal^a,\psi^a),
(\psi^b, \bar\bcal^b)) \leq \int_{\omega^b}W(\nabla_\alpha
\psi^b| \bar\bcal^b)\,\dx_\alpha .
\end{aligned}
\end{equation}

We  recall that 
  $W$ can be assumed
to be a  continuous function by Step~1. The arguments we will use next
are inspired by
those in \cite[Propositions~3.1 and
4.1]{GGLM02}.

Let \(\gamma:= 2 \,\hbox{diam}(\omega^a)\). For all \(n\in\NN\) sufficiently large, we have
\begin{equation*}
\begin{aligned}
r_{\epsi_n} \omega^a \subset B(0_\alpha, \gamma r_{\epsi_n})\subset \subset B(0_\alpha, \gamma \sqrt{r_{\epsi_n}}) \subset \subset \omega^b.
\end{aligned}
\end{equation*}
For all such \(n\in\NN\), let \(\phi_{p,n} \in C^1_0(B(0_\alpha,
\gamma \sqrt{r_{\epsi_n}});[0,1])\) with \(\phi_{p,n}=1\) in \( \overline{B(0_\alpha,
\gamma {r_{\epsi_n}})}\)  be the solution to the \(p\)-capacity problem of \(\overline{B(0_\alpha, \gamma r_{\epsi_n})}\) with respect to \(B(0_\alpha,
\gamma \sqrt{r_{\epsi_n}})\); that is, the solution to
\begin{equation*}
\begin{aligned}
\min \bigg\{ \int_{B(0_\alpha,
\gamma \sqrt{r_{\epsi_n}})} |\grad_\alpha \phi(x_\alpha)|^p \,\dx_\alpha\!:\, \phi\in C^1_0(B(0_\alpha,
\gamma \sqrt{r_{\epsi_n}});[0,1]), \, \phi=1 \hbox{ in } \overline{B(0_\alpha,
\gamma {r_{\epsi_n}})} \bigg\}.
\end{aligned}
\end{equation*}
Then (see \cite[Example~2.12]{HJM93}), for \(A_n:= B(0_\alpha,
\gamma \sqrt{r_{\epsi_n}}) \backslash \overline{B(0_\alpha,
\gamma {r_{\epsi_n}})}\), we have
\begin{equation*}
\begin{aligned}
\int_{A_n} |\grad_\alpha \phi_{p,n}(x_\alpha)|^p \,\dx_\alpha = 
\begin{cases} \displaystyle
2\pi \Big( \frac{|2-p|}{p-1} \Big)^{p-1}  \bigg|r_{\epsi_n}^{\tfrac{p-2}{2(p-1)}} - r_{\epsi_n}^{\tfrac{p-2}{p-1}}\bigg|^{1-p} & \hbox{if } p\not=2 \\ 
2\pi \big(-\log({ \sqrt{r_{\epsi_n}}})\big)^{-1} & \hbox{if } p=2.
\end{cases}
\end{aligned}
\end{equation*}
Note that if $1< p \leq 2$, then
\begin{equation}
\label{bycapacity}
\begin{aligned}
\lim_{n\to\infty} \int_{\omega^b} |\grad_\alpha \phi_{p,n}(x_\alpha)|^p \,\dx_\alpha = \lim_{n\to\infty} \int_{A_n} |\grad_\alpha \phi_{p,n}(x_\alpha)|^p \,\dx_\alpha = 0.
\end{aligned}
\end{equation}

Next, for \(x=(x_\alpha,x_3) \in \Omega^a\), we define
\vspace{-1mm}%
\begin{equation*}
\begin{aligned}
&\psi^a_n(x) :=
(r_{\epsi_n} x_\alpha, x_3),  \quad \bar\bcal^a_n(x_3):= {r_{\epsi_n}^{-1}} \int_{\omega^a} \grad_\alpha \psi^a_n\,\dx_\alpha
\enspace 
\end{aligned}
\end{equation*}
and, for \(x=(x_\alpha,x_3) \in \Omega^b\), we define
\vspace{-1mm}%
\begin{equation*}
\begin{aligned}
&\psi^b_n(x):= (x_\alpha, h_{\epsi_n} x_3) \phi_{p,n}(x_\alpha) + \big(   h_{\varepsilon_n} x_3\bar\bcal^b(x_\alpha)
+ \psi^b(x_\alpha)\big) (1- \phi_{p,n}(x_\alpha)),\quad \bar\bcal^b_n(x_\alpha) := {h_{\epsi_n}^{-1}}
\int_{-1}^0 \grad_3 \psi^b_n\,\dx_3.
\end{aligned}
\end{equation*}
Because \(\ffi^a_{0,\epsi_n} \equiv (r_{\epsi_n} x_\alpha, x_3)\) and \(\phi_{p,n} \equiv 1\) in \(r_{\epsi_n} \omega^a \), we have \(  ((\bar\bcal^a_n,\psi^a_n),(
\psi^b_n, \bar\bcal^b_n)
)\in \widetilde{\mathcal{A}}_{\epsi_n}\). Moreover,
using  \eqref{WI=0}, \vspace{-2.5mm}
\begin{equation*}
\begin{aligned}
\psi^a_n \to \psi^a \hbox{ in } W^{1,p}(\Omega^a;\mathbb
R^3), \enspace \bar\bcal^a_n \to \bar\bcal^a \hbox{ in } L^p((0,L);\mathbb
R^{3\times 2}), \enspace
  \int_{\Omega^a} W({r_{\epsi_{n}}^{-1}}\nabla_\alpha
\psi^a_n|\nabla_3
\psi^a_n)\,\dx =0.
\end{aligned}
\end{equation*}
Assume now that \(1<p\leq2\). The convergences  \(\phi_{p,n}
\to 0\) \aev\ in \(\omega^b\) and  \eqref{bycapacity}, together with \eqref{pgrowth} and Vitali--Lebesgue's lemma, yield
\vspace{-1mm}%
\begin{equation*}
\begin{aligned}
\psi^b_n \to \psi^b \hbox{ in } W^{1,p}(\Omega^b;\mathbb
R^3), \enspace \bar\bcal^b_n \to \bar\bcal^b \hbox{ in } L^p(\omega^b;\mathbb
R^3), \enspace \lim_{n\to\infty} \int_{\Omega^b}W(\nabla_\alpha
\psi^{b}_n| \bar\bcal^b_n)\,\dx= \int_{\omega^b}W(\nabla_\alpha
\psi^b| \bar\bcal^b)\,\dx_\alpha.
\end{aligned}
\end{equation*}
Hence, \eqref{eq:ubwrtWlz} holds.

\smallskip
\textit{Step 5 (Upper bound).} In this
step, we prove
that for all \(((\bar\bcal^a,\psi^a),
(\psi^b, \bar\bcal^b) )\) in  \(
\big(L^p((0,L);\R^{3 \times 2}) \times
W^{1,p}(\Omega^a;\R^3)\big)\times
\big( W^{1,p}(\Omega^b;\R^3)
\times L^p(\omega^b;\R^{3}) \big) \) such that 
\(\psi^a \equiv (0_\alpha,x_3)\), \( \bar\bcal^a
 \equiv \bar a\, \II_\alpha\), and
\(\psi^b\)  is independent of  \(x_3\),
  we have
\vspace{-1mm}%
\begin{equation*}
\begin{aligned}
\widetilde F^-((\bar\bcal^a,\psi^a),
(\psi^b, \bar\bcal^b)) \leq  F^b(\psi^b,\bar\bcal^b) .
\end{aligned}
\end{equation*}

To prove this estimate, it suffices to argue as in Steps~5 and 6  of the proof of Theorem~\ref{Thm:FepsitoF} but invoking \eqref{eq:two} in place of
 Lemma~\ref{lem:relaxationresult}. 
\end{proof}

Arguing  as in Lemma~\ref{lem:ontrace}
(disregarding the terms related to \(\Omega^a\)), it can be easily checked that the
following result holds. This result enable us to address the boundary condition on \(\Gamma^b\).

\begin{lemma}\label{lem:ontraceb}
Let \(W:\R^{3\times 3} \to \R\) be a
Borel function satisfying
\eqref{pgrowth} and let \(\kappa\in\R\).
Fix \((\psi^b, \bar\bcal^b)  \in 
W^{1,p}(\Omega^b;\RR^3)\times L^p(\omega^b;\mathbb
R^3)$ with \(\psi^b\) independent of
\(x_3\).  For each
\(n\in\Nb\), let  \(\lambda_n \in \RR^+\)
and
\(
\psi^b_n\in W^{1,p}(\Omega^b;\R^3)
 \) be such
that \(\lambda_n\to\infty\),  \( \psi^b_n \to \psi^b\)  in
\(L^{p}(\Omega^b;\R^3)\),
 \(   \lambda_n \int_{-1}^0 \grad_3
 \psi_n^b\,\dx_3 \weakly \bar\bcal^b\)
weakly in \(L^{p}(\omega^b;\R^3)\),
 and
\vspace{-1.5mm} %
\begin{equation*}
\begin{aligned}
\lim_{n\to\infty}
  \int_{\Omega^b}W (\nabla_\alpha
\psi^b_n| \lambda_n\nabla_3 \psi^b_n)\,\dx
= \kappa.
\end{aligned}
\end{equation*}
Let 
 \((\ffi^b_n)_{n\in\Nb}
\subset W^{1,p}(\Omega^b;\R^3)\) and
 \(\ffi^b \in W^{1,p}(\Omega^b;\R^3)\)
satisfy \eqref{bcb}.
Assume further that  \(\psi^b = \ffi^b\)
on \(\Gamma^b
\). Then, there exist subsequences 
\(\lambda_{n_k}
\prec \lambda_n\) and \(\ffi^b_{{n_k}}
\prec \ffi^b_n\) and  a sequence 
\((\tilde
\psi^b_k
)_{k\in\Nb} \subset W^{1,p}(\Omega^b;\R^3)\) satisfying
\(\tilde\psi^b_k
= \ffi^b_{n_k}\)
on \(\Gamma^b\)  and \(\tilde\psi^b_k
= \psi^b_{n_k}\) in \(\omega^a \times
(-1,0)\) for all \(k\in\Nb\),
 \(\tilde \psi^b_k \weakly \psi^b\)
weakly in
 \(W^{1,p}(\Omega^b;\R^3)\),    
 \( \lambda_{n_k} \int_{-1}^0 \grad_3
 \tilde \psi_k^b\,\dx_3 
\weakly \bar\bcal^b\) weakly in \(L^{p}(\omega^b;\R^3)\),
 and \vspace{-1.5mm}
 \begin{equation*}
\begin{aligned}
\limsup_{k\to\infty}
 \int_{\Omega^b}W (\nabla_\alpha
\tilde \psi^b_k| \lambda_{n_k}\nabla_3 \tilde \psi^b_k)\,\dx
\leq \kappa.
\end{aligned}
\end{equation*}
\end{lemma}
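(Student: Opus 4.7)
The plan is to follow the $\Omega^b$-portion of the proof of Lemma~\ref{lem:ontrace}, just discarding everything related to $\Omega^a$ and tightening the cut-off construction so that the modification leaves $\psi_{n_k}^b$ unchanged on the fixed set $\omega^a\times(-1,0)$. By the lower bound in \eqref{pgrowth}, we may assume $W\geq0$. Passing to a subsequence, we have $h_{\epsi_n}^{-1}\int_{-1}^{0}\grad_3\ffi_n^b\,\dx_3\weakly\vartheta^b$ weakly in $L^p(\omega^b;\R^3)$ for some $\vartheta^b$, and $\grad_3\ffi_n^b\to 0$ strongly in $L^p(\Omega^b;\R^3)$ by \eqref{bcb}; in particular $\ffi^b$ is independent of $x_3$.

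Introduce the positive Radon measures on $\omega^b$ defined, for $B\in\calB(\omega^b)$, by
\begin{equation*}
\nu_n^b(B):=\int_{B\times(-1,0)}\bigl(1+|\grad_\alpha\psi_n^b|^p+|\grad_\alpha\ffi_n^b|^p+|\lambda_n\grad_3\psi_n^b|^p+|h_{\epsi_n}^{-1}\grad_3\ffi_n^b|^p+|\grad_\alpha\psi^b|^p+|\grad_\alpha\ffi^b|^p\bigr)\,\dx,
\end{equation*}
which are uniformly bounded by the hypotheses of the lemma together with \eqref{pgrowth}. Extracting a further subsequence, $\nu_{n_j}^b\weaklystar\nu^b$ in $\M(\omega^b)$. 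Given $\tau>0$, the equi-integrability part of \eqref{bcb} combined with the strong $L^p$-convergence of $\psi_n^b$ furnishes $\epsilon\in(0,\tau)$ such that every equi-integrable density above, integrated over $B\times(-1,0)$ with $|B|<\epsilon$, is bounded by $\tau$ uniformly in $j$.

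Because $\omega^a\subset\subset\omega^b$, set $\eta_0:=\tfrac12\dist(\overline{\omega^a},\partial\omega^b)>0$. For $\eta\in(0,\eta_0)$ and $\delta\in(0,\eta/2)$, choose $\phi^b_{\eta,\delta}\in C_c^\infty(\omega^b;[0,1])$ with $\phi^b_{\eta,\delta}\equiv 1$ on $A_\eta^b:=\{x_\alpha\in\omega^b\!:\dist(x_\alpha,\partial\omega^b)>\eta\}$ (which already contains $\omega^a$), $\phi^b_{\eta,\delta}\equiv 0$ outside $A_{\eta-\delta}^b$, and $\Vert\grad_\alpha\phi^b_{\eta,\delta}\Vert_\infty\leq C/\delta$, and define
\begin{equation*}
\vartheta^b_{j,\delta,\eta}(x):=\psi^b_{n_j}(x)\phi^b_{\eta,\delta}(x_\alpha)+\bigl(\ffi_{n_j}^b(x)-\ffi^b(x_\alpha)+\psi^b(x_\alpha)\bigr)\bigl(1-\phi^b_{\eta,\delta}(x_\alpha)\bigr).
\end{equation*}
Using $\phi^b_{\eta,\delta}=0$ on $\partial\omega^b$ together with $\psi^b=\ffi^b$ on $\Gamma^b$, one obtains $\vartheta^b_{j,\delta,\eta}=\ffi_{n_j}^b$ on $\Gamma^b$; and $\vartheta^b_{j,\delta,\eta}=\psi_{n_j}^b$ on $\omega^a\times(-1,0)$ since $\phi^b_{\eta,\delta}\equiv1$ on $\omega^a$.

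The energy $\int_{\Omega^b}W(\grad_\alpha\vartheta^b_{j,\delta,\eta}|\lambda_{n_j}\grad_3\vartheta^b_{j,\delta,\eta})\,\dx$ splits into three pieces: on $A_\eta^b\times(-1,0)$, bounded by $\int_{\Omega^b}W(\grad_\alpha\psi^b_{n_j}|\lambda_{n_j}\grad_3\psi^b_{n_j})\,\dx$; on $(\omega^b\setminus A_{\eta-\delta}^b)\times(-1,0)$, bounded by $C\tau$ through \eqref{pgrowth} and the equi-integrability estimate; and on the transition strip $I_\delta^b\times(-1,0)$, where the $C/\delta$ bound on $\grad\phi^b_{\eta,\delta}$ and \eqref{pgrowth} give $C\nu_{n_j}^b(I_\delta^b)+(C/\delta^p)\int_{I_\delta^b\times(-1,0)}|\psi_{n_j}^b-\psi^b+\ffi^b-\ffi_{n_j}^b|^p\,\dx$. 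Passing to the limit as $j\to\infty$, then $\delta\to0^+$, then choosing $\eta_k\to 0^+$ inside $(0,\eta_0)$ along a sequence with $\nu^b(\partial A_{\eta_k}^b)=0$, and finally $\tau\to0^+$, yields $\limsup_k\limsup_j\int W\leq\kappa$. A standard diagonal extraction based on the metrizability of weak convergence on bounded $L^p$-balls then selects $\tilde\psi_k^b:=\vartheta^b_{j_k,\delta_k,\eta_k}$ with the requested properties. The only subtlety compared with Lemma~\ref{lem:ontrace} is the insistence that $\phi^b_{\eta,\delta}\equiv 1$ on the \emph{fixed} set $\omega^a$ rather than on the shrinking set $r_{\epsi_n}\omega^a$; this is secured a priori by $\omega^a\subset\subset\omega^b$ and the upper constraint $\eta<\eta_0$, which in no way interferes with the triple limit $\tau,\delta,\eta\to0^+$ and hence constitutes the only, and mild, obstacle.
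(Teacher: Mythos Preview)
Your proof is correct and follows precisely the approach the paper indicates: it reproduces the $\Omega^b$-half of the argument in Lemma~\ref{lem:ontrace}, with the single necessary adjustment that the cut-off satisfies $\phi^b_{\eta,\delta}\equiv 1$ on the fixed set $\omega^a$ (rather than the shrinking set $r_{\epsi_n}\omega^a$), secured via the constraint $\eta<\eta_0=\tfrac12\dist(\overline{\omega^a},\partial\omega^b)$, which is indeed compatible with $\eta_k\to0^+$. The paper itself gives no detailed proof, simply stating that one argues as in Lemma~\ref{lem:ontrace} disregarding the $\Omega^a$-terms; your write-up is exactly that.
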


\subsection{Proof of Theorem~\ref{thm:ellz}}\label{Subs:lzproof}
In this subsection, we prove Theorem~\ref{thm:ellz}. The proof is analogous to  those of Theorems~\ref{thm:ellr+} and \ref{thm:elli}, for which reason we only indicate here the main ideas.

\begin{proof}[Proof of Theorem~\ref{thm:ellz}]
Let \((\psi^a_\epsi,\psi^b_\epsi)_{\epsi>0}\)
be as in the statement of Theorem~\ref{thm:ellz};
that is, a sequence
in \(W^{1,p}(\Omega^a;{\mathbb
R}^3) \times W^{1,p}(\Omega^b;{\mathbb
R}^3)\) satisfying \( \psi^a_\epsi =
\ffi^a_{0,\epsi}\) on \(\Gamma^a\),
\(
 \psi^b_\epsi =   \ffi^b_{0,\epsi}\)
on  \(\Gamma^b\), \(\psi^a_\epsi(x_\alpha,0)
=
\psi^b_\epsi(r_\varepsilon x_\alpha,0)\)
for \aev\
\(x_\alpha\in\omega^a\), and
\vspace{-1.5mm}%
\begin{equation*}
\begin{aligned}
\frac{r_\epsi^2}{h_\epsi}E^a_\epsi(\psi^a_\epsi) + \frac{r_\epsi^2}{h_\epsi}E^b_\epsi(\psi^b_\epsi)< \inf_{(\psi^a,\psi^b) \in
 \Phi_\epsi}
 \Big(\frac{r_\epsi^2}{h_\epsi}E^a_\varepsilon(\psi^a) + \frac{r_\epsi^2}{h_\epsi}
 E^b_\varepsilon(\psi^b)\Big)
+ \rho(\epsi),
\end{aligned}
\end{equation*}
where \(\rho\) is a non-negative
function satisfying \(\rho(\epsi) \to
0\) as \(\epsi\to0^+\) and  \(\Phi_\epsi\) is given by \eqref{Phiepsi}. Recall that here \(\ffi^a_{0,\epsi} \equiv (r_\epsi x_\alpha, 0_3)\).

Arguing as at the beginning of the proof of Theorem~\ref{thm:elli} and as in Step~2 of Theorem~\ref{thm:tFepsitotF}, it follows that \vspace{-1mm} 
\begin{equation*}
(r_\epsi^{-1}\psi^a_\epsi, \nabla_3
\psi^a_\epsi) \to ( \bcal^a,\psi^a)
\hbox{ in } 
L^p(\Omega^a;\R^{3\times 2})\times W^{1,p}(\Omega^a;\R^3), 
\end{equation*}
 where  \(\psi^a\equiv
(0_\alpha,x_3)\)
and \(\bcal^a \equiv \II_\alpha\). Moreover,  
the sequence 
\((\psi^b_\epsi, \bar \bcal^b_\epsi)_{\epsi>0}\),
where \(\bar\bcal^b_\epsi:={h_{\varepsilon}^{-1}}
\int_{-1}^0\nabla_3
\psi^b_\epsi\, \dx_3\), is sequentially,
weakly compact in
\(   
W^{1,p}(\Omega^b;\R^3)\times 
L^p(\omega^b;\R^{3 }) \). If \((
\psi^b, \bar\bcal^b)\) is a corresponding accumulation
point, then \(\psi^b\in
\Phi_{\ell_0}^p\) (see \eqref{Phiplz}).

As in the proof of Theorems~\ref{thm:ellr+} and \ref{thm:elli}, if we prove that the sequence  \((\widetilde{\mathcal{E}}_\epsi)_{\epsi>0}\)
 of the functionals \(\widetilde{\mathcal{E}}_\epsi:
\big(
 L^p((0,L);\R^{3 \times 2}) \times W^{1,p}(\Omega^a;\R^3)\big)\times
\big( W^{1,p}(\Omega^b;\R^3) \times
L^p(\omega^b;\R^{3})
\big)  \to
(-\infty,\infty]
\) 
defined by \vspace{-1mm}
\begin{equation*}
\begin{aligned}
\widetilde{\mathcal{E}}_\epsi((\bar\bcal^a,\psi^a),
(\psi^b,
\bar\bcal^b)):=
\begin{cases}
\displaystyle
\frac{r_\epsi^2}{h_\epsi}E_\epsi^a(\psi^a) +
\frac{r_\epsi^2}{h_\epsi}E_\epsi^b(\psi^b) & \hbox{if }  ((\bar\bcal^a,\psi^a),(
\psi^b, \bar\bcal^b)
)\in \widetilde{\mathcal{A}}_\epsi \hbox{ and }
(\psi^a,\psi^b) \in
\Phi_\epsi\\
\infty & \hbox{otherwise}
\end{cases}
\end{aligned}
\end{equation*}
\(\Gamma\)-converges, with
respect to the weak
topology in
\(
\big(
 L^p((0,L);\R^{3 \times 2}) \times W^{1,p}(\Omega^a;\R^3)\big)\times
\big( W^{1,p}(\Omega^b;\R^3) \times
L^p(\omega^b;\R^{3})
\big)\), to the functional 
 \(\mathcal{E}_{\ell_0 }:
\big(
 L^p((0,L);\R^{3 \times 2}) \times W^{1,p}(\Omega^a;\R^3)\big)\times
\big( W^{1,p}(\Omega^b;\R^3) \times
L^p(\omega^b;\R^{3})
\big) \to
(-\infty,\infty]
\) defined 
by
\vspace{-1mm}%
\begin{equation*}
\begin{aligned}
\mathcal{E}_{\ell_0 }((\bar\bcal^a,\psi^a),
(\psi^b, \bar\bcal^b)):=
\begin{cases}
\displaystyle
E_{\ell_0 }
(\psi^b, \bar\bcal^b) & \hbox{if }     \psi^a\equiv (0_\alpha,x_3), 
\bar\bcal^a \equiv \bar a \II_\alpha, \hbox{ and } \psi^b\in
\Phi_{\ell_0}^p\\
\infty & \hbox{otherwise,}
\end{cases}
\end{aligned}
\end{equation*}
then Theorem~\ref{thm:ellz} follows. 

We omit  the proof of this \(\Gamma\)-convergence property because it can be proved exactly as its counterpart in Theorem~\ref{thm:ellr+} but invoking Theorem~\ref{thm:tFepsitotF} in place of Theorem~\ref{Thm:FepsitoF} and Lemma~\ref{lem:ontraceb} in place of Lemma~\ref{lem:ontrace}. 
\end{proof}

\begin{remark}
A remark similar  to Remark \ref{doublewells} holds in \(\ell=0\) case. 
\end{remark}

\section{On the system of applied forces}\label{Sect:forces}

In this section, we further extend our analysis by exploring variants of the system of applied forces. Precisely, in Section~\ref{Subs:nobending}, we consider the case in which one or both  the terms,  $G^a$ and $G^b$, inducing bending moments in the limit models  is not present. Next, in Section~\ref{Subs:divform}, we consider the case in which the applied forces are in divergence form.

\subsection{Models without (or partially without) bending moments}\label{Subs:nobending}
Here, we consider the case in which the terms $G^a$ or $G^b$ in the surface applied forces with a non-standard order of scaling magnitude in \eqref{forcesl+}--\eqref{forcesl0} is not present. Roughly speaking, in this case, the work done by the forces does not depend on $\bar \bcal^a$ or $\bar \bcal^b$, which allow us to pass the  corresponding minimum under the elastic energy integral sign; we then  recover the elastic energy densities in \cite{ABP91} and \cite{LDR00}.

Precisely, let \(W_0\) be the function defined in \eqref{eq:W0} and 
 \(W_1:\RR^{3\times 2} \to \RR\) be the function defined by 
 \(W_1(M_\alpha):=\inf_{b^b\in\R^3} 
W(M_\alpha|b^b)\) for \(M_\alpha\in \RR^{3\times 2}\).
Denote by \(\ce_{\!0}\)  and \(\qe_{\!1}\)  the convex and quasiconvex envelops of \(W_0\) and \(W_1\), respectively.
Following the same
arguments as those in  \cite[Proposition~1.1-(iv)]{BFM03}, we have that
\begin{equation*}
\begin{aligned}
\inf_{b^a \in \RR^{3\times 2}} \ce(b^a|\cdot) = \ce_{\!0}(\cdot) \enspace \hbox{ and } \enspace \inf_{b^b \in \RR^{3}} \qce(\cdot|b^b) = \qe_{\!1}(\cdot).
\end{aligned}
\end{equation*}
Then,  if the function $G^a$ in \eqref{forcesl+}
and \eqref{forcesli} is null, which means that \(\Bcal^a\equiv0\), we  may perform explicitly
the infimum in \eqref{Pell+} and \eqref{Pelli} (see Theorems~\ref{thm:ellr+} and \ref{thm:elli}) with
respect to $\bar\bcal^a$  and pass it under the elastic energy integral sign.
In this case, the elastic energy term in \((0,L)\) becomes
\vspace{-.5mm}%
\begin{equation*}
\begin{aligned}
\bar
a\int_0^L \mathcal{C}W_0(\nabla_3\psi^a)\,\dx_3.
\end{aligned}
\end{equation*}
Similarly, if the function  $G^b$  in \eqref{forcesl+} and \eqref{forcesl0} is null, we  may perform explicitly
the infimum in \eqref{Pell+} and \eqref{Pellz} (see Theorems~\ref{thm:ellr+} and \ref{thm:ellz})
 with
respect to $\bar
\bcal^b$ and pass it under the elastic energy integral sign. In this case, the elastic energy term in \(\omega^b\) becomes
\vspace{-.5mm}%
\begin{equation*}
\begin{aligned}
 \int_{\omega^b}
{\mathcal Q}W_1(\nabla_\alpha \psi^b)\,\dx_\alpha,
\end{aligned}
\end{equation*}
multiplied by \(\ell\) in the \(\ell^+\) case.

For instance, if both functions $G^a$ and $G^b$ in \eqref{forcesl+}  are  null, then the limit problem  \eqref{Pell+} reduces to
\vspace{-1mm}\begin{equation*}
\begin{aligned}
\min_{(\psi^a,\psi^b) \in\Phi_{\ell_+}^p  }
\bigg\{  &\bar
a\int_0^L {\mathcal C }W_0(\nabla_3\psi^a)\,\dx_3+ \ell\int_{\omega^b}
{\mathcal Q}W_1(\nabla_\alpha \psi^b)\,\dx_\alpha- \int_0^L
\big(
\bar f^a\cdot\psi^a+ \bar g^a\cdot\psi^a \big)\,\dx_3\\&  -\ell \int_{\omega^b}
\big(\bar f^b\cdot\psi^b +
(g^{b,+}-g^{b,-})\cdot\psi^{b}\big)
\,\dx_\alpha +\bar a\, \hat G^b(0_\alpha)\cdot
\psi^a(0_3) \bigg\};
\end{aligned}
\end{equation*}
setting further \(\hat G^b=0\), we recover  \cite[Theorem
5.1]{GaZa07} as a particular case. 

\subsection{Forces in divergence form}\label{Subs:divform}

Here, following a suggestion by Fran\c{c}ois Murat after part of this work was completed, we discuss the case where the system of applied forces to the multi-structure
is in a divergence form  in the spirit of \cite{GMMMS02, GMMMS07, MuSi99, Musi00} (see also \cite[Theorem~6.2]{FMP12}), allowing
for less regular volume and surface density terms.
Namely, in place of the \textit{classical} total energy in 
\eqref{energystandardforces}, we could  consider instead the total energy \vspace{-1mm}
\begin{equation}
 \label{energydivergenceforces}
 \begin{aligned}
 \int_{\Omega_\varepsilon}
 W(\nabla\tilde\psi)\,\d\tilde x - \int_{\Omega_\varepsilon}
 \tilde{H}_\epsi:\nabla \tilde\psi\,\d\tilde x,
 \end{aligned}
 \end{equation}
 where $\tilde{H}_\epsi \in L^q(\Omega_\epsi;\mathbb R^{3\times 3})$.
Note that given  \(\tilde f_\epsi\in L^q(\Omega_\varepsilon;{\mathbb
R}^3)\) and \(\tilde g_\epsi\in L^q(S_\varepsilon;{\mathbb
R}^3) \) such that
\begin{equation}
\label{eq:compatibility}
\begin{aligned}
\int_{\Omega_\varepsilon}
\tilde
f_\epsi\,\d\tilde x + \int_{S_\varepsilon}
\tilde g_\epsi\,\d{\HH}^2(\tilde x) =0,
\end{aligned}
\end{equation}
 we can find $\tilde{H}_\epsi \in L^q(\Omega_\epsi;\mathbb R^{3\times 3})$ satisfying
\begin{equation}\label{div=classical}
\begin{aligned}
 \int_{\Omega_\varepsilon}
 \tilde{H}_\epsi:\nabla \tilde\theta\,\d\tilde x = \int_{\Omega_\varepsilon}
\tilde
f_\epsi\cdot\tilde\theta\,\d\tilde x + \int_{S_\varepsilon}
\tilde g_\epsi\cdot\tilde\theta\,\d{\HH}^2(\tilde x)
\end{aligned}
\end{equation}
for all \(\tilde \theta \in W^{1,p}(\Omega_\varepsilon;{\mathbb
R}^3)\) (see \cite{GiTr01}). Thus, under the compatibility condition 
\eqref{eq:compatibility}, the classical formulation  
\eqref{energystandardforces} can be seen as a particular case of 
\eqref{energydivergenceforces}.
Conversely, if $\tilde{H}_\epsi \in L^q(\Omega_\epsi;\mathbb R^{3\times 3})$ is somewhat more regular with \(\tilde H_\epsi = 0\) on \(\Gamma^\epsi\) in the sense of traces, then formula \eqref{div=classical} holds true for %
\begin{equation}\label{eq:fromHreg}
\begin{aligned}
\tilde f_\epsi :=-(\div \tilde H_\epsi^1, \div \tilde H_\epsi^2, \div \tilde H_\epsi^3) \enspace \hbox{and} \enspace \tilde g_\epsi := \tilde H_\epsi \nu_\epsi,
\end{aligned}
\end{equation}
where \( \tilde H_\epsi^j\) stands for the \(jth\) row of \(\tilde H_\epsi\) and \(\nu_\epsi\) for the unit outer normal to \(S_\epsi\).
If  \(\tilde f_\epsi\) and \(\tilde g_\epsi\) given by \eqref{eq:fromHreg} belong to \(L^q(\Omega_\varepsilon;{\mathbb
R}^3)\) and \(L^q(S_\varepsilon;{\mathbb
R}^3)\), respectively, then   
\eqref{energydivergenceforces} can be seen as a particular case of 
\eqref{energystandardforces}. Note that if  \(\tilde H_\epsi \not=0\)
 on \(\Gamma^\epsi\), then we obtain the additional term 
 \(\int_{\Gamma^\epsi} (\tilde H_\epsi
\nu_\epsi)\cdot \tilde\theta \, \d{\HH}^2(\tilde x)\)  
in \eqref{div=classical}; however, using the 
 deformation condition \(\tilde \ffi_{0,\epsi}\in W^{1,p}(\Omega_\epsi;\RR^3)\)
imposed on \(\Gamma^\epsi\) (see \eqref{deftildePhiepsi}), this additional term can be easily handled.

Next, we elaborate on how to  reproduce our analysis in the previous sections but starting from    
\eqref{energydivergenceforces}. Under  mild hypotheses on \(\tilde H_\epsi\), we obtain in the limit a model incorporating bending-torsion moments; specifying further  \(\tilde H_\epsi\),  we obtain a limit model that, in addition to bending-torsion moments,  incorporates certain  body and surface forces. This limit model also contains
a term of the type \(c\cdot\psi^a(0_3)\)
for a certain constant \(c\); however,
this constant \(c\)   has, in general, a distinct physical interpretation from the constant   \(\bar a \hat G^b(0_\alpha)
\) in \eqref{Eell+}.   

Assume that $\tilde{H}_\epsi \in L^q(\Omega_\epsi;\mathbb R^{3\times 3})$. As in \eqref{forcesepsi}, we start by defining
\begin{equation*}
\begin{aligned}
&H^a_\epsi(x):=\tilde H_\epsi (r_\varepsilon
x_\alpha,
x_3)  \text{ for $x=(x_\alpha,x_3)\in\Omega^a$}
\, \hbox{
and }\,
 H^b_\epsi(x):=\tilde H_\epsi (
x_\alpha, h_\varepsilon
x_3) \text{ for $x=(x_\alpha,x_3)\in\Omega^b$}. \end{aligned}
\end{equation*}
Then, proceeding as in the Introduction, we are led to the re-scaled energy
\begin{equation}\label{eq:hatEdiv}
\begin{aligned}
F_\epsi^a (\psi^a) - \int_{\Omega^a} H_\epsi^a :({r_\varepsilon^{-1}}\nabla_\alpha
\psi^a|\nabla_3\psi^a)\, \dx 
+ \frac{h_\varepsilon}{r_\varepsilon^2} F_\epsi^b(\psi^b) - \frac{h_\varepsilon}{r_\varepsilon^2}\int_{\Omega^b}
H_\epsi^b :(
\nabla_\alpha \psi^b|{h_\varepsilon^{-1}}\nabla_3\psi^b)\,\dx, \end{aligned}
\end{equation}
where \(F^a_\epsi\) and \(F^b_\epsi\) are given by \eqref{Fabepsi}.

Let $(\psi^a_\epsi)_{\epsi>0}\subset W^{1,p}(\Omega^a;\RR^3)$
and $(\psi^b_\epsi)_{\epsi>0}\subset W^{1,p}(\Omega^b;\RR^3)$  be as in Lemma~\ref{lem:l21GGLM} and assume  that
\begin{equation}
\label{eq:convHepsi}
\begin{cases}
H^a_\varepsilon \to H^a \hbox{ in }L^q(\Omega^a;\mathbb R^{3\times
3})\enspace \hbox{and} \enspace \frac{h_\varepsilon}{r_\varepsilon^2}H^b_\varepsilon \to H^b \hbox{ in }L^q(\Omega^b;\mathbb R^{3\times
3}) & \hbox{if } \ell\in \RR^+ \hbox{ or } \ell=\infty,\\
\frac{r_\varepsilon^2}{h_\varepsilon} H^a_\varepsilon \to H^a \hbox{ in }L^q(\Omega^a;\mathbb R^{3\times
3})\enspace \hbox{and} \enspace H^b_\varepsilon
\to H^b \hbox{ in }L^q(\Omega^b;\mathbb R^{3\times
3}) & \hbox{if } \ell=0
\end{cases}
\end{equation}
for some \(H^a\in L^q(\Omega^a;\mathbb R^{3\times
3}) \) and \(H^b\in L^q(\Omega^b;\mathbb R^{3\times
3})\). Then, for  \(\ell\in \RR^+\) or \(\ell=\infty\),
we have%
\begin{equation}\label{eq:limwithH+}
\begin{aligned}
&\lim_{\epsi\to0^+} \bigg(\int_{\Omega^a} H_\epsi^a :({r_\varepsilon^{-1}}\nabla_\alpha
\psi^a_\varepsilon|\nabla_3\psi^a_\varepsilon)\, \dx + 
\frac{h_\varepsilon}{r_\varepsilon^2}\int_{\Omega^b}
H_\epsi^b :(
\nabla_\alpha \psi^b_\varepsilon|{h_\varepsilon^{-1}}\nabla_3\psi^b_\varepsilon)\,\dx \bigg) \\
&\quad = \int_{\Omega^a} H^a(x) : (\bcal^a(x)|\grad_3\psi^a(x_3))\,\dx + \int_{\Omega^b}
H^b(x) :(
\nabla_\alpha \psi^b(x_\alpha)|\bcal^b(x))\,\dx;
\end{aligned}
\end{equation}
similarly, for \(\ell=0\),
we have%
\begin{equation}\label{eq:limwithH0}
\begin{aligned}
&\lim_{\epsi\to0^+} \bigg(\frac{r_\varepsilon^2}{h_\varepsilon}\int_{\Omega^a} H_\epsi^a :({r_\varepsilon^{-1}}\nabla_\alpha
\psi^a_\varepsilon|\nabla_3\psi^a_\varepsilon)\, \dx + 
\int_{\Omega^b}
H_\epsi^b :(
\nabla_\alpha \psi^b_\varepsilon|{h_\varepsilon^{-1}}\nabla_3\psi^b_\varepsilon)\,\dx
\bigg) \\
&\quad = \int_{\Omega^a} H^a(x) :
(\bcal^a(x)|\grad_3\psi^a(x_3))\,\dx
+ \int_{\Omega^b}
H^b(x) :(
\nabla_\alpha \psi^b(x_\alpha)|\bcal^b(x))\,\dx.
\end{aligned}
\end{equation}
Moreover, if
\vspace{-1.5mm}%
\begin{equation}
\label{eq:extrahypH}
\begin{aligned}
H^a_\alpha\equiv H^a_\alpha(x_3) \quad \hbox{and}
\quad H^b_3\equiv
H^b_3(x_\alpha),
\end{aligned}
\end{equation}
where, as before, \(H^a=(H^a_\alpha|H^a_3)\)
and \(H^b=(H^b_\alpha|H^b_3)\), then the sum on the right-hand side of
\eqref{eq:limwithH+} and \eqref{eq:limwithH0}
equals
\vspace{-3mm}%
\begin{equation}\label{eq:hatL}
\begin{aligned}
\hat L((\bar\bcal^a,\psi^a),
(\psi^b,\bar
\bcal^b))&:= \int_0^L H^a_\alpha(x_3) : \bar\bcal^a(x_3)\,\dx_3 +
 \int_{\Omega^a} H^a_3(x)\cdot\grad_3\psi^a(x_3)\,\dx\\ &\qquad+ \int_{\Omega^b}
H^b_\alpha(x) :
\nabla_\alpha \psi^b(x_\alpha)\,\dx
+ \int_{\omega^b}H^b_3(x_\alpha) \cdot\bar\bcal^b(x_\alpha)\,\dx_\alpha. \end{aligned}
\end{equation}

Under the assumption in \eqref{eq:extrahypH}, results analogous  to Theorems~\ref{thm:ellr+}, \ref{thm:elli}, and \ref{thm:ellz} hold  replacing the terms involving the applied forces by \(\hat L\) given by   \eqref{eq:hatL}.
For instance, if \(\ell\in\RR^+\), we have the following result.

\begin{theorem}[\(\ell\in\RR^+\)]\label{thm:ellr+div}
Let \(W:\R^{3\times 3} \to \R\) be a
Borel function
satisfying
\eqref{pgrowth} and let \((\psi^a_\epsi,\psi^b_\epsi)_{\epsi>0}\)
be a diagonal
infimizing sequence of the sequence
of problems \eqref{Pepsi} with \(E^a_\epsi(\psi^a)
+ E^b_\epsi(\psi^b)\) replaced by the
functional  in \eqref{eq:hatEdiv},
where  \(\ell\)  given by \eqref{ell}
is such that  \(\ell\in\RR^+\),    
\((\ffi^a_{0,\epsi},\ffi^b_{0,\epsi})_{\epsi>0}\)
satisfies
\eqref{bca}--\eqref{bcb} and \eqref{junction},
and \eqref{eq:convHepsi} holds with
 \eqref{eq:extrahypH}.
 Then, the sequences
\((\bar \bcal^a_\epsi,\psi^a_\epsi)_{\epsi>0}\)
and \((\psi^b_\epsi,\bar \bcal^b_\epsi)_{\epsi>0}\),
where \(\bar\bcal^a_\epsi:={r_{\varepsilon}^{-1}}
\int_{\omega^a}\nabla_\alpha
\psi^a_\epsi\, \dx_\alpha\) and \(\bar\bcal^b_\epsi:=
{h_\epsi^{-1}}\int_{-1}^{0}\nabla_3
\psi^b\,\dx_3 \), are sequentially,
weakly compact in
\(  L^p((0,L);\R^{3 \times 2}) \times
W^{1,p}(\Omega^a;\R^3) \) and \(W^{1,p}(\Omega^b;\R^3)\times
L^p(\omega^b;\R^{3})\),
respectively. If \((\bar\bcal^a, \psi^a)\)
and \(( \psi^b,\bar \bcal^b)\) are corresponding
accumulation points, then \((\psi^a,\psi^b)\in
\Phi_{\ell_+}^p\) and they  solve the
minimization problem
\vspace{-1mm}%
\begin{equation*}
\begin{aligned}
\min
\Big\{\hat E_{\ell_+}((\bar\bcal^a, \psi^a),
( \psi^b,\bar \bcal^b))\!: \, &\,(\psi^a,\psi^b) \in\Phi_{\ell_+}^p,\, (\bar\bcal^a,\bar
\bcal^b) \in L^p((0,L);\R^{3 \times
2}) \times L^p(\omega^b;\R^{3}) \Big\},
\end{aligned}
\end{equation*}
where, recalling \(\hat L\) in \eqref{eq:hatL},
\vspace{-1mm}\begin{equation*}
\begin{aligned}
\hat E_{\ell_+}((\bar\bcal^a,\psi^a), (\psi^b,\bar
\bcal^b)):= &\, \bar
a\int_0^L \ce({\bar a^{-1}}\bar\bcal^a|\nabla_3\psi^a)\,\dx_3+
\ell\int_{\omega^b}
 \qce(\nabla_\alpha \psi^b|\bar\bcal^b)\,\dx_\alpha
 + \hat L((\bar\bcal^a,\psi^a),
(\psi^b,\bar
\bcal^b)).
\end{aligned}
\end{equation*}
\end{theorem}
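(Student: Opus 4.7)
The plan is to follow the proof scheme of Theorem~\ref{thm:ellr+}, replacing the work-of-forces term by the bilinear pairing with \(H_\epsi^a\) and \(H_\epsi^b\) and establishing its passage to the limit under hypotheses \eqref{eq:convHepsi}--\eqref{eq:extrahypH}. First, I would test the right-hand side of the infimum defining \((\psi^a_\epsi,\psi^b_\epsi)\) with \((\ffi^a_{0,\epsi},\ffi^b_{0,\epsi})\) and use \eqref{pgrowth}, \eqref{bca}--\eqref{bcb},  Young's and H\"older's inequalities together with \eqref{eq:convHepsi} (which gives a uniform \(L^q\)-bound of \(H_\epsi^a\) and \(\tfrac{h_\epsi}{r_\epsi^2}H_\epsi^b\)) to obtain
\(\sup_\epsi\big(F_\epsi^a(\psi^a_\epsi)+\tfrac{h_\epsi}{r_\epsi^2}F_\epsi^b(\psi^b_\epsi)\big)<\infty\),
and hence the uniform bounds of Lemma~\ref{lem:l21GGLM}. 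Compactness of \((\bar\bcal^a_\epsi,\psi^a_\epsi)\) and \((\psi^b_\epsi,\bar\bcal^b_\epsi)\) and the fact that any accumulation point \(((\bar\bcal^a,\psi^a),(\psi^b,\bar\bcal^b))\) satisfies \((\psi^a,\psi^b)\in\Phi_{\ell_+}^p\) then follow exactly as in the proof of Theorem~\ref{thm:ellr+}, using the continuity of the trace to absorb the boundary conditions on \(\Gamma^a\) and \(\Gamma^b\).

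The core new step is the convergence
\[
\lim_{\epsi\to0^+}\!\bigg(\!\int_{\Omega^a}\! H_\epsi^a\!:\!(r_\epsi^{-1}\grad_\alpha\psi^a_\epsi|\grad_3\psi^a_\epsi)\,\dx+\tfrac{h_\epsi}{r_\epsi^2}\!\int_{\Omega^b}\! H_\epsi^b\!:\!(\grad_\alpha\psi^b_\epsi|h_\epsi^{-1}\grad_3\psi^b_\epsi)\,\dx\bigg)=\hat L((\bar\bcal^a,\psi^a),(\psi^b,\bar\bcal^b)).
\]
To prove this, I would split each integral into its \(\alpha\) and \(3\) parts. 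For the \(3\)-column of \(H^a_\epsi\) and the \(\alpha\)-column of \(\tfrac{h_\epsi}{r_\epsi^2}H^b_\epsi\), the strong \(L^q\)-convergences in \eqref{eq:convHepsi} paired with the weak \(L^p\)-convergences \(\grad_3\psi^a_\epsi\weakly\grad_3\psi^a\) and \(\grad_\alpha\psi^b_\epsi\weakly\grad_\alpha\psi^b\) yield the desired limits. For the remaining two pieces, which only involve the \textit{averaged} gradients \(\bar\bcal^a_\epsi\) and \(\bar\bcal^b_\epsi\) weakly, I would add and subtract the limit matrices: writing
\[
\int_{\Omega^a}\! H_{\epsi,\alpha}^a\!:\!r_\epsi^{-1}\grad_\alpha\psi^a_\epsi\,\dx=\int_{\Omega^a}\!(H_{\epsi,\alpha}^a-H_\alpha^a)\!:\!r_\epsi^{-1}\grad_\alpha\psi^a_\epsi\,\dx+\int_0^L\! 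H_\alpha^a(x_3)\!:\!\bar\bcal^a_\epsi(x_3)\,\dx_3,
\]
where the second identity crucially uses \(H^a_\alpha=H^a_\alpha(x_3)\) from \eqref{eq:extrahypH} to pull the matrix outside the \(x_\alpha\)-integral, making \(\bar\bcal^a_\epsi\) appear. The first summand vanishes by H\"older and the \(L^p\)-bound on \(r_\epsi^{-1}\grad_\alpha\psi^a_\epsi\), while the second converges to \(\int_0^L H^a_\alpha\!:\!\bar\bcal^a\,\dx_3\) by the weak \(L^p\)-convergence of \(\bar\bcal^a_\epsi\). The same trick with \(H^b_3=H^b_3(x_\alpha)\) handles the \(b\)-part and produces \(\int_{\omega^b}H^b_3\cdot\bar\bcal^b\,\dx_\alpha\). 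I expect this is where the structural restriction \eqref{eq:extrahypH} is essential: without it, the \(\alpha\)-column of \(H^a\) would have to be integrated against \textit{all} of \(r_\epsi^{-1}\grad_\alpha\psi^a_\epsi\), which only converges in the averaged sense, and similarly for \(h_\epsi^{-1}\grad_3\psi^b_\epsi\).

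With this convergence in hand, the remainder is a routine adaptation of the \(\Gamma\)-convergence argument of Theorem~\ref{thm:ellr+}: introduce on the product space \(\calX\) the functionals
\[
\hat{\mathcal E}_\epsi((\bar\bcal^a,\psi^a),(\psi^b,\bar\bcal^b)):=F_\epsi((\bar\bcal^a,\psi^a),(\psi^b,\bar\bcal^b))-\hat L_\epsi(\psi^a,\psi^b)
\]
on \(\widetilde\A_\epsi\cap(\Phi_\epsi\times\Phi_\epsi)\) and \(+\infty\) otherwise (with \(\hat L_\epsi\) the divergence pairing and \(F_\epsi\) as in \eqref{Fepsi}), and check equi-coercivity as above. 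The liminf inequality then follows by combining Theorem~\ref{Thm:FepsitoF} for the elastic part with the limit identity above for the linear part, since \(\hat L_\epsi\) is continuous along the weak convergence of admissible sequences. For the limsup inequality, given a target \((\bar\bcal^a,\psi^a,\psi^b,\bar\bcal^b)\) with \((\psi^a,\psi^b)\in\Phi_{\ell_+}^p\), Theorem~\ref{Thm:FepsitoF} supplies a sequence matching the elastic limit, Lemma~\ref{lem:ontrace} corrects the traces on \(\Gamma^a\) and \(\Gamma^b\) without increasing the energy, and the continuity of \(\hat L_\epsi\) along that sequence (proved again by the decomposition above) yields the matching upper bound. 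Coupled with equi-coercivity and the fundamental theorem of \(\Gamma\)-convergence \cite{DM93}, this delivers Theorem~\ref{thm:ellr+div}. The main obstacle is really the intermediate convergence of \(\hat L_\epsi\); everything else is a mild adjustment of the scalar-forces proof.
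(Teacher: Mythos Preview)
Your proposal is correct and follows essentially the same route as the paper: the paper does not give a separate proof of Theorem~\ref{thm:ellr+div} but simply states that it is the analogue of Theorem~\ref{thm:ellr+} with the force term replaced by \(\hat L\), after having recorded the limit identity \eqref{eq:limwithH+} and its simplification to \eqref{eq:hatL} under \eqref{eq:extrahypH}. Two minor remarks: (i) the paper obtains the limit of the linear term by first pairing the strong \(L^q\)-convergence of \(H^a_\epsi,\tfrac{h_\epsi}{r_\epsi^2}H^b_\epsi\) with the weak \(L^p\)-convergence of the \emph{full} scaled gradients \(r_\epsi^{-1}\grad_\alpha\psi^a_\epsi\weakly\bcal^a\), \(h_\epsi^{-1}\grad_3\psi^b_\epsi\weakly\bcal^b\) (cf.\ Lemma~\ref{lem:l21GGLM}), and only then uses \eqref{eq:extrahypH} to reduce \(\int_{\Omega^a}H^a_\alpha:\bcal^a\,\dx\) to \(\int_0^L H^a_\alpha:\bar\bcal^a\,\dx_3\); your add--subtract decomposition is an equivalent and equally clean way to reach the same conclusion. (ii) In the \(\ell\in\RR^+\) case the constraint set is \(\A_\epsi\) (see \eqref{Aepsi} and \eqref{Fepsi}), not \(\widetilde\A_\epsi\), which is only introduced for the \(\ell=0\) analysis; this is a harmless slip in your write-up.
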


\begin{remark}\label{rmk:ondivcase}
(i) If the assumption in \eqref{eq:extrahypH} does not hold, then the previous analysis
must be performed considering the full
3D dependence of \({r_{\varepsilon}^{-1}}
\nabla_\alpha
\psi^a_\epsi\)  and \({h_{\varepsilon}^{-1}}
\nabla_3
\psi^b_\epsi\),
as in \cite{BFM09}. This may lead us
into nonlocal limit models (see \cite[Remark~2.1]{BFM09})
and will be the object of a future work.
 (ii) Next, we analyse the relationship
between \(\hat L\) and  the terms involving the
applied forces in the limit functional \eqref{Eell+}. We start by observing that \(\int_0^L
\Bcal^a(x_3) : (\bar\bcal^a(x_3)|0)\,\dx_3
= \int_0^L
\Bcal^a_\alpha(x_3):\bar\bcal^a(x_3)\,\dx_3\).
Hence, \(-H^a_\alpha\) plays the role
of \(\Bcal^a_\alpha\), which are the relevant
components of \(\Bcal^a\) in \eqref{Eell+}.
If \(H^a_3(x_\alpha,\cdot)\in W^{1,q}(0,L;\RR^3)\)
for \aev~\(x_\alpha\in\omega^a\), then
\vspace{-1mm}
\begin{equation*}
\begin{aligned}
 \int_{\Omega^a} H^a_3(x)\cdot\grad_3\psi^a(x_3)\,\dx
&=\int_{\omega^a} H^a_3(x_\alpha,L)\,\dx_\alpha\cdot\ffi^a_0(L)
- \int_{\omega^a} H^a_3(x_\alpha,0)\,\dx_\alpha\cdot\psi^a(0)\\
&\qquad - \int_0^L \bigg(\int_{\omega^a} \grad_3H^a_3(x_\alpha,x_3)\,\dx_\alpha \bigg)\cdot
\psi^a(x_3)\,\dx_3.
  \end{aligned}
\end{equation*}
The first term on the right-hand side
of the previous identity is one of the additional
terms mentioned above when
commenting the case in which   \(\tilde H_\epsi \not=0\)
 on \(\Gamma^\epsi\). Regarding the
second term, we observe that from the mathematical
 point of view, \(- \int_{\omega^a} H^a_3(x_\alpha,0)\,\dx_\alpha\) plays
the role of \(\bar a \hat G^b(0_\alpha)\)
in \eqref{Eell+} but, in general, these two constants are of distinct
physical natures. Finally, concerning
the third term in the   identity above,
we observe that \(\int_{\omega^a}
\grad_3H^a_3(x_\alpha,\cdot)\,\dx_\alpha\) plays
the role of \(\bar f^a + \bar g^a\)
in \eqref{Eell+}.

Next, we focus on the last two integral
terms in \eqref{eq:hatL}. We first
note that \(- H^b_3\) plays
the role of \(\ell  G^b\) in \eqref{Eell+}. Moreover, if  
\(H^b_\alpha(\cdot, x_3)\in W^{1,q}(\omega^b;\RR^3)\)
for \aev~\(x_3\in(-1,0)\), then \vspace{-.mm}
\begin{equation*}
\begin{aligned}
 \int_{\Omega^b}
H^b_\alpha(x) :
\nabla_\alpha \psi^b(x_\alpha)\,\dx
&=\int_{\partial \omega^b}
\bigg(\int_{-1}^0H^b_\alpha(x_\alpha,x_3)\,\dx_3\bigg):\big(\ffi^b_0(x_\alpha)\otimes
\nu_\alpha(x_\alpha)\big)\,\d\mathcal{H}^1(x_\alpha)  \\ &\qquad - \int_{\omega^b}
\bigg(\int_{-1}^0\big(
 \grad_1 H^b_1(x_\alpha,x_3) + \grad_2 H^b_2(x_\alpha,x_3)\big)\,
 \dx_3\bigg)\cdot \psi^b(x_\alpha)\,\dx_\alpha.
\end{aligned}
\end{equation*}
As before, the first term on the right-hand side
of the previous identity is one of the
additional
terms mentioned above
when
commenting the case in which   \(\tilde
H_\epsi \not=0\)
 on \(\Gamma^\epsi\), while \(\int_{-1}^0\big(
 \grad_1 H^b_1(\cdot,x_3) + \grad_2
H^b_2(\cdot,x_3)\big)\,
 \dx_3\) plays the role of \(\ell\,(\bar f^b
 + g^{b,+} - g^{b,-})\) in \eqref{Eell+}.
 \end{remark}

We finish by observing that in light
of the previous analysis, and as in
\cite{GMMMS07}, a more complete model is obtained
starting from an energy that simultaneously
contains the classical applied forces
as in 
\eqref{energystandardforces} and forces in divergence form as in \eqref{energydivergenceforces}. The corresponding limit models can be easily deduced from our previous arguments.

\section*{Acknowledgements}
The authors thank  Professors F. Murat for having suggested to target the model proposed in Section~6.2 and  R. Alicandro and M.G. Mora for insights on the double-well case. 
E. Zappale is a member of GNAMPA- INdAM whose support is gratefully acknowledged.

\bibliographystyle{plain}

\bibliography{Junctions}

\end{document}